\documentclass[11pt]{amsart}

\usepackage{soul}
\usepackage{pifont}
\usepackage{amsfonts}
\usepackage{amscd,amssymb,amsmath,graphicx,verbatim,mathrsfs}
\usepackage[TS1,OT1,T1]{fontenc}
\usepackage{extarrows}
\usepackage[all]{xy}
\usepackage{dcpic,mathtools,pictexwd}
\usepackage{relsize}
\usepackage{rotating}
\theoremstyle{remark}
\usepackage{indentfirst}
\usepackage{amsthm} 
\usepackage[utf8]{inputenc}
\theoremstyle{plain}
\usepackage{tikz-cd}
\usepackage{tikz}
\usetikzlibrary{angles,quotes}

\usepackage{geometry}
\allowdisplaybreaks

\newtheorem{thm}{Theorem}[section]
\newtheorem{lem}[thm]{Lemma}
\newtheorem{cor}[thm]{Corollary}
\newtheorem{prop}[thm]{Proposition}

\theoremstyle{definition}

\newtheorem{defn}[thm]{Definition}

\newtheorem{rmk}[thm]{Remark}

\newtheorem{constr}[thm]{Construction}

\numberwithin{equation}{section}

\makeatletter
\newcommand{\biggg}{\bBigg@{3}}

\newcommand{\Biggg}{\bBigg@{3.5}}

\newcommand{\bigggg}{\bBigg@{4}}

\newcommand{\Bigggg}{\bBigg@{4.5}}

\makeatother

\begin{document}

	\title{\normalsize \uppercase{E}\lowercase{mbedding complexity into the universal} \uppercase{B}\lowercase{anach space and the strong} \uppercase{N}\lowercase{ovikov} \lowercase{conjecture} 
	}

	\author{Geng Tian}
	
	\address{Department of Mathematics, Liaoning University, Liaoning, China 110036}
	\email{gengtian.ncg@gmail.com}
	
	\thanks{The first author was supported in part by Grant for Excellent Young Scholars in Tianyuan Mathematics.}

	\author{Guoliang Yu  }
	\address{Department of Mathematics, Texas A\&M University, College Station, TX, USA}
	\email{guoliangyu@tamu.edu}
	\thanks{The second author is partially supported by NSF Grant DMS-2247322.}

	
	\dedicatory{  }
	
	\keywords{Novikov conjecture, embedding complexity, Banach spaces,  $K$-theory}

	\maketitle

	\begin{abstract}
		
		Brown--Guentner and Haagerup--Przybyszewska
		showed that every countable discrete group admits a proper affine isometric action on the universal Banach space
		$\bigoplus_{p=1}^{\infty} \ell^{2p}(\mathbb{N}),$
		taken as the $\ell^{2}$-direct sum, and hence admits a coarse embedding into this space.  
		They further asked whether such embeddings could be used to study the Novikov conjecture. In this paper, we address this question by proving that the strong Novikov conjecture holds for any countable discrete group that admits a coarse embedding with finite complexity into this universal Banach space.
	Finally, we study some non-sofic groups,  all of which satisfy the Novikov conjecture.
	\end{abstract}

	
	\tableofcontents

	\section{Introduction}

	In their paper \cite{BG,Haagerup-Przybyszewska},
	Brown--Guentner and Haagerup--Przybyszewska  proved that every countable discrete group admits  a proper affine isometric action on the universal Banach space
	$$\bigoplus_{p=1}^\infty
	\ell^{2p}(\mathbb{N}),$$ where the direct sum is taken in the $\ell^2$-sense. They also raised the natural question of how to establish the Novikov conjecture for discrete groups that coarsely embed into this Banach space.
	
	The main purpose of this article is to prove the strong Novikov conjecture for discrete groups that admit coarse embeddings with finite complexity into this universal Banach space. In fact, we establish the following more general theorem.

	\begin{thm}\label{main-result1}
		Let $\Gamma$ be a countable discrete group. 
		If 
		$\Gamma$ 
		admits a
		coarse embedding   with finite complexity
		into an
		$\ell^2$-direct sum of infinitely many Property 
		$(H)$ 
		Banach spaces,
		then the rational strong Novikov conjecture holds for $\Gamma$, i.e., the Baum--Connes--Kasparov higher index map
		\[
		\mu: K_*(B\Gamma)  \longrightarrow K_*(C^*_{\rm r} \Gamma) 
		\]  
		is rationally
		injective, here $K_*(B\Gamma)$ denotes the $K$-homology   of the classifying space $B\Gamma$ for the group $\Gamma$ and $K_*(C^*_{\rm r} \Gamma)$ denotes the $K$-theory of the reduced group $C^\ast$-algebra $C^*_{\rm r} \Gamma$  associated to $\Gamma$. 
	\end{thm}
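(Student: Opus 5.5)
The plan is to combine three earlier ingredients: the finite decomposition complexity philosophy of Guentner--Tessera--Yu, the coarse Novikov results for spaces coarsely embeddable into Property (H) Banach spaces (Kasparov--Yu), and the descent principle. By the descent principle it suffices to prove that the Roe algebra version of the coarse Baum--Connes assembly map
\[
\mu: \lim_{d\to\infty} K_*(P_d(\Gamma)) \longrightarrow K_*(C^*(P_d(\Gamma)))
\]
is rationally (in fact integrally) injective. Here $\Gamma$ carries a proper left-invariant metric, and $P_d$ is the Rips complex. Injectivity of the coarse assembly map then gives rational injectivity of $\mu: K_*(B\Gamma)\to K_*(C^*_r\Gamma)$ by the standard descent argument, because $B\Gamma$ admits a locally finite model approximated by $P_d(\Gamma)/\Gamma$.

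For the coarse injectivity I would set up a controlled induction on complexity. A coarse embedding $f:\Gamma\to \bigoplus_{n}^{\ell^2} E_n$ with finite complexity should mean the following. After finitely many steps of decomposing the metric family (each time into $r$-disjoint unions of pieces over two colors), one reaches families on which $f$ is, up to coarse equivalence, controlled by only finitely many summands $E_1\oplus\cdots\oplus E_N$. The target is the $\ell^2$-sum of finitely many Property (H) spaces, which I expect to again be a Property (H) space; I would verify this first, since Property (H) is defined via the existence of a suitable Bott--Dirac-type structure and is stable under finite $\ell^2$-sums. The base case is then the uniform version of the Kasparov--Yu theorem: for a family of metric spaces that uniformly coarsely embeds into a fixed Property (H) space, the coarse assembly map for the family is injective. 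Concretely, this uses the twisted Roe algebras $C^*(P_d(X), \mathcal{A}(E))$ and the Bott map $\beta$. The argument shows that $\beta$ composed with the twisted assembly map factors through an isomorphism, where the isomorphism comes from cutting and pasting over the coarse embedding.

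The inductive step is a Mayer--Vietoris argument in controlled ($K$-theoretic) form, exactly as in Guentner--Tessera--Yu's proof for finite decomposition complexity. Given a decomposition $X = U\cup V$ with $U=\bigsqcup U_i$ and $V=\bigsqcup V_j$ both $r$-disjoint, one writes the Mayer--Vietoris sequences for $K$-homology of Rips complexes and for $K$-theory of Roe algebras. Uniform injectivity for the families $\{U_i\}$, $\{V_j\}$, $\{U_i\cap V_j\}$ then implies injectivity for $X$ via the five lemma. To keep the induction honest, one must use quantitative (controlled) injectivity: a class vanishing at propagation $R$ in the Roe algebra must come from something vanishing at a controlled Rips parameter. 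This is a statement about families, which is why everything is formulated uniformly over families.

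The main obstacle I expect is the base case with infinitely many summands being reduced to finitely many. One has to make precise how finite complexity truncates the $\ell^2$-sum, and show that the resulting Bott maps are compatible under the Mayer--Vietoris boundary maps, since different pieces may use different finite sub-sums $E_1\oplus\cdots\oplus E_N$. To handle this, I would try to use the single infinite sum $E=\bigoplus E_n$ throughout. I would then define the twisted algebra with $\mathcal{A}(E)$ as an inductive limit of $\mathcal{A}(E_1\oplus\cdots\oplus E_N)$, so that all pieces map into a common coefficient algebra and the Bott maps are natural.
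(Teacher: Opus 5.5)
\textbf{Main gap: the hypothesis is misread.} In Definition~\ref{mostkeyidea1}, ``finite complexity'' is not a decomposition-complexity condition. It asks for coordinatewise control functions $\rho_p^{\pm}$ with $\lim_{t\to\infty}\sum_p(\rho_p^-(t))^2=+\infty$, together with Lipschitz Property $(H)$ maps $\phi_p$ for $p>N$ satisfying $\sum_{p>N}\big((L_{\phi_p}+1)\rho_p^+(t)\big)^2<+\infty$ for every $t$. Nothing here decomposes $\Gamma$ into $r$-disjoint families, so the Guentner--Tessera--Yu Mayer--Vietoris induction has no input. Nor can you pass to pieces on which finitely many summands $\mathscr{X}_1\oplus\cdots\oplus\mathscr{X}_N$ control the embedding. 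Condition (2) only says the infinite sum of squares diverges. The hypothesis gives no unbounded lower control for any finite truncation $\mathfrak{h}_k=\oplus_{p\le k}h_p$: in the construction from $\text{CR}(\Gamma)<\pmb{\infty}$, each $\rho_p^-$ is a step function bounded by $(p\,\mathfrak{ln}(p)\cdots\mathfrak{ln}_k(p))^{-1/2}$. The finitely-many-summands situation is just the Kasparov--Yu theorem, and the statement is precisely about going beyond it. Condition (3), which is the actual content of ``finite complexity'', is never used in your proposal.

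\textbf{The missing idea.} You need a proper coefficient algebra and a Bott element that survive infinitely many summands whose Property $(H)$ maps have unbounded moduli. For $\ell^{2p}$ the Mazur map has Lipschitz constant $p$, so $\bigoplus_p\phi_p$ is not uniformly continuous on $S(\mathscr{E})$. Your $\varinjlim_N\mathcal{A}(\mathscr{X}_1\oplus\cdots\oplus\mathscr{X}_N)$ comes with no specified connecting maps and no control of the Bott map uniform in $N$. The paper instead uses $\prod_k\mathcal{A}(C_b(Z,\mathscr{E}_{k,\Gamma}))/\bigoplus_k$, restricted to uniformly bounded supports. Condition (2) gives properness only in the limit $k\to\infty$ (Proposition~\ref{properalgebra}). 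Condition (3), through Lemma~\ref{inequalityforPsi1}, gives asymptotic $\Gamma$-invariance of the Bott morphisms uniformly in $k$ (Proposition~\ref{asymptoticGammathemostkeypoint1}). Since no Dirac element is available in this Banach setting, rational injectivity of the Bott map is obtained differently. The paper enlarges to $L^2([0,1],\mathscr{E}_{k,\Gamma})$ and deforms the action $\alpha_s$ to the action at $s=0$. That action is trivial on fibres, and the problem there reduces to Lemma~\ref{InjectiveofKtheory}.

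\textbf{Second gap: the descent step.} Deducing rational injectivity of $\mu$ from injectivity of the coarse assembly map via Rips complexes requires a finite model of $B\Gamma$. Also, $P_d(\Gamma)$ approximates $\underline{E}\Gamma$, not $B\Gamma$, when $\Gamma$ has torsion. The theorem concerns arbitrary countable $\Gamma$. The paper avoids coarse descent altogether by working $\Gamma$-equivariantly over the contractible space $Z$, using Theorem~\ref{thm:proper-GHT} and Lemma~\ref{jdskljfaljsdkfjsqqq}.
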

	
	We note that the rational strong Novikov conjecture implies both the Novikov conjecture on the homotopy invariance of higher signatures and the  Gromov--Lawson--Rosenberg conjecture concerning the nonexistence of positive scalar curvature metrics on closed aspherical manifolds.
	
	Next,  we introduce the notion of \emph{finite complexity} for coarse embeddings into Banach spaces. We begin by recalling that a 
	metric space 
	$X$
	is said to be coarsely embeddable into a metric
	space 
	$Y$ 
	(see Gromov \cite{G1}) 
	if there exists a map 
	$h:X \to {Y}$ 
	for which there exist
	non-decreasing functions 
	$\rho^{\pm}$ 
	from 
	$[0,+\infty)$ 
	to
	$[0,+\infty)$  
	such that
	\begin{itemize}
		\item [(1)] 
		$\rho^{-}
		\big( 
		d_X(x_{1},x_{2})
		\big)
		\leq
		d_Y
		\big(
		h(x_{1}),
		h(x_{2})
		\big)
		\leq
		\rho^{+}
		\big(
		d_X(x_{1},x_{2})
		\big)
		$ for all  
		$x_{1}, x_{2}\in {X}$; and
		\item [(2)] 
		$\lim\limits_{t\to\infty}
		\rho^{-}(t)=+\infty$.
	\end{itemize}

	A \emph{paving} of a Banach space $\mathscr{X}$ 
	is 	an increasing sequence
	$V_{1}\subseteq V_{2} \subseteq \cdots$ 
	of finite-dimensional subspaces of $\mathscr{X}$  
	whose union is dense in $\mathscr{X}$.
	Recall that 
	a real Banach space 
	$\mathscr{X}$ 
	is said to have \emph{Property 
		$(H)$}  \cite{KY}
	if there exist  pavings
	$\{V_{n}\}_{n\in\mathbb{N}}$ 
	of 
	$\mathscr{X}$ 
	and
	$\{W_{n}\}_{n\in\mathbb{N}}$
	of a real Hilbert space 
	$\mathscr{H}$, and a
	uniformly continuous map $\phi$ from the unit sphere $S(\mathscr{X})$ of the Banach space $ \mathscr{X}$ to the unit sphere $S(\mathscr{H})$ of the Hilbert space $\mathscr{H}$
	such that the restriction of $\phi$ to $S(V_n)$ is a homeomorphism onto $S(W_n)$ 
	for all $n\in\mathbb{N}$.
	The map	
	$\phi$ is called 
	\emph{Property 
		$(H)$ 
		map}. 
	We remark that the class of Banach spaces with Property $(H)$ is quite rich. For example,  $\ell^{p}(\mathbb{N})$ has Property $(H)$, where the Property $(H)$ map $\phi: S(\ell^{p}(\mathbb{N}) ) \rightarrow S(\ell^{2}(\mathbb{N}))$ 
	is given by  the Mazur map: $$\phi(x)_n = \operatorname{sign}(x_n)\, |x_n|^{p/2}$$ for all $x=(x_1, \cdots, x_n, \cdots) $ in the unit sphere of $ \ell^{p}(\mathbb{N}),$ and the subspaces $V_n$ of $\ell^{p}(\mathbb{N})$ and $W_n$  of $ \ell^{2}(\mathbb{N})$  are the finite-dimensional spaces consisting of all vectors whose coordinates vanish from the $(n+1)$-st position onward.
	More generally,  every Banach space with  an unconditional basis and nontrivial  cotype  has 
	 Property $(H)$.
	 Important examples include the commutative (non-commutative) $L^p$-spaces,
	 and the  
	Banach space of all Schatten $p$-class
	operators on a Hilbert space.

	We are now ready to introduce the concept of \emph{coarse 
		embedding with finite complexity}.

	\begin{defn}\label{mostkeyidea1}
		Let $Y$ be a metric space, 
		and let $\mathscr{E}=\bigoplus_{p=1}^{\infty}
		\mathscr{X}_p$ 
		be the $\ell^2$-direct sum of infinitely many  Property 
		$(H)$
		Banach spaces with Property 
		$(H)$ maps 
		$\phi_p: S(\mathscr{X}_p)\to S(\mathscr{H})$.
		A map 
		$h: Y \to \mathscr{E}$ 
		is called a
		\emph{coarse 
			embedding with finite complexity}  
		if there exist (not necessarily non-decreasing) functions $\rho_p^{\pm}:[0,+\infty)\to [0,+\infty)$ 
		such that
		\begin{itemize}
			\item  [(1)]
			for all 
			$y_{1},y_{2}\in {Y}$ and 
			all $p\geq1$,
			$$\rho_p^{-}
			\big(d(y_{1},y_{2})\big)
			\leq
			\Big\|
			h_p(y_{1})-h_p(y_{2})
			\Big\|
			\leq
			\rho_p^{+}
			\big(d(y_{1},y_{2})\big), 
			$$
			where $h_p: Y \to \mathscr{X}_p$ is the $p$-th coordinate of $h$; 
			\item [(2)]
			$\lim\limits_{t \to \infty}
			\sum\limits_{p=1}^{\infty}
			\big(
			\rho_p^{-}(t)
			\big)^2=+\infty
			$;  and
			\item  [(3)]
			there exists an integer 
			$N \geq 0$ 
			such that for any  $p >  N$, the map $\phi_p$ is Lipschitz with the Lipschitz constant   $L_{\phi_p}$, and
			for each
			$t>0$, 
			$$
			\sum\limits_{p=N+1}^{\infty}
			\Big(
			( L_{\phi_p}+1 ) 
			\cdot 
			\rho_{p}^+(t)
			\Big)^2
			<+\infty.
			$$
		\end{itemize}
	\end{defn}

We emphasize that  no requirements are imposed on the first finitely many maps  $\phi_p$ in the third condition. 
Consequently, the classical setting of coarse embeddings into Property $(H)$ Banach spaces is  a special case of  Definition \ref{mostkeyidea1}. 
It follows that  the main result of \cite{KY}	
is recovered as a special case of  Theorem \ref{main-result1}.

	Next, we specialize to the universal Banach space, the $\ell^2$-direct sum of $\ell^p$-spaces. 
	We  introduce a notion of complexity rate for discrete groups, which measures the degree of complexity. This notion is a generalization of Property A  with respect to the universal Banach space. 
	Recall that a map $x \mapsto \tau(x)$ from a metric space $X$ to a Banach space
	is said to have $(R,\epsilon)$-\emph{variation} 
	if $d(x,y)\leq R$ implies 
	$\|\tau(x)-\tau(y)\|\leq\epsilon$.
	A discrete metric space $X$  with
	bounded geometry has \emph{Property A} if  for  every $R>0$ and every $\epsilon>0$, there exists a map 
	$\tau: X \to  S\big(\ell^1(X)\big)$ 
	and a constant $c>0$, 
	such that $\tau$ has $(R,\epsilon)$-variation
	and  the support of $\tau(x)$ is contained in the closed ball of radius $c$ centered at $x$.
	
	To simplify the notation, we restrict our attention to finitely generated groups.
	In 
	\cite{Haagerup-Przybyszewska}, 
	the authors strengthened a result of   Brown--Guentner \cite{BG} 
	by showing that  every finitely generated group $\Gamma$ admits a proper affine isometric action on 
	the universal Banach space 
	$$\bigoplus_{p=1}^\infty
	\ell^{2p}(\mathbb{N}).$$ 
	The cocycle
	$b:\Gamma \to \bigoplus_{p=1}^\infty
	\ell^{2p}(\mathbb{N})$
	associated to the affine isometric action 
	provides a coarse embedding of $\Gamma$ into this universal Banach space.  
	More precisely,
	this  cocycle  (or coarse embedding)  is induced by a sequence of maps 
	$$
	\Big\{\tau_{p}: 
	\Gamma \longrightarrow  \ell^{2p}(\mathbb{N}) \Big\}_{p\in\mathbb{N}},
	$$ 
	which satisfy the following conditions: 
	\begin{itemize}
		\item [(1)]  $\tau_{p}$ has $(1,\frac{C}{p})$-variation,
		where 
		$C>0$ is independent of $p$;  and
		\item [(2)] 
		$\big\|\tau_{p}(x)-\tau_{p}(y)\big\|\geq 1$
		whenever 
		$d(x,y)\geq 2p$. 
	\end{itemize}
This construction provides a quantitative weakening of Property A and thereby motivates the following notion of complexity rate.

	We set   
	\[
	\mathcal{D} =
	\Big\{
	(a_{0}, \cdots, a_{p}, \cdots )
	~ \Big|~ 
	a_{p}\geq0  
	\text{ for all } p,  \ 
	a_{p}=0 \text{ for all but finitely many } p 
	\Big\} 
	\cup 
	\Big\{ 
	(+\infty, 0, 0, \cdots)
	\Big\}.
	\]
	For  
	$\vec{a}=(a_{0}, \cdots, a_{p},\cdots)$ and $\vec{b}=(b_{0}, \cdots, b_{p},\cdots)$ in $\mathcal{D}$,
	we define  $\vec{a}<\vec{b}$ 
	if there exists $k\geq 0$
	such that 
	$a_{p} =b_{p}$ for all $0\leq p\leq k$,
	and $a_{k+1}< b_{k+1}$. 
	We write $\vec{a}\leq\vec{b}$ if $\vec{a}<\vec{b}$  
	or  $\vec{a}=\vec{b}$. 
	Then  $(\mathcal{D},\leq)$ is a totally ordered set. 
	Similarly, we can define a totally ordered set 
	$$\frac{1}{\mathcal{D}} = \left\{ \frac{1}{\vec{a}} ~\bigg|~ \vec{a} \in \mathcal{D}\right\},$$
	where the elements $\frac{1}{\vec{a}}$
	are formal symbols, and the order is given by
	\[ 
	\frac{1}{\vec{a}}  \geq \frac{1}{\vec{b}}
	\Longleftrightarrow 
	\vec{a} \leq \vec{b}\ 
	.
	\]
	We set 
	\[
	\pmb{0} = \frac{1}{(+\infty,0,0,\cdots)}
	\quad \text{and} \quad \pmb{\infty} = \frac{1}{(0,0,\cdots)}    
	,
	\]
	then 
	$\pmb{0}$ and $\pmb{\infty}$ are the minimal and maximal elements of $\frac{1}{\mathcal{D}}$
	respectively.
	Note that if $a_p \in \{0,1,\cdots,9\}$ for all $p$,
	then 
	$\mathcal{D}$ and $\frac{1}{\mathcal{D}}$,
	together with their order structures, are analogous to the rational numbers under the identification with decimal expansions 
	$a_{0\textbf{.}}a_{1}a_{2}\cdots$.

	For each $k \geq 1$, 
	define
	\[
	\mathfrak{ln}_k(x)=
	\begin{cases}
		\underbrace{\ln(\ln(\cdots \ln}\limits_{k}(x)\cdots )),  &  x>	\underbrace{\exp(\exp(\cdots \exp}\limits_{k}(1)\cdots)), 
		\\ 
		1, &  
		x \leq	\exp(\exp(\cdots \exp(1)\cdots)).
	\end{cases}
	\]
	When $k=1$, we write $\mathfrak{ln}$ 
	in place of  $\mathfrak{ln}_1$.

	\begin{defn}\label{klsdjflksjdlkf333}
		Let $\Gamma$ 
		be a finitely generated group endowed with a word length metric $d$. 
		Let 
		$\frac{1}{\vec{a}} > \pmb{0}$, 
		where  $\vec{a}=(a_{0},a_{1},\cdots,{a}_{k},0,\cdots) \in \mathcal{D}$.
		We say that $\Gamma$ has 
		\emph{complexity rate at most $\frac{1}{\vec{a}}$}
		if there exists  $C>0$
		and a sequence of maps
		$$
		\Big\{\tau_{p}: 
		\Gamma \longrightarrow  \ell^{2p}(\mathbb{N})
		\Big\}_{p\in\mathbb{N}}
		$$ 
		such that: 
		\begin{itemize}
			\item [(1)]
			$\tau_{p}$
			has
			$\Big(1,\ 
			\dfrac{C}{ p^{1+a_{0}} \cdot \big(\mathfrak{ln}(p)\big)^{a_{1}} \cdots \big( \mathfrak{ln}_k(p)\big)^{a_{k}}}
			\Big)$-variation;  and
			\item [(2)] there exists $N_{p}>0$ such that
			$\big\|\tau_{p}(x)-\tau_{p}(y)\big\|\geq 1$
			whenever 
			$d(x,y)\geq N_{p}$.
		\end{itemize}
		In this case, we write  
		$\text{CR}(\Gamma)\leq \frac{1}{\vec{a}}$.
			We write
		$\text{CR}(\Gamma)< \frac{1}{\vec{a}}$  
		if there exists $\vec{b}\in\mathcal{D}$ such that 
		$\text{CR}(\Gamma)\leq \frac{1}{\vec{b}}$
		and $\frac{1}{\vec{b}}<\frac{1}{\vec{a}}$.  
	We say that $\Gamma$ has a \emph{finite complexity rate} if  $\text{CR}(\Gamma) <\pmb{\infty}$.
	\end{defn}

Note  that the images of the maps
$\tau_{p}$ 
are not assumed to lie in the corresponding unit spheres or to consist of finitely supported functions.	
We remark that the order on $\frac{1}{\mathcal{D}}$
	is not redundant;
	it reflects the level of complexity in the sense that
	$\text{CR}(\Gamma)\leq \frac{1}{\vec{b}}$ 
	and 
	$\frac{1}{\vec{b}}\leq \frac{1}{\vec{a}}$
	imply
	$\text{CR}(\Gamma)\leq \frac{1}{\vec{a}}$.
The above definition is independent of the choice of word length metrics on $\Gamma$.

	Haagerup--Przybyszewska's result  implies that
	$\text{CR}(\Gamma)\leq\pmb{\infty}$  
	for every finitely generated group $\Gamma$.
	Thus, the complexity rates of all finitely generated  groups lie between
	$\pmb{0}$ and $\pmb{\infty}$. 
	On the other hand, Theorem 1.2.4 
	in \cite{Willett}  implies that  if $\Gamma$ has Property A, then   
	$\text{CR}(\Gamma)\leq \frac{1}{\vec{a}}$
	for every 
	$\frac{1}{\vec{a}} > \pmb{0}$;
	in other words,  groups with  Property A  admit arbitrarily small complexity rates.

	We emphasize that the notion of finite complexity rate is  substantially weaker than  Property A.
	Indeed,   
	Osajda (\cite{Osajda}) constructed a family of finitely generated groups with the Haagerup property (also known as Gromov's a-T-menability)  but without Property A.
	In fact,  finitely generated groups with the Haagerup property admit arbitrarily small complexity rates; 
	more generally, the same holds for those coarsely embeddable into  $\ell^{q}(\mathbb{N})$  for some $q\in[1,+\infty)$.    
	We give a brief proof.
	Let  $\Gamma$  be a finitely generated group that  admits a coarse embedding  into  $\ell^{q}(\mathbb{N})$ for some 
	$q\in [1,+\infty)$.  
	For any  integer
	$p\geq \frac{q}{2}$,    $\ell^{q}(\mathbb{N})$ admits a coarse embedding into  
	$\ell^{2p}(\mathbb{N})$ (see \cite{MN}). 
	Thus $\Gamma$ admits a coarse embedding into  $\ell^{2p}(\mathbb{N})$  for every integer
	$p\geq \frac{q}{2}$.    
	We denote this embedding by $h_{p}$.  
	Since $\Gamma$ is quasi-geodesic with respect to the word metric,  there exists $C_{p}>0$ such that 
	$\|h_{p}(x)-h_{p}(y)\|\leq C_{p}\cdot d(x,y)$ 
	for all $x,y\in\Gamma$.  Now for any $R>0$, the rescaled map $\tau_{p,R}:=\frac{1}{R}\cdot h_{p}$ has $(1,\frac{C_p}{R})$-variation.  Moreover, since $h_{p}$ is a coarse embedding,  there exists  $N_p>0$ such that 
	$\|\tau_{p,R}(x)-\tau_{p,R}(y)\| \geq 1$  whenever  $d(x,y)\geq N_p$. 
	Hence,  for every 
	$\frac{1}{\vec{a}} > \pmb{0}$,  we can find a  family of maps  
	$\big\{ \tau_{p}: \Gamma \to 
	\ell^{2p}(\mathbb{N}) \big\}_{p\in\mathbb{N}}$
	such that $\text{CR}(\Gamma)\leq \frac{1}{\vec{a}}$,
	where  the maps $\tau_{p}$ for $p\geq \frac{q}{2}$ are obtained from the above argument, 
	while the finitely many remaining maps are chosen as in Haagerup--Przybyszewska's construction.  
	Consequently,  Osajda's  examples  have  finite complexity rates but do not have Property A.

The following result shows that any finite complexity rate, no matter how large, can guarantee a good embedding into the 	 
universal Banach space  $\bigoplus_{p=1}^\infty
\ell^{2p}(\mathbb{N})$.

	\begin{prop}
		If  $\Gamma$  has a finite complexity rate, i.e.	$\text{CR}(\Gamma)<\pmb{\infty}$,
		then $\Gamma$ admits a coarse 
		embedding with finite complexity into $\bigoplus_{p=1}^\infty
		\ell^{2p}(\mathbb{N})$.
	\end{prop}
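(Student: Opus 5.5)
Throughout, let $\mathscr{X}_p=\ell^{2p}(\mathbb{N})$ and let $\phi_p$ be the Mazur map $x_n\mapsto \operatorname{sign}(x_n)|x_n|^{p}$. The plan is to build the embedding by rescaling the maps $\tau_p$ from Definition \ref{klsdjflksjdlkf333} and then verify the three conditions of Definition \ref{mostkeyidea1}.

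By assumption $\text{CR}(\Gamma)\le \frac1{\vec a}<\pmb\infty$, so $\vec a=(a_0,\dots,a_k,0,\dots)\neq(0,0,\dots)$. Let $j$ be the first index with $a_j>0$. Write
\[
w(p)=p^{1+a_0}(\mathfrak{ln} p)^{a_1}\cdots(\mathfrak{ln}_k p)^{a_k},
\]
so that $\tau_p$ has $(1,C/w(p))$-variation. Fix a base point $e$ and set
\[
h_p(x)=\lambda_p\bigl(\tau_p(x)-\tau_p(e)\bigr),\qquad h=(h_p)_p ,
\]
where $\lambda_p>0$ are weights to be chosen. Subtracting $\tau_p(e)$ makes $h(e)=0$. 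Then $h(x)\in\bigoplus_p\ell^{2p}$ as soon as the upper bounds below are square-summable, which follows by taking $y=e$.

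The conditions are checked as follows.

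\textbf{Upper bound.} Since $\Gamma$ is geodesic for the word metric, chaining the $(1,C/w(p))$-variation along a geodesic gives
\[
\|h_p(x)-h_p(y)\|\le \lambda_p\,C\,d(x,y)/w(p).
\]
So we may take $\rho_p^+(t)=C\lambda_p t/w(p)$.

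\textbf{Lower bound.} Set $\rho_p^-(t)=\lambda_p$ for $t\ge N_p$ and $\rho_p^-(t)=0$ otherwise. Condition (1) then holds with these choices.

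\textbf{Lipschitz constants.} The Mazur map $\phi_p$ from $S(\ell^{2p})$ to $S(\ell^2)$ is Lipschitz with constant $L_{\phi_p}\le c\,p$ for a universal $c$. This follows from the elementary inequality $\bigl||s|^{p}\operatorname{sign}s-|r|^{p}\operatorname{sign}r\bigr|\le p\,|s-r|\,(|s|^{p-1}+|r|^{p-1})$ combined with Hölder with exponents $2p$ and $2p/(2p-2)$. So for some $N$ all these maps are Lipschitz with $L_{\phi_p}+1\lesssim p$.

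\textbf{Condition (3).} This requires $\sum_p\bigl(p\lambda_p t/w(p)\bigr)^2<\infty$ for each $t$, i.e. $\sum_p \lambda_p^2 p^2/w(p)^2<\infty$.

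\textbf{Condition (2).} Since $\rho_p^-(t)=\lambda_p$ once $t\ge N_p$, we have $\sum_p\rho_p^-(t)^2\ge\sum_{p:\,N_p\le t}\lambda_p^2$. This tends to $\infty$ as $t\to\infty$ if and only if $\sum_p\lambda_p^2=\infty$. No control on the growth of $N_p$ is needed, because each $N_p$ is finite.

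\textbf{Choice of weights (main obstacle).} It remains to choose $\lambda_p$ with $\sum_p\lambda_p^2=\infty$ and $\sum_p\lambda_p^2p^2/w(p)^2<\infty$. Take $\lambda_p^2=1/\bigl(p\,\mathfrak{ln}(p)\cdots\mathfrak{ln}_{k+1}(p)\bigr)$, a divergent series by iterated Cauchy condensation or the integral test. With this choice,
\[
\frac{\lambda_p^2p^2}{w(p)^2}=\frac{p^{2}}{p^{1+2+2a_0}\prod_{i\le k+1}\mathfrak{ln}_i(p)\,\prod_{i\le k}(\mathfrak{ln}_i p)^{2a_i}}
=\frac{1}{p^{1+2a_0}\prod_{i\le k+1}\mathfrak{ln}_i(p)\prod_{i\le k}(\mathfrak{ln}_ip)^{2a_i}} .
\]
Compared with the borderline divergent series $1/(p\,\mathfrak{ln}\,p\cdots)$, this term carries the extra factor $p^{-2a_0}\prod_i(\mathfrak{ln}_i p)^{-2a_i}$. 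The first nonzero exponent is $2a_j>0$, so the series is a convergent Bertrand-type series. The factor $p^2$ coming from $L_{\phi_p}^2$ is exactly cancelled by the $p^{2}$ in $w(p)^2=p^{2+2a_0}\cdots$. This cancellation is why the definition uses $p^{1+a_0}$ rather than $p^{a_0}$, and it is the step that genuinely uses finiteness of the complexity rate.

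The remaining things to double-check are the linear growth of the Mazur map's Lipschitz constant in $p$ and the convergence/divergence of the Bertrand series; both are standard.
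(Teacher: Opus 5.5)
Your argument is correct and is essentially the paper's proof: the paper also sets $h_p=\lambda_p\bigl(\tau_p-\tau_p(e)\bigr)$, takes $\rho_p^+(t)=C\lambda_p t/w(p)$ and $\rho_p^-(t)=\lambda_p$ for $t\ge N_p$, and balances $\sum_p\lambda_p^2=\infty$ against $\sum_p(p\lambda_p/w(p))^2<\infty$. The only differences are minor. The paper assumes without loss of generality that $a_k>0$ and uses $\lambda_p=(p\,\mathfrak{ln}(p)\cdots\mathfrak{ln}_k(p))^{-1/2}$, so your extra factor $\mathfrak{ln}_{k+1}$ is harmless but unnecessary. The paper also reads off $L_{\phi_p}\le p$ directly from Lemma~\ref{ineaualityforMazurmap1} instead of re-deriving it; if you keep your own derivation, the H\"older exponents should be $2p$ and $2p/(p-1)$, which gives $L_{\phi_p}\le 2p$.
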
	
	\begin{proof}
		
		Since 
		$\text{CR}(\Gamma)<\pmb{\infty}$,
		there exists 
		$\vec{a}=(a_{0}, a_{1}, \cdots, a_{k}, 0 \cdots)>(0,0,0\cdots)$ 
		such that $\text{CR}(\Gamma)\leq \frac{1}{\vec{a}}$. 
		Without loss of generality, we assume that $a_k>0$.
		By Definition \ref{klsdjflksjdlkf333}, there exists a family of maps 
		$$
		\Big\{\tau_{p}: \Gamma \longrightarrow \ell^{2p}(\mathbb{N})
		\Big\}_{p\in\mathbb{N}}
		$$
		satisfying conditions  
		$(1)$ and $(2)$. 
		For each $p\in\mathbb{N}$, 
		define  
		$h_p: \Gamma \to \ell^{2p}(\mathbb{N})$
		by 
		\begin{equation*}
			h_p(\gamma)=
			\dfrac{\tau_{p}(\gamma)-\tau_{p}(e)}{~\sqrt{p \cdot \mathfrak{ln}(p) \cdots  \mathfrak{ln}_k(p)}~}
		\end{equation*}
		for every $\gamma\in\Gamma$, 
		where $e$ is the identity element of group $\Gamma$.

		We claim that the map
		$h:=\oplus_{p=1}^{\infty} h_p$ is a coarse 
		embedding with finite complexity from  $\Gamma$ 
		into $\bigoplus_{p=1}^{\infty}
		\ell^{2p}(\mathbb{N})$.
	First, 
	since the Cayley graph of the finitely generated group 
	$\Gamma$ is geodesic with respect to the path metric,   
	condition $(1)$ implies that
	$$
	\big\|\tau_{p}(x)-\tau_{p}(y)\big\|\leq \frac{C\cdot d(x,y)}{~ p^{1+a_{0}} \cdot \big(\mathfrak{ln}(p)\big)^{a_{1}} \cdot\cdots \big( \mathfrak{ln}_k(p)\big)^{a_{k}}}
	$$
	for all   $x,y\in\Gamma$ and  $p\geq 1$,
	where 	$C>0$ is independent of $p$.
It follows that 
\[
\sum_{p=1}^{\infty} 
\|h_p(\gamma)\|^2 <+\infty 
\]
for every 
$\gamma\in\Gamma$, 
and hence the map $h$ is well-defined.	
Moreover,
define 
	$$
	\rho_{p}^{+}(t)=
	\dfrac{C\cdot t}{~
		p^{a_{0}+\frac{3}{2}} \cdot \big(\mathfrak{ln}(p)\big)^{a_{1}+\frac{1}{2}} \cdot\cdots \big( \mathfrak{ln}_k(p)\big)^{a_{k}+\frac{1}{2}}
	}
	$$
	for every
	$t \geq 0$.	
	Then for all 
	$x,y\in \Gamma$ 
	and  
	$p\geq1$,	
	$$
	\Big\|
	h_p(x)-h_p(y)
	\Big\|
	\leq
	\rho_p^{+}
	\big(d(x,y)\big). 
	$$
	Since  $a_j\geq0$ for $0\leq j\leq k-1$, and $a_k>0$,
	we know that for each
	$t\geq 0$, 
	\begin{eqnarray*}
		\sum\limits_{p=1}^{\infty}
		\Big(
		(p+1) \cdot 
		\rho_{p}^+(t)
		\Big)^2
		&\leq& \sum\limits_{p=1}^{\infty}
		\Big(
		2p \cdot 
		\rho_{p}^+(t)
		\Big)^2
		\\
		&=&
		\sum\limits_{p=1}^{\infty}
		\Big(
		\dfrac{2C t}{
			p^{a_{0}+\frac{1}{2}} \cdot \big(\mathfrak{ln}(p)\big)^{a_{1}+\frac{1}{2}} \cdot\cdots \big( \mathfrak{ln}_k(p)\big)^{a_{k}+\frac{1}{2}}
		}
		\Big)^2
		\\
		&=&
		4 C^2 t^2\cdot \sum\limits_{p=1}^{\infty}
		\dfrac{1}{
			p^{2a_{0}+1} \cdot \big(\mathfrak{ln}(p)\big)^{2a_{1}+1} \cdot\cdots \big( \mathfrak{ln}_k(p)\big)^{2a_{k}+1}
		}
		\\
		&\leq&
		4 C^2 t^2\cdot \sum\limits_{p=1}^{\infty}
		\dfrac{1}{
			p \cdot \mathfrak{ln}(p) \cdots
			\mathfrak{ln}_{k-1}(p) \cdot  \big( \mathfrak{ln}_k(p)\big)^{2a_{k}+1}
		}	
		\\
		&<& +\infty.
	\end{eqnarray*}
	Note that for each $p\in\mathbb{N}$,
	the Property $(H)$ map  (Mazur map)  of $\ell^{2p}(\mathbb{N})$  has  Lipschitz constant $p$.
	Thus, the third condition in Definition \ref{mostkeyidea1} holds.

On the other hand,  condition $(2)$ implies that for each 
		$p\geq 1$, 
		there exists $N_{p}>0$ such that  
		$$
		\big\|\tau_{p}(x)-\tau_{p}(y)\big\|\geq 1
		$$ 
		whenever  $d(x,y)\geq N_{p}$.
		Define 
		\[
		\rho_{p}^{-}(t)=
		\begin{cases}
			\dfrac{1}{
				\sqrt{p \cdot \mathfrak{ln}(p) \cdots  \mathfrak{ln}_k(p)}
			}, &  
			t \geq N_{p},
			\\
			\hspace{0.1cm} 
			0, &  
			0\leq  t < N_{p}.
		\end{cases}
		\]
		Then for all 
		$x,y\in \Gamma$ and 
		$p\geq1$,	
		$$
		\rho_p^{-}
		\big(d(x,y)\big)
		\leq
		\Big\|
		h_p(x)-h_p(y)
		\Big\|. 
		$$
		Moreover,  
		for every $n\in\mathbb{N}$ 
		and every
		$t \geq \max\{N_1, \cdots, N_n\}$, 	
		\begin{equation*}
			\sum\limits_{p=1}^{\infty}
			\big(
			\rho_{p}^{-}(t)
			\big)^2
			\geq
			\sum\limits_{p=1}^{n}
			\big(
			\rho_{p}^{-}(t)
			\big)^2
			\\
			=
			\sum\limits_{p=1}^{n}
			\dfrac{1}{
				p \cdot \mathfrak{ln}(p) \cdots  \mathfrak{ln}_k(p)
			}.
		\end{equation*}
		Since 
		$$\sum\limits_{p=1}^{\infty}
		\dfrac{1}{
			p \cdot \mathfrak{ln}(p) \cdots  \mathfrak{ln}_k(p)
		}=+\infty,
		$$ 
		it follows that   	
		$$
		\lim\limits_{t\to\infty}
		\sum\limits_{p=1}^{\infty}
		\big(
		\rho_{p}^{-}(t)
		\big)^2
		=+\infty.
		$$

		Therefore,
		$h$ is a coarse embedding with finite complexity. 
	\end{proof}

	As a consequence of the above proposition and  Theorem \ref{main-result1}, 
	we have the following result.
	
	\begin{cor}\label{theclassofcwodimayaijksdjlfjalksdjf}
		The rational strong  Novikov conjecture holds for all finitely generated groups $\Gamma$ with 
		$\text{CR}(\Gamma)<\pmb{\infty}$.
		Moreover, 
		let $\mathcal{C}$  denote the class of all groups that can be expressed as inductive limits of finitely generated 
		groups with finite complexity rates, 
		where the connecting homomorphisms are not assumed to be injective, then
		the  Novikov conjecture holds for every  $\Gamma\in\mathcal{C}$.  
	\end{cor}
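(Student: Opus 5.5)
The first assertion follows by combining the preceding Proposition with Theorem \ref{main-result1}. If $\text{CR}(\Gamma)<\pmb{\infty}$, the Proposition gives a coarse embedding with finite complexity $h:\Gamma\to\bigoplus_{p=1}^\infty\ell^{2p}(\mathbb{N})$. Each summand $\ell^{2p}(\mathbb{N})$ has Property $(H)$ via the Mazur map, which is Lipschitz with constant $p$. So the target is an $\ell^2$-direct sum of infinitely many Property $(H)$ Banach spaces, as Definition \ref{mostkeyidea1} requires, and Theorem \ref{main-result1} yields rational injectivity of $\mu:K_*(B\Gamma)\to K_*(C^*_{\rm r}\Gamma)$ for finitely generated $\Gamma$. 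The strong Novikov conjecture here is meant in its rational form, which is what the Theorem provides.

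For the class $\mathcal{C}$, I would use the standard reduction that the Novikov conjecture (homotopy invariance of higher signatures) depends only on finitely generated subgroups, and that it descends along surjections in a controlled way.

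Let $\Gamma=\varinjlim\Gamma_i$ with each $\Gamma_i$ finitely generated and $\text{CR}(\Gamma_i)<\pmb{\infty}$. Take a closed oriented manifold $M$, a map $f:M\to B\Gamma$, and a class $x\in H^*(B\Gamma;\mathbb{Q})$. Since $M$ is compact, $f_*\pi_1(M)$ is a finitely generated subgroup $G\le\Gamma$, and $f$ factors through $BG$ up to homotopy. The map $\pi_1(M)\to G$ lifts to some $\Gamma_i$: choose preimages of a finite generating set of $\pi_1(M)$ in some $\Gamma_j$, and pass to a later $\Gamma_i$ in which the finitely many relators of the finitely presented group $\pi_1(M)$ hold. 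This gives a homomorphism $\pi_1(M)\to\Gamma_i$ whose composite with $\Gamma_i\to\Gamma$ is $f_*$. Consequently $f$ is homotopic to $B\psi_i\circ f_i$ with $f_i:M\to B\Gamma_i$, where $\psi_i:\Gamma_i\to\Gamma$ is the structure map. The higher signature $\langle L(M)\cup f^*x,[M]\rangle$ then equals $\langle L(M)\cup f_i^*(B\psi_i^*x),[M]\rangle$, which is a higher signature for $\Gamma_i$. Its homotopy invariance follows from the first part of the corollary, since rational injectivity of $\mu$ for $\Gamma_i$ implies the Novikov conjecture for $\Gamma_i$.

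The main obstacle is that the connecting maps need not be injective, so $\Gamma$ itself need not have finite complexity rate, and strong Novikov for $\Gamma$ is not claimed. This is handled precisely by pulling the cohomology class back through $B\psi_i$ rather than pushing forward in $K$-theory. The one point to verify carefully is that homotopy equivalences $M'\to M$ are compatible with the lift. Given a homotopy equivalence $g:M'\to M$, I use the same $f_i\circ g$ for $M'$, so both higher signatures are computed with respect to the same class in $H^*(B\Gamma_i;\mathbb{Q})$, and their equality follows from the Novikov conjecture for $\Gamma_i$.
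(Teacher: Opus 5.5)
Your proposal is correct and follows the paper's route: the paper obtains the corollary directly from the preceding Proposition and Theorem~\ref{main-result1}. It leaves implicit the standard reduction for $\mathcal{C}$ that you spell out, namely lifting the finitely presented $\pi_1(M)\to\Gamma$ to some $\Gamma_i$ and pulling the class back along $B\psi_i$, so that only the Novikov conjecture for the finitely generated $\Gamma_i$ is needed.
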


The class  $\mathcal{C}$   includes  amenable groups, free groups,  Gromov's hyperbolic groups,  subgroups of connected Lie groups,
subgroups of the mapping class groups,
subgroups of the outer automorphism groups of the free groups, Gromov's monster groups, 	Arzhantseva--Tessera's monster groups, 
Kun--Thom's  non-sofic groups, a new family of groups called non-sofic monster groups  etc.  
Examples of non-sofic groups were discovered quite recently. 
In the final section,  we prove that the non-sofic groups constructed by
Kun and Thom have Property A, 
thereby  resolving Pestov's  question  (\cite[Open Question 9.6]{VladimirGPestov2008}) in the negative.
We also construct a new family of groups, called non-sofic monster groups, all of which satisfy the Novikov conjecture.

Combining the result of	Haagerup and Przybyszewska with  the definition of finite complexity rate,	
it is natural to ask whether 
every finitely generated group $\Gamma$  lies in $\mathcal{C}$.

	The proof of the main result of our article  differs substantially from the classical Dirac-dual-Dirac method of Kasparov. In the Hilbert space setting, the Dirac-dual-Dirac method establishes the injectivity of the Baum--Connes assembly map by constructing both a Bott element and a Dirac element whose Kasparov product is the identity. In our setting, however, the target Banach space is no longer a Hilbert space, and there is no natural Dirac operator or Dirac element available. Consequently, the classical Dirac-dual-Dirac method is no longer applicable.
	
The first ingredient in our approach is the construction of a proper $\Gamma$-$C^*$-algebra associated to a coarse embedding with finite complexity. 
Our construction is based on a new method that avoids the technical language of continuous fields and instead works directly with a simple and explicit trivial bundle over the base space $Z$, 
where $Z$ is the $\Gamma$-invariant contractible metric subspace of
$\ell^1(\Gamma)$ consisting of  all non-negative functions with $\ell^1$-norm $1$.  
Since the target space for coarse embedding is an infinite direct sum, we decompose it into an increasing filtration by finite partial sums. Each term in this filtration is a finite direct sum of Banach spaces with Property $(H)$, and hence itself has Property $(H)$. 
For each term of the filtration, regarded as a single Banach space with Property $(H)$, we construct an associated $\Gamma$-$C^*$-algebra. 
The desired proper $\Gamma$-$C^*$-algebra in the general setting is then obtained by taking a suitable limit of these algebras. The second condition in the definition of a coarse embedding with finite complexity plays a crucial role in establishing the properness of the resulting $\Gamma$-$C^*$-algebra.

The second ingredient is the construction of the Bott element by means of $\Gamma$-equivariant asymptotic morphisms. This element induces maps from  $KK$-theory with coefficients in $C_0(\mathbb{R})$ 
to $KK$-theory with coefficients in the proper algebra. 
Establishing $\Gamma$-equivariance requires the asymptotic morphisms to be asymptotically  invariant under the group action, and the third condition in the definition of a coarse embedding with finite complexity  provides the quantitative bound needed for this purpose.

After these two constructions,
we obtain a commutative diagram  	
\begin{equation*}
	\begin{array}{ccccc}
		KK^\Gamma_*\big( {E}\Gamma, C_0(\mathbb{R}) \big)  & \xlongrightarrow{\pi_{*}} &  KK^\Gamma_*\big( \underline{E}\Gamma, C_0(\mathbb{R}) \big) & \xlongrightarrow{\mu} & K_*\big( C_0(\mathbb{R}) \rtimes_{\operatorname{r}} \Gamma \big) 
		\\
	\Big\downarrow  &&	\Big\downarrow   & & \Big\downarrow  
		\\
		KK^\Gamma_*({E}\Gamma, A) & \xlongrightarrow{\pi_{*}} & KK^\Gamma_*(\underline{E}\Gamma, A) & \xlongrightarrow{\mu} & K_*(A \rtimes_{\operatorname{r}} \Gamma),
	\end{array}
\end{equation*}
where $A$ is a proper $\Gamma$-$C^*$-algebra,
and $\pi: E\Gamma \to \underline{E}\Gamma$ is the natural $\Gamma$-equivariant continuous map (see the Appendix).
The properness of $A$ ensures that the bottom map $\mu$ is an isomorphism.
The homomorphism 
$\pi_*$ is rationally injective 
by Lemma \ref{jdskljfaljsdkfjsqqq} in the Appendix.
If the left vertical map (Bott map)
is  (rationally)  injective,
then the commutative diagram implies that the composition of the maps in the top row is rationally injective, as desired. 
The proper $\Gamma$-$C^*$-algebra constructed initially in Section 5 
can establish the injectivity of the Bott map only in a special case.
The general case calls for a further key construction introduced in this paper:
a deformation of group actions from the original proper action to one that acts trivially on the fibres while moving only the base space. 
The reason for introducing this construction is that the $K$-theory of the proper algebra cannot be computed directly.
To overcome this difficulty, 
we enlarge the proper $\Gamma$-$C^*$-algebra in Section 5  so that the resulting  $C^*$-algebra  admits a continuous deformation 
of  $\Gamma$-actions,  
starting from an action that encodes the  original proper action and ending at an almost trivial action.
This deformation allows us to compare the Bott maps associated to the original proper action with those associated to a much simpler action that is trivial on the fibres. The latter can be analyzed by ordinary $K$-theoretic methods, and this comparison leads to the rational injectivity of the Bott maps.
Thus the proof replaces the classical Dirac-dual-Dirac method by a new strategy based on new proper coefficient algebras, Bott elements, and a deformation of group actions.

	This paper is organized as follows. In Section 2, we recall the Property $(H)$ maps for Lebesgue--Bochner function spaces and  then construct proper extensions of arbitrary Property $(H)$ maps.
	These extensions will play
	crucial roles in 
	the construction of $C^*$-algebra
	$\mathcal{A}
	\big(
	C_{b}(Y,\mathscr{X})
	\big)$  in Section 4  and of the  Bott elements in Sections 6 and 8. 
	In Section 3, we resolve the problem of constructing a proper affine isometric action of the transformation groupoid $Z\rtimes\Gamma$ on a trivial bundle associated to a coarse embedding $h:\Gamma\to\mathscr{X}$, where $\mathscr{X}$ is an arbitrary Banach space and $Z$ is the metric subspace of $\ell^1(\Gamma)$ consisting of all non-negative functions with $\ell^1$-norm $1$.
	In Section 4, we construct the $C^*$-algebra associated to the trivial bundle constructed in Section 3.
	In Sections 5 and 6, we construct a proper $\Gamma$-$C^*$-algebra together with its Bott element, under the assumption that $\Gamma$ admits a coarse embedding with finite complexity into an $\ell^2$-direct sum of infinitely many Property $(H)$ Banach spaces. We emphasize that these constructions already suffice to prove the strong Novikov conjecture in a special case where the $K$-theory of the proper $\Gamma$-$C^*$-algebra is computable in an appropriate sense.  
Moreover, these sections are devoted to illustrating  the main ideas underlying our construction and to providing a prototype for the more elaborate construction developed in the subsequent two sections.
	In Section 7, we construct a larger proper $\Gamma$-$C^*$-algebra that encodes  the information of the one in Section 5. 
	This larger algebra admits a continuous deformation of the original proper group action to an action that acts trivially on the fibres while moving only the base space. 
	This deformation plays a key role in establishing the rational injectivity of the Bott maps.
	In Section 8, we construct the (Bott element) Bott maps associated to the larger proper $\Gamma$-$C^*$-algebra. 
	In Section 9, we establish the rational injectivity of the Bott maps using the continuous deformation of group actions.
	In Section 10, we prove the main theorem of this paper.
	In Section 11, we collect several elementary facts from $KK$-theory that are used throughout the paper.
	Finally, in Section 12, we prove that  Kun--Thom's  non-sofic groups have Property A.  
	These are the first examples of Property A groups that are not sofic.
	This gives a negative answer to an open question posed by  Pestov  in \cite{VladimirGPestov2008}.
	We also construct a new family of groups, called non-sofic monster groups, all of which satisfy the Novikov conjecture.

	\emph{Acknowledgements.} We thank Florent Baudier,  Bill Johnson, Assaf Naor, Rufus Willett, and Jianchao Wu  for very inspiring discussions.
	We also thank Jintao Deng for his comments  that improved the readability of this paper.

	\section{Property $(H)$ map and its extensions}

	\subsection{Property $(H)$ maps for Lebesgue--Bochner function spaces}
	
The Banach spaces considered throughout this paper are primarily of the form $\ell^2(\Gamma,\mathscr{X})$ or $L^2([0,1], \mathscr{X})$, 
where $\mathscr{X}$ is a Banach space with  Property $(H)$.
Both are Lebesgue--Bochner function spaces, 
as defined in the next paragraph.
For our purposes, it is essential that
these spaces inherit Property $(H)$ from 
$\mathscr{X}$.
In this section,
we recall this fact from \cite{CD2017},
where the general
Banach-valued Mazur maps provide the required Property $(H)$ maps for these spaces.

	Let $(\mathscr{X},\|\cdot\|)$ 
	be a Banach space and let $(\Omega,\mu)$ 
	be a measure space.
	Recall that the   
	\emph{Lebesgue--Bochner function space} 
	$L^p(\Omega,\mu,\mathscr{X})$,
	where $1\leq p<+\infty$,
	is the Banach space of all (equivalence classes of) 
	Bochner-integrable functions
	$f:\Omega\rightarrow\mathscr{X}$
	endowed with the norm
	$$
	\|f\| =
	\Big(
	\mathlarger{\int}_{\Omega} 
	\big\|
	f(v)
	\big\|^pd\mu 
	\Big)^{\frac{1}{p}}.
	$$
	In particular, if $\mathscr{X}$ 
	is the scalar field,
	then $L^p(\Omega,\mu,\mathscr{X})$
	is the classical $L^p$-space
	$L^p(\Omega,\mu)$.
	If $\Omega$ is a countable discrete space and 
	$\mu$ is the counting measure, 
	then the space 
	$L^p(\Omega,\mu,\mathscr{X})$ 
	reduces to the Banach sequence space $\ell^p(\Omega,\mathscr{X})$.

	Let  $\mathscr{X}$, $\mathscr{Y}$ be  Banach spaces,  and let  $\phi: S(\mathscr{X}) \to S(\mathscr{Y})$ be a map between  their unit spheres.
	For $1 \leq  p,q < +\infty$,  
	define a map 
	\[F_{\phi,p,q}: L^p(\Omega,\mu,\mathscr{X})\longrightarrow L^q(\Omega,\mu,\mathscr{Y})\] 
	by 
	\begin{equation}
		\label{eq:extendedmapforBanachsequencespaces}
		\big( 
		F_{\phi,p,q}(f)
		\big)(w)=\|f(w)\|^{\frac{p}{q}}\cdot \phi\Big(\dfrac{f(w)}{\|f(w)\|} 
		\Big)
	\end{equation}
	for
	$f\in  L^p(\Omega,\mu,\mathscr{X})$
	and $w\in\Omega$, 
	where
	$\big( 
	F_{\phi,p,q}(f)
	\big)(w)$ 
	should be interpreted as $0$ if 
	$f(w)=0$.
	It follows directly that 
	$F_{\phi,p,q}$   
	maps 
	$S\big( 
	L^p(\Omega,\mu,\mathscr{X}) 
	\big)$ 
	into
	$S\big( 
	L^q(\Omega,\mu,\mathscr{Y}) 
	\big)$.
	We denote by 
	$\mathcal{M}_{\phi,p,q}$
	the restriction of 
	$F_{\phi,p,q}$
	to the unit spheres, 
	and call it the 
	\emph{Banach-valued Mazur map}.

	If $\mathscr{X}=\mathscr{Y}=\mathbb{C}$ and $\phi$ is the identity map, 
	then $\mathcal{M}_{\phi,p,q}$  
	reduces to the classical Mazur map from 
	$S\big( 
	L^p(\Omega,\mu)
	\big)$ 
	to 
	$S\big( 
	L^q(\Omega,\mu)
	\big)$.  
	In this case, 	
	we omit the symbol $\phi$ and write 
	$\mathcal{M}_{p,q}$. 
	Moreover, 
	$\mathcal{M}_{p,q}$
	is the inverse of 
	$\mathcal{M}_{q,p}$
	for all
	$1 \leq  p,q < +\infty$.
	If $\Omega$  is a countable discrete space equipped with the counting measure,
	then $\mathcal{M}_{p,q}$
	is given by
	\begin{eqnarray*}
		\mathcal{M}_{p,q}: S\big(\ell^p(\Omega)\big)
		&\longrightarrow& S\big(\ell^q(\Omega)\big)
		\\
		(a_j)_{j\in\Omega}
		&\longmapsto& 
		\big(
		{\rm sign}(a_j)\cdot  |a_j|^{\frac{p}{q}}
		\big)_{j\in\Omega},
	\end{eqnarray*} 
	where 
	${\rm sign}(a)=\frac{a}{|a|}$ if $a \ne 0$, and  ${\rm sign}(0)=0$.
	We now recall a basic property of the classical Mazur map.
	\begin{lem}[\cite{BL2000}]\label{ineaualityforMazurmap1}
		Let $p\geq q\geq 1$. 
		Then there exists a constant 
		$C=C(p,q)>0$
		such that 
		\begin{equation}
			\label{eq:constantformazurmap}
			C\cdot 
			\big\|
			f-g
			\big\|^{\frac{p}{q}}
			\leq
			\Big\|
			\mathcal{M}_{p,q}(f)-\mathcal{M}_{p,q}(g)
			\Big\|
			\leq \frac{~p~}{q}\cdot \big\|
			f-g
			\big\|
		\end{equation}
		for all 
		$f,g\in 
		S\big( 
		L^p(\Omega,\mu)
		\big)$.
	\end{lem}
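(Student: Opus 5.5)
The plan is to reduce both inequalities to pointwise estimates for the scalar map $\psi_r(z)=\mathrm{sign}(z)\,|z|^{r}$, and then integrate using Hölder's inequality and convexity of the unit ball. Throughout, set $r=\frac{p}{q}\geq 1$, so that $\mathcal{M}_{p,q}(f)=\psi_r\circ f$ pointwise. I would also use the fact, stated just before the lemma, that $\mathcal{M}_{q,p}=\mathcal{M}_{p,q}^{-1}$.

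\emph{Upper bound.} For $f,g\in S(L^p(\Omega,\mu))$, put $h_t=tf+(1-t)g$ for $t\in[0,1]$. Since $r\ge1$, the map $\psi_r$ is $C^1$ as a map of $\mathbb{R}^2\cong\mathbb{C}$, and its real derivative at $z$ has operator norm at most $r|z|^{r-1}$. (In polar coordinates the radial derivative is $r|z|^{r-1}$ and the angular one is $|z|^{r-1}$.) Hence, pointwise,
\[
|\psi_r(f)-\psi_r(g)|\le r\int_0^1 |h_t|^{r-1}\,|f-g|\,dt .
\]
I would take $L^q$-norms and use Minkowski's integral inequality to move the norm inside the $dt$-integral. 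Then Hölder's inequality with exponents $\frac{p}{p-q}$ and $\frac{p}{q}$ (so that $\frac1q=\frac{r-1}{p}+\frac1p$) gives
\[
\big\||h_t|^{r-1}|f-g|\big\|_q\le \|h_t\|_p^{\,r-1}\,\|f-g\|_p .
\]
Convexity of the unit ball gives $\|h_t\|_p\le 1$. Integrating over $t$ then yields $\|\mathcal{M}_{p,q}(f)-\mathcal{M}_{p,q}(g)\|_q\le \frac pq\|f-g\|_p$, which is the constant claimed in \eqref{eq:constantformazurmap}. The case $r=1$ is trivial.

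\emph{Lower bound.} Here I would show that the inverse map $\mathcal{M}_{q,p}$ is Hölder with exponent $s=\frac qp\le1$. The key ingredient is the scalar inequality
\[
|\psi_s(u)-\psi_s(v)|\le 2^{1-s}|u-v|^{s}\qquad (u,v\in\mathbb{C},\ 0<s\le1).
\]
Raising it to the $p$-th power and integrating gives
\[
\|\mathcal{M}_{q,p}(u)-\mathcal{M}_{q,p}(v)\|_p^p\le 2^{(1-s)p}\int|u-v|^{q}\,d\mu ,
\]
that is, $\|\mathcal{M}_{q,p}(u)-\mathcal{M}_{q,p}(v)\|_p\le 2^{1-s}\|u-v\|_q^{q/p}$. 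Now substitute $u=\mathcal{M}_{p,q}(f)$ and $v=\mathcal{M}_{p,q}(g)$, which lie in $S(L^q)$. This gives $\|f-g\|_p\le 2^{1-q/p}\|\mathcal{M}_{p,q}(f)-\mathcal{M}_{p,q}(g)\|_q^{q/p}$. Raising both sides to the power $p/q$ produces the left-hand inequality with $C=2^{-(p/q-1)}$.

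The main obstacle I expect is the scalar Hölder estimate for $\psi_s$ in the complex (or signed) case. My approach would be to assume, by symmetry, that $|u|\ge|v|$ and split the difference as
\[
\psi_s(u)-\psi_s(v)=\big(|u|^s-|v|^s\big)\,\mathrm{sign}(u)+|v|^s\big(\mathrm{sign}(u)-\mathrm{sign}(v)\big).
\]
The radial term is bounded by $(|u|-|v|)^s$, by subadditivity of $x\mapsto x^s$. For the angular term I would use $|v|\,|\mathrm{sign}(u)-\mathrm{sign}(v)|\le 2\min(|v|,|u-v|)$ together with $|v|^s\le|v|^{s-1}\cdot|v|$ to bound it by a multiple of $|u-v|^s$. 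Finally, concavity of $x\mapsto x^s$, in the form $a^s+b^s\le 2^{1-s}(a+b)^s$, absorbs both pieces into a single $|u-v|^s$ term. If this bookkeeping only gives a worse absolute constant, that is harmless, because the lemma asks only for some $C=C(p,q)>0$.
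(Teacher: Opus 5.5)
The paper gives no proof of this lemma; it simply cites Benyamini--Lindenstrauss \cite{BL2000}. Your argument is a correct, self-contained substitute, and it is worth noting what it buys.

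\textbf{Upper bound.} You integrate $D\psi_r$ along the segment $h_t=tf+(1-t)g$ and then use $\|h_t\|_p\le 1$. This is exactly what delivers the sharp constant $\frac pq$ asserted in the statement. A pointwise bound such as $|\psi_r(a)-\psi_r(b)|\le r|a-b|\max(|a|,|b|)^{r-1}$ followed by H\"older would only give $\frac pq\,2^{\frac1q-\frac1p}$, since $\|\max(|f|,|g|)\|_p$ can equal $2^{1/p}$.

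\textbf{Lower bound.} Proving that the inverse $\mathcal{M}_{q,p}$ is $\frac qp$-H\"older is the natural route. This intermediate estimate is also precisely the form in which the paper later uses the lemma: the bound $\|\mathcal{M}_{1,2}(y)-\mathcal{M}_{1,2}(z)\|\le C\sqrt{\|y-z\|}$ in Section~3 is your estimate with $(q,p)=(1,2)$.

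\textbf{A small correction on constants.} For real scalars your two-case argument gives $2^{1-s}$: subadditivity of $x\mapsto x^s$ handles the same-sign case, and $a^s+b^s\le 2^{1-s}(a+b)^s$ handles opposite signs. In the complex case, however, your decomposition only shows that the radial term is at most $|u-v|^s$ and the angular term at most $2|u-v|^s$. The concavity step does not merge these as written. So what you have actually proved is $|\psi_s(u)-\psi_s(v)|\le 3|u-v|^s$, which gives the lemma with $C=3^{-p/q}$ rather than $2^{-(p/q-1)}$. The sharp complex inequality with constant $2^{1-s}$ is true but needs a separate argument. As you note, the lemma only asks for some $C(p,q)>0$, so this does not affect correctness.
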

	
	We remark that the classical $L^p$-spaces have Property $(H)$ with respect to the classical Mazur map.
	The following result shows that  Lebesgue--Bochner function spaces also have Property $(H)$ with respect to  the  Banach-valued Mazur map.

	\begin{prop} 
		[{\cite[Theorem 3.3]{CD2017}}]
		\label{PropertyHmapforLpspaces}
		Assume that $\mathscr{X}$ is a Property $(H)$ Banach space with a Property $(H)$ map
		$\phi: S(\mathscr{X}) \to S(\mathscr{H})$.
		Then for any 
		$1 \leq  p < +\infty$,   $L^p(\Omega,\mu,\mathscr{X})$
		is a Property $(H)$ space with respect to the Banach-valued Mazur map 
		$\mathcal{M}_{\phi,p,2}: 
		S\big( 
		L^p(\Omega,\mu,\mathscr{X}) 
		\big)\to 
		S\big( 
		L^2(\Omega,\mu,\mathscr{H}) 
		\big)$.
	\end{prop}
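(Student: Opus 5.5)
We must verify three things for $\mathcal{M}_{\phi,p,2}$: it maps $S(L^p(\Omega,\mu,\mathscr{X}))$ into $S(L^2(\Omega,\mu,\mathscr{H}))$ (already noted), it is uniformly continuous, and it restricts to homeomorphisms between suitable pavings. Let $\{V_n\}$, $\{W_n\}$ be the pavings of $\mathscr{X}$ and $\mathscr{H}$ associated with $\phi$.

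\textbf{Pavings.} We first reduce to the case where $L^p(\Omega,\mu)$ is separable; this is automatic for $\ell^p(\Gamma)$ and $L^p([0,1])$, the cases used in the paper. Choose an increasing sequence of finite measurable partitions $\mathcal{P}_n$ of finite-measure sets whose simple functions are dense. Define $\widetilde V_n$ as the space of $\mathcal{P}_n$-simple functions with values in $V_n$, and $\widetilde W_n$ likewise with values in $W_n$. These subspaces are finite dimensional and increasing. Density follows from density of simple functions in Bochner spaces together with density of $\bigcup V_n$ in $\mathscr{X}$.

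\textbf{Homeomorphism on each level.} For $f=\sum_{P\in\mathcal{P}_n} x_P\chi_P$ we have
\[
\mathcal{M}_{\phi,p,2}(f)=\sum_P \|x_P\|^{p/2}\,\phi\big(x_P/\|x_P\|\big)\chi_P .
\]
Because $\phi$ is norm-preserving and $\phi|_{S(V_n)}$ is a bijection onto $S(W_n)$, the map $\widetilde{V}_n\to\widetilde W_n$ has an explicit inverse:
\[
g=\sum_P y_P\chi_P\ \longmapsto\ \sum_P \big(\|y_P\|\mu(P)^{\cdot}\big)\text{-rescaled}\ \|y_P\|^{2/p}\,\phi^{-1}\big(y_P/\|y_P\|\big)\chi_P .
\]
Continuity in both directions holds because these are finite-dimensional maps built from continuous pieces. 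The only delicate points are where some $x_P\to0$, and there the factor $\|x_P\|^{p/2}\to0$ restores continuity. This gives the homeomorphism $S(\widetilde V_n)\to S(\widetilde W_n)$.

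\textbf{Uniform continuity (main obstacle).} Write $f(w)=r(w)u(w)$ with $r=\|f(w)\|$ and $u\in S(\mathscr{X})$, and similarly $g=s\,v$. Then
\[
\|F(f)(w)-F(g)(w)\|\le |r^{p/2}-s^{p/2}| + \min(r,s)^{p/2}\,\|\phi(u)-\phi(v)\| .
\]
The first term is controlled by the classical Mazur map, Lemma \ref{ineaualityforMazurmap1}, applied to $r,s\in S(L^p)$. For the second term, fix $\varepsilon$ and take the modulus $\delta$ of $\phi$. On the set where $\|u-v\|<\delta$, the term contributes at most $\varepsilon$. On the complementary set, use $\min(r,s)\|u-v\|\le 2\|f(w)-g(w)\|$; hence $\min(r,s)^p\le (2/\delta)^p\|f(w)-g(w)\|^p$ there, and its $L^1$-mass is at most $(2/\delta)^p\|f-g\|^p$. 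Bounding $\|\phi(u)-\phi(v)\|\le 2$ and integrating squares, we get $\|F(f)-F(g)\|_2^2$ small once $\|f-g\|_p$ is small enough in terms of $\varepsilon$ and $\delta$. This gives uniform continuity.
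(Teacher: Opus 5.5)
The paper does not prove this proposition; it only cites \cite{CD2017}. Your argument is therefore a self-contained substitute, and it is correct.

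\textbf{Pavings and levels.} The $\mathcal{P}_n$-simple, $V_n$-valued pavings are finite dimensional, increasing and dense. On each level, $\mathcal{M}_{\phi,p,2}$ acts fibrewise by $x\mapsto\|x\|^{p/2}\phi(x/\|x\|)$ and is inverted fibrewise.

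\textbf{Uniform continuity.} The estimate is sound. The key inequality is $\min(r,s)\|u-v\|\le 2\|f(w)-g(w)\|$; for $r\le s$ it reads $r\|u-v\|=\|f(w)-\tfrac{r}{s}g(w)\|\le\|f(w)-g(w)\|+(s-r)$. Combine it with splitting $\Omega$ according to whether $\|u-v\|<\delta$. This gives, for every $\delta>0$,
\[
\|F(f)-F(g)\|_2\le\|r^{p/2}-s^{p/2}\|_2+\bigl(\omega_\phi(\delta)^2+4(2/\delta)^p\|f-g\|_p^p\bigr)^{1/2},
\]
an explicit modulus of continuity in terms of $\omega_\phi$.

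This splitting is the right device. In the paper's Proposition \ref{LipofextenedMazurmap}, $\phi$ is Lipschitz and the pointwise bound of Lemma \ref{3lip} integrates directly. A merely uniformly continuous $\phi$ only yields the pointwise bound $\min(r,s)^{p/2}\,\omega_\phi\bigl(2\|f(w)-g(w)\|/\min(r,s)\bigr)$. That bound cannot be integrated against $\|f-g\|_p$ without such a splitting.

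Three minor repairs are needed.

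\textbf{The inverse formula.} Your displayed inverse contains a spurious rescaling: nothing depending on $\mu(P)$ enters. The inverse on level $n$ is just $\sum_P y_P\chi_P\mapsto\sum_P\|y_P\|^{2/p}\,(\phi|_{S(V_n)})^{-1}\bigl(y_P/\|y_P\|\bigr)\chi_P$, with terms where $y_P=0$ sent to $0$. This works because $\|x_P\|^p=\|y_P\|^2$ termwise already forces $\|f\|_p^p=\|g\|_2^2$.

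\textbf{The range $1\le p<2$.} Lemma \ref{ineaualityforMazurmap1} is stated only for $p\ge q$, so for $1\le p<2$ it does not apply to $\mathcal{M}_{p,2}$ directly. Use instead $|a^{p/2}-b^{p/2}|\le|a-b|^{p/2}$ for $a,b\ge 0$. This gives $\|r^{p/2}-s^{p/2}\|_2\le\|r-s\|_p^{p/2}\le\|f-g\|_p^{p/2}$. For $p\ge 2$ the lemma does apply, together with $\|r-s\|_p\le\|f-g\|_p$.

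\textbf{Separability.} Restricting to separable $L^p(\Omega,\mu)$ loses nothing. A space admitting a paving is separable, so the proposition has content only in that case.
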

	
	In this paper, we consider only the case $p=2$, 
	and write 
	$\mathcal{M}_{\phi}$
	in place of 
	$\mathcal{M}_{\phi,2,2}$.

	\begin{rmk}\label{properHmapsforiteratedconstructionkdjlkjl}
		As a corollary of Proposition \ref{PropertyHmapforLpspaces}, 	
		if  $\mathscr{X}$ has  Property $(H)$, 
		then so do
		$\ell^2(\Omega, \mathscr{X})$ 
		and   
		$L^2 ( [0,1], m, 
		\mathscr{X} )$,
		where $\Omega$ is a countable discrete space equipped with
		the counting measure and $m$ is the Lebesgue measure on $[0,1]$. 
		For simplicity, we write  
		$L^2 ( [0,1],  
		\mathscr{X} )$ 
		in place of 
		$L^2 ( [0,1], m, 
		\mathscr{X} )$.
		These spaces, together with their iterated constructions 
		(for instance 
		$L^2 ( 
		[0,1],  
		\ell^2(\Omega, \mathscr{X})
		)$),
		will serve as the main objects of study in this paper.
		Moreover, if $\phi: S(\mathscr{X}) \to S(\mathscr{H})$ is a Property $(H)$ map for $\mathscr{X}$,
		then 
		$$
		\mathcal{M}_{\phi}: S\big(\ell^2(\Omega, \mathscr{X})\big) \longrightarrow  S\big(\ell^2(\Omega, \mathscr{X})\big)
		$$
		is a Property $(H)$ map for $\ell^2(\Omega, \mathscr{X})$.
		Applying Proposition \ref{PropertyHmapforLpspaces} once more yields that
		$$\mathcal{M}_{\mathcal{M}_{\phi}}: S\Big(L^2 
		\big( 
		[0,1],  
		\ell^2(\Omega, \mathscr{X})
		\big)\Big) \longrightarrow  
		S\Big(L^2 
		\big( 
		[0,1],  
		\ell^2(\Omega, \mathscr{X})
		\big)\Big)
		$$
		is a Property $(H)$ map for $L^2 
		\big( 
		[0,1],  
		\ell^2(\Omega, \mathscr{X})
		\big)$.
	\end{rmk}

	Next, 
	we show that the Property $(H)$ map $\mathcal{M}_{\phi}$ is Lipschitz  whenever $\phi$ is Lipschitz. 
	
	Given Banach spaces $\mathscr{X}$, $\mathscr{Y}$, a map 
	$\phi: S(\mathscr{X}) \to S(\mathscr{Y})$
	and a function $\eta:[0,+\infty)\to[0,+\infty)$ satisfying $\eta(1)=1$,
	we define a map 
	$\phi^{\eta}: \mathscr{X}\to \mathscr{Y}$ by 
	\[
	\phi^{\eta}(x)=
	\begin{cases}
		\eta(\|x\|) \cdot \phi
		\Big(\dfrac{x}{\|x\|}\Big), & x \ne 0,
		\\
		0, & x=0.
	\end{cases}
	\]
	We call $\phi^{\eta}$ the \emph{$\eta$-scalar extension} of $\phi$.
	In the special case $\eta(t)\equiv t$, 
	we simply call it the \emph{scalar extension} of $\phi$,
	and denote it by
	$\phi^{I}$.

	\begin{lem}\label{3lip}
		Let $\mathscr{X}$, $\mathscr{Y}$  be Banach spaces, and  let
		$\phi: S(\mathscr{X}) \to S(\mathscr{Y})$  be a map between their unit spheres. 
		If $\phi$ is  Lipschitz,
		then  $\phi^I$ is also 
		Lipschitz with Lipschitz constant at most  $2\cdot L_{\phi}+1$. 
	\end{lem}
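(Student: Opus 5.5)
We need to show $\|\phi^I(x)-\phi^I(y)\|\le (2L_\phi+1)\|x-y\|$ for all $x,y\in\mathscr{X}$, where $\phi^I(x)=\|x\|\,\phi(x/\|x\|)$ for $x\neq 0$ and $\phi^I(0)=0$. The plan is a direct case analysis.

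First, suppose one of the points is $0$, say $y=0$. Since $\phi$ takes values in the unit sphere,
\[
\|\phi^I(x)-\phi^I(0)\|=\|x\|\,\|\phi(x/\|x\|)\|=\|x\|=\|x-0\|,
\]
which is within the claimed bound because $2L_\phi+1\ge 1$.

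Now let $x,y\neq0$, and assume without loss of generality that $\|x\|\ge\|y\|>0$. Write $u=x/\|x\|$ and $v=y/\|y\|$. Inserting the intermediate term $\|y\|\phi(u)$ and applying the triangle inequality gives
\[
\|\phi^I(x)-\phi^I(y)\|\le \big|\|x\|-\|y\|\big|\,\|\phi(u)\|+\|y\|\,\|\phi(u)-\phi(v)\|.
\]
Since $\|\phi(u)\|=1$, the first term is at most $\|x-y\|$. For the second term, the Lipschitz assumption gives $\|\phi(u)-\phi(v)\|\le L_\phi\|u-v\|$, so it suffices to show
\[
\|y\|\,\|u-v\|\le 2\|x-y\|.
\]

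This last estimate is the only nontrivial step, and it uses the standard normalization inequality. We have
\[
\|y\|\,\|u-v\|=\Big\|\frac{\|y\|}{\|x\|}x-y\Big\|\le \Big\|\frac{\|y\|}{\|x\|}x-x\Big\|+\|x-y\|.
\]
The first term on the right equals $\|x\|-\|y\|$, because $\|x\|\ge\|y\|$. By the reverse triangle inequality this is at most $\|x-y\|$, so the whole expression is at most $2\|x-y\|$.

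Choosing $\|y\|$, the smaller norm, as the multiplier in the intermediate term is exactly what makes this work. Had we used $\|x\|$ instead, the factor $\|x\|/\|y\|$ would be unbounded. Combining the two terms yields
\[
\|\phi^I(x)-\phi^I(y)\|\le (1+2L_\phi)\|x-y\|,
\]
and together with the zero case this proves the lemma.
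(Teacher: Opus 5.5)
Your proof is correct and is essentially the paper's argument: insert an intermediate term, control the radial part by the reverse triangle inequality, and control the angular part by the normalization estimate $\|a\|\,\bigl\|a/\|a\|-b/\|b\|\bigr\|\le 2\|a-b\|$. Your closing remark is mistaken, though. The estimate holds with either norm as the multiplier, and the paper uses $\|v\|$ with no ordering assumption, since $\|x\|\,\|u-v\|=\bigl\|x-\tfrac{\|x\|}{\|y\|}y\bigr\|\le\|x-y\|+\bigl|\|y\|-\|x\|\bigr|\le 2\|x-y\|$. So the factor $\|x\|/\|y\|$ is harmless, and your WLOG, while valid, is unnecessary.
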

	\begin{proof}
		For any nonzero vectors
		$v,w\in\mathscr{X}$,
		we have
		\begin{eqnarray*}
			&& \Big\| 
			\phi^I(v)- \phi^I(w)
			\Big\|
			\\
			&\leq& 
			\bigg\| 
			\|v\|\cdot 
			\phi
			\Big(
			\frac{v}{\|v\|}
			\Big)- \|v\|\cdot 
			\phi
			\Big(
			\frac{w}{\|w\|}
			\Big)\bigg\| +
			\bigg\| 
			\|v\|\cdot 
			\phi
			\Big(
			\frac{w}{\|w\|}
			\Big)- 
			\|w\|\cdot 
			\phi
			\Big(
			\frac{w}{\|w\|}
			\Big)
			\bigg\|
			\\
			&=&\|v\|\cdot 
			\bigg\| 
			\phi
			\Big(
			\frac{v}{\|v\|}
			\Big)- 
			\phi
			\Big(
			\frac{w}{\|w\|}
			\Big)\bigg\| +
			\Big| 
			\|v\|- \|w\|
			\Big|
			\\
			&\leq&
			L_{\phi} \cdot \|v\| \cdot 
			\bigg\| 
			\frac{v}{\|v\|}
			- 
			\frac{w}{\|w\|}
			\bigg\| +
			\| v - w \|
			\\
			&\leq&
			L_{\phi} \cdot \|v\| \cdot
			\Bigg(
			\bigg\|  \frac{v}{\|v\|}- \frac{w}{\|v\|}\bigg\|
			+\bigg\|\frac{w}{\|v\|}-  \frac{w}{\|w\|}\bigg\|
			\Bigg)
			+\|v - w\|
			\\
			&\leq &
			L_{\phi} \cdot \|v\| \cdot\frac{2\cdot 
				\|v - w\|}{\|v\|}+
			\|v - w\|
			\\
			&=&\big(2\cdot L_{\phi}+1\big) \cdot\|v-w\|.
		\end{eqnarray*}	
		If one of $v$ and $w$ is zero, 
		then  
		$$
		\Big\| 
		\phi^I(v)- \phi^I(w)
		\Big\| = \|v-w\|.
		$$ 
		This finishes the proof.
	\end{proof}

	\begin{prop}\label{LipofextenedMazurmap}
		Assume that $\mathscr{X}$ is a Property
		$(H)$ Banach space with the Lipschitz Property $(H)$ map
		$\phi: S(\mathscr{X}) \to S(\mathscr{H})$.
		Then for each 
		$1\leq p <\infty$,
		$F_{\phi,p,p}: 
		L^p(\Omega,\mu,\mathscr{X}) 
		\to 
		L^p(\Omega,\mu,\mathscr{H})$
		is  Lipschitz with the Lipschitz constant at most
		$2 \cdot  L_{\phi}+1$. 
		
		In particular,  
		the Property $(H)$ map 
		$\mathcal{M}_{\phi}: 
		S\big( 
		L^2(\Omega,\mu,\mathscr{X}) 
		\big)\to 
		S\big( 
		L^2(\Omega,\mu,\mathscr{H}) 
		\big)$ 
		for 
		$L^2(\Omega,\mu,\mathscr{X})$
		is Lipschitz with Lipschitz constant at most
		$2 \cdot  L_{\phi}+1$. 
	\end{prop}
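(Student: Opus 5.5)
The plan is a pointwise reduction to Lemma \ref{3lip}, followed by integration over $\Omega$. By definition \eqref{eq:extendedmapforBanachsequencespaces} with $p=q$, we have
\[
\big(F_{\phi,p,p}(f)\big)(w)=\|f(w)\|\cdot\phi\Big(\frac{f(w)}{\|f(w)\|}\Big)=\phi^{I}\big(f(w)\big)
\]
for every $w\in\Omega$, where $\phi^I:\mathscr{X}\to\mathscr{H}$ is the scalar extension of $\phi$. The convention $(F_{\phi,p,p}(f))(w)=0$ when $f(w)=0$ matches $\phi^I(0)=0$. So $F_{\phi,p,p}$ is simply postcomposition with $\phi^I$, applied fibrewise.

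The steps are as follows.

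\textbf{Step 1.} By Lemma \ref{3lip}, $\phi^I$ is Lipschitz with constant at most $2L_\phi+1$. Hence, for $f,g\in L^p(\Omega,\mu,\mathscr{X})$ and each $w\in\Omega$,
\[
\big\|\phi^I(f(w))-\phi^I(g(w))\big\|\leq (2L_\phi+1)\,\|f(w)-g(w)\|.
\]

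\textbf{Step 2.} Raise this inequality to the $p$-th power, integrate over $\Omega$, and take $p$-th roots. This gives
\[
\|F_{\phi,p,p}(f)-F_{\phi,p,p}(g)\|\leq(2L_\phi+1)\|f-g\|.
\]

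\textbf{Step 3.} The "in particular" statement then follows by restricting to unit spheres with $p=2$, since $\mathcal{M}_\phi=\mathcal{M}_{\phi,2,2}$ is the restriction of $F_{\phi,2,2}$. The restriction of a Lipschitz map has the same Lipschitz bound, and $F_{\phi,2,2}$ maps spheres to spheres, as already noted.

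The only genuine technical point is well-definedness: $F_{\phi,p,p}(f)$ must be Bochner measurable, so that it actually lies in $L^p(\Omega,\mu,\mathscr{H})$. Integrability is immediate, because $\|F_{\phi,p,p}(f)(w)\|=\|f(w)\|$.

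For measurability, I would argue as follows. Since $\phi^I$ is continuous (indeed Lipschitz), and $f$ is an a.e.\ limit of simple functions $f_n$, the compositions $\phi^I\circ f_n$ are simple functions converging a.e.\ to $\phi^I\circ f$. Thus $\phi^I\circ f$ is strongly measurable.

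I expect no real obstacle here. The argument is a direct consequence of Lemma \ref{3lip} together with this measurability remark.
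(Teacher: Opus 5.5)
Your proof is correct and follows the paper's argument exactly. Like the paper, you identify $F_{\phi,p,p}$ as fibrewise postcomposition with the scalar extension $\phi^I$, apply Lemma~\ref{3lip} pointwise, integrate and take $p$-th roots, and then restrict to unit spheres for $p=2$. Your measurability remark, that simple functions composed with the continuous map $\phi^I$ remain simple because $\phi^I(0)=0$, is sound and supplies a detail the paper leaves implicit.
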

	\begin{proof}
		
		In view of \eqref{eq:extendedmapforBanachsequencespaces}, 
		we know that 
		$F_{\phi,p,p}$ is defined via scalar extension. 
		Thus,  
		for all 
		$f,g\in L^p(\Omega,\mu,\mathscr{X})$,
		\begin{eqnarray*}
			&& 
			\big\| F_{\phi,p,p}(f)-F_{\phi,p,p}(g)
			\big\| 
			\\
			&=&  
			\Big(
			\mathlarger{\int}_{\Omega} 
			\Big\|
			\big( 
			F_{\phi,p,p}(f)
			\big)(w) -
			\big( 
			F_{\phi,p,p}(g)
			\big)(w)
			\Big\|^p d\mu 
			\Big)^{\frac{1}{p}}
			\\
			&\stackrel{Lemma \ref{3lip}}{\leq}& 	\Big(
			\mathlarger{\int}_{\Omega} (2\cdot L_{\phi}+1)^p \cdot \big\| 
			f(w)- g(w) 
			\big\|^p
			d\mu 
			\Big)^{\frac{1}{p}}
			\\
			&=& (2\cdot L_{\phi}+1) \cdot \|f-g\|.
		\end{eqnarray*}
		The result also holds for 
		$\mathcal{M}_{\phi}$,
		since it
		is  the restriction of 
		$F_{\phi,2,2}$. 
	\end{proof}

	\subsection{Proper extensions of Property $(H)$ maps} 
	
	In this subsection,
	we  study  proper extensions of  Property $(H)$ maps.
	The proper extension  plays  crucial roles in the construction of $C^*$-algebra
	$\mathcal{A}
	\big(
	C_{b}(Y,\mathscr{X})
	\big)$  in Section 4  and of the  Bott elements in Sections 6 and 8.

	Recall that the 
	\emph{modulus of continuity} for 
	a map
	$f: (X,d_X)\to (Y,d_Y)$ between two metric spaces
	is defined by
	$$
	\omega_{\scalebox{0.6}{$f$}}(\delta)=
	\sup
	\Big\{
	d_Y
	\big(
	f(x_{1}),f(x_{2})
	\big)
	~\Big|~ 
	d_X(x_{1},x_{2})\leq\delta\Big\}
	$$
	for 
	$\delta\geq0$.
	Obviously
	$\omega_{\scalebox{0.6}{$f$}}$
	is an increasing function from 
	$[0,+\infty)$ 
	to 
	$[0,+\infty]$.
	A map 
	$f$
	is uniformly continuous if and only if
	$\lim\limits_{\delta\rightarrow0^+}\omega_{\scalebox{0.6}{$f$}}(\delta)=0$.

	\begin{defn}
		\label{ProperextensionsofPropertyHspaces}
		Given two Banach spaces $\mathscr{X}$, $\mathscr{Y}$, and a uniformly continuous map 
		$\phi: S(\mathscr{X}) \to S(\mathscr{Y})$,
		a map  
		$\varPhi: \mathscr{X} \to \mathscr{Y}$ 
		is called  a 
		\emph{proper extension} of $\phi$ if 
		\begin{itemize}
			\item [(1)]  $\varPhi\big|_{S(\mathscr{X})} = \phi$;
			\item [(2)] 
			for each $\delta > 0$,
			$\omega_{
				\scalebox{0.6}{$\varPhi$}
			}(\delta)<+\infty$;
			\item [(3)] 
			$\varPhi$  
			is uniformly continuous on every bounded subset of $\mathscr{X}$; and
			\item [(4)] 
			$\lim\limits_{\|v\|\to\infty}
			\big\|\varPhi(v)\big\|=\infty$.
		\end{itemize}
	\end{defn}

	Proper extensions of Property $(H)$ maps play  crucial roles in the subsequent sections. 
	In  \cite{KY}, 
	there is an oversight concerning such extensions:
	scalar extensions are applied to arbitrary Property $(H)$ maps, although this is not valid in general. 
	Indeed, if we consider the space 
	$\ell^1(\mathbb{N})$ and the scalar extension $\mathcal{M}_{1,2}^{I}$ 
	of the Property $(H)$ map (the Mazur map) 
	$\mathcal{M}_{1,2}: S\big(\ell^1(\mathbb{N})\big)
	\to S\big(\ell^2(\mathbb{N})\big)$,
	then 
	$\omega_{\scalebox{0.6}{$\mathcal{M}_{1,2}^{I}$}}(\delta)=+\infty$ for every $\delta>0$ 
	(see \cite[Chapter 9]{BL2000}),
	and hence the second condition
	in Definition \ref{ProperextensionsofPropertyHspaces}
	is not satisfied.
	In general, a proper extension should be constructed using an $\eta$-scalar extension
	rather than the scalar extension.
	The latter is valid only in the Lipschitz setting.
	By Lemma \ref{3lip}, the scalar extension of a Lipschitz Property $(H)$ map is again Lipschitz and therefore provides a proper extension.
	
	We now turn to general Property $(H)$ maps and show that every such map admits a proper extension.

	\begin{lem}\label{generaluniformlycontinuousonfiniteBall}
		Let  $\mathscr{X}$,  $\mathscr{Y}$  be  Banach spaces,  and let
		$\phi: S(\mathscr{X})\to S(\mathscr{Y})$ be a uniformly continuous map.
		Then for any continuous function 
		$\eta: [0,+\infty) \to [0,+\infty)$ satisfying $\eta(0)=0$ and $\eta(1)=1$,
		the $\eta$-scalar extension $\phi^{\eta}$ of $\phi$   is uniformly continuous on every bounded subset of $\mathscr{X}$.
	\end{lem}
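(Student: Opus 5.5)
Fix a bounded set, say contained in the closed ball $B(0,R)$ of $\mathscr{X}$. It suffices to show that $\phi^{\eta}$ is uniformly continuous on $B(0,R)$. The plan is to split pairs of points according to whether they lie near the origin or away from it. Near the origin the factor $\eta(\|x\|)$ is small, which makes $\phi^{\eta}$ small. Away from the origin, normalization is uniformly continuous, so the estimate of Lemma \ref{3lip} can be adapted using moduli of continuity in place of Lipschitz constants.

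We will use three facts. First, $\eta$ is continuous on the compact interval $[0,R]$, hence uniformly continuous there with some modulus $\omega_\eta$; moreover $\eta(0)=0$ gives $\eta(t)\to 0$ as $t\to 0^+$. Second, $\phi$ is uniformly continuous with modulus $\omega_\phi(\delta)\to 0$. Third, $\|\phi\|\equiv 1$ on $S(\mathscr{X})$, so $\|\phi^{\eta}(x)\|=\eta(\|x\|)$ for every $x$.

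Given $\epsilon>0$, we first choose $r\in(0,R)$ with $\eta(t)<\epsilon/2$ for all $t\le r$. Then take $x,y\in B(0,R)$ with $\|x-y\|<\delta$, where $\delta$ is still to be fixed.

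\emph{Case 1: one of $\|x\|,\|y\|$ is less than $r/2$.} If $\delta<r/2$, then both norms are at most $r$. Hence
\[
\|\phi^{\eta}(x)-\phi^{\eta}(y)\|\le \eta(\|x\|)+\eta(\|y\|)<\epsilon.
\]
This bound also holds when $x$ or $y$ is $0$.

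\emph{Case 2: both $\|x\|,\|y\|\ge r/2$.} We split as in Lemma \ref{3lip}:
\[
\|\phi^{\eta}(x)-\phi^{\eta}(y)\|\le \eta(\|x\|)\,\Big\|\phi\big(\tfrac{x}{\|x\|}\big)-\phi\big(\tfrac{y}{\|y\|}\big)\Big\|+\big|\eta(\|x\|)-\eta(\|y\|)\big|.
\]
The computation in Lemma \ref{3lip} gives $\big\|\tfrac{x}{\|x\|}-\tfrac{y}{\|y\|}\big\|\le \tfrac{2\|x-y\|}{\|x\|}\le \tfrac{4\delta}{r}$. Therefore the right-hand side is at most
\[
M\,\omega_\phi(4\delta/r)+\omega_\eta(\delta),\qquad M=\max_{[0,R]}\eta,
\]
and this tends to $0$ as $\delta\to 0$. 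We then choose $\delta<r/2$ small enough that this quantity is below $\epsilon$.

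The only delicate point is the near-origin regime. There normalization is not uniformly continuous, and we cannot control $\phi(x/\|x\|)$. This is exactly where the hypothesis $\eta(0)=0$ together with continuity of $\eta$ is needed: the smallness of $\|\phi^{\eta}\|=\eta(\|\cdot\|)$ absorbs the problem. The threshold $r$ must be chosen before $\delta$ so that the two cases fit together.
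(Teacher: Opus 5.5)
Your proof is correct and follows essentially the same route as the paper. The paper also splits into a near-origin regime, controlled by $\eta(0)=0$, and an away-from-origin regime, handled by the same triangle-inequality split and the estimate $\bigl\|\frac{x}{\|x\|}-\frac{y}{\|y\|}\bigr\|\le \frac{2\|x-y\|}{\|x\|}$. The only difference is that the paper uses the $\delta$-dependent threshold $\sqrt{\delta}$ instead of your $\epsilon$-dependent $r$, which gives the explicit modulus bound $M_R\,\omega_\phi(2\sqrt{\delta})+\omega_{\eta_R}(\delta)+\eta(\delta)+2\sup_{t\in[0,\sqrt{\delta}]}\eta(t)$.
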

	\begin{proof}
		It suffices to show that 
		$\phi^{\eta}$
		is uniformly continuous on every bounded ball of $\mathscr{X}$.
		Fix $R>0$, and  
		denote by $B_{\mathscr{X}}(0,R)$ the closed ball in $\mathscr{X}$ centered at $0$ with radius $R$.
		Let $\eta_{_R}$ be the restriction of $\eta$ to $[0,R]$, and set
		$$
		M_R=\max\limits_{
			0 \leq  t \leq R}
		\eta(t).
		$$

		For any  fixed  
		$0<\delta\leq  R$ 
		and
		$v,w \in  B_{\mathscr{X}}(0,R)$
		with $\delta=\|v-w\|$,
		we distinguish two cases.

		Case 1:    
		Suppose that $\max\big\{
		\|v\|,\|w\|
		\big\} \geq \sqrt{\delta}$. 
		Without loss of generality, 
		we may assume that $\|v\| \geq  \sqrt{\delta}$.
		If  $w\neq0$,  
		then 
		\begin{eqnarray*}
			\big\| 
			\phi^{\eta}(v)
			-\phi^{\eta}(w)
			\big\| 
			& = &
			\bigg\| 
			\eta(\|v\|) \cdot 
			\phi	
			\Big( 
			\dfrac{v}{\|v\|}
			\Big)
			-
			\eta(\|w\|) \cdot 
			\phi	
			\Big( 
			\dfrac{w}{\|w\|}
			\Big)
			\bigg\| 
			\\
			&\leq&
			\bigg\| 
			\eta(\|v\|) \cdot 
			\phi	
			\Big( 
			\dfrac{v}{\|v\|}
			\Big)
			-
			\eta(\|v\|) \cdot 
			\phi	
			\Big( 
			\dfrac{w}{\|w\|}
			\Big)
			\bigg\| 
			\\
			&& +\bigg\| 
			\eta(\|v\|) \cdot 
			\phi	
			\Big( 
			\dfrac{w}{\|w\|}
			\Big)
			-
			\eta(\|w\|) \cdot 
			\phi	
			\Big( 
			\dfrac{w}{\|w\|}
			\Big)
			\bigg\| 
			\\
			&=&
			\eta(\|v\|) 
			\cdot 
			\bigg\| 
			\phi	
			\Big( 
			\dfrac{v}{\|v\|}
			\Big)
			-
			\phi	
			\Big( 
			\dfrac{w}{\|w\|}
			\Big)
			\bigg\| 
			+
			\Big| 
			\eta(\|v\|) 
			-
			\eta(\|w\|) 
			\Big| 
			\\
			&\leq& 
			\eta(\|v\|) 
			\cdot 
			\omega_{\scalebox{0.6}{$\phi$}}
			\bigg(
			\Big\|	
			\dfrac{v}{\|v\|}-	\dfrac{w}{\|w\|}
			\Big\|
			\bigg)
			+
			\omega_{\scalebox{0.6}{$\eta_{_R}$}} 
			\bigg(
			\Big| 
			\|v\|-\|w\|
			\Big|
			\bigg) 
			\\
			&\leq& 
			\eta(\|v\|) 
			\cdot 
			\omega_{\scalebox{0.6}{$\phi$}}
			\bigg(
			\dfrac{2\|v-w\|}{\|v\|}
			\bigg)
			+
			\omega_{\scalebox{0.6}{$\eta_{_R}$}}
			\big( 
			\|v-w\|
			\big)
			\\
			&\leq& 
			M_R \cdot 
			\omega_{\scalebox{0.6}{$\phi$}}
			\big(
			2\sqrt{\delta}
			\big)
			+
			\omega_{\scalebox{0.6}{$\eta_{_R}$}} 
			( 
			\delta
			).
		\end{eqnarray*}
		If $w=0$, 
		then 	
		\begin{eqnarray*}
			\big\| 
			\phi^{\eta}(v)
			-\phi^{\eta}(w)
			\big\| = \big\| 
			\phi^{\eta}(v)
			\big\| = \eta(\delta).
		\end{eqnarray*}
		
		Case 2:  
		Suppose that  
		$\max\big\{
		\|v\|,\|w\|
		\big\} < \sqrt{\delta}$.
		In this case, 
		\begin{eqnarray*}
			\big\| 
			\phi^{\eta}(v)
			-\phi^{\eta}(w)
			\big\| 
			&\leq & 
			\big\| 
			\phi^{\eta}(v)
			\big\|
			+
			\big\|
			\phi^{\eta}(w)
			\big\|
			\\
			& = &
			\eta(\|v\|)+
			\eta(\|w\|)
			\\
			&\leq& 
			2 \cdot \sup_{
				t \in [ 0,\sqrt{\delta} ] }
			\eta(t). 
		\end{eqnarray*}
		
		Let 
		$\phi^{\eta}_{R}$
		denote the restriction of 
		$\phi^{\eta}$ to  $B_{\mathscr{X}}(0,R)$.
		Combining the above estimates,
		we obtain
		$$
		\omega_{\scalebox{0.6}{$\phi^{\eta}_{R}$}}(\delta)
		\leq 
		M_R 
		\cdot 
		\omega_{\scalebox{0.6}{$\phi$}}
		\big(
		2\sqrt{\delta}
		\big)
		+
		\omega_{\scalebox{0.6}{$\eta_{_R}$}} 
		( 
		\delta
		)+ \eta(\delta)+
		2 \cdot \sup_{
			t \in [ 0,\sqrt{\delta} ] }
		\eta(t)
		$$
		for every 
		$0<\delta\leq R$. 
		Since 
		$\eta(0)=0$ and both $\phi$ and $\eta_{_R}$
		are
		uniformly continuous,
		it follows that 
		$$\lim\limits_{
			\delta \to 0^+}
		\omega_{\scalebox{0.6}{$\phi^{\eta}_{R}$}}(\delta)
		=0.
		$$
		This completes the proof.
	\end{proof}

	\begin{lem}\label{asmalltrickfunction}
		For any increasing function $f: (0,+\infty) \to (0,+\infty)$
		satisfying 
		$\lim\limits_{
			t\to+\infty} 
		f(t)
		=+\infty$,
		there exists a strictly increasing Lipschitz function $\eta: [0,+\infty) \to [0,+\infty)$
		such that 
		\begin{itemize}
			\item [(1)]	 
			there exists  
			$c > 0$ such that 
			$\eta(t) \leq  f(t)$ for all $t \geq  c$;
			\item [(2)] $\eta(t)=t$ for all $t\in[0,1]$; and
			\item [(3)] $\lim\limits_{
				t\to+\infty}
			\eta(t)=+\infty$.
		\end{itemize}
	\end{lem}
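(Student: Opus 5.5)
We will build $\eta$ explicitly as a piecewise linear function: it equals $t$ on $[0,1]$ and beyond $1$ it climbs by fixed slope-$1$ steps separated by increasingly long stretches of small positive slope. Since $f$ is increasing with $f(t)\to+\infty$, we can choose a strictly increasing sequence of thresholds $1<t_1<t_2<\cdots$ with $t_k\to\infty$ such that $f(t)\ge k+1$ for all $t\ge t_k$. Here we use $\lim f=+\infty$ together with monotonicity: if $f(s)\ge k+1$ at some point $s$, then $f(t)\ge k+1$ for all $t\ge s$. We may also require $t_{k+1}\ge t_k+1$ and $t_1\ge 2$.

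\emph{Construction.} Define $\eta(t)=t$ on $[0,1]$, so condition (2) holds. On $[1,t_1]$, interpolate linearly from $\eta(1)=1$ to $\eta(t_1)=1+\tfrac12$. In general, on $[t_k,t_{k+1}]$, interpolate linearly from $\eta(t_k)=1+\sum_{j\le k}2^{-j}\cdot\epsilon$ to the next value; that concrete choice turns out too timid, so we simplify instead: set $\eta(t_k)=k+1-\tfrac12$ for $k\ge1$ and interpolate linearly between consecutive nodes, including $1\mapsto 1$ and $t_1\mapsto \tfrac32$.

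\emph{Verification.} The slopes are $\tfrac{1/2}{t_1-1}\le\tfrac12$ on the first interval and $\tfrac{1}{t_{k+1}-t_k}\le 1$ afterwards, all positive. Hence $\eta$ is strictly increasing and Lipschitz with constant $1$, the slope $1$ on $[0,1]$ included. Condition (3) holds because $\eta(t_k)=k+\tfrac12\to\infty$.

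For condition (1), take $c=t_1$. For $t\in[t_k,t_{k+1}]$ we have $\eta(t)\le\eta(t_{k+1})=k+\tfrac32$, while $f(t)\ge f(t_k)\ge k+1$. This is off by $\tfrac12$, so we shift the indexing: choose the thresholds so that $f(t)\ge k+2$ for $t\ge t_k$, keeping $\eta(t_k)=k+\tfrac12$. Then on $[t_k,t_{k+1}]$ we get $\eta\le k+\tfrac32< k+2\le f$. This gives $\eta(t)\le f(t)$ for all $t\ge c$.

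The only point needing care is this bookkeeping: the comparison on each interval must use the lower bound for $f$ at the left endpoint against the value of $\eta$ at the right endpoint. The rest is routine.
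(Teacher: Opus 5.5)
Your final construction is correct and essentially the paper's own. Both build a piecewise-linear $\eta$ with $\eta(t)=t$ on $[0,1]$, nodes $t_k$ spaced at least $1$ apart (so every slope is at most $1$) and chosen so that $f\ge k+2$ beyond $t_k$. Node values are of order $k+1$ (you use $k+\tfrac12$, the paper uses $k+1$), which gives $\eta<k+2\le f(t_k)\le f$ on each $[t_k,t_{k+1}]$. When you write it up, delete the abandoned attempts (the $\sum 2^{-j}\epsilon$ choice and the $f\ge k+1$ thresholds) so that only the final version remains.
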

	\begin{proof}
		Since  
		$f$
		is an increasing function
		satisfying 
		$\lim\limits_{
			t\to+\infty} 
		f(t)
		=+\infty$,
		there exists a strictly increasing sequence 
		$\{n_{k}\}_{k\in\mathbb{N}}$
		such that 
		\begin{itemize}
			\item [(1)]
			$n_{1} \geq 2$, 
			$n_{k+1}-n_{k} \geq  n_{k}-n_{k-1}$
			for all 
			$k \geq 1$, where we set $n_{0}=1$; 
			\item [(2)] $\lim\limits_{
				k\to+\infty}
			n_{k}=+\infty$; and
			\item [(3)] $f(n_{k}) \geq  k+2$
			for all  $k \geq 1$.
		\end{itemize}
		Define $\eta$ by 
		\[
		\eta(t) =
		\begin{cases}
			\hspace{0.1cm} 
			t, &   t \in [0,1), 
			\\
			\dfrac{t-1}{n_{1}-1} + 1, &  t \in [1, n_{1}), 
			\\
			\hspace{1.2cm} 
			\vdots &\hspace{0.6cm} \vdots 
			\\
			\dfrac{t-n_{k}}{n_{k+1}-n_{k}} + k+1, &  t \in [n_{k}, n_{k+1}), 
			\\
			\hspace{1.2cm} 
			\vdots &\hspace{0.6cm} \vdots \hspace{1.5cm}.
		\end{cases}
		\]
		Then we have 
		\begin{center}
			$\eta(t) \leq  f(t)$ for all 
			$t \geq  n_{1}$,
			$L_{\eta}\leq 1$
			and 
			$\lim\limits_{
				t\to+\infty}
			\eta(t)=+\infty$.
		\end{center}
		This completes the proof.
	\end{proof}

	\begin{prop}\label{whatisthenergithikalikdkfslak}
		Let $\mathscr{X}$, $\mathscr{Y}$  be Banach spaces, 
		and let $\phi: S(\mathscr{X})\to S(\mathscr{Y})$ 
		be a uniformly continuous map
		such that  
		$\omega_{\phi}(t) \neq 0$ for all $t>0$.
		Then there exists a strictly increasing continuous function 
		$\eta: [0, +\infty) \to [0, +\infty)$
		satisfying
		$\eta(t)=t$ for  
		$t \in [0,1]$ 
		and 
		$\lim\limits_{t\to\infty} \eta(t)=+\infty$,
		such that 
		\begin{itemize}
			\item [(1)] 
			for every  $\delta\geq0$,
			$\omega_{
				\scalebox{0.6}{$\phi^{\eta}$}
			}(\delta)<+\infty$; and
			\item [(2)] 
			$\phi^{\eta}$  
			is uniformly continuous on every bounded subset of $\mathscr{X}$.
		\end{itemize}
	\end{prop}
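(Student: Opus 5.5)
Condition (2) comes for free: any $\eta$ we construct will be continuous with $\eta(0)=0$ and $\eta(1)=1$, so Lemma \ref{generaluniformlycontinuousonfiniteBall} makes $\phi^{\eta}$ uniformly continuous on bounded sets. The real work is choosing $\eta$ so that $\omega_{\phi^{\eta}}(\delta)<+\infty$ for every $\delta$. We will grow $\eta$ slowly enough that the large-scale oscillation of $\phi^{\eta}$ is controlled by the modulus of continuity of $\phi$ on the sphere, and then obtain $\eta$ from Lemma \ref{asmalltrickfunction}.

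\textbf{The basic estimate.} Let $v,w\in\mathscr{X}$ be nonzero with $\|v-w\|\le\delta$, and assume $\|v\|\ge\|w\|$. As in the proof of Lemma \ref{generaluniformlycontinuousonfiniteBall}, we get
\[
\|\phi^{\eta}(v)-\phi^{\eta}(w)\|\le \eta(\|v\|)\,\omega_{\phi}\Big(\frac{2\delta}{\|v\|}\Big)+|\eta(\|v\|)-\eta(\|w\|)|.
\]
If $\eta$ is Lipschitz with $L_\eta\le1$, the second term is at most $\delta$. Since $\|v\|\ge\|w\|$, points with $\|v\|$ small are handled separately. If $\|v\|\le 2\delta$, then $\|w\|\le 2\delta$ too, and the difference is at most $2\eta(2\delta)<\infty$. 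The case $w=0$ is similar.

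So it suffices to make $\eta(s)\,\omega_\phi(2\delta/s)$ bounded for $s\ge 2\delta$. Continuity makes it bounded on compact ranges of $s$, so the issue is only $s\to\infty$.

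\textbf{Choosing $\eta$.} We want $\eta(s)\le 1/\omega_\phi(1/\sqrt{s})$ for large $s$. This is where the hypothesis $\omega_\phi(t)\neq0$ for $t>0$ is used: it makes $f(s):=1/\omega_\phi(1/\sqrt s)$ a well-defined map $(0,\infty)\to(0,\infty)$. It is increasing, because $\omega_\phi$ is increasing. It tends to $+\infty$, because $\phi$ is uniformly continuous, so $\omega_\phi(t)\to0$ as $t\to0^+$. If $\omega_\phi$ can be infinite at large arguments, $f$ is $0$ there; since $\omega_\phi\le 2$ on the sphere, we replace $f$ by $\max(f,\min(1,s))$-type adjustments as needed. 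Lemma \ref{asmalltrickfunction} then gives a strictly increasing, $1$-Lipschitz $\eta$ with $\eta(t)=t$ on $[0,1]$, $\eta\to\infty$, and $\eta\le f$ on $[c,\infty)$.

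\textbf{Finishing the estimate.} Fix $\delta$. For $s\ge\max(c,4\delta^2)$ we have $2\delta/s\le 1/\sqrt s$, hence
\[
\eta(s)\,\omega_\phi(2\delta/s)\le f(s)\,\omega_\phi(1/\sqrt s)=1.
\]
For $2\delta\le s\le\max(c,4\delta^2)$, we use $\eta(s)\le\eta(\max(c,4\delta^2))$ together with $\omega_\phi\le2$. Altogether, $\omega_{\phi^\eta}(\delta)\le \max\{2\eta(2\delta),\ 2\eta(\max(c,4\delta^2))+\delta,\ 1+\delta\}<\infty$.

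The main obstacle is matching the scale in $f$ to the argument $2\delta/s$ uniformly in $\delta$. The $1/\sqrt s$ choice handles this, since $2\delta/s\le1/\sqrt s$ eventually for each fixed $\delta$.
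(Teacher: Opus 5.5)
Your proof is correct. Its skeleton is the paper's: condition (2) comes from Lemma \ref{generaluniformlycontinuousonfiniteBall}, $\eta$ comes from Lemma \ref{asmalltrickfunction} dominated by a reciprocal of $\omega_\phi$, and then there is a case split. The difference lies in the scale at which you compare.

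The paper takes $f(t)=1/\omega_\phi(1/t)$, which gives $\eta(s)\,\omega_\phi(1/s)\le 1$ for $s\ge c$. It then passes from $\omega_\phi(2\delta/s)$ to $\omega_\phi(1/s)$ via $\omega_\phi(\lceil 2\delta\rceil/s)\le\lceil 2\delta\rceil\,\omega_\phi(1/s)$, which is subadditivity of the modulus of continuity. That is automatic on convex domains but not on a unit sphere. For example, take $x\mapsto \pi^{-1}d_{\mathrm{arc}}(x,\cdot)$ from the Euclidean unit circle into $S(C(S^1))$. Here $\omega_\phi(2)=1>\tfrac23=2\,\omega_\phi(1)$. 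So the paper's step is not justified as written; it can be repaired by chaining along the sphere, at the cost of a constant factor.

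Your choice $f(s)=1/\omega_\phi(s^{-1/2})$ avoids this issue entirely. Combined with the observation that $2\delta/s\le s^{-1/2}$ once $s\ge 4\delta^2$, it uses only the monotonicity of $\omega_\phi$.

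What you give up is quantitative. Granting subadditivity, the paper's argument bounds $\omega_{\phi^\eta}(\delta)$ linearly in $\delta$, with a threshold $c$ independent of $\delta$. Your threshold $\max(c,4\delta^2)$ depends on $\delta$, so your bound grows like $\eta(4\delta^2)$, and your $\eta$ is forced to grow more slowly. Nothing is lost, though: later the paper uses only $\eta\to\infty$ and the finiteness of $\omega_{\phi^\eta}$ at specific arguments.

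One small correction. Your remark about $\omega_\phi$ possibly being infinite, and about adjusting $f$, is unnecessary. We have $0<\omega_\phi\le 2$ on $(0,\infty)$: positivity is the hypothesis, and the bound holds because $\phi$ takes values in $S(\mathscr{Y})$. So $f$ already maps into $[\tfrac12,\infty)$. Drop that sentence, since enlarging $f$ would only weaken the identity $f(s)\,\omega_\phi(s^{-1/2})=1$ that you rely on.
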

	\begin{proof}
		It follows from the uniform continuity of $\phi$ that  
		$$
		\lim\limits_{t \rightarrow  0^+}
		\omega_{\scalebox{0.6}{$\phi$}}(t)=0.
		$$
		Hence, the function 
		$f: (0,+\infty) \to (0,+\infty)$ defined by
		$$
		f(t)= \frac{1}{\omega_{\scalebox{0.6}{$\phi$}}(t^{-1})}
		$$ 
		is  increasing   
		and  satisfies
		$$
		\lim\limits_{t \to +\infty} f(t)= +\infty.
		$$ 
		By Lemma \ref{asmalltrickfunction},
		there exists a Lipschitz, strictly increasing function $\eta: [0,+\infty) \to [0,+\infty)$ 
		such that
		\begin{itemize}
			\item [(1)]	 
			there exists  
			$c > 0$ such that 
			$\eta(t) \leq  f(t)$  for all  $t \geq  c$;
			\item [(2)] $\eta(t)=t$ for  $t\in[0,1]$; and
			\item [(3)] $\lim\limits_{
				t\to+\infty}
			\eta(t)=+\infty$.
		\end{itemize}

		Next, we distinguish two cases to show that  
		$\omega_{
			\scalebox{0.6}{$\phi^{\eta}$}
		}(\delta)<+\infty$
		for every $\delta\geq0$.

		Case 1:	
		Suppose that 
		$\max\big\{
		\|v\|,\|w\|
		\big\} >  c$.
		Without loss of generality, we may assume that 
		$\|v\| >  c$.
		If $w\neq0$,
		then 
		\begin{eqnarray*}
			\big\| 
			\phi^{\eta}(v)
			-\phi^{\eta}(w)
			\big\| 
			&\leq&
			\bigg\| 
			\eta(\|v\|) \cdot 
			\phi	
			\Big( 
			\dfrac{v}{\|v\|}
			\Big)
			-
			\eta(\|v\|) \cdot 
			\phi	
			\Big( 
			\dfrac{w}{\|w\|}
			\Big)
			\bigg\| 
			\\
			&& +
			\bigg\| 
			\eta(\|v\|) \cdot 
			\phi	
			\Big( 
			\dfrac{w}{\|w\|}
			\Big)
			-
			\eta(\|w\|) \cdot 
			\phi	
			\Big( 
			\dfrac{w}{\|w\|}
			\Big)
			\bigg\| 
			\\
			&=&
			\eta(\|v\|) 
			\cdot 
			\bigg\| 
			\phi	
			\Big( 
			\dfrac{v}{\|v\|}
			\Big)
			-
			\phi	
			\Big( 
			\dfrac{w}{\|w\|}
			\Big)
			\bigg\| 
			+
			\Big| 
			\eta(\|v\|) 
			-
			\eta(\|w\|) 
			\Big| 
			\\
			&\leq& 
			\eta(\|v\|) 
			\cdot 
			\omega_{\scalebox{0.6}{$\phi$}}
			\bigg(
			\Big\|	
			\dfrac{v}{\|v\|}-	\dfrac{w}{\|w\|}
			\Big\|
			\bigg)
			+
			\omega_{\scalebox{0.6}{$\eta$}} 
			\bigg(
			\Big| 
			\|v\|-\|w\|
			\Big| 
			\bigg)  
			\\
			&\leq& 
			\eta(\|v\|) 
			\cdot 
			\omega_{\scalebox{0.6}{$\phi$}}
			\bigg(
			\dfrac{2\|v-w\|}{\|v\|}
			\bigg)
			+
			\omega_{\scalebox{0.6}{$\eta$}} 
			\big( 
			\|v-w\| 
			\big)  
			\\
			&\leq& 
			\eta(\|v\|) 
			\cdot 
			\omega_{\scalebox{0.6}{$\phi$}}
			\bigg(
			\dfrac{\big\lceil 
				2\|v-w\| \big\rceil}{\|v\|}
			\bigg)
			+
			\omega_{\scalebox{0.6}{$\eta$}} 
			\big( 
			\|v-w\|
			\big) 
			\\
			&\leq& 
			\eta(\|v\|) 
			\cdot 
			\big\lceil  2\|v-w\| \big\rceil
			\cdot 
			\omega_{\scalebox{0.6}{$\phi$}}
			\bigg(
			\dfrac{1}{\|v\|}
			\bigg)
			+
			\omega_{\scalebox{0.6}{$\eta$}} 
			\big( 
			\|v-w\| 
			\big)  
			\\
			&\leq& 
			f(\|v\|) 
			\cdot 
			\big\lceil  2\|v-w\| \big\rceil
			\cdot 
			\omega_{\scalebox{0.6}{$\phi$}}
			\bigg(
			\dfrac{1}{\|v\|}
			\bigg)
			+
			\omega_{\scalebox{0.6}{$\eta$}} 
			\big( 
			\|v-w\| 
			\big) 
			\\
			&=& 
			\big\lceil  2\|v-w\| \big\rceil
			+
			\omega_{\scalebox{0.6}{$\eta$}} 
			\big( 
			\|v-w\| 
			\big) , 
		\end{eqnarray*}
		where 
		$\lceil \cdot \rceil$
		denotes the ceiling function.
		If $w=0$, 
		then
		\begin{eqnarray*}
			\big\| 
			\phi^{\eta}(v)
			-\phi^{\eta}(w)
			\big\| = 
			\big\| 
			\phi^{\eta}(v)
			\big\| = 
			\big|
			\eta(\|v\|)-\eta(0)
			\big| \leq \omega_{\scalebox{0.6}{$\eta$}}\big(\|v\|\big) = 
			\omega_{\scalebox{0.6}{$\eta$}} \big(\|v-w\|\big). 
		\end{eqnarray*}	
		
		Case 2: 
		Suppose that 
		$\max\big\{
		\|v\|,\|w\|
		\big\} \leq  c$.
		In this case,
		\begin{eqnarray*}
			\big\| 
			\phi^{\eta}(v)
			-\phi^{\eta}(w)
			\big\|
			\leq 
			\big\| 
			\phi^{\eta}(v)
			\big\|
			+\big\| 
			\phi^{\eta}(w)
			\big\|
			= 
			\eta(\|v\|) +	\eta(\|w\|) 
			\leq 
			2\cdot  M_{c},
		\end{eqnarray*}
		where 
		$M_{c}=\sup\limits_{
			0 \leq t \leq c}
		\eta(t)$.

		Combining the above estimates,
		we obtain 
		\begin{eqnarray*}
			\omega_{
				\scalebox{0.6}{$\phi^{\eta}$}
			}(\delta)
			\leq 
			\big\lceil  2\delta \big\rceil
			+
			\omega_{\scalebox{0.6}{$\eta$}} 
			(\delta) 
			+ 
			2\cdot  M_{c}
			< +\infty  
		\end{eqnarray*}
		for every $\delta\geq 0$.
		
		The second assertion follows directly from Lemma \ref{generaluniformlycontinuousonfiniteBall}.
	\end{proof}

	The following result 
	asserts that every Property $(H)$ map admits a proper extension.

	\begin{cor}\label{finitemodulusofcontinuity}
		Let 
		$\phi: S(\mathscr{X}) \to S(\mathscr{H})$ be a Property $(H)$ map. 
		Then there exists a strictly increasing continuous function 
		$\eta: [0, +\infty) \to [0, +\infty)$
		satisfying
		$\eta(t)=t$ for  
		$t \in [0,1]$ 
		and 
		$\lim\limits_{t\to\infty} \eta(t)=+\infty$,
		such that 
		\begin{itemize}
			\item [(1)] 
			for every  $\delta\geq0$,
			$\omega_{
				\scalebox{0.6}{$\phi^{\eta}$}
			}(\delta)<+\infty$; and
			\item [(2)] 
			$\phi^{\eta}$  
			is uniformly continuous on every bounded subset of $\mathscr{X}$.
		\end{itemize}
	\end{cor}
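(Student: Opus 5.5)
The plan is to deduce this from Proposition \ref{whatisthenergithikalikdkfslak} applied with $\mathscr{Y}=\mathscr{H}$. The only hypothesis to check is that a Property $(H)$ map $\phi$ is uniformly continuous with $\omega_{\phi}(t)\neq 0$ for all $t>0$. Uniform continuity is built into the definition of Property $(H)$, so the work is the non-vanishing of the modulus of continuity.

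To show $\omega_\phi(t)>0$ for every $t>0$, I use the paving. Take $n$ with $\dim V_n\geq 1$; this exists because $\bigcup_n V_n$ is dense in $\mathscr{X}$, which we may take to be nonzero. The restriction $\phi|_{S(V_n)}: S(V_n)\to S(W_n)$ is a homeomorphism, hence injective. If $\dim V_n\geq 2$, then $S(V_n)$ is connected and contains points at every distance in $[0,2]$. For $0<t\leq 2$, pick $x_1,x_2\in S(V_n)$ with $0<\|x_1-x_2\|\leq t$. Injectivity gives $\phi(x_1)\neq\phi(x_2)$, so $\omega_\phi(t)>0$. Since $\omega_\phi$ is increasing, this extends to all $t>2$.

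If every $V_n$ were one-dimensional, then $\mathscr{X}$ is one-dimensional. Then $S(\mathscr{X})=\{\pm x_0\}$, and injectivity gives $\omega_\phi(t)>0$ for $t\geq 2$. For $t<2$, however, $\omega_\phi(t)=0$, so the hypothesis genuinely fails here. I would handle this degenerate case directly rather than through the proposition. In this case $\phi^{\eta}$ with $\eta(t)=t$ is the scalar extension, which is Lipschitz with constant at most $3$ by Lemma \ref{3lip}, since a map on a two-point space is trivially Lipschitz. Both conclusions are then immediate.

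In the generic case, Proposition \ref{whatisthenergithikalikdkfslak} supplies the strictly increasing continuous $\eta$ with $\eta(t)=t$ on $[0,1]$ and $\eta(t)\to\infty$. It also gives both conclusions: $\omega_{\phi^\eta}(\delta)<+\infty$ for every $\delta\geq 0$, and uniform continuity of $\phi^\eta$ on bounded sets, the latter via Lemma \ref{generaluniformlycontinuousonfiniteBall}.

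The only step needing care is verifying $\omega_\phi(t)\neq 0$ from the homeomorphism condition on the paving. The main subtlety there is the dimension bookkeeping: finding points of $S(V_n)$ at arbitrarily small positive distance, which requires $\dim V_n\geq 2$. The rest is a direct citation.
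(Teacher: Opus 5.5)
Your proof is correct and follows the paper's route: the paper's whole argument is the one-line observation that $\omega_{\phi}(t)\neq 0$ for all $t>0$, followed by an appeal to Proposition \ref{whatisthenergithikalikdkfslak}. Your paving/injectivity argument supplies the justification the paper omits, and you are right that the observation fails when $\dim\mathscr{X}=1$ (there $\omega_\phi(t)=0$ for $0<t<2$); the paper overlooks this case, and your direct treatment of it via the scalar extension and Lemma \ref{3lip} is correct.
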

	\begin{proof}
		We observe that for every Property $(H)$ map $\phi$, $\omega_{\phi}(t) \neq 0$  for all $t>0$.
		The conclusion follows immediately from Proposition \ref{whatisthenergithikalikdkfslak}. 
	\end{proof}

	We call $\phi^{\eta}$ in the above corollary an 
	\emph{$\eta$-scalar proper extension}.
	
	\begin{rmk}\label{otheidrectsumofetascalarextensionsslaoproper}
		The direct sum of finitely many $\eta$-scalar proper extensions is again 
		a proper extension.       
		Let $\mathscr{X}_p$ ($1\leq p \leq k$)
		be  Property $(H)$ Banach spaces
		with  Property $(H)$ maps
		$\phi_p: S(\mathscr{X}_p) \to S(\mathscr{H})$.
		By Corollary \ref{finitemodulusofcontinuity}, 
		each $\phi_p$ admits an
		$\eta$-scalar proper extension $\phi_{p}^{\eta_p}$. 
		Since 
		$\eta_p(t)=t$ for all  
		$t \in [0,1]$ and $1\leq p \leq k$,
		the restriction of
		the direct sum map
		\[
		\begin{array}{cccc}
			\oplus_{p=1}^{k}\phi_{p}^{\eta_p}: & \bigoplus_{p=1}^{k}
			\mathscr{X}_p & \longrightarrow & \bigoplus_{p=1}^{k}
			\mathscr{H}
		\end{array} 
		\]
		to the unit spheres provides a Property $(H)$ map for 
		$\bigoplus_{p=1}^{k}
		\mathscr{X}_p$. 
		Moreover, it is straightforward to verify that this direct sum map is a proper extension
		of the resulting Property $(H)$ map on the unit spheres.
	\end{rmk}

	\section{Affine isometric actions associated to bornologous maps}

	In \cite{Y2000}, the second author proved that if a countable discrete group $\Gamma$ admits a coarse embedding into a Hilbert space, then it satisfies the coarse Baum--Connes conjecture and, consequently, the strong Novikov conjecture, provided that its classifying space $B\Gamma$ has the homotopy type of a finite CW complex. Subsequently, Skandalis, Tu, and Yu \cite{STY} removed the finiteness assumption on $B\Gamma$ by developing Higson's elegant descent technique.
	
	At the heart of the descent method is the observation that a coarse embedding of $\Gamma$ into a Hilbert space gives rise to a compact contractible space $X$ such that the transformation groupoid $X\rtimes\Gamma$ acts properly by affine isometries on a continuous field of Hilbert spaces over $X$. This construction relies crucially on the theory of negative type functions. For general Banach spaces, however, negative type functions are no longer available, and the Hilbert space argument breaks down.

	In this section, we resolve the above problem for arbitrary Banach spaces by giving an explicit construction. More precisely, 
	let $Z$ be the $\Gamma$-invariant contractible metric subspace of
	$\ell^1(\Gamma)$ consisting of  all non-negative functions with $\ell^1$-norm $1$.  
	We show that every bornologous map 
	$h: \Gamma \to \mathscr{X}$, 
	where $\mathscr{X}$ is a Banach space, gives rise to an affine isometric action of the transformation groupoid $Z\rtimes\Gamma$ on the trivial bundle 
	$Z\times\ell^2(\Gamma,\mathscr{X})$. 
	Furthermore,
	if 
	$h$ is a coarse embedding, then this action is proper. 
	In contrast to previous approaches based on continuous fields of Banach spaces, our construction is explicit and elementary.

	\begin{defn}\label{uniformlycontinuousalgebra}
		Let 
		$Y$ 
		be a metric space and 
		let $\mathscr{X}$
		be a Banach space.
		We define
		$C_{b}(Y,\mathscr{X})$
		to be the space of all
		uniformly continuous and bounded functions from  
		$Y$
		to
		$\mathscr{X}$.	  
	\end{defn}
	It is a Banach space under the norm
	$$
	\|f\| =  \sup\limits_{y\in{Y}} \big\|f(y)\big\|.
	$$
	Moreover, if 
	$\mathscr{X}$
	is a  $C^*$-algebra,
	then  
	$C_{b}(Y,\mathscr{X})$ is also a  $C^*$-algebra.

	Let 
	$\Gamma$ 
	be a countable discrete group with  
	a proper 
	\emph{right}-invariant metric
	$d$.
	Set  
	\begin{equation}	\label{eq:simplicityforthebasespaceZ}
		Z=\Big\{f\in\ell^1(\Gamma) ~ \Big|~ f\geq 0 \text{ and } \|f\|=1 \Big\}.
	\end{equation}
	Then
	$Z$ is a
	convex, bounded metric subspace of 
	$\ell^1(\Gamma)$.  
	Throughout the paper,  we write each 
	$z \in  Z$ in the form  
	$$
	z=\sum\limits_{\gamma\in\Gamma}
	z_{_\gamma} \cdot \gamma,
	$$
	where 
	$z_{\gamma}\geq 0$
	for all 
	$\gamma\in\Gamma$
	and
	$\sum\limits_{\gamma\in\Gamma}
	z_{_\gamma}=1$.
	The group 
	$\Gamma$
	admits
	an isometric action on 
	$Z$
	via
	\begin{equation*}
		\gamma\cdot
		\Big(
		\sum\limits_{\tilde{\gamma}\in\Gamma}
		z_{\tilde{\gamma}} \cdot 
		\tilde{\gamma}
		\Big) 
		=
		\sum\limits_{\tilde{\gamma}\in\Gamma}
		z_{\tilde{\gamma}} \cdot (\gamma\tilde{\gamma})_.
	\end{equation*}

	\begin{rmk}
		The metric space $Z$ is not compact in general, 
		but its convexity and boundedness are sufficient for our purposes.
		In the rest of this paper,
		we fix the notation $Z$ for the metric space defined in \eqref{eq:simplicityforthebasespaceZ}  associated to  $\Gamma$. 
	\end{rmk}

	\begin{defn}
		[{\cite[Definition~1.8]{Roe2003}}]
		\label{Bornologousdef}    
		A map $h$ from $\Gamma$ to a Banach space $\mathscr{X}$
		is called  \emph{bornologous} if there exists a function $\rho^{+}: [0,+\infty) \to [0,+\infty)$ 
		such that
		\begin{equation*}
			\Big\|
			h(\gamma_{1})-
			h(\gamma_{2})
			\Big\|
			\leq\rho^{+}
			\big(d(\gamma_{1},
			\gamma_{2})\big)
		\end{equation*}
		for all 
		$\gamma_{1},\gamma_{2}\in\Gamma$.
		Here $\rho^{+}$  is  not necessarily non-decreasing.
	\end{defn}

	For simplicity, we set $$\mathscr{X}_{\Gamma}=\ell^2(\Gamma, \mathscr{X}).$$ 
	Next,
	we construct a family of affine isometries on $\mathscr{X}_{\Gamma}$ associated to a bornologous map.
	This family will induce an affine isometric action of $\Gamma$ on 
	the function space
	$C_{b}(Z,\mathscr{X}_{\Gamma})$.

	Let 
	$h: \Gamma \to \mathscr{X}$ 
	be a  bornologous map. 
	For each $\gamma\in\Gamma$ and each
	$z=\sum\limits_{\scalebox{0.6}{$\tilde{\gamma}\in\Gamma$}} 
	z_{\tilde{\gamma}}\cdot
	\tilde{\gamma} \in {Z}$,
	define a function 
	$$
	\zeta^{\scalebox{0.6}{$(\gamma,z)$}}: \Gamma \longrightarrow  \mathscr{X}
	$$
	by
	\begin{equation}\label{eq:cocycles}
		\zeta^{\scalebox{0.6}{$(\gamma,z)$}}
		(\tilde{\gamma})=	
		\sqrt{z_{\scalebox{0.6}{$\gamma^{-1}\tilde{\gamma}$}}}
		\cdot \Big(h(\gamma^{-1}\tilde{\gamma})-
		h(\tilde{\gamma})
		\Big)
	\end{equation}
	for all
	$\tilde{\gamma}\in\Gamma$. 
	It follows from 
	\eqref{eq:cocycles}  
	and 
	the right-invariance of the metric
	$d$ on $\Gamma$
	that 	
	\begin{eqnarray*}
		\sum\limits_{
			\tilde{\gamma}\in\Gamma}
		\Big\|
		\zeta^{\scalebox{0.6}{$(\gamma,z)$}}
		(\tilde{\gamma})
		\Big\|^2 
		&=&	
		\sum\limits_{
			\tilde{\gamma}\in\Gamma}
		\bigg\|
		\sqrt{z_{\scalebox{0.6}{$\gamma^{-1}\tilde{\gamma}$}}}
		\cdot \Big(h(\gamma^{-1}\tilde{\gamma})-
		h(\tilde{\gamma})
		\Big)
		\bigg\|^2
		\\
		&=&   
		\sum\limits_{
			\tilde{\gamma}\in\Gamma}
		z_{\scalebox{0.6}{$\gamma^{-1}\tilde{\gamma}$}}
		\cdot
		\Big\|
		h(\gamma^{-1}\tilde{\gamma})-
		h(\tilde{\gamma})
		\Big\|^2
		\\
		&\leq&   
		\sum\limits_{
			\tilde{\gamma}\in\Gamma}
		z_{\scalebox{0.6}{$\gamma^{-1}\tilde{\gamma}$}}
		\cdot
		\rho^{+}
		\big(
		d(
		\gamma^{-1}\tilde{\gamma},
		\tilde{\gamma})
		\big)^2 
		\\
		&=&
		\rho^{+}
		\big(
		d(\gamma^{-1},e)
		\big)^2. 
	\end{eqnarray*}
	Hence,
	$\zeta^{\scalebox{0.6}{$(\gamma,z)$}} \in \mathscr{X}_{\Gamma}$
	and 
	\begin{equation}	\label{eq:upperboundforthecocycles}
		\Big\|
		\zeta^{\scalebox{0.6}{$(\gamma,z)$}}
		\Big\|
		\leq 
		\rho^{+}
		\big(
		d(\gamma^{-1},e)
		\big)
	\end{equation}
	for all 
	$\gamma\in\Gamma$
	and $z\in Z$.

	Moreover, for every $\gamma\in\Gamma$,
	define  	
	$$
	\lambda^{\scalebox{0.6}{$\gamma$}}: 	
	\mathscr{X}_{\Gamma}
	\longrightarrow 
	\mathscr{X}_{\Gamma}
	$$
	by 
	\begin{equation}	\label{eq:thedefinitionforthetranslation}
		\big( \lambda^{\scalebox{0.6}{$\gamma$}}(f)
		\big)(\tilde{\gamma})=
		f(\gamma^{-1}\tilde{\gamma})
	\end{equation}
	for 
	$f \in \mathscr{X}_{\Gamma}$ and 
	$\tilde{\gamma}\in\Gamma$.
	It is clear that each
	$\lambda^{\scalebox{0.6}{$\gamma$}}$
	is an isometric isomorphism.

	For
	each $\gamma\in\Gamma$ 
	and each
	$z\in {Z}$,
	define an affine isometry
	$$
	U^{\scalebox{0.6}{$(\gamma,z)$}}: 
	\mathscr{X}_{\Gamma}
	\longrightarrow 
	\mathscr{X}_{\Gamma}
	$$
	by 
	\begin{equation}	\label{eq:affineisometricesgeneralone}
		U^{\scalebox{0.6}{$(\gamma,z)$}}(f)=	\lambda^{\scalebox{0.6}{$\gamma$}}(f)+\zeta^{\scalebox{0.6}{$(\gamma,z)$}} 
	\end{equation}	
	for $f \in \mathscr{X}_{\Gamma}$.

	\begin{lem}\label{affineisometryproperty1}
		The following statements hold: 
		\begin{itemize}
			\item [(1)]	$U^{\scalebox{0.6}{$(e,z)$}}$
			is the 
			identity operator 
			for all 
			$z\in{Z}$, where $e$ is the identity element of group $\Gamma$;
			\item [(2)]	$U^{\scalebox{0.6}{$(\gamma, z^\prime)$}} U^{\scalebox{0.6}{$(\gamma^\prime,z)$}}=U^{\scalebox{0.6}{$(\gamma\gamma^\prime,z)$}}$
			for all 
			$\gamma, \gamma^\prime\in\Gamma$ and $z,z^\prime\in {Z}$ 
			such that
			$z^\prime=\gamma^\prime z$. 
		\end{itemize}
	\end{lem}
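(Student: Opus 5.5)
Both statements are direct computations from the definitions \eqref{eq:cocycles}, \eqref{eq:thedefinitionforthetranslation} and \eqref{eq:affineisometricesgeneralone}. The plan is to show that $\lambda$ is a homomorphism and that $\zeta$ satisfies the cocycle identity, and then to assemble these.

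For (1), $\lambda^{e}$ is the identity on $\mathscr{X}_{\Gamma}$. Moreover,
\[
\zeta^{(e,z)}(\tilde{\gamma})=\sqrt{z_{\tilde{\gamma}}}\,\big(h(\tilde{\gamma})-h(\tilde{\gamma})\big)=0,
\]
so $U^{(e,z)}=\mathrm{id}$.

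For (2), first check that $\lambda^{\gamma}\lambda^{\gamma'}=\lambda^{\gamma\gamma'}$. Indeed,
\[
\big(\lambda^{\gamma}\lambda^{\gamma'}f\big)(\tilde{\gamma})=(\lambda^{\gamma'}f)(\gamma^{-1}\tilde{\gamma})=f(\gamma'^{-1}\gamma^{-1}\tilde{\gamma}).
\]
Then expand the composition:
\[
U^{(\gamma,z')}U^{(\gamma',z)}(f)=\lambda^{\gamma}\lambda^{\gamma'}f+\lambda^{\gamma}\zeta^{(\gamma',z)}+\zeta^{(\gamma,z')}.
\]
Since the linear parts already agree with that of $U^{(\gamma\gamma',z)}$, it remains to verify the cocycle identity
\[
\lambda^{\gamma}\zeta^{(\gamma',z)}+\zeta^{(\gamma,\gamma' z)}=\zeta^{(\gamma\gamma',z)}.
\]

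To verify it, evaluate both sides at $\tilde{\gamma}$ and put $\sigma=(\gamma\gamma')^{-1}\tilde{\gamma}$. The first term is
\[
\zeta^{(\gamma',z)}(\gamma^{-1}\tilde{\gamma})=\sqrt{z_{\sigma}}\,\big(h(\sigma)-h(\gamma^{-1}\tilde{\gamma})\big).
\]
For the second term, note that $(\gamma' z)_{\mu}=z_{\gamma'^{-1}\mu}$ by the definition of the action on $Z$. Hence $(\gamma' z)_{\gamma^{-1}\tilde{\gamma}}=z_{\sigma}$, and
\[
\zeta^{(\gamma,\gamma'z)}(\tilde{\gamma})=\sqrt{z_{\sigma}}\,\big(h(\gamma^{-1}\tilde{\gamma})-h(\tilde{\gamma})\big).
\]
Both terms carry the same weight $\sqrt{z_{\sigma}}$, so their sum telescopes to $\sqrt{z_{\sigma}}\,\big(h(\sigma)-h(\tilde{\gamma})\big)=\zeta^{(\gamma\gamma',z)}(\tilde{\gamma})$, as required.

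The only delicate point is this bookkeeping of the base-point change $z'=\gamma' z$. It is exactly what makes the square-root weights in the two terms coincide, so that the $h$-differences telescope.
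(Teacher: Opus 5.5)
Your proof is correct and follows the paper's argument essentially verbatim: both reduce (2), via $\lambda^{\gamma}\lambda^{\gamma'}=\lambda^{\gamma\gamma'}$, to the cocycle identity $\lambda^{\gamma}\zeta^{(\gamma',z)}+\zeta^{(\gamma,\gamma' z)}=\zeta^{(\gamma\gamma',z)}$. Both then verify it pointwise using $(\gamma' z)_{\gamma^{-1}\tilde{\gamma}}=z_{\gamma'^{-1}\gamma^{-1}\tilde{\gamma}}$, so that the two $h$-differences carry the same weight and telescope. The only additions on your side are the explicit checks that $\lambda$ is a homomorphism and that $\zeta^{(e,z)}=0$, which the paper treats as immediate.
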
	
	\begin{proof}
		The first statement follows directly from the definition. 
		For the second one, 
		since $\lambda^{\scalebox{0.6}{$\gamma$}}\lambda^{\scalebox{0.6}{$\gamma^\prime$}}=\lambda^{\scalebox{0.6}{$\gamma\gamma^\prime$}}$
		for all 
		$\gamma, \gamma^\prime \in \Gamma$,
		it suffices to verify that
		\begin{equation*}
			\lambda^{\scalebox{0.6}{$\gamma$}}
			\big(
			\zeta^{\scalebox{0.6}{$(\gamma^\prime,z)$}}
			\big)
			+ \zeta^{\scalebox{0.6}{$(\gamma,z^\prime)$}}
			= \zeta^{\scalebox{0.6}{$(\gamma\gamma^\prime,z)$}},
		\end{equation*}
		where
		$z^\prime=\gamma^\prime z$. 
		
		Write 
		$z=\sum\limits_{\scalebox{0.6}{$\tilde{\gamma}\in\Gamma$}} 
		z_{\tilde{\gamma}}\cdot
		\tilde{\gamma}$.
		Then 
		$z^\prime=\gamma^\prime z=\sum\limits_{\scalebox{0.6}{$\tilde{\gamma}\in\Gamma$}} 
		z_{\tilde{\gamma}}\cdot
		(\gamma^\prime\tilde{\gamma})$   
		and for every $\tilde{\gamma}\in\Gamma$,
		we have
		\begin{eqnarray*}
			\Big(
			\lambda^{\scalebox{0.6}{$\gamma$}}
			\big(
			\zeta^{\scalebox{0.6}{$(\gamma^\prime,z)$}}
			\big)
			+ \zeta^{\scalebox{0.6}{$(\gamma,z^\prime)$}}
			\Big)(\tilde{\gamma})
			& = & 
			\zeta^{\scalebox{0.6}{$(\gamma^\prime,z)$}}
			(\gamma^{-1}\tilde{\gamma})
			+ \zeta^{\scalebox{0.6}{$(\gamma,z^\prime)$}}
			(\tilde{\gamma})
			\\
			&=&
			\sqrt{z_{\scalebox{0.6}{$\gamma^{\prime-1}\gamma^{-1}\tilde{\gamma}$}}}
			\cdot \Big(h(\gamma^{\prime-1}\gamma^{-1}\tilde{\gamma})-
			h(\gamma^{-1}\tilde{\gamma})
			\Big) +
			\\
			&& 
			\sqrt{z_{\scalebox{0.6}{$\gamma^{\prime-1}\gamma^{-1}\tilde{\gamma}$}}}
			\cdot 
			\Big(
			h(\gamma^{-1}\tilde{\gamma}) -
			h(\tilde{\gamma})
			\Big)
			\\
			&=& 
			\sqrt{z_{\scalebox{0.6}{$\gamma^{\prime-1}\gamma^{-1}\tilde{\gamma}$}}}
			\cdot \Big(h(\gamma^{\prime-1}\gamma^{-1}\tilde{\gamma})-
			h(\tilde{\gamma})
			\Big) 
			\\
			&=&
			\zeta^{\scalebox{0.6}{$(\gamma\gamma^{\prime},z)$}}(\tilde{\gamma}).
		\end{eqnarray*}
		This verifies the second statement. 
	\end{proof}

	We now construct a $\Gamma$-action 
	(in fact, an affine isometric action) on 
	$C_{b}(Z, \mathscr{X}_{\Gamma})$
	induced by the family of affine isometries 
	\[\Big\{
	U^{\scalebox{0.6}{$(\gamma,z)$}} ~\Big|~ 
	\gamma\in\Gamma, z \in  Z 
	\Big\}.\]
	For each
	$\gamma\in\Gamma$
	and each
	$f\in C_{b}(Z, \mathscr{X}_{\Gamma})$,
	define a function 
	$$
	\gamma\cdot f:
	Z \longrightarrow \mathscr{X}_{\Gamma}
	$$
	by
	\begin{equation}	\label{eq:theactionsofgammaonsections}
		\big(
		\gamma\cdot f
		\big)
		(z)
		=	
		U^{\scalebox{0.6}{$(\gamma,\gamma^{-1}z)$}}
		\big(
		f(\gamma^{-1}z)
		\big)
	\end{equation}
	for $z\in{Z}$.

	\begin{lem}\label{The-very-most-essentional-idea}
		$\gamma\cdot f
		\in
		C_{b}(Z, \mathscr{X}_{\Gamma})$ 
		for all
		$\gamma\in\Gamma$
		and 
		$f\in
		C_{b}(Z, \mathscr{X}_{\Gamma})$. 
	\end{lem}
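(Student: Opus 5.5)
We must show that $\gamma\cdot f$ is bounded and uniformly continuous on $Z$. Fix $\gamma\in\Gamma$ and $f\in C_{b}(Z,\mathscr{X}_{\Gamma})$. By \eqref{eq:theactionsofgammaonsections} and \eqref{eq:affineisometricesgeneralone},
\[
(\gamma\cdot f)(z)=\lambda^{\scalebox{0.6}{$\gamma$}}\big(f(\gamma^{-1}z)\big)+\zeta^{\scalebox{0.6}{$(\gamma,\gamma^{-1}z)$}} .
\]
The plan is to treat the two summands separately. The first is a linear isometry composed with $f$ composed with an isometry of $Z$. The second is the cocycle term, viewed as a function of $z$.

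\textbf{Boundedness.} Since $\lambda^{\scalebox{0.6}{$\gamma$}}$ is isometric, the first term has norm at most $\|f\|$. By \eqref{eq:upperboundforthecocycles}, the second term has norm at most $\rho^{+}(d(\gamma^{-1},e))$, and this bound is independent of $z$. So $\sup_{z}\|(\gamma\cdot f)(z)\|\leq \|f\|+\rho^+(d(\gamma^{-1},e))<\infty$.

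\textbf{Uniform continuity of the first term.} The action of $\gamma^{-1}$ on $Z$ is an isometry for the $\ell^1$-metric. Hence
\[
\|\lambda^{\scalebox{0.6}{$\gamma$}}(f(\gamma^{-1}z))-\lambda^{\scalebox{0.6}{$\gamma$}}(f(\gamma^{-1}z'))\|=\|f(\gamma^{-1}z)-f(\gamma^{-1}z')\|\leq \omega_f(\|z-z'\|_1),
\]
and $\omega_f(\delta)\to0$ as $\delta\to0^+$.

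\textbf{Uniform continuity of the cocycle term.} This is the only real computation. Write $w=\gamma^{-1}z$ and $w'=\gamma^{-1}z'$, so that $\|w-w'\|_1=\|z-z'\|_1$. From \eqref{eq:cocycles},
\[
\big\|\zeta^{\scalebox{0.6}{$(\gamma,w)$}}-\zeta^{\scalebox{0.6}{$(\gamma,w')$}}\big\|^2=\sum_{\tilde\gamma}\big(\sqrt{w_{\gamma^{-1}\tilde\gamma}}-\sqrt{w'_{\gamma^{-1}\tilde\gamma}}\big)^2\,\|h(\gamma^{-1}\tilde\gamma)-h(\tilde\gamma)\|^2 .
\]
By bornology and right-invariance of $d$, each factor $\|h(\gamma^{-1}\tilde\gamma)-h(\tilde\gamma)\|$ is at most $\rho^+(d(\gamma^{-1},e))$. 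We then use the elementary inequality $(\sqrt a-\sqrt b)^2\leq |a-b|$ for $a,b\geq0$. Together these give
\[
\big\|\zeta^{\scalebox{0.6}{$(\gamma,w)$}}-\zeta^{\scalebox{0.6}{$(\gamma,w')$}}\big\|\leq \rho^+(d(\gamma^{-1},e))\cdot\|z-z'\|_1^{1/2}.
\]
So the cocycle term is even Hölder continuous in $z$.

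\textbf{Conclusion.} Adding the two estimates gives
\[
\omega_{\gamma\cdot f}(\delta)\leq \omega_f(\delta)+\rho^+(d(\gamma^{-1},e))\,\delta^{1/2}\to0 .
\]
This proves that $\gamma\cdot f\in C_b(Z,\mathscr{X}_\Gamma)$. The one step needing care is the square-root inequality: the map $z\mapsto\sqrt{z}$ from $\ell^1$ to $\ell^2$ is only $\tfrac12$-Hölder, not Lipschitz, but that suffices for uniform continuity.
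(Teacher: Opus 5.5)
Your proof is correct and follows essentially the same route as the paper. You use the same splitting into the translated term $\lambda^{\gamma}(f(\gamma^{-1}z))$ and the cocycle term, the same bound $\|f\|+\rho^{+}(d(\gamma^{-1},e))$, and the same $\tfrac12$-H\"older estimate $\rho^{+}(d(\gamma^{-1},e))\,\|y-z\|^{1/2}$; the only difference is that you derive this estimate from $(\sqrt a-\sqrt b)^2\le |a-b|$ (with explicit constant $1$), whereas the paper phrases it via the Mazur map $\mathcal{M}_{1,2}$ and Lemma \ref{ineaualityforMazurmap1}.
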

	\begin{proof}
		
		Let 
		$\gamma\in\Gamma$ 
		and 
		$f\in
		C_{b}(Z, \mathscr{X}_{\Gamma})$
		be fixed.
		Then for any 
		$y,z\in Z$,  	
		\begin{eqnarray*}
			\Big\| 
			(\gamma\cdot f)
			(y)-
			(\gamma\cdot f)
			(z)
			\Big\| &=& 
			\Big\| U^{\scalebox{0.6}{$(\gamma,\gamma^{-1}y)$}}
			\big(
			f(\gamma^{-1}y)
			\big)
			-
			U^{\scalebox{0.6}{$(\gamma,\gamma^{-1}z)$}}
			\big(
			f(\gamma^{-1}z)
			\big)
			\Big\|
			\\
			&=& 
			\Big\| 
			\lambda^{\scalebox{0.6}{$\gamma$}}
			\big(
			f(\gamma^{-1}y)
			\big)
			+\zeta^{\scalebox{0.6}{$(\gamma,\gamma^{-1}y)$}}
			-
			\lambda^{\scalebox{0.6}{$\gamma$}}
			\big(
			f(\gamma^{-1}z)
			\big)
			-\zeta^{\scalebox{0.6}{$(\gamma,\gamma^{-1}z)$}}
			\Big\|
			\\
			&\leq& 
			\Big\| 
			\lambda^{\scalebox{0.6}{$\gamma$}}
			\big(
			f(\gamma^{-1}y)
			\big)
			-
			\lambda^{\scalebox{0.6}{$\gamma$}}
			\big(
			f(\gamma^{-1}z)
			\big)
			\Big\|
			+
			\Big\| \zeta^{\scalebox{0.6}{$(\gamma,\gamma^{-1}y)$}}
			-\zeta^{\scalebox{0.6}{$(\gamma,\gamma^{-1}z)$}}
			\Big\|
			\\
			&=&
			\Big\| 
			f(\gamma^{-1}y)
			-
			f(\gamma^{-1}z)
			\Big\|
			+
			\Big\| 
			\zeta^{\scalebox{0.6}{$(\gamma,\gamma^{-1}y)$}}
			-\zeta^{\scalebox{0.6}{$(\gamma,\gamma^{-1}z)$}}
			\Big\|.
		\end{eqnarray*}

		For the second term,
		we have
		\begin{eqnarray*}
			\Big\|
			\zeta^{\scalebox{0.6}{$(\gamma,\gamma^{-1}y)$}}-
			\zeta^{\scalebox{0.6}{$(\gamma,\gamma^{-1}z)$}}
			\Big\|
			&=&
			\sqrt{\sum_{\scalebox{0.7}{$\tilde{\gamma}\in\Gamma$}}
				\Bigg\|
				\sqrt{y_{\tilde{\gamma}}}
				\cdot \Big(h(\gamma^{^{-1}}\tilde{\gamma})-
				h(\tilde{\gamma})\Big)	-	
				\sqrt{z_{\tilde{\gamma}}}
				\cdot \Big(h(\gamma^{^{-1}}\tilde{\gamma})-
				h(\tilde{\gamma})\Big)
				\Bigg\|^2}
			\\ 	
			&=&
			\sqrt{\sum_{\scalebox{0.7}{$\tilde{\gamma}\in\Gamma$}}
				\big|
				\sqrt{y_{\tilde{\gamma}}}
				-	
				\sqrt{z_{\tilde{\gamma}}}
				\big|^2
				\cdot 
				\Big\|
				h(\gamma^{^{-1}}\tilde{\gamma})-
				h(\tilde{\gamma})
				\Big\|^2}
			\\ 	
			&\leq& 
			\rho^{+}
			\big(
			d(\gamma^{-1},e)
			\big)
			\cdot \sqrt{\sum_{\scalebox{0.7}{$\tilde{\gamma}\in\Gamma$}}
				\big|	\sqrt{y_{\tilde{\gamma}}}
				-
				\sqrt{z_{\tilde{\gamma}}}
				\big|^2} 
			\\
			&=& 
			\rho^{+}
			\big(
			d(\gamma^{-1},e)
			\big)
			\cdot 
			\Big\|
			\mathcal{M}_{1,2}(y)-\mathcal{M}_{1,2}(z)
			\Big\|
			\\
			&\leq&	
			\rho^{+}
			\big(
			d(\gamma^{-1},e)
			\big)
			\cdot 
			C
			\cdot
			\sqrt{\|y-z\|}, 
		\end{eqnarray*}
		where 
		$\mathcal{M}_{1,2}: S(\ell^1(\Gamma))\to  S(\ell^2(\Gamma))$
		is the Mazur map between the unit spheres, 
		and $C$ is the constant given
		by Lemma \ref{ineaualityforMazurmap1}.

		Combining the above estimates, 
		we obtain 
		\begin{eqnarray*}
			&& 
			\Big\| 
			(\gamma\cdot f)
			(y)-
			(\gamma\cdot f)
			(z)
			\Big\|
			\\ 
			&\leq&
			\Big\| 
			f(\gamma^{-1}y)
			-
			f(\gamma^{-1}z)
			\Big\|
			+
			\rho^{+}
			\big(
			d(\gamma^{-1},e)
			\big)
			\cdot 
			C
			\cdot
			\sqrt{\|y-z\|} 
		\end{eqnarray*}	
		for all  $y,z \in  Z$.
		Since 
		the $\Gamma$-action on $Z$
		is isometric and 
		$f$ is uniformly continuous,
		it follows that
		$\gamma\cdot f$
		is uniformly continuous on $Z$.
		
		On the other hand, 
		$\gamma\cdot f$ is bounded since
		\begin{eqnarray*}
			\sup\limits_{z\in Z}
			\Big\|
			(\gamma\cdot f)
			(z)
			\Big\| &=&
			\sup\limits_{z\in Z}
			\Big\| 
			U^{\scalebox{0.6}{$(\gamma,\gamma^{-1}z)$}}
			\big(
			f(\gamma^{-1}z)
			\big)
			\Big\|
			\\
			&=&
			\sup\limits_{z\in Z}
			\Big\|
			\lambda^{\scalebox{0.6}{$\gamma$}}
			\big(
			f(\gamma^{-1}z)
			\big)
			+ \zeta^{\scalebox{0.6}{$(\gamma,\gamma^{-1}z)$}}
			\Big\|
			\\
			&\stackrel{\eqref{eq:upperboundforthecocycles}}{\leq}&
			\|f\|+ 	\rho^{+}
			\big(
			d(\gamma^{-1},e)
			\big).
		\end{eqnarray*}
		This completes the proof.
	\end{proof}

	By Lemma \ref{affineisometryproperty1} and Lemma \ref{The-very-most-essentional-idea},
	\eqref{eq:theactionsofgammaonsections}  gives rise to a $\Gamma$-action on 
	$C_{b}(Z, \mathscr{X}_{\Gamma})$. 
	The following remark shows that this action is affine isometric, although this is not essential for our purposes.
	
	\begin{rmk}
		Let  
		$$
		\pi:\Gamma\longrightarrow \text{Isom}
		\big(
		C_{b}(Z, \mathscr{X}_{\Gamma})
		\big)
		$$
		be the 
		group homomorphism
		given by 
		\begin{equation*}
			\big(\pi(\gamma)f\big)(z)=	\lambda^{\scalebox{0.6}{$\gamma$}}
			\big(f(\gamma^{-1}z)\big) 
		\end{equation*}
		for all
		$\gamma\in\Gamma$,
		$f\in  C_{b}(Z, \mathscr{X}_{\Gamma})$
		and 
		$z\in Z$.
		For each 
		$\gamma\in\Gamma$,
		define 
		$\kappa^{\scalebox{0.6}{$\gamma$}}: Z\to \mathscr{X}_{\Gamma}$
		by 
		$\kappa^{\scalebox{0.6}{$\gamma$}}(z)=
		\zeta^{\scalebox{0.6}{$(\gamma,\gamma^{-1}z)$}}$.	
		It follows from \eqref{eq:affineisometricesgeneralone} and \eqref{eq:theactionsofgammaonsections}
		that,
		\begin{eqnarray*}
			(\gamma\cdot f)
			(z)
			&=&	U^{\scalebox{0.6}{$(\gamma,\gamma^{-1}z)$}}
			\big(
			f(\gamma^{-1}z)
			\big)
			\\
			&=&
			\lambda^{\scalebox{0.6}{$\gamma$}}
			\big(
			f(\gamma^{-1}z)
			\big)
			+\zeta^{\scalebox{0.6}{$(\gamma,\gamma^{-1}z)$}}
			\\
			&=&
			\big(\pi(\gamma)f\big)(z)
			+
			\kappa^{\scalebox{0.6}{$\gamma$}}(z)
		\end{eqnarray*}
		for all 
		$\gamma\in\Gamma$,
		$f\in
		C_{b}(Z, \mathscr{X}_{\Gamma})$
		and 
		$z\in {Z}$.
		Thus
		$$
		\gamma\cdot f
		= 	
		\pi(\gamma)f
		+
		\kappa^{\scalebox{0.6}{$\gamma$}}.
		$$
		By Lemma \ref{The-very-most-essentional-idea}, $\kappa^{\scalebox{0.6}{$\gamma$}}
		=
		\gamma\cdot f
		-
		\pi(\gamma)f
		\in 
		C_{b}(Z, \mathscr{X}_{\Gamma})$
		for all
		$\gamma\in\Gamma$.  
		Therefore,
		the $\Gamma$-action
		on 
		$C_{b}(Z, \mathscr{X}_{\Gamma})$  
		is  affine isometric,
		with associated one-cocycles 
		$\{
		\kappa^{\scalebox{0.6}{$\gamma$}} ~|~ \gamma\in\Gamma
		\}$.
	\end{rmk}

	We emphasize that all the results above rely only on the assumption that $h$ is bornologous.
	In the following proposition, we impose the stronger assumption that $h$ is a coarse embedding in order to establish properness.
	The affine isometric $\Gamma$-action
	on 
	$C_{b}(Z, \mathscr{X}_{\Gamma})$  
	is called \emph{uniformly proper} if 
	$$	
	\lim\limits_{\gamma\rightarrow\infty}
	\inf\limits_{z\in{Z}}
	\big\|
	\kappa^{\scalebox{0.6}{$\gamma$}}(z)
	\big\|
	=+\infty. 
	$$
	Recall that 	
	$h: \Gamma \to \mathscr{X}$  
	is a coarse embedding if 
	$h$ is bornologous, and there exists  non-decreasing function 
	$\rho^{-}: [0,+\infty) \to [0,+\infty)$ 
	such that
	$\lim\limits_{t\to\infty}\rho^{-}(t)=+\infty$ and 
	\begin{equation}	\label{eq:lowerboundforcoarseembedding}
		\Big\|
		h(\gamma_{1})-
		h(\gamma_{2})
		\Big\|
		\geq\rho^{-}
		\big(d(\gamma_{1},
		\gamma_{2})\big)
	\end{equation}
	for all 
	$\gamma_{1},\gamma_{2}\in\Gamma$.

	We conclude this section by showing that  the $\Gamma$-action on 
	$C_{b}(Z, \mathscr{X}_{\Gamma})$
	is uniformly proper.

	\begin{prop}\label{strongproperness}
		Suppose that
		$h:\Gamma\rightarrow\mathscr{X}$ 
		is a coarse embedding.
		Then 
		\begin{equation*}
			\lim\limits_{\gamma\rightarrow\infty}
			\inf\limits_{z\in{Z}}
			\big\|
			\zeta^{\scalebox{0.6}{$(\gamma,z)$}}
			\big\|
			=+\infty
			~\bigg(
			\text{equivalently}~ 	
			\lim\limits_{\gamma\rightarrow\infty}
			\inf\limits_{z\in{Z}}
			\big\|
			\kappa^{\scalebox{0.6}{$\gamma$}}(z)
			\big\|
			=+\infty 
			\bigg).
		\end{equation*}
	\end{prop}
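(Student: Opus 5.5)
The idea is to compute $\|\zeta^{(\gamma,z)}\|^2$ exactly and bound it below using the coarse embedding inequality \eqref{eq:lowerboundforcoarseembedding}, in the same way \eqref{eq:upperboundforthecocycles} was obtained from the upper control function. The weights $z_{\gamma^{-1}\tilde\gamma}$ sum to $1$, so the lower bound will be uniform in $z\in Z$.

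First, by the definition \eqref{eq:cocycles} we have
\[
\big\|\zeta^{(\gamma,z)}\big\|^2=\sum_{\tilde\gamma\in\Gamma} z_{\gamma^{-1}\tilde\gamma}\,\big\|h(\gamma^{-1}\tilde\gamma)-h(\tilde\gamma)\big\|^2 .
\]
Since $d$ is right-invariant, $d(\gamma^{-1}\tilde\gamma,\tilde\gamma)=d(\gamma^{-1},e)$ for every $\tilde\gamma$. Applying \eqref{eq:lowerboundforcoarseembedding} to each term therefore gives
\[
\big\|\zeta^{(\gamma,z)}\big\|^2\ \ge\ \sum_{\tilde\gamma} z_{\gamma^{-1}\tilde\gamma}\,\rho^-\big(d(\gamma^{-1},e)\big)^2=\rho^-\big(d(\gamma^{-1},e)\big)^2,
\]
because $\sum_{\tilde\gamma} z_{\gamma^{-1}\tilde\gamma}=\sum_{\gamma'} z_{\gamma'}=1$ after reindexing $\gamma'=\gamma^{-1}\tilde\gamma$. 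Taking the infimum over $z\in Z$ yields
\[
\inf_{z\in Z}\|\zeta^{(\gamma,z)}\|\ge \rho^-\big(d(\gamma^{-1},e)\big).
\]

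Next, the metric $d$ is proper, so $\gamma\to\infty$ (leaving every finite set) forces $d(\gamma^{-1},e)\to\infty$. Indeed, $\gamma\mapsto\gamma^{-1}$ is a bijection and balls are finite. Since $\rho^-(t)\to+\infty$, the first limit follows.

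For the equivalent formulation, recall $\kappa^\gamma(z)=\zeta^{(\gamma,\gamma^{-1}z)}$. Because $z\mapsto\gamma^{-1}z$ is a bijection of $Z$, the two infima over $Z$ coincide, so the two statements are the same.

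None of these steps is really an obstacle. The one point needing care is the reindexing of the weights to get $\sum_{\tilde\gamma} z_{\gamma^{-1}\tilde\gamma}=1$, which is what makes the bound independent of $z$. One should also check that the properness assumption on $d$ is what converts $\gamma\to\infty$ into $d(\gamma^{-1},e)\to\infty$.
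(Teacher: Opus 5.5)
Your argument is correct and is essentially the paper's proof. The paper computes $\|\kappa^{\gamma}(z)\|=\|\zeta^{(\gamma,\gamma^{-1}z)}\|$ directly, where the weights become $z_{\tilde\gamma}$, and bounds it below by $\rho^{-}(d(\gamma^{-1},e))$ using right-invariance; you instead work with $\zeta^{(\gamma,z)}$, pass to $\kappa^{\gamma}$ via the bijection $z\mapsto\gamma^{-1}z$, and spell out the properness step ($\gamma\to\infty$ forces $d(\gamma^{-1},e)\to\infty$) that the paper leaves implicit.
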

	\begin{proof}
		It follows from \eqref{eq:cocycles} and  \eqref{eq:lowerboundforcoarseembedding}
		that
		\begin{eqnarray*}
			\big\|
			\kappa^{\scalebox{0.6}{$\gamma$}}(z)
			\big\|
			&=&
			\big\|
			\zeta^{\scalebox{0.6}{$(\gamma,\gamma^{-1}z)$}}
			\big\|
			\\
			&=&
			\sqrt{
				\sum_{\scalebox{0.7}{$\tilde{\gamma}\in\Gamma$}}
				z_{\tilde{\gamma}}
				\cdot 
				\big\|
				h(\gamma^{-1}\tilde{\gamma})-
				h(\tilde{\gamma})
				\big\|^2
			}
			\\
			&\geq& 
			\rho^{-}
			\big(
			d(\gamma^{-1},e)
			\big)
		\end{eqnarray*}
		for all 
		$\gamma\in\Gamma$
		and 
		$z\in 
		Z$.
		Since $\lim\limits_{t\to\infty}\rho^{-}(t)=+\infty$, 
		the conclusion follows. 			
	\end{proof}

	\begin{rmk}
		The uniform properness above is equivalent to the classical notion of properness for the transformation groupoid action on  continuous fields of Banach spaces; 
		see 
		Definition 6.3 in \cite{STY}.
		Moreover, all results in this section remain valid upon replacing
		$\ell^2(\Gamma, \mathscr{X})$  by $\ell^p(\Gamma, \mathscr{X})$, and the square root by the 
		$p$-th root.  
		We believe that the action constructed in this section on the trivial bundle with Banach-space fibres may be of independent interest in the study of group dynamical systems. 
	\end{rmk}

	\section{The $C^*$-algebra 
		associated to a function space}
	
	In this section, 
	we construct a  $C^*$-algebra associated to a function space
	$C_{b}(Y,\mathscr{X})$,
	where $Y$ is a metric space and $\mathscr{X}$ is a Banach space with Property $(H)$. 
	This $C^*$-algebra, 
	with  $Y$  taken to be  $Z$ in \eqref{eq:simplicityforthebasespaceZ} 
	 and $\mathscr{X}$  replaced by  
	$\mathscr{X}_{\Gamma} =\ell^2(\Gamma, \mathscr{X})$,
will serve as building blocks for the proper $\Gamma$-algebras constructed in subsequent sections.

	\subsection{Clifford $C^*$-algebras}
	Given a \emph{real} Hilbert space $\mathscr{H}$, 
	we write $\text{Cliff} _{\mathbb{C}}(\mathscr{H})$ 
	for the complex Clifford $C^*$-algebra generated by $\mathscr{H}$. 
	More precisely, 
	consider the \emph{antisymmetric Fock space} 
	\[
	\varLambda_{\mathbb{C}} ^*\mathscr{H}:= \bigoplus_{k=0}^\infty \varLambda_{\mathbb{C}}^k \mathscr{H},
	\]
	where $\varLambda_{\mathbb{C}}^k \mathscr{H}$, the \emph{$k$-th complex exterior power} of $\mathscr{H}$, is defined to be the complexification of the quotient of the real tensor vector space $\bigotimes_{j=1}^k\mathscr{H}$ by equating $w_1\otimes\cdots\otimes w_k$ with $\text{sgn}(\sigma)\ w_{\sigma(1)}\otimes\cdots\otimes w_{\sigma(k)}$ for any $k$-permutation $\sigma$, with the equivalence class denoted by $w_1\wedge\cdots\wedge w_k$. For each $v \in \mathscr{H}$, we may define its \emph{exterior multiplication operator} $\text{ext}(v): \varLambda_{\mathbb{C}} ^*\mathscr{H}\rightarrow\varLambda_{\mathbb{C}}^*\mathscr{H}$ 
	by
	\[
	\text{ext}(v)(w_1\wedge\cdots\wedge w_k) = v\wedge w_1\wedge\cdots\wedge w_k \; .
	\] Let $\text{int}(v)$ be the adjoint operator of $\text{ext}(v)$. 
	The \textit{Clifford multiplication operators}  $C(v)$ is  defined by
	$$C(v)= \text{ext}(v)+\text{int}(v)$$
	for each $v\in \mathscr{H}$. 
	We have the relation
	\begin{equation*}\label{cliffordrelation}
		C(v)C(w)+C(w)C(v) =2 \left\langle v,w \right\rangle
	\end{equation*}
	for any $v,w \in \mathscr{H}$. In particular,
	$C(v)^2=\|v\|^2$,
	which is a scalar multiplication. 
	Notice that $C$ is linear which means that 
	$C(\alpha v_1+\beta v_2)=\alpha C(v_1)+\beta C(v_2)$ 
	for all $\alpha,\beta\in\mathbb{R}$ and $v_1,v_2\in \mathscr{H}$. 
	The \emph{(complex) Clifford $C^*$-algebra} $\text{Cliff}_\mathbb{C} (\mathscr{H})$ of $\mathscr{H}$ is the $*$-subalgebra of 
	$\mathcal{B}(\varLambda_{\mathbb{C}} ^*\mathscr{H})$ generated by $\{C(v) ~|~ v \in \mathscr{H}\}$. We remark that the assignment $\mathscr{H} \mapsto \text{Cliff}_\mathbb{C}(\mathscr{H})$ 
	is functorial with regard to isometric linear embeddings of Hilbert spaces and $*$-homomorphisms and it also preserves direct limits in the respective categories. In particular, the involutive isometry on $\mathscr{H}$ that takes each $v$ to $-v$ induces a distinguished involutive $*$-automorphism of 
	$\text{Cliff}_\mathbb{C}(\mathscr{H})$, 
	which turns the latter into a \emph{{graded} $C^*$-algebra}. All $*$-homomorphisms induced from isometric linear embeddings of Hilbert spaces also preserve the grading.

	\begin{rmk}\label{functionalcalculusforvectors} 
		We point out an elementary fact about functional calculus in Clifford $C^*$-algebras: 
		for any $v \in \mathscr{H}$ and any bounded continuous complex function $f$ on $\mathbb{R}$, since $C(v)$ is self-adjoint, we may apply functional calculus to obtain $f(C(v))$, and the result is rather simple: 
		\begin{enumerate}
			\item if $f$ is even, then $f(C(v))$ is equal to the scalar $f(\|v\|)$; 
			\item if $f$ is odd, then $f(C(v))$ is equal to $f(\|v\|) C\big(\frac{v}{\|v\|}\big)$ 
			(understood to be $0$ when $v = 0$), which is a rescaling of $C(v)$. 
		\end{enumerate}
	\end{rmk}

	\subsection{The $C^*$-algebra  associated to a  Banach space with Property 
		$(H)$}
	
	In this subsection, we recall the $C^*$-algebra  (\cite{KY})  associated to a  Banach space with Property 
	$(H)$.
	
	\begin{defn}
		Let $Y$ be a metric space with a fixed base point $y_0\in Y$, and let $A$ be a graded $C^*$-algebra. 
		Denote by 
		$\mathcal{F}(Y,A)$ 
		the graded $C^*$-algebra of all bounded functions from $Y$ to $A$, equipped with the supremum norm
		and with the grading induced from that of
		$A$.
		Moreover,  denote by $C_{0}(Y,A)$  the graded $C^*$-subalgebra of $\mathcal{F}(Y,A)$ 
		consisting of all
		bounded and uniformly continuous functions  $f$
		such that 
		$$
		\lim\limits_{
			d(y,y_0) \to \infty}
		f(y)=0.
		$$
	\end{defn}
	
	We remark that  $Y$ is not required to be locally compact, and that the definition of  $C_{0}(Y,A)$
	is independent of the choice of base point $y_0$.
	
	\begin{defn}\label{Cliffordmap1}
		Suppose that $\mathscr{X}$ is a Property 
		$(H)$
		Banach space with a Property 
		$(H)$
		map 
		$\phi: S(\mathscr{X})\to S(\mathscr{H})$. 	
		Let  $\varPhi: \mathscr{X} \to \mathscr{H}$
		be a proper extension of $\phi$.
		We define
		$\mathfrak{B} :\mathscr{X} \to
		\text{Cliff}_{\mathbb{C}}
		(\mathscr{H})$
		by $v \mapsto  C\big(\varPhi(v)\big)$.	
		Moreover,
		for any  
		$v_{0} \in \mathscr{X}$,  
		we define 
		$\mathfrak{B}^{v_{0}}: \mathscr{X} \to
		\text{Cliff}_{\mathbb{C}}
		(\mathscr{H})$  
		by 
		$\mathfrak{B}^{v_{0}}(v) = \mathfrak{B}(v-v_{0})$.
	\end{defn}

	Denote by $\varTheta$ the function of multiplication by $\theta$ on $\mathbb{R}$,
	viewed as a degree one, essentially self-adjoint, unbounded multiplier of 
	$\mathcal{S}=C_0(\mathbb{R})$  with domain the compactly supported functions in $\mathcal{S}$.
	We grade $\mathcal{S}$ according
	to even and odd functions, and write
	$\mathcal{S}_{\rm even}$ for the subspace of all even functions.
	
	The third and fourth conditions for proper extension  in Definition \ref{ProperextensionsofPropertyHspaces}
	guarantee that, for each $v\in\mathscr{X}$,
	$\mathfrak{B}^{v}$ is 
	a degree one, essentially self-adjoint,
	unbounded multiplier of 
	${C}_{0}
	\big(	
	\mathscr{X}, 
	\text{Cliff}_{\mathbb{C}}
	(\mathscr{H})
	\big)$ 
	with domain 
	${C}_{c}
	\big(	
	\mathscr{X}, 
	\text{Cliff}_{\mathbb{C}}
	(\mathscr{H})
	\big)$, 
	consisting of
	all bounded and uniformly continuous functions
	whose supports are bounded subsets of $\mathscr{X}$.

	Thus, 
	for each $v\in\mathscr{X}$, 
	the operator 
	$\varTheta \widehat{\otimes} 1
	+1\widehat{\otimes} 		\mathfrak{B}^{v}$ 
	is a degree one, essentially self-adjoint,
	unbounded multiplier of
	$\mathcal{S}\widehat{\otimes} {C}_{0}
	\big(	
	\mathscr{X}, 
	\text{Cliff}_{\mathbb{C}}
	(\mathscr{H})
	\big)$, 
	and 
	has compact resolvents in the sense of multiplier algebra theory \cite{KY}. 
	Here $\widehat{\otimes}$ denotes the graded tensor product. 
	Hence, by functional calculus, 
	we obtain a graded $*$-homomorphism 
	\begin{equation}	\label{eq:functionalcalculusone}
		\begin{array}{rcl}
			\beta_{v}:~ \mathcal{S} &\longrightarrow&
			\mathcal{S}\widehat{\otimes} {C}_{0}
			\big(\mathscr{X}, 
			\text{Cliff}_{\mathbb{C}}
			(\mathscr{H})
			\big)
			\\
			f &\longmapsto &
			f\big(
			\varTheta \widehat{\otimes} 1+1\widehat{\otimes} 		\mathfrak{B}^{v}
			\big).
		\end{array}
	\end{equation}
	For 
	$v=0$,
	we simply write
	$\beta$ for it.

	\begin{defn}[\cite{KY}]
		\label{basicproperalgebra-geng}
		Let  $\mathscr{X}$  be a Property $(H)$ Banach space with a Property 
		$(H)$
		map 
		$\phi: S(\mathscr{X})\to S(\mathscr{H})$. 	 
		Denote by 
		$\mathcal{A}
		(\mathscr{X})$
		the graded $C^*$-subalgebra of 
		$
		\mathcal{S}\widehat{\otimes}
		{C}_{0}
		\big(\mathscr{X}, 
		\text{Cliff}_{\mathbb{C}}
		(\mathscr{H})
		\big)
		$
		generated by the set 
		$$\Big\{ 		\beta_{v}(f) ~\Big|~
		f\in \mathcal{S},~ v\in\mathscr{X}
		\Big\},$$
		where $\beta_{v}$  encodes information about $\phi$ and  is defined by \eqref{eq:functionalcalculusone}.
		Moreover, for a subspace 
		$\mathscr{V}\subseteq\mathscr{X}$,
		we denote by 	
		$\mathcal{A}
		(\mathscr{X}, \mathscr{V})$
		the graded $C^*$-subalgebra of 
		$\mathcal{A}
		(\mathscr{X})$
		generated by  
		$$\Big\{ \beta_{v}(f) ~\Big|~
		f\in \mathcal{S},~ v\in \mathscr{V}
		\Big\}.$$
	\end{defn}

	For any (not necessarily continuous) function $\xi$ from a metric space $Y$
	to  
	$\mathscr{X}$, 
	\eqref{eq:functionalcalculusone}  
	induces a graded $*$-homomorphism 
	\begin{equation}
		\label{eq:functionalcalculustwo}
		\begin{array}{rcl}
			\beta_{\xi}: ~ \mathcal{S} &\longrightarrow &
			\mathcal{F}
			\big(
			Y, ~
			\mathcal{A}
			(\mathscr{X})
			\big)
			\\
			f &\longmapsto & 
			\Big\{
			y\mapsto 
			\beta_{\xi(y)}(f)
			\Big\}.
		\end{array}
	\end{equation}
	If 
	$\xi$ is the zero function,
	we denote it by 
	$\beta_{\bf 0}$.

Note that 
we \emph{disregard the gradings of these algebras} 
and  treat them as trivially graded  when computing their $K$-theory.

	\subsection{Uniform continuity}

	In this subsection, we show that, for any $f\in\mathcal{S}$,
	$\beta_{v}(f)$ 
	is uniformly continuous with respect to $v$  on every bounded subset of $\mathscr{X}$.
	For this purpose, we first introduce an analogue of the functional calculus in Section 4.2.

	\begin{lem}\label{Clifford-key-fact}
		Suppose that  $X$  is a metric space and $Y$ is  a locally compact metric space. 	
		Let $A$   
		be a graded $C^*$-algebra, and let $V$  be a finite-dimensional Euclidean space.
		Then there exists a graded $*$-isomorphism
		$$C_{0}\big(Y,\text{Cliff}_\mathbb{C}(V)\big)
		\widehat{\otimes}
		C_{0}(X,A) \cong C_{0}\big(Y\times X, \text{Cliff}_\mathbb{C}(V)\widehat{\otimes}A\big)$$ 
		mapping $f \widehat{\otimes} g$ to $\psi$,
		where $\psi$ is defined by
		$\psi(y,x)=f(y)  \widehat{\otimes} g(x)$ for 
		$(y,x)\in Y\times X$  and the metric on  
		$Y\times X$ is given by
		$d\big((y_{1},x_{1}),(y_{2},x_{2})\big)=\sqrt{d_{X}(x_{1},x_{2})^{2}+d_{Y}(y_{1},y_{2})^{2}}$. 
	\end{lem}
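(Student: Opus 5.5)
The plan is to reduce to the case of trivial coefficients in the Clifford factor, and then use the standard identification of tensor products of function algebras. Since $V$ is finite-dimensional, $\text{Cliff}_\mathbb{C}(V)$ is a finite-dimensional graded $C^*$-algebra. Hence
\[
C_{0}\big(Y,\text{Cliff}_\mathbb{C}(V)\big)\cong C_0(Y)\otimes \text{Cliff}_\mathbb{C}(V),
\]
with grading coming only from the Clifford factor, $C_0(Y)$ being trivially graded. Associativity and commutativity of the graded tensor product (all tensor products are unambiguous since $C_0(Y)$ is commutative, hence nuclear) then give
\[
C_{0}\big(Y,\text{Cliff}_\mathbb{C}(V)\big)\widehat{\otimes} C_0(X,A)\cong C_0(Y)\otimes\big(\text{Cliff}_\mathbb{C}(V)\widehat{\otimes}C_0(X,A)\big).
\]
Next, because $\text{Cliff}_\mathbb{C}(V)$ is finite-dimensional, $\text{Cliff}_\mathbb{C}(V)\widehat{\otimes}C_0(X,A)\cong C_0\big(X,\text{Cliff}_\mathbb{C}(V)\widehat{\otimes}A\big)$ pointwise. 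This is concrete: choose a basis $e_I$ of monomials in an orthonormal basis of $V$. Elements are then finite sums $\sum_I e_I\widehat{\otimes} g_I$ with $g_I\in C_0(X,A)$. Uniform continuity and vanishing at infinity are checked coordinatewise, and the norms are equivalent up to constants depending only on $\dim V$.

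It remains to show $C_0(Y)\otimes C_0(X,B)\cong C_0(Y\times X,B)$ for $B=\text{Cliff}_\mathbb{C}(V)\widehat{\otimes}A$, via $f\otimes g\mapsto\big((y,x)\mapsto f(y)g(x)\big)$. First, the map is well defined and isometric on the algebraic tensor product. For this, view $C_0(Y)\otimes D$ as $C_0(Y,D)$, which is the classical identification for locally compact $Y$. Then evaluation at points of $Y$ shows the image of $\sum f_i\otimes g_i$ has sup norm equal to the $C^*$-norm, since $C_0(Y,D)$ carries the sup norm. Second, the image functions lie in $C_0(Y\times X,B)$. Boundedness is clear. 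Uniform continuity for the product metric follows from $|f(y_1)g(x_1)-f(y_2)g(x_2)|\le\|f\|\omega_g(d_X)+\|g\|\omega_f(d_Y)$, using that $f\in C_0(Y)$ is uniformly continuous. Vanishing at infinity holds because if $d((y,x),(y_0,x_0))\to\infty$, then either $d_Y\to\infty$ or $d_X\to\infty$, killing one factor while the other stays bounded. Finally, the map is a graded $*$-homomorphism on elementary tensors, so the identification $(f\widehat\otimes g)\mapsto f(y)\widehat\otimes g(x)$ respects Koszul signs, given how the grading was transported through the reduction.

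The main obstacle is surjectivity: density of the span of $f(y)g(x)$ in $C_0(Y\times X,B)$ when $X$ is not locally compact. Given $\psi$ and $\epsilon>0$, I would first cut off in $Y$. Pick a compactly supported $\chi\in C_c(Y)$ with $0\le\chi\le1$ and $\chi=1$ on a large ball $K$, so that $\|\psi-\chi\psi\|<\epsilon$ (a tail estimate, fine since $\psi\to0$ at infinity). On the compact support $K'$ of $\chi$, use uniform continuity of $\psi$ in $y$, uniformly in $x$. Take a finite $\delta$-net $y_1,\dots,y_n$ of $K'$ and a partition of unity $\{u_i\}\subset C_c(Y)$ subordinate to $\delta$-balls. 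Then
\[
\Big\|\chi\psi-\sum_i u_i(y)\,\chi(y_i)\psi(y_i,\cdot)\Big\|<\epsilon.
\]
Each $\psi(y_i,\cdot)$ lies in $C_0(X,B)$, because uniform continuity and decay restrict to slices. This is a finite sum of elementary tensors, so the image is dense, and being a closed $*$-homomorphic isometric image it is everything. Local compactness of $Y$ is used exactly here, for the finite net and the compactly supported cutoff.
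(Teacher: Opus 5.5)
Your argument is correct in the setting where the lemma actually holds (see below), and it follows essentially the paper's route. Both proofs strip off the finite-dimensional factor $\text{Cliff}_\mathbb{C}(V)$ pointwise and then identify $C_0(Y)\otimes C_0(X,B)$ with $C_0(Y\times X,B)$, where $B=\text{Cliff}_\mathbb{C}(V)\widehat{\otimes}A$.

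The paper does the last step by passing through $C_0\big(Y,C_0(X,B)\big)$ and calling the currying map $\tau(f)(y,x)=f(y)(x)$ a $*$-isomorphism ``straightforward''. In that formulation surjectivity is trivial, and the real content is that $\tau(f)$ is uniformly continuous and decays. In your version well-definedness is trivial, and your cutoff/partition-of-unity density argument carries the weight. That argument supplies the analysis the paper leaves out.

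One correction to your last sentence: a cutoff $\chi\in C_c(Y)$ with $\chi=1$ on a large ball exists only if closed balls of $Y$ are compact, i.e.\ $Y$ is proper. Local compactness alone does not suffice. This is a defect of the stated hypothesis rather than of your method, as the following example shows.

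Take $Y=\mathbb{N}$ with $d(m,n)=1$ for $m\neq n$, $X=\mathbb{R}$, $A=c_0(\mathbb{N})$ and $0\neq\varphi\in C_c(\mathbb{R})$. Then $\psi(n,x)=\varphi(x)\,(1\widehat{\otimes}e_n)$ lies in $C_0(Y\times X,B)$ in the paper's sense. However, the range of $n\mapsto\psi(n,\cdot)$ has all mutual distances equal to $\|\varphi\|_\infty$, so it is not totally bounded. Every element of the closed span of the functions $f(y)\widehat{\otimes}g(x)$ has totally bounded range, because $\text{Cliff}_\mathbb{C}(V)$ is finite-dimensional. So the map is not onto.

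The paper's proof tacitly needs properness too. Both its identification of $C_0(Y,\text{Cliff}_\mathbb{C}(V))\widehat{\otimes}D$ with $C_0\big(Y,\text{Cliff}_\mathbb{C}(V)\widehat{\otimes}D\big)$ and the well-definedness of $\tau$ fail for this $Y$. Properness does hold in the only application, $Y=\mathbb{R}^+$.
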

	\begin{proof}
		
		It is a classical result that
		$\text{Cliff}_\mathbb{C}(V)$ is isomorphic to a  $C^*$-algebra consisting of complex matrices, namely
		\begin{equation*} 
			\text{Cliff}_\mathbb{C}(V)
			\cong
			\begin{cases}
				M_{2^n}(\mathbb{C}),& \text{dim}(V)=2n, 
				\\
				M_{2^n}(\mathbb{C})\oplus 	M_{2^n}(\mathbb{C}),&\text{dim}(V)=2n+1.
			\end{cases}
		\end{equation*}
		This induces a graded  $*$-isomorphism  
		$$
		\text{Cliff}_\mathbb{C}(V)\widehat{\otimes} C_{0}(X,A)\cong
		C_{0}\big(X,\text{Cliff}_\mathbb{C}(V)\widehat{\otimes} A\big),
		$$  
		which sends 
		$b\widehat{\otimes} f$ to the function defined by
		$x\mapsto  b\widehat{\otimes} f(x)$.   
		It follows that 
		\begin{equation}
			\label{eq:elementaryisomorohdlkjflkdslkfjks}
			\begin{array}{ccc}
				C_{0}\big(Y,\text{Cliff}_\mathbb{C}(V)\big)
				\widehat{\otimes} 
				C_{0}(X,A) &\cong & 
				C_{0}\big(Y,~	\text{Cliff}_\mathbb{C}(V)
				\widehat{\otimes}C_{0}(X,A)\big) \\
				&\cong  & C_{0}\big( Y,~ C_{0}
				(X,\text{Cliff}_\mathbb{C}(V) \widehat{\otimes}A) \big).
			\end{array}
		\end{equation}

		Define  
		\begin{equation*}
			\tau: 
			C_{0}\big(Y,~ C_{0}(X,\text{Cliff}_\mathbb{C}(V) \widehat{\otimes}A) \big) \longrightarrow 
			C_{0}\big(Y\times X,\text{Cliff}_\mathbb{C}(V) \widehat{\otimes}A\big)
		\end{equation*}
		by 
		$$
		\big(\tau(f)\big)(y,x)=\big(f(y)\big)(x)
		$$
		for all $(y,x)\in Y\times X$.
		It is straightforward to verify that  
		$\tau$ is a $*$-isomorphism.

		The composition of isomorphisms  $\tau$ and \eqref{eq:elementaryisomorohdlkjflkdslkfjks}  satisfies the required conditions.
	\end{proof}

	We remark that the above lemma is classical when $X$ is also locally compact.
	
	As a corollary, 
	for any  Banach space $\mathscr{X}$ with Property $(H)$, 
	we obtain a graded $*$-isomorphism
	\begin{equation*}
		C_{0}
		\big(
		\mathbb{R}^+,\text{Cliff}_\mathbb{C}(\mathbb{R})
		\big)
		\widehat{\otimes} 
		C_{0}
		\big(
		\mathscr{X},\text{Cliff}_\mathbb{C}(\mathscr{H})
		\big)
		\cong 
		C_{0}
		\big(
		\mathbb{R}^+\times\mathscr{X},  \text{Cliff}_\mathbb{C}(\mathbb{R}\oplus\mathscr{H})
		\big),
	\end{equation*} 
	sending $f \widehat{\otimes} g$ to $\psi$,
	where 
	$\psi(t,x)=f(t)  \widehat{\otimes} g(x)$ for 
	$(t,x)\in \mathbb{R}^+\times\mathscr{X}$. 
	Since $\mathcal{S}$ 
	is graded $*$-isomorphic to 
	the $C^*$-algebra (\cite{GWY})
	$$
	\Big\{
	f\in 
	C_{0}
	\big(
	\mathbb{R}^+,\text{Cliff}_\mathbb{C}(\mathbb{R})
	\big) ~\Big|~ 
	f(0)\in\mathbb{C} 
	\Big\},
	$$
	it follows that
	$\mathcal{S}\widehat{\otimes}	C_{0}
	\big(
	\mathscr{X},\text{Cliff}_\mathbb{C}(\mathscr{H})
	\big)$
	is graded $*$-isomorphic to 
	the $C^*$-algebra
	$$
	\Big\{	
	f\in 
	C_{0}
	\big(
	\mathbb{R}^+\times\mathscr{X},  \text{Cliff}_\mathbb{C}(\mathbb{R}\oplus\mathscr{H})
	\big) ~\Big|~ 
	f(0,v)\in \text{Cliff}_\mathbb{C}(\mathscr{H}) \text{ for all } v\in\mathscr{X} 
	\Big\}.
	$$
	Under this identification, 
	for any $f\in\mathcal{S}$,
	the element 
	$\beta_{v_0}(f)$
	in 
	$\mathcal{S}\widehat{\otimes}	
	C_{0}
	\big(
	\mathscr{X},\text{Cliff}_\mathbb{C}(\mathscr{H})
	\big)$
	corresponds to the function
	\begin{equation}
		\mathbb{R}^+\times\mathscr{X}\ni (\theta,v) \longmapsto 
		f\Big(
		C \big( 
		\theta, \varPhi(v-v_0) 
		\big)
		\Big) \in
		\text{Cliff}_\mathbb{C}(\mathbb{R}\oplus\mathscr{H})
	\end{equation}
	in 
	$C_{0}
	\big(
	\mathbb{R}^+\times\mathscr{X},  \text{Cliff}_\mathbb{C}(\mathbb{R}\oplus\mathscr{H})
	\big)$,
	where $f\big(C(\cdot)\big)$ is the functional calculus of vectors as in Remark \ref{functionalcalculusforvectors}. 
	We identify these two formulations throughout this section.

	\begin{defn}[\cite{GWY}]
		For  
		$f \in C_0(\mathbb{R})$  
		and 
		$r > 0$,  
		we define
		\[
		\Lambda_{f}(r) = r \cdot \sup \left\{ \frac{|f(t) - f(-t)|}{2t} ~\bigg|~ t \geq r \right\}.
		\]
	\end{defn}
	
	It is clear that  
	$\Lambda_{f}(r) \leq \|f\|$.
	In general, 
	$\Lambda_{f}(r)$ 
	is not monotone in $r\in(0,+\infty)$. 
	However, 
	$$
	\lim\limits_{r\rightarrow0^+}
	\Lambda_{f}(r)=0
	$$
	for all $f\in  C_0(\mathbb{R})$.
	See \cite{GWY} for details.

	It was shown in \cite{GWY} that 
	$\beta_{v}(f)$ 
	is uniformly continuous in 
	$v$ on the entire Hilbert--Hadamard space $M$.
	In contrast, for a general Banach space $\mathscr{X}$ 
	with Property $(H)$, $\beta_{v}(f)$ is uniformly continuous only on bounded subsets of $\mathscr{X}$.

	\begin{lem}\label{comparingbasepoint}
		Let $\mathscr{X}$  
		and 
		$\varPhi: \mathscr{X} \to \mathscr{H}$
		be as in  Definition \ref{Cliffordmap1}. 
		Let
		$f\in C_{c}(\mathbb{R})$ and
		$R>0$. 
		Then there exists $c>R$ 
		such that 
		for all  
		$v_{0},v_{1}\in B_{\scalebox{0.6}{$\mathscr{X}$}}(0,R)$,  
		\begin{equation*}
			\Big\|
			\beta_{v_{0}}(f)-\beta_{v_{1}}(f)
			\Big\|
			\leq 
			4\cdot\omega_{f}
			\Big(
			\omega_{\scalebox{0.6}{$\varPhi_{c}$}}
			\big(
			\|v_{0}-v_{1}\|
			\big)
			\Big) + \Lambda_{f}\Big(
			\omega_{\scalebox{0.6}{$\varPhi_{c}$}}
			\big(
			\|v_{0}-v_{1}\|
			\big)
			\Big),
		\end{equation*}
		where 
		$\varPhi_{c}$
		denotes the restriction of  $\varPhi$ to the closed ball $B_{\scalebox{0.6}{$\mathscr{X}$}}(0,c)$.
	\end{lem}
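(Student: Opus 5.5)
We work throughout in the picture $C_0(\mathbb{R}^+\times\mathscr{X},\text{Cliff}_\mathbb{C}(\mathbb{R}\oplus\mathscr{H}))$. There $\beta_{v_j}(f)$ is the function
$$(\theta,v)\mapsto f\big(C(\theta,\varPhi(v-v_j))\big).$$
The difference $\beta_{v_0}(f)-\beta_{v_1}(f)$ is then estimated pointwise in $(\theta,v)$ and the supremum is taken. The basic tool is the GWY estimate for functional calculus of Clifford vectors. For $w_0,w_1$ in a real Hilbert space, the quantity $\|f(C(w_0))-f(C(w_1))\|$ is bounded by terms of the form $\omega_f(\|w_0-w_1\|)$ together with $\Lambda_f(\|w_0-w_1\|)$. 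To see this, split $f=f_{\rm even}+f_{\rm odd}$ and use Remark \ref{functionalcalculusforvectors}. The even part gives $|f_{\rm even}(\|w_0\|)-f_{\rm even}(\|w_1\|)|\le\omega_f(\|w_0-w_1\|)$. The odd part is $\frac{f_{\rm odd}(\|w\|)}{\|w\|}C(w)$. Comparing these for two vectors produces an $\omega_f$ term from the change in norm and a $\Lambda_f$ term from the change in direction. The direction term is controlled by $\frac{|f(t)-f(-t)|}{2t}$ times the distance, for $t\ge$ the distance; when $\|w_j\|$ is smaller than the distance, one simply bounds by $\omega_f$. Applied with $w_j=(\theta,\varPhi(v-v_j))$, this reduces everything to bounding $\|\varPhi(v-v_0)-\varPhi(v-v_1)\|$, with the constant $4$ absorbing the even, odd-norm and small-vector cases.

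Next, fix $f\in C_c(\mathbb{R})$ with ${\rm supp} f\subseteq[-K,K]$. By property (4) of a proper extension (Definition \ref{ProperextensionsofPropertyHspaces}), $\|\varPhi(u)\|\to\infty$. We therefore choose $c'>0$ such that $\|u\|\ge c'$ implies $\|\varPhi(u)\|>K+\omega_{\varPhi}(2R)$; this uses condition (2), which gives $\omega_\varPhi(2R)<\infty$. We then set $c=\max\{c',R\}+2R$, enlarged if necessary so that $c>R$.

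We split into two cases according to $v$. If $\|v-v_0\|\le c-2R$, then both $v-v_0$ and $v-v_1$ lie in $B_{\mathscr{X}}(0,c)$, and so $\|\varPhi(v-v_0)-\varPhi(v-v_1)\|\le\omega_{\varPhi_c}(\|v_0-v_1\|)$. The pointwise estimate above then gives the claimed bound. Otherwise $\|v-v_0\|>c'$ and also $\|v-v_1\|\ge\|v-v_0\|-2R$. We arrange $c'$ (by replacing it with $c'+2R$) so that both $\|\varPhi(v-v_j)\|>K$. In that case $\|(\theta,\varPhi(v-v_j))\|>K$, and hence $f(C(\cdot))$ vanishes at both points, so the difference is $0$. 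Since $\omega_{\varPhi_c}$ is nondecreasing, taking the supremum over $(\theta,v)$ completes the argument.

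The main obstacle is the pointwise Clifford estimate with exactly the constants $4\omega_f+\Lambda_f$. The delicate part is the odd-part direction change when the vectors are short. There one must use $\Lambda_f$ only for norms $t\ge r=\|w_0-w_1\|$ and fall back on $\omega_f$ when $\min\|w_j\|<r$, which is what forces several $\omega_f$ summands. I would follow the GWY argument directly for this step. The cutoff via compact support is routine once $c$ is chosen as above.
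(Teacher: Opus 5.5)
Your overall architecture matches the paper's. You split into even and odd parts via Remark~\ref{functionalcalculusforvectors} and use condition (4) of Definition~\ref{ProperextensionsofPropertyHspaces} to make both terms vanish far out. On the remaining region you have $v-v_0,v-v_1\in B_{\mathscr{X}}(0,c)$, so $\|\varPhi(v-v_0)-\varPhi(v-v_1)\|\le r:=\omega_{\varPhi_c}(\|v_0-v_1\|)$. Your cutoff splits on $\|v-v_0\|$ and uses $\omega_{\varPhi}(2R)<\infty$, whereas the paper splits on $\|v\|\le L$ with $c=L+R$. That difference is cosmetic and your version is fine.

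The gap is in the final passage. You set the case threshold at the pointwise distance $\rho=\|w_0-w_1\|$ and obtain a bound in terms of $\omega_f(\rho)$ and $\Lambda_f(\rho)$. You then replace $\rho$ by the larger number $r$ when taking the supremum. That replacement is valid for $\omega_f$ but not for $\Lambda_f$, which is not monotone; the paper remarks on this just before the lemma. For example, if $f\in C_c(\mathbb{R})$ is odd with $f(t_0)=1$ and $f=0$ on $[2t_0,\infty)$, then $\Lambda_f(t_0)\ge 1$ while $\Lambda_f(r)=0$ for $r\ge 2t_0$. The monotonicity of $\omega_{\varPhi_c}$ that you invoke is not what is needed here.

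The paper avoids this by fixing the uniform threshold $r=\omega_{\varPhi_c}(\|v_0-v_1\|)$ \emph{before} splitting into cases, and using only $\|w_0-w_1\|\le r$:
\begin{itemize}
\item If $\min_j s_j<r$, then $\max_j s_j<2r$, and the odd part is at most $\omega_f(r)+\omega_f(2r)\le 3\omega_f(r)$.
\item If both $s_j\ge r$, the law of cosines gives $1-\cos\vartheta\le r^2/(2s_0s_1)$. Since $s_j\ge r$, also $|f_1(s_j)|/s_j\le \Lambda_f(r)/r$. Hence the odd part is at most $\omega_f(r)+\Lambda_f(r)$.
\end{itemize}
Adding the even part's $\omega_f(r)$ gives $4\omega_f(r)+\Lambda_f(r)$.

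Alternatively, you could keep your pointwise threshold $\rho$ and add the estimate $\Lambda_f(\rho)\le\max\{\omega_f(r),\Lambda_f(r)\}$ for $\rho\le r$. To prove it, split $t\ge\rho$ at $t=r$. For $t\le r$, use $\rho|f_1(t)|/t\le|f_1(t)|\le\omega_f(r)$, which relies on $f_1(0)=0$. For $t\ge r$, the bound by $\Lambda_f(r)$ is immediate. You must then keep the two cases separate rather than summing them, which still gives the constant $4$. Summing first would only give $5\omega_f(r)+\Lambda_f(r)$.
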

	\begin{proof}
		Suppose 
		${\rm Supp}(f)\subseteq [-N,N]$ for some $N>0$.
		Let  $f_0$ and $f_1$
		be the even and odd parts of 
		$f$, that is,
		\[
		f_0(t) = \frac{f(t) + f(-t)}{2}
		\]
		and
		\[
		f_1(t) = \frac{f(t) - f(-t)}{2}.
		\]
		For $i = 0, 1$ 
		and any 
		$s > 0$, 
		we have 
		$f_i \in C_0(\mathbb{R})$
		with 
		${\rm Supp}(f_i)\subseteq [-N,N]$, 
		$\|f_i\|\leq\|f\|$, 
		$\omega_{f_{i}}(s)\leq\omega_{f}(s)$  
		and
		\[
		\Lambda_{f}(s) = \Lambda_{f_1}(s) = s \cdot \sup\left\{ \frac{|f_1(t)|}{t} ~\bigg|~ t \geq s \right\}.
		\]

		Since $\varPhi$ is a proper extension of $\phi$, 
		it follows from the last condition of Definition \ref{ProperextensionsofPropertyHspaces}
		that
		there exists  
		$L>0$
		such that 
		$$
		\big\|
		\varPhi(v-w)
		\big\|
		\geq 
		N
		$$
		for all $v,w\in\mathscr{X}$ 
		satisfying 
		$\|w\| \leq  R$
		and  
		$\|v\|\geq L$.	
		By Remark \ref{functionalcalculusforvectors} and
		${\rm Supp}(f_i)\subseteq [-N,N]$,   
		we obtain
		\begin{eqnarray}
			\label{eq:firstonedkljflak}
			\beta_{w}(f_i)(\theta,v) = 	f_{i}\Big(
			C \big( 
			\theta, \varPhi(v-w) 
			\big)
			\Big) = 0
		\end{eqnarray}
		for all $i=0,1$, $\theta\in\mathbb{R}^+$, 
		and 
		$v,w\in\mathscr{X}$ 
		satisfying 
		$\|w\| \leq  R$
		and   
		$\|v\|\geq L$.	
		
		Let	
		$c=L+R$ 
		and 
		denote by $\varPhi_{c}$
		the restriction of  $\varPhi$ 
		to the closed ball $B_{\scalebox{0.6}{$\mathscr{X}$}}(0,c)$ in $\mathscr{X}$.
		Fix any two vectors
		$v_{0},v_{1}\in B_{\scalebox{0.6}{$\mathscr{X}$}}(0,R)$
		and set 
		$r=\omega_{\scalebox{0.6}{$\varPhi_{c}$}}
		\big(
		\|v_{0}-v_{1}\|
		\big)$.
		We consider two cases.

		Case of $f_{0}$. 
		By Remark \ref{functionalcalculusforvectors}, 
		we have 
		$\beta_{v_{i}}(f_0)(\theta,v) = f_0\big(
		\|C(\theta, \varPhi(v-v_{i}))\|
		\big)$ 
		for  
		$i = 0, 1$. 	
		Thus, for all
		$\theta\in\mathbb{R}^+$
		and $v\in\mathscr{X}$ 
		with 
		$\|v\|\leq L$, 	
		\begin{eqnarray*}
			\Big\|
			\Big(
			\beta_{v_{0}}(f_0) - \beta_{v_{1}}(f_0)
			\Big)
			(\theta,v)
			\Big\| 
			&=& 
			\Big|
			f_0\big(\|C(\theta, \varPhi(v-v_{0}))\|\big) - f_0\big(\|C(\theta, \varPhi(v-v_{1}))\|\big)
			\Big|
			\\
			&\leq& 
			\omega_{f_{0}}
			\bigg(
			\Big| 
			\|C(\theta, \varPhi(v-v_{0}))\|-
			\|C(\theta, \varPhi(v-v_{1}))\|
			\Big| 
			\bigg)
			\\
			&\leq& 
			\omega_{f_{0}}
			\bigg(
			\Big\| 
			\varPhi(v-v_{0})-
			\varPhi(v-v_{1})
			\Big\| 
			\bigg)
			\\
			&=& 
			\omega_{f_{0}}
			\bigg(
			\Big\| 
			\varPhi_{c}(v-v_{0})-
			\varPhi_{c}(v-v_{1})
			\Big\| 
			\bigg)
			\\
			&\leq& 
			\omega_{f_{0}}
			(r)
			\leq
			\omega_{f}(r).
		\end{eqnarray*}	
		For all $\theta\in\mathbb{R}^+$ and  
		$v\in\mathscr{X}$ 
		with 
		$\|v\|\geq L$, it follows from \eqref{eq:firstonedkljflak} 
		that 
		\begin{equation*}
			\label{eq:secondonedkljflak}
			\Big\|
			\Big(
			\beta_{v_{0}}(f_0) - \beta_{v_{1}}(f_0)
			\Big)
			(\theta,v)
			\Big\| 
			= 0.		
		\end{equation*}
		Hence, 
		\begin{equation}
			\label{eq:thirdonedkljflak}
			\Big\|
			\beta_{v_{0}}(f_0) - \beta_{v_{1}}(f_0)
			\Big\| 
			\leq
			\omega_{f}\Big(
			\omega_{\scalebox{0.6}{$\varPhi_{c}$}}
			\big(
			\|v_{0}-v_{1}\|
			\big)
			\Big).
		\end{equation}

		Case of $f_{1}$.
		By \eqref{eq:firstonedkljflak},
		we have 
		\begin{equation*}
			\Big\|
			\Big(
			\beta_{v_{0}}(f_1) - \beta_{v_{1}}(f_1)
			\Big)
			(\theta,v)
			\Big\| 
			= 0		
		\end{equation*}
		for all $\theta\in\mathbb{R}^+$ and  
		$v\in\mathscr{X}$ 
		satisfying 
		$\|v\|\geq L$.	
		Next, for 
		$\theta\in\mathbb{R}^+$ and  
		$v\in\mathscr{X}$ 
		with 
		$\|v\|\leq L$, 
		we distinguish two subcases.

		\noindent $(1)$  
		If one of 	
		$\big\|	C \big( 
		\theta, \varPhi(v-v_0) 
		\big)
		\big\|$ 
		and 
		$\big\|	C \big( 
		\theta, \varPhi(v-v_1) 
		\big)
		\big\|$ 
		is less than $r$, 
		then the other is bounded by
		$2r$.
		Hence observing that
		$f_1(0)=0$,
		we obtain 
		\begin{eqnarray*}
			\Big\|
			\Big(
			\beta_{v_{0}}(f_1) - \beta_{v_{1}}(f_1)
			\Big)
			(\theta,v)
			\Big\| 
			&=& 
			\Big\|
			f_1\big(C(\theta, \varPhi(v-v_{0}))\big) - f_1\big(C(\theta, \varPhi(v-v_{1}))\big)
			\Big\|
			\\
			&\leq& 
			\Big\|
			f_1\big(C(\theta, \varPhi(v-v_{0}))\big)
			\Big\| 
			+ 
			\Big\| 
			f_1\big(C(\theta, \varPhi(v-v_{1}))\big)
			\Big\|
			\\
			&=& 
			\Big|
			f_1\big(\|
			C(\theta, \varPhi(v-v_{0}))\|
			\big)
			\Big| 
			+ 
			\Big| 
			f_1\big(
			\|C(\theta, \varPhi(v-v_{1}))\|
			\big)
			\Big|
			\\
			&\leq& 
			\omega_{f_1}(r)+\omega_{f_1}(2r)
			\\
			&\leq& 
			3\cdot \omega_{f_1}(r)
			\leq
			3\cdot \omega_{f}(r).
		\end{eqnarray*}

		\noindent
		$(2)$ 
		Suppose that
		$$
		\min\Big\{
		\big\|	C \big( 
		\theta, \varPhi(v-v_0) 
		\big)
		\big\|,
		\big\|	C \big( 
		\theta, \varPhi(v-v_1) 
		\big)
		\big\|
		\Big\}\geq r.
		$$ 
		Let $\vartheta$ denote  
		the angle between
		the vectors
		$\big(\theta, \varPhi(v-v_{0})\big)$
		and 
		$\big(\theta, \varPhi(v-v_{1})\big)$
		in the real Hilbert space $\mathbb{R}\oplus\mathscr{H}$. 
		By Euclidean geometry,
		\begin{eqnarray*}
			\cos(\vartheta)=
			\dfrac{\|C(\theta, \varPhi(v-v_{0}))\|^2+\|C(\theta, \varPhi(v-v_{1}))\|^2-\|C(\theta, \varPhi(v-v_{0}))-C(\theta, \varPhi(v-v_{1}))\|^2}{2\cdot\|C(\theta, \varPhi(v-v_{0}))\|\cdot\|C(\theta, \varPhi(v-v_{1}))\|}. 
		\end{eqnarray*}
		Thus,  	
		\begin{eqnarray*}
			1-\cos(\vartheta)
			&=&
			\dfrac{
				\|C(\theta, \varPhi(v-v_{0}))-C(\theta, \varPhi(v-v_{1}))\|^2
				-
				\big(
				\|C(\theta, \varPhi(v-v_{0}))\|-\|C(\theta, \varPhi(v-v_{1}))\|
				\big)^2
			}{2\cdot\|C(\theta, \varPhi(v-v_{0}))\|\cdot\|C(\theta, \varPhi(v-v_{1}))\|}
			\\
			&\leq&
			\dfrac{r^2}{2\cdot\|C(\theta, \varPhi(v-v_{0}))\|\cdot\|C(\theta, \varPhi(v-v_{1}))\|}.
		\end{eqnarray*}
		Hence, writing 	
		$s_i=\big\|C(\theta, \varPhi(v-v_{i}))\big\|$
		for $i=0,1$  and using Remark \ref{functionalcalculusforvectors},
		we obtain 
		\begin{eqnarray*}
			\Big\|
			\Big(
			\beta_{v_{0}}(f_1) - \beta_{v_{1}}(f_1)
			\Big)
			(\theta,v)
			\Big\|^2 
			&=& 
			\Big\|
			f_1\big(C(\theta, \varPhi(v-v_{0}))\big) - f_1\big(C(\theta, \varPhi(v-v_{1}))\big)
			\Big\|^2
			\\
			&=& 
			f_1(s_0)^2+f_1(s_1)^2-2f_1(s_0)f_1(s_1)\cos(\vartheta)
			\\
			&=& 
			\big(
			f_1(s_0)-f_1(s_1)
			\big)^2+
			2f_1(s_0)f_1(s_1)
			\big(
			1-\cos(\vartheta)
			\big)
			\\
			&\leq& 
			\omega_{f_1}(r)^2+
			\frac{f_1(s_0)}{s_0}
			\cdot
			\frac{f_1(s_1)}{s_1}
			\cdot 
			r^2
			\\
			&\leq& 
			\omega_{f_1}(r)^2+
			\Lambda_{f_1}(r)^2
			\\
			&\leq& 
			\Big(
			\omega_{f_1}(r)+
			\Lambda_{f_1}(r)
			\Big)^2
			\\
			&\leq& 
			\Big( 
			\omega_{f}(r)+
			\Lambda_{f}(r)
			\Big)^2.
		\end{eqnarray*}
		The above estimates imply that
		\begin{equation}
			\label{eq:lastonedkljflak}
			\Big\|
			\beta_{v_{0}}(f_1) - \beta_{v_{1}}(f_1)
			\Big\| 
			\leq 
			3\cdot\omega_{f}
			\Big(
			\omega_{\scalebox{0.6}{$\varPhi_{c}$}}
			\big(
			\|v_{0}-v_{1}\|
			\big)
			\Big) + 	\Lambda_{f}
			\Big(
			\omega_{\scalebox{0.6}{$\varPhi_{c}$}}
			\big(
			\|v_{0}-v_{1}\|
			\big)
			\Big).
		\end{equation}

		Finally, since $f=f_0+f_1$, 
		combining \eqref{eq:thirdonedkljflak} and \eqref{eq:lastonedkljflak} 
		yields the desired conclusion.
	\end{proof}

	\begin{prop}\label{localuniformlycontinuous}
		Let $\mathscr{X}$  
		and 
		$\varPhi: 
		\mathscr{X}
		\to
		\mathscr{H}$ 
		be as in Definition \ref{Cliffordmap1}. 
		Let
		$f\in \mathcal{S}$ 
		and
		$R>0$. 
		Then for any 
		$\epsilon>0$,
		there exists 
		$\delta>0$
		such that 
		\begin{equation*}
			\Big\|
			\beta_{v_{0}}(f)-\beta_{v_{1}}(f)
			\Big\|
			<\epsilon
		\end{equation*}
		whenever   
		$v_{0},v_{1}\in B_{\scalebox{0.6}{$\mathscr{X}$}}(0,R)$
		satisfy
		$\|v_{0}-v_{1}\|<\delta$. 
	\end{prop}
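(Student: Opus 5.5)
We reduce to Lemma \ref{comparingbasepoint} via a density argument in $\mathcal{S}$. The key observation is that each $\beta_{v}$ is a $*$-homomorphism, hence contractive, so $\|\beta_{v}(f)-\beta_{v}(g)\|\leq\|f-g\|$ for all $v\in\mathscr{X}$ and $f,g\in\mathcal{S}$. Since $C_c(\mathbb{R})$ is dense in $\mathcal{S}=C_0(\mathbb{R})$, given $f\in\mathcal{S}$ and $\epsilon>0$ we first choose $g\in C_c(\mathbb{R})$ with $\|f-g\|<\epsilon/3$. The triangle inequality then gives
\[
\big\|\beta_{v_0}(f)-\beta_{v_1}(f)\big\|\leq \frac{2\epsilon}{3}+\big\|\beta_{v_0}(g)-\beta_{v_1}(g)\big\|,
\]
so it suffices to prove the statement for compactly supported $g$ with $\epsilon/3$ in place of $\epsilon$.

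For $g\in C_c(\mathbb{R})$ and the given $R$, Lemma \ref{comparingbasepoint} supplies $c>R$ such that for all $v_0,v_1\in B_{\mathscr{X}}(0,R)$,
\[
\big\|\beta_{v_0}(g)-\beta_{v_1}(g)\big\|\leq 4\,\omega_g\big(\omega_{\varPhi_c}(\|v_0-v_1\|)\big)+\Lambda_g\big(\omega_{\varPhi_c}(\|v_0-v_1\|)\big).
\]
We now bound the right-hand side in three steps.

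\begin{enumerate}
\item By condition (3) of Definition \ref{ProperextensionsofPropertyHspaces}, $\varPhi$ is uniformly continuous on the bounded set $B_{\mathscr{X}}(0,c)$. Hence $\omega_{\varPhi_c}(\delta)\to0$ as $\delta\to0^+$.
\item Since $g$ is uniformly continuous on $\mathbb{R}$, $\omega_g(r)\to0$ as $r\to0^+$.
\item By the fact recalled from \cite{GWY}, $\Lambda_g(r)\to0$ as $r\to0^+$.
\end{enumerate}

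Now choose $r_0>0$ such that $4\,\omega_g(r)<\epsilon/6$ and $\Lambda_g(r)<\epsilon/6$ for all $0<r\leq r_0$. The function $\omega_g$ is monotone, but $\Lambda_g$ need not be, so we use the pointwise bound for every $r\le r_0$ rather than monotonicity. The case $r=0$ is trivial, since there $\omega_g(0)=0$ and the difference is controlled by continuity. Next choose $\delta>0$ such that $\omega_{\varPhi_c}(\delta')\leq r_0$ for all $\delta'<\delta$. Then $\|v_0-v_1\|<\delta$ implies $\|\beta_{v_0}(g)-\beta_{v_1}(g)\|<\epsilon/3$.

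Combining this with the density reduction gives the total estimate $<\epsilon$. The only delicate point is the possible non-monotonicity of $\Lambda_g$, and the argument above handles it by requiring the bound on $\Lambda_g(r)$ for every $r$ in $(0,r_0]$.
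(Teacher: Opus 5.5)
Your proposal is correct and follows the paper's proof: the paper likewise uses uniform continuity of $\varPhi$ on bounded sets together with Lemma \ref{comparingbasepoint} to settle the case $f\in C_c(\mathbb{R})$, then passes to general $f\in\mathcal{S}$ by approximation, a step you make explicit via contractivity of the $*$-homomorphisms $\beta_v$. Your treatment of the degenerate case $\omega_{\varPhi_c}(\|v_0-v_1\|)=0$ is vague but harmless: there $\varPhi(v-v_0)=\varPhi(v-v_1)$ for every $v$ in the ball of radius $L=c-R$ from the proof of Lemma \ref{comparingbasepoint}, both elements vanish outside that ball, and hence $\beta_{v_0}(g)=\beta_{v_1}(g)$.
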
	
	\begin{proof}
		Since $\varPhi$ is a proper extension of $\phi$, 
		it follows from the third condition of Definition \ref{ProperextensionsofPropertyHspaces}
		that
		$\varPhi$ is uniformly continuous on every bounded subset of $\mathscr{X}$.
		Thus, by Lemma \ref{comparingbasepoint},  
		the result holds for all	 
		$f\in C_{c}(\mathbb{R})$. 
		
		The general case is obtained by an approximation argument.
	\end{proof}

	\subsection{The $C^*$-algebra associated to 
		$C_{b}(Y,\mathscr{X})$ 
		and its $K$-theory}

	In this subsection, 
	we construct a  $C^*$-algebra associated to a function space
	$C_{b}(Y,\mathscr{X})$,
	where $Y$ is a metric space and $\mathscr{X}$ is a Banach space with Property $(H)$. 
	The $K$-theory of this $C^*$-algebra can be computed  when $Y$ is a bounded convex subset of a Banach space.

	\begin{lem}
		Let $Y$ be a metric space and let $\mathscr{X}$ be a Property $(H)$ Banach space.
		Then for all $f\in\mathcal{S}$
		and 	
		$\xi\in 
		C_{b}(Y,\mathscr{X})$, 
		we have 
		$\beta_{\xi}
		(f)
		\in 
		C_{b}
		\big(
		Y, \mathcal{A}(
		\mathscr{X})
		\big)$,
		where $\beta_{\xi}$ is given by \eqref{eq:functionalcalculustwo}.
	\end{lem}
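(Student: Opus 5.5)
Since $\xi\in C_{b}(Y,\mathscr{X})$ is bounded, there is $R>0$ with $\xi(Y)\subseteq B_{\mathscr{X}}(0,R)$. Moreover, $\beta_{\xi}(f)(y)=\beta_{\xi(y)}(f)$ lies in $\mathcal{A}(\mathscr{X})$ by Definition \ref{basicproperalgebra-geng}. It therefore remains to show two things: that $y\mapsto \beta_{\xi(y)}(f)$ is bounded, and that it is uniformly continuous from $Y$ into $\mathcal{A}(\mathscr{X})$.

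For boundedness, I will use that each $\beta_{v}$ is a $*$-homomorphism. This gives $\|\beta_{\xi(y)}(f)\|\leq\|f\|$ for all $y\in Y$, so $\sup_{y}\|\beta_{\xi}(f)(y)\|\leq \|f\|<\infty$.

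For uniform continuity, fix $\epsilon>0$ and apply Proposition \ref{localuniformlycontinuous} with this $R$. This yields $\delta>0$ such that $\|\beta_{v_0}(f)-\beta_{v_1}(f)\|<\epsilon$ whenever $v_0,v_1\in B_{\mathscr{X}}(0,R)$ satisfy $\|v_0-v_1\|<\delta$. Since $\xi$ is uniformly continuous, there is $\delta'>0$ with $\|\xi(y_1)-\xi(y_2)\|<\delta$ whenever $d(y_1,y_2)<\delta'$. Both $\xi(y_1)$ and $\xi(y_2)$ lie in $B_{\mathscr{X}}(0,R)$, so composing the two estimates gives $\|\beta_{\xi}(f)(y_1)-\beta_{\xi}(f)(y_2)\|<\epsilon$ whenever $d(y_1,y_2)<\delta'$.

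The only genuine input is Proposition \ref{localuniformlycontinuous}, which controls $\beta_v(f)$ only on bounded sets. That is exactly why the boundedness of $\xi$ is needed, and it is the one point requiring care. If one prefers to see the $C_c(\mathbb{R})$ reduction inside that proposition explicitly, one approximates $f\in\mathcal{S}$ by $g\in C_c(\mathbb{R})$ with $\|f-g\|<\epsilon/3$. Then $\|\beta_v(f)-\beta_v(g)\|<\epsilon/3$ uniformly in $v$, since each $\beta_v$ is contractive, and the argument reduces to Lemma \ref{comparingbasepoint} applied to $g$.
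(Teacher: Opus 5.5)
Your proposal is correct and follows the paper's proof: boundedness of $\xi$ puts its image in a ball $B_{\mathscr{X}}(0,R)$, and Proposition \ref{localuniformlycontinuous} composed with the uniform continuity of $\xi$ gives uniform continuity of $y\mapsto\beta_{\xi(y)}(f)$. Your explicit boundedness check, via contractivity of the $*$-homomorphisms $\beta_v$, is a small point the paper leaves implicit.
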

	\begin{proof}
		For every
		$\xi\in 
		C_{b}(Y,\mathscr{X})$, 
		its image is bounded in $\mathscr{X}$  and $\xi$ is  uniformly continuous.
		Combining this with
		Proposition \ref{localuniformlycontinuous},
		we conclude that 
		$\beta_{\xi}
		(f)$ 
		is uniformly continuous on $Y$. 
	\end{proof}

	The following algebra is a natural generalization of that in Definition \ref{basicproperalgebra-geng}.

	\begin{defn}\label{proper-algebra-by-geng-most-key}
		For the function space 
		$C_{b}(Y,\mathscr{X})$, where $\mathscr{X}$  is a Banach space with Property $(H)$,  
		we define 
		$\mathcal{A}
		\big(
		C_{b}(Y,\mathscr{X})
		\big)$
		to be  the graded $C^*$-subalgebra of
		$C_{b}
		\big(
		Y, \mathcal{A}(\mathscr{X})
		\big)$
		generated by the set
		$$\Big\{
		\beta_{\xi}
		(f) ~\Big|~ f\in\mathcal{S}, ~ 
		\xi\in 
		C_{b}(Y,\mathscr{X}) 
		\Big\}.$$
		Moreover, for $R>0$,
		we denote by
		$\mathcal{A}_{R}
		\big(
		C_{b}(Y,\mathscr{X})
		\big)$
		the graded
		$*$-subalgebra of
		$\mathcal{A}
		\big(
		C_{b}(Y,\mathscr{X})
		\big)$
		consisting of all  $f$ 
		such that 
		\begin{equation*}
			{\rm Supp} \big(f(y)\big) \subseteq 
			B_{\mathbb{R}\times\mathscr{X}}(0,R)
			\quad  \text{for all  } 
			y\in Y.
		\end{equation*}
	\end{defn}

	By definition, 
	\begin{equation*}
		\mathcal{A}
		\big(
		C_{b}(Y,\mathscr{X})
		\big)=
		\overline{\bigcup_{R>0} \mathcal{A}_{R}
			\big(
			C_{b}(Y,\mathscr{X})
			\big)}.
	\end{equation*}

	\begin{rmk}\label{generalcaseforembeddingalgebra}
		The algebra 
		$\mathcal{A}
		(\mathscr{X})$
		is identified with the subalgebra of constant functions in 
		$C_{b}
		\big(
		Y, \mathcal{A}(\mathscr{X})
		\big)$.
		Moreover, every constant function in
		$C_{b}
		\big(
		Y, \mathcal{A}(\mathscr{X})
		\big)$
		belongs to 
		$\mathcal{A}
		\big(
		C_{b}(Y,\mathscr{X})
		\big)$.
		Hence, we obtain a canonical embedding
		$$\iota:
		\mathcal{A}
		(\mathscr{X}) 
		\longrightarrow
		\mathcal{A}
		\big(
		C_{b}(Y,\mathscr{X})
		\big).$$
	\end{rmk}

	The algebra introduced in Definition \ref{proper-algebra-by-geng-most-key} 
	plays a central role in the sequel. 
	It provides building blocks for the proper
	$\Gamma$-algebra constructed  in the subsequent sections,  and its $K$-theory can be computed in the case where $Y$ is contractible.
	In the following lemma, we compute the $K$-theory of 
	$\mathcal{A}
	\big(
	C_{b}(Y,\mathscr{X})
	\big)$
	when $Y$ is a bounded convex subset of a Banach space, equipped with the induced metric. 
	In later sections, it suffices to restrict attention to the specific convex subspace
	$Z$ of $\ell^1(\Gamma)$ defined in \eqref{eq:simplicityforthebasespaceZ}.

	\begin{lem}\label{isomorphismbetweenKtheory-babycase}
		Let $Y$ be a bounded convex subset of a Banach space endowed with the induced metric. 
		Then for any Property $(H)$ Banach space $\mathscr{X}$, 
		the embedding 
		\begin{equation*}
			\iota:
			\mathcal{A}
			(\mathscr{X}) 
			\longrightarrow
			\mathcal{A}
			\big(
			C_{b}(Y,\mathscr{X})
			\big)
		\end{equation*} 
		induces an isomorphism
		\begin{eqnarray*}
			\iota_*:
			K_*\big(
			\mathcal{A}
			(\mathscr{X}) 
			\big)
			&\longrightarrow&
			K_*\Big(
			\mathcal{A}
			\big(
			C_{b}(Y,\mathscr{X})
			\big)
			\Big).
		\end{eqnarray*} 
	\end{lem}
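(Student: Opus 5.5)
The approach is a homotopy argument exploiting the convexity of $Y$: I will show that $\iota$ is a homotopy equivalence of $C^*$-algebras, so that $\iota_*$ is an isomorphism on $K$-theory. Fix a base point $y_0\in Y$. Evaluation at $y_0$ gives a graded $*$-homomorphism
\[
\mathrm{ev}_{y_0}:\mathcal{A}\big(C_{b}(Y,\mathscr{X})\big)\longrightarrow \mathcal{A}(\mathscr{X}),\qquad F\longmapsto F(y_0).
\]
It lands in $\mathcal{A}(\mathscr{X})$ because each generator $\beta_\xi(f)$ evaluates to $\beta_{\xi(y_0)}(f)$, which is a generator of $\mathcal{A}(\mathscr{X})$. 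Moreover $\mathrm{ev}_{y_0}\circ\iota=\mathrm{id}$, so $\iota_*$ is injective, and it remains to show that $\iota\circ \mathrm{ev}_{y_0}$ is homotopic to the identity.

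For $s\in[0,1]$, let $c_s:Y\to Y$ be the contraction $c_s(y)=(1-s)y+sy_0$, which stays in $Y$ by convexity. Define $\alpha_s(F)=F\circ c_s$. Then $\alpha_0=\mathrm{id}$ and $\alpha_1=\iota\circ\mathrm{ev}_{y_0}$. Each $c_s$ is $1$-Lipschitz, so $F\circ c_s$ is again bounded and uniformly continuous. On generators, $\alpha_s(\beta_\xi(f))=\beta_{\xi\circ c_s}(f)$, and $\xi\circ c_s\in C_b(Y,\mathscr{X})$. Hence each $\alpha_s$ is a $*$-endomorphism of $\mathcal{A}\big(C_{b}(Y,\mathscr{X})\big)$: it maps generators to generators and is a contractive $*$-homomorphism of $C_b(Y,\mathcal{A}(\mathscr{X}))$, so it preserves the generated $C^*$-subalgebra.

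The main obstacle is continuity of $s\mapsto\alpha_s(F)$ in norm, uniformly in $y$, so that the family defines a $*$-homomorphism into $C\big([0,1],\mathcal{A}(C_b(Y,\mathscr{X}))\big)$. Since the $\alpha_s$ are uniformly contractive, it suffices to check this on generators, and then on products and sums of generators, which are dense. For a generator,
\[
\big\|\beta_{\xi(c_s y)}(f)-\beta_{\xi(c_t y)}(f)\big\|.
\]
Here $\|c_s y-c_t y\|=|s-t|\,\|y-y_0\|\le |s-t|\operatorname{diam}(Y)$, and this is where boundedness of $Y$ is used. Uniform continuity of $\xi$ then makes $\|\xi(c_sy)-\xi(c_ty)\|$ small uniformly in $y$, and $\xi$ takes values in a fixed ball $B_{\mathscr{X}}(0,R)$. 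Proposition \ref{localuniformlycontinuous} then gives a uniform bound on the difference of the $\beta$'s, which is exactly why the local uniform continuity result on bounded sets was established. Alternatively, one can first check this for $f\in C_c(\mathbb{R})$ via Lemma \ref{comparingbasepoint} and then approximate.

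Thus $\{\alpha_s\}$ is a homotopy of $*$-homomorphisms from $\mathrm{id}$ to $\iota\circ\mathrm{ev}_{y_0}$. By homotopy invariance of $K$-theory, with gradings disregarded as stated, $\iota_*\circ(\mathrm{ev}_{y_0})_*=\mathrm{id}$. Combined with $(\mathrm{ev}_{y_0})_*\circ\iota_*=\mathrm{id}$, this shows that $\iota_*$ is an isomorphism.
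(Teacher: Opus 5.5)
Your proposal is correct and follows essentially the same route as the paper: evaluation at $y_0$ serves as a one-sided inverse of $\iota$, and the straight-line contraction $y\mapsto ty_0+(1-t)y$ gives the homotopy from the identity to $\iota\circ\mathrm{ev}_{y_0}$. The only difference is in the continuity check: the paper obtains norm-continuity in $t$ directly for every $F\in C_b(Y,\mathcal{A}(\mathscr{X}))$ from the uniform continuity of $F$ on $Y$, via the bound $\omega_F(2L|s-t|)$ with $L=\sup_{y\in Y}\|y\|$, so your detour through generators and Proposition~\ref{localuniformlycontinuous} is valid but unnecessary.
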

	\begin{proof}
		Fix  
		$y_{0}\in Y$.	
		Since 	
		$\mathcal{A}
		\big(
		C_{b}(Y,\mathscr{X})
		\big)\subseteq
		C_{b}
		\big(
		Y, \mathcal{A}(\mathscr{X})
		\big)$,
		the evaluation map 
		$$
		{\rm ev}_{y_0}: 
		C_{b}
		\big(
		Y, \mathcal{A}(\mathscr{X})
		\big) 
		\longrightarrow 
		\mathcal{A}(\mathscr{X}), \quad 
		f\mapsto
		f(y_{0})
		$$
		restricts to
		a $*$-homomorphism
		\begin{equation*}
			{\rm ev}_{y_0}: 
			\mathcal{A}
			\big(
			C_{b}(Y,\mathscr{X})
			\big)
			\longrightarrow
			\mathcal{A}(\mathscr{X}).  
		\end{equation*}
		
		We claim that
		${\rm ev}_{y_0}$ is a homotopy inverse of $\iota$, i.e., 
		each of ${\rm ev}_{y_0}\circ\iota$
		and 
		$\iota\circ{\rm ev}_{y_0}$
		is homotopic to the identity. 
		The first one is immediate:
		${\rm ev}_{y_0}\circ\iota = {\rm id}$.
		It remains to construct a homotopy between $\iota\circ{\rm ev}_{y_0}$ and
		${\rm id}$.

		Define a map
		\begin{equation}
			\label{eq:Homotopymap}
			\varUpsilon: 	
			C_{b}
			\big(
			Y, \mathcal{A}(\mathscr{X})
			\big)
			\longrightarrow  
			C\Big(
			[0,1],~	
			C_{b}
			\big(
			Y, \mathcal{A}(\mathscr{X})
			\big)
			\Big)
		\end{equation}
		by
		\begin{equation*}
			\big(
			\varUpsilon(f)(t)
			\big)(y)= 
			f\big(
			t \cdot {y}_{0} + (1-t) \cdot {y}
			\big) 
		\end{equation*}
		for all
		$f\in
		C_{b}
		\big(
		Y, \mathcal{A}(\mathscr{X})
		\big)$,
		$t \in [0,1]$ and 
		$y \in {Y}$.
		We first check that it is well-defined.
		Since $f$ is bounded and uniformly continuous,
		it follows that 
		$\varUpsilon(f)(t)\in
		C_{b}
		\big(
		Y, \mathcal{A}(\mathscr{X})
		\big)$
		for all 
		$t \in [0,1]$.
		Moreover, for all $s,t \in [0,1]$ and $y \in Y$, 
		\begin{eqnarray*}
			\Big\|
			\big( 
			s \cdot {y}_{0}+(1-s) \cdot {y}
			\big)-
			\big(
			t \cdot {y}_{0}+(1-t) \cdot y
			\big)
			\Big\|
			&=&
			\Big\|
			(s-t) \cdot {y}_{0} + (t-s) \cdot {y}
			\Big\|
			\\
			&\leq&
			2L \cdot |s-t|,
		\end{eqnarray*}
		where 
		$L=
		\sup\limits_{y\in Y} 
		\|y\|$. 
		Hence, for all $s,t \in [0,1]$,
		\begin{eqnarray*}
			\Big\|
			\varUpsilon(f)(s)-
			\varUpsilon(f)(t)
			\Big\|
			&=& 
			\sup\limits_{y \in {Y}}
			\bigg\|
			\Big(
			\varUpsilon(f)(s)-
			\varUpsilon(f)(t)
			\Big)(y)
			\bigg\|
			\\
			&=&
			\sup\limits_{y \in {Y}}
			\bigg\|
			f\Big(
			s \cdot {y}_{0} + (1-s) \cdot {y}
			\Big)-
			f\Big(
			t \cdot {y}_{0} + (1-t) \cdot {y}
			\Big) 
			\bigg\|
			\\
			&\leq&
			\omega_{f}\big(
			2L \cdot |s-t|
			\big).
		\end{eqnarray*}
		Together with the uniform continuity of $f$, this implies that
		$t\mapsto \varUpsilon(f)(t)$ is continuous on $[0,1]$, and hence $\varUpsilon$ 
		is well-defined.

		By definition,
		$\varUpsilon$ 
		is a $*$-homomorphism.
		Moreover, when restricting 
		$\varUpsilon$ 
		to
		$\mathcal{A}
		\big(
		C_{b}(Y,\mathscr{X})
		\big)$,
		the resulting $*$-homomorphism takes values in
		$C
		\Big(
		[0,1],~	
		\mathcal{A}
		\big(
		C_{b}(Y,\mathscr{X})
		\big)
		\Big)$.
		It suffices to verify this on generators. 
		For 
		$\beta_{\xi}
		(f)$, 
		where 
		$f\in\mathcal{S}$ and 
		$\xi\in 
		C_{b}(Y,\mathscr{X})$,
		it follows from  \eqref{eq:functionalcalculustwo}  and the definition of $\varUpsilon$  that
		$$
		\varUpsilon
		\big(
		\beta_{\xi}
		(f)
		\big)(t)=
		\beta_{\xi_t}
		(f), \quad 
		t\in[0,1],
		$$ 
		where 
		$\xi_t\in
		C_{b}(Y,\mathscr{X})$
		is defined by 
		$$\xi_t(y) = \xi \big( 
		t \cdot {y}_{0} + (1-t) \cdot {y} \big), \quad 
		y \in Y.
		$$
		Thus, for each $t\in[0,1]$, 
		$$
		\varUpsilon
		\big(
		\beta_{\xi}
		(f)
		\big)(t)\in
		\mathcal{A}
		\big(
		C_{b}(Y,\mathscr{X})
		\big)
		$$
		and therefore
		$$\varUpsilon
		\big(
		\beta_{\xi}
		(f)
		\big)\in 
		C\Big(
		[0,1],~	
		\mathcal{A}
		\big(
		C_{b}(Y,\mathscr{X})
		\big)
		\Big).$$
		We continue to denote this induced map by $\varUpsilon$.

		It is clear that 
		\begin{center}
			${\rm ev}_{0}\circ \varUpsilon={\rm id}$ \quad 
			and \quad 
			${\rm ev}_{1}\circ \varUpsilon=\iota\circ{\rm ev}_{y_0}$,
		\end{center}
		where
		\begin{eqnarray*}
			{\rm ev}_{t}: 
			C\Big(
			[0,1],~	
			\mathcal{A}
			\big(
			C_{b}(Y,\mathscr{X})
			\big)
			\Big)
			\longrightarrow 
			\mathcal{A}
			\big(
			C_{b}(Y,\mathscr{X})
			\big)
		\end{eqnarray*}
		denotes the evaluation map at $t\in[0,1]$.
		This completes the proof.
	\end{proof}

	\begin{rmk}
		Note that the proof of the above lemma also yields an isomorphism
		\begin{equation*}
			K_*\big(
			\mathcal{A}
			(\mathscr{X}) 
			\big)
			\cong
			K_*\Big(
			C_{b}
			\big(
			Y, \mathcal{A}(\mathscr{X})
			\big)
			\Big).
		\end{equation*} 
	\end{rmk}

	\section{Proper $\Gamma$-$C^*$-algebras}

	In this section, we shall construct the proper $\Gamma$-$C^*$-algebra for a countable discrete group $\Gamma$ that 
	admits a
	coarse embedding  with finite complexity
	into an
	$\ell^2$-direct sum of infinitely many Property 
	$(H)$ 
	Banach spaces. 
	The construction of the $\Gamma$-$C^*$-algebra proceeds as follows.
	We decompose the infinite direct sum into an increasing filtration by finite partial sums. 
	Each term in this filtration is a finite direct sum of Property 
	$(H)$ Banach spaces and hence also has Property $(H)$.
	For a single Banach space with Property 
	$(H)$, 
	an associated $\Gamma$-$C^*$-algebra will be constructed in Section  5.1. 
	We then take a suitable limit of these algebras to obtain the desired proper  $\Gamma$-$C^*$-algebra in the general setting. 
	This will be carried out in Section  5.2.
The second condition in the definition of coarse embedding  with finite complexity  (Definition \ref{mostkeyidea1})  
will play a crucial role in establishing  the properness of the $\Gamma$-action on the $C^*$-algebra.

	\subsection{Case for single Property $(H)$ Banach space}

	Suppose that
	$h: \Gamma \to \mathscr{X}$ 
	is bornologous (Definition \ref{Bornologousdef}), where  $\mathscr{X}$ is a Property $(H)$ Banach space with a Property 
	$(H)$
	map 
	$\phi: S(\mathscr{X})\to S(\mathscr{H})$.

	By Proposition \ref{PropertyHmapforLpspaces}, 
	$\mathscr{X}_{\Gamma}=\ell^2(\Gamma, \mathscr{X})$
	is a Property $(H)$  Banach space with respect to the induced 
	Property $(H)$ map  
	$\mathcal{M}_{\phi}:
	S(\mathscr{X}_{\Gamma})\to 
	S(\mathscr{H}_{\Gamma})$.
	Let 
	$$
	\mathcal{M}_{\phi}^{\eta}:
	\mathscr{X}_{\Gamma}
	\longrightarrow 
	\mathscr{H}_{\Gamma}
	$$ 
	be an  $\eta$-scalar proper extension of $\mathcal{M}_{\phi}$
	given by Corollary \ref{finitemodulusofcontinuity}.
	We emphasize that the choice of the $\eta$-scalar proper extension, rather than an arbitrary proper extension,
	ensures the commutativity of the diagram \eqref{eq:commutativeofriowithcliffalgebrariojdklk} below.

	\begin{defn}\label{Cliffordmap}
		We define
		$\mathfrak{B}: \mathscr{X}_{\Gamma} \rightarrow
		\text{Cliff}_{\mathbb{C}}
		(\mathscr{H}_{\Gamma})$
		by 
		$v\mapsto C\big(\mathcal{M}_{\phi}^{\eta}(v)\big)$. 
		Moreover,
		for any  
		$v_{0}\in \mathscr{X}_{\Gamma}$,  
		we define $\mathfrak{B}^{v_{0}}: \mathscr{X}_{\Gamma} \rightarrow
		\text{Cliff}_{\mathbb{C}}
		(\mathscr{H}_{\Gamma})$ 
		by
		$\mathfrak{B}^{v_{0}}(v) = \mathfrak{B}(v-v_{0})$.
	\end{defn}		
	The same notation as in Definition \ref{Cliffordmap1} is used,
	and this causes no ambiguity.

	By \eqref{eq:functionalcalculusone} and \eqref{eq:functionalcalculustwo},
	we obtain the graded $*$-homomorphisms 
	\begin{equation}
		\label{eq:secondfunctionalcalculusforlsdkjlfa}
		\begin{array}{rcl}
			\beta_{v}: ~ \mathcal{S} &\longrightarrow&
			\mathcal{S}\widehat{\otimes} {C}_{0}
			\big(\mathscr{X}_{\Gamma}, 
			\text{Cliff}_{\mathbb{C}}
			(\mathscr{H}_{\Gamma})
			\big)
			\\
			f &\longmapsto & 
			f\big(
			\varTheta \widehat{\otimes} 1+1\widehat{\otimes} 		\mathfrak{B}^{v}
			\big),
		\end{array}
	\end{equation}
	and
	\begin{equation}
		\label{eq:secondfunctionalcalculusforsections}
		\begin{array}{rcl}
			\beta_{\xi}: ~ \mathcal{S}&\longrightarrow &
			\mathcal{F}
			\big(
			Z, ~
			\mathcal{A}(\mathscr{X}_{\Gamma})
			\big)
			\\
			f &\longmapsto & 
			\Big\{
			y\mapsto 
			\beta_{\xi(y)}(f)
			\Big\},
		\end{array}
	\end{equation}
	where  
	$v\in\mathscr{X}_{\Gamma}$,
	$\xi\in 
	C_{b}(Z, \mathscr{X}_{\Gamma})$,
	and 
	$Z$ is the metric space defined in \eqref{eq:simplicityforthebasespaceZ}.

	Following Section 4.4,
	we have the graded $C^*$-algebra 
	$$
	\mathcal{A}
	\big(
	C_{b}(Z, \mathscr{X}_{\Gamma})
	\big).
	$$
	We now show that the $\Gamma$-action on  
	$C_{b}(Z, \mathscr{X}_{\Gamma})$,  
	constructed in Section 3,
	induces a 
	$\Gamma$-action on 	
	$$
	\mathcal{A}
	\big(
	C_{b}(Z, \mathscr{X}_{\Gamma})
	\big).
	$$
	By the definition of the isometric isomorphism $\lambda^{\scalebox{0.6}{$\gamma$}}$,
	the Property $(H)$ map  
	$\mathcal{M}_{\phi}$, and the
	$\eta$-scalar extension,  
	there exists a 
	natural 
	$*$-isomorphism
	$\lambda^{\scalebox{0.6}{$\gamma$}}_*$
	such that the following diagram commutes:
	\begin{equation}
		\label{eq:commutativeofriowithcliffalgebrariojdklk}
		\begin{tikzcd}[row sep=1cm, column sep=1cm]
			\mathscr{X}_{\Gamma}
			\arrow[r,"\lambda^{\scalebox{0.6}{$\gamma$}}"] \arrow[d,"\mathfrak{B}"] &
			\mathscr{X}_{\Gamma}
			\arrow[d,"\mathfrak{B}"] 
			\\
			\text{Cliff}_{\mathbb{C}}
			(\mathscr{H}_{\Gamma}) \arrow[r,"\lambda^{\scalebox{0.6}{$\gamma$}}_*"]
			& \text{Cliff}_{\mathbb{C}}
			(\mathscr{H}_{\Gamma}).
		\end{tikzcd}
	\end{equation}
	For each $\gamma\in\Gamma$  
	and each
	$z \in {Z}$, 
	define a map
	\begin{equation*}
		\mathcal{U}^{\scalebox{0.6}{$(\gamma,z)$}}:
		\mathcal{S}\widehat{\otimes} {C}_{0}
		\big(\mathscr{X}_{\Gamma}, 
		\text{Cliff}_{\mathbb{C}}
		(\mathscr{H}_{\Gamma})
		\big)
		\longrightarrow \mathcal{S}\widehat{\otimes} {C}_{0}
		\big(\mathscr{X}_{\Gamma}, 
		\text{Cliff}_{\mathbb{C}}
		(\mathscr{H}_{\Gamma})
		\big)
	\end{equation*}
	by 
	\begin{equation}
		\label{eq:isometricsforalgebras}
		\big(
		\mathcal{U}^{\scalebox{0.6}{$(\gamma,z)$}}(f)
		\big)(\theta,v)
		=
		(id\widehat{\otimes}\lambda^{\scalebox{0.6}{$\gamma$}}_*)
		\bigg(
		f\Big(
		\theta,		U^{\scalebox{0.5}{$(\gamma^{-1},\gamma{z})$}}(v)
		\Big)	
		\bigg)
	\end{equation}
	for
	$f\in\mathcal{S}\widehat{\otimes} {C}_{0}
	\big(\mathscr{X}_{\Gamma}, 
	\text{Cliff}_{\mathbb{C}}
	(\mathscr{H}_{\Gamma})
	\big)$ 
	and  
	$(\theta,v)\in \mathbb{R} \times \mathscr{X}_{\Gamma}$.
	It is a $*$-isomorphism between 
	$C^*$-algebras.

	The following result follows immediately from Lemma \ref{affineisometryproperty1}.
	\begin{lem}\label{affineisometryproperty22}
		The following statements hold: 
		\begin{itemize}
			\item [(1)]	$\mathcal{U}^{^{\scalebox{0.6}{$(e,z)$}}}$
			is the 
			identity map 
			for all 
			$z\in{Z}$;
			\item [(2)]	$\mathcal{U}^{\scalebox{0.6}{$(\gamma, z^\prime)$}} \mathcal{U}^{\scalebox{0.6}{$(\gamma^\prime,z)$}}=\mathcal{U}^{\scalebox{0.6}{$(\gamma\gamma^\prime,z)$}}$
			for all 
			$\gamma, \gamma^\prime\in\Gamma$ and $z,z^\prime\in {Z}$ 
			such that
			$z^\prime=\gamma^\prime z$. 
		\end{itemize}
	\end{lem}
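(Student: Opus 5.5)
The plan is to reduce both statements to Lemma \ref{affineisometryproperty1}, using the fact that the map $g \mapsto (\mathrm{id}\widehat{\otimes}\lambda^{\gamma}_*)\circ g\circ (\mathrm{id}\times U^{(\gamma^{-1},\gamma z)})$ is a composition of two commuting ingredients. The first ingredient is a pullback along an affine isometry of $\mathscr{X}_{\Gamma}$. The second is a pushforward along a $*$-automorphism of the Clifford algebra. We work in the picture $\mathcal{S}\widehat{\otimes} C_0(\mathscr{X}_{\Gamma},\text{Cliff}_{\mathbb{C}}(\mathscr{H}_{\Gamma}))$ as functions of $(\theta,v)$, where \eqref{eq:isometricsforalgebras} is literally a formula. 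It therefore suffices to check the identities pointwise on such functions.

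For (1), take $\gamma=e$. Lemma \ref{affineisometryproperty1}(1) gives $U^{(e,z)}=\mathrm{id}$, so the inner argument is $v$. Moreover $\lambda^{e}$ is the identity on $\mathscr{X}_{\Gamma}$, so by the naturality in \eqref{eq:commutativeofriowithcliffalgebrariojdklk}, $\lambda^{e}_*$ is the identity automorphism of $\text{Cliff}_{\mathbb{C}}(\mathscr{H}_{\Gamma})$. Here $\lambda^{e}_*$ is induced by the identity permutation of $\ell^2(\Gamma,\mathscr{H})$, and functoriality of $\text{Cliff}_{\mathbb{C}}$ gives the identity. Hence $\mathcal{U}^{(e,z)}(f)=f$.

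For (2), fix $\gamma,\gamma'$ and $z'=\gamma' z$, and compute $\mathcal{U}^{(\gamma,z')}\big(\mathcal{U}^{(\gamma',z)}(f)\big)(\theta,v)$. Unwinding \eqref{eq:isometricsforalgebras} twice yields
\[
(\mathrm{id}\widehat{\otimes}\lambda^{\gamma}_*\lambda^{\gamma'}_*)\Big(f\big(\theta, U^{(\gamma'^{-1},\gamma' z)}U^{(\gamma^{-1},\gamma z')}(v)\big)\Big).
\]
First, $\lambda^{\gamma}_*\lambda^{\gamma'}_*=\lambda^{\gamma\gamma'}_*$, again by functoriality, since $\lambda^{\gamma}\lambda^{\gamma'}=\lambda^{\gamma\gamma'}$ on $\mathscr{H}_{\Gamma}$. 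Second, apply Lemma \ref{affineisometryproperty1}(2) with the group elements $\gamma'^{-1}$ and $\gamma^{-1}$ and base point $\gamma z'=\gamma\gamma' z$. The compatibility condition requires the base point of the outer factor to equal $\gamma^{-1}$ applied to the base point of the inner factor, and indeed $\gamma^{-1}(\gamma\gamma' z)=\gamma' z$. So the composite equals $U^{(\gamma'^{-1}\gamma^{-1},\gamma\gamma' z)}=U^{((\gamma\gamma')^{-1},(\gamma\gamma')z)}$. This is exactly the inner argument appearing in $\mathcal{U}^{(\gamma\gamma',z)}$, which gives (2).

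The only point needing care is the bookkeeping of which base points the lemma's hypothesis $z'=\gamma' z$ refers to, after inverting group elements and composing in the contravariant (pullback) order. This is the step to check most carefully. The remaining steps are formal consequences of functoriality of $\text{Cliff}_{\mathbb{C}}$ and of the pullback/pushforward description.
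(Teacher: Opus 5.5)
Your proposal is correct and follows the same approach as the paper, which simply states that the lemma follows immediately from Lemma \ref{affineisometryproperty1}. Your computation supplies exactly the bookkeeping the paper leaves implicit: the identity $U^{(\gamma'^{-1},\gamma' z)}U^{(\gamma^{-1},\gamma\gamma' z)}=U^{((\gamma\gamma')^{-1},\gamma\gamma' z)}$, justified by the compatibility check $\gamma^{-1}(\gamma\gamma' z)=\gamma' z$, together with the functoriality identity $\lambda^{\gamma}_*\lambda^{\gamma'}_*=\lambda^{\gamma\gamma'}_*$.
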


	\begin{lem}
		$
		\mathcal{U}^{\scalebox{0.6}{$(\gamma,z)$}}\big(
		\mathcal{A}(\mathscr{X}_{\Gamma})
		\big)
		\subseteq
		\mathcal{A}(\mathscr{X}_{\Gamma})
		$
		for all
		$\gamma\in\Gamma$
		and
		$z\in {Z}$.      
	\end{lem}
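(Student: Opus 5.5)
Since $\mathcal{U}^{(\gamma,z)}$ is a $*$-isomorphism of $\mathcal{S}\widehat{\otimes} C_{0}\big(\mathscr{X}_{\Gamma},\text{Cliff}_{\mathbb{C}}(\mathscr{H}_{\Gamma})\big)$, and $\mathcal{A}(\mathscr{X}_{\Gamma})$ is the $C^*$-subalgebra generated by the elements $\beta_{v}(f)$, it suffices to check the claim on generators. Concretely, we show that for every $f\in\mathcal{S}$ and $v_0\in\mathscr{X}_{\Gamma}$ one has
\begin{equation*}
\mathcal{U}^{(\gamma,z)}\big(\beta_{v_0}(f)\big)=\beta_{w}(f), \qquad w:=U^{(\gamma,z)}(v_0)=\lambda^{\gamma}(v_0)+\zeta^{(\gamma,z)} .
\end{equation*}
Since $*$-homomorphisms map generated subalgebras into the subalgebras generated by the images, and $\mathcal{A}(\mathscr{X}_{\Gamma})$ is closed, the inclusion follows.

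To verify the identity, use the picture from Section 4.3 in which $\beta_{v_0}(f)$ is the function $(\theta,v)\mapsto f\big(C(\theta,\mathcal{M}^{\eta}_{\phi}(v-v_0))\big)$. By \eqref{eq:isometricsforalgebras},
\begin{equation*}
\mathcal{U}^{(\gamma,z)}\big(\beta_{v_0}(f)\big)(\theta,v)=(id\widehat{\otimes}\lambda^{\gamma}_*)\Big(f\big(C(\theta,\mathcal{M}^{\eta}_{\phi}(U^{(\gamma^{-1},\gamma z)}(v)-v_0))\big)\Big).
\end{equation*}
By Lemma \ref{affineisometryproperty1}(2), applied with $z'=\gamma z$, we have $U^{(\gamma^{-1},\gamma z)}=\big(U^{(\gamma,z)}\big)^{-1}$. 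Since $U^{(\gamma,z)}$ is affine with linear part $\lambda^{\gamma}$, this gives
\begin{equation*}
U^{(\gamma^{-1},\gamma z)}(v)-v_0=(\lambda^{\gamma})^{-1}\big(v-U^{(\gamma,z)}(v_0)\big)=\lambda^{\gamma^{-1}}(v-w).
\end{equation*}

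Next, the commutative diagram \eqref{eq:commutativeofriowithcliffalgebrariojdklk} holds at the level of the maps into $\mathscr{H}_{\Gamma}$. This is because $\mathcal{M}^{\eta}_{\phi}$ is an $\eta$-scalar extension and the Banach-valued Mazur map is computed coordinatewise, so $\mathcal{M}^{\eta}_{\phi}\circ\lambda^{\gamma}=\lambda^{\gamma}_{\mathscr{H}}\circ\mathcal{M}^{\eta}_{\phi}$, where $\lambda^{\gamma}_{\mathscr{H}}$ is the orthogonal translation on $\mathscr{H}_{\Gamma}$ inducing $\lambda^{\gamma}_*$. Hence
\begin{equation*}
\mathcal{M}^{\eta}_{\phi}\big(\lambda^{\gamma^{-1}}(v-w)\big)=\lambda^{\gamma^{-1}}_{\mathscr{H}}\,\mathcal{M}^{\eta}_{\phi}(v-w).
\end{equation*}
The $*$-automorphism $id\widehat{\otimes}\lambda^{\gamma}_*$ of $\text{Cliff}_{\mathbb{C}}(\mathbb{R}\oplus\mathscr{H}_{\Gamma})$ is induced by the orthogonal map $1\oplus\lambda^{\gamma}_{\mathscr{H}}$. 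It therefore sends $C(\theta,u)$ to $C(\theta,\lambda^{\gamma}_{\mathscr{H}}u)$ and commutes with functional calculus: $\alpha(f(a))=f(\alpha(a))$ for self-adjoint $a$, using Remark \ref{functionalcalculusforvectors}. Combining these facts, the right-hand side equals $f\big(C(\theta,\mathcal{M}^{\eta}_{\phi}(v-w))\big)=\beta_{w}(f)(\theta,v)$, as desired.

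The main point requiring care is the equivariance $\mathcal{M}^{\eta}_{\phi}\circ\lambda^{\gamma}=\lambda^{\gamma}_{\mathscr{H}}\circ\mathcal{M}^{\eta}_{\phi}$, which is where the choice of an $\eta$-scalar extension matters. Because $\lambda^{\gamma}$ is an isometry, $\eta(\|\lambda^{\gamma}u\|)=\eta(\|u\|)$. Moreover, $\mathcal{M}_{\phi}$ on the unit sphere commutes with permutations of the index set $\Gamma$, since $\big(\mathcal{M}_{\phi}(f)\big)(\tilde\gamma)=\|f(\tilde\gamma)\|\,\phi\big(f(\tilde\gamma)/\|f(\tilde\gamma)\|\big)$. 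One should also confirm that the functional-calculus identification for the unbounded multiplier $\varTheta\widehat{\otimes}1+1\widehat{\otimes}\mathfrak{B}^{v_0}$ is compatible with the automorphism. This is routine via the pointwise description.
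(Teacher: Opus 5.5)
Your proof is correct and follows the paper's route. You reduce to the generators and establish $\mathcal{U}^{(\gamma,z)}\big(\beta_{v_0}(f)\big)=\beta_{U^{(\gamma,z)}(v_0)}(f)$ from the $\lambda^{\gamma}$-equivariance of the $\eta$-scalar extension; the only difference is that you verify this identity for every $f\in\mathcal{S}$ at once, via the pointwise Clifford picture and the fact that $*$-automorphisms commute with functional calculus, whereas the paper checks it explicitly for $e^{-\theta^{2}}$ and $\theta e^{-\theta^{2}}$ and extends by generation of $\mathcal{S}$ (you also make explicit the relation $U^{(\gamma^{-1},\gamma z)}=\big(U^{(\gamma,z)}\big)^{-1}$, which the paper uses implicitly).
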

	\begin{proof}
		For simplicity, we write 
		$\varPhi=\mathcal{M}_{\phi}^{\eta}$.
		To complete the proof, it suffices to show that
		\begin{equation}
			\label{eq:rulesforisometricforalgebras}
			\mathcal{U}^{\scalebox{0.6}{$(\gamma,z)$}}
			\big(	
			\beta_{v_{0}}(f)
			\big)
			=
			\beta_{U^{\scalebox{0.5}{$(\gamma,z)$}}				(v_{0})}(f)
		\end{equation}
		for all 
		$\gamma\in\Gamma$,
		$z\in {Z}$, 
		$v_{0} \in \mathscr{X}_{\Gamma}$ and 
		$f\in \mathcal{S}$,
		where $\beta$ is given by \eqref{eq:secondfunctionalcalculusforlsdkjlfa}.

		We first 
		consider the case 
		where 
		$f(\theta)=e^{-\theta^{2}}$.
		Then for all
		$\gamma\in\Gamma$, $z\in {Z}$ and $v_{0} \in \mathscr{X}_{\Gamma}$,
		\begin{eqnarray*}
			\Big(\mathcal{U}^{\scalebox{0.6}{$(\gamma,z)$}}\big(
			\beta_{v_0}
			(f)\big)
			\Big)(\theta,v) &=& 	(id\widehat{\otimes}\lambda^{\scalebox{0.6}{$\gamma$}}_*)
			\bigg(
			\beta_{v_0}(f)
			\Big(
			\theta,		U^{\scalebox{0.5}{$(\gamma^{-1},\gamma{z})$}}(v)
			\Big)	
			\bigg)
			\\
			&=&
			e^{-\theta^{2}}
			\widehat{\otimes}~ 
			e^{
				-\Big\|
				\varPhi 
				\big( \lambda^{\scalebox{0.6}{$\gamma^{-1}$}}(v)+\zeta^{\scalebox{0.6}{$(\gamma^{-1},\gamma{z})$}}-v_{0}
				\big)
				\Big\|^2
			}
			\\
			&=&
			e^{-\theta^{2}}
			\widehat{\otimes}~ 
			e^{
				-\bigg\|
				\varPhi 
				\Big( \lambda^{\scalebox{0.6}{$\gamma^{-1}$}}
				\big(v+\lambda^{\scalebox{0.6}{$\gamma$}}
				(\zeta^{\scalebox{0.6}{$(\gamma^{-1},\gamma{z})$}}-v_{0})\big)
				\Big)
				\bigg\|^2
			}
			\\
			&=&
			e^{-\theta^{2}}
			\widehat{\otimes}~ 
			e^{
				-\bigg\|
				\lambda^{\scalebox{0.6}{$\gamma^{-1}$}}
				\Big( \varPhi  
				\big(v+\lambda^{\scalebox{0.6}{$\gamma$}}
				(\zeta^{\scalebox{0.6}{$(\gamma^{-1},\gamma{z})$}}-v_{0})\big)
				\Big)
				\bigg\|^2
			}
			\\
			&=&
			e^{-\theta^{2}}
			\widehat{\otimes}~ 
			e^{
				-\Big\|
				\varPhi  \big(v+\lambda^{\scalebox{0.6}{$\gamma$}}
				(\zeta^{\scalebox{0.6}{$(\gamma^{-1},\gamma{z})$}}-v_{0})\big)
				\Big\|^{2}
			}
			\\
			&=& 
			\big(\beta_{w_0}(f)\big)(\theta,v),
		\end{eqnarray*}
		where $w_{0}=\lambda^{\scalebox{0.7}{$\gamma$}}\big(v_{0}-\zeta^{\scalebox{0.6}{$(\gamma^{-1},\gamma{z})$}}\big)=U^{\scalebox{0.6}{$(\gamma,z)$}}
		(v_{0})$.
		Next, for the function 
		$f(\theta)=\theta\cdot{e}^{-\theta^{2}}$, we have
		\begin{eqnarray*}
			&&\Big(
			\mathcal{U}^{\scalebox{0.6}{$(\gamma,z)$}}
			\big(
			\beta_{v_0}(f)
			\big)
			\Big)(\theta,v) 
			\\
			&=& 	(id\widehat{\otimes}\lambda^{\scalebox{0.7}{$\gamma$}}_*)
			\bigg(
			\beta_{v_0}(f)
			\Big(
			\theta,		U^{\scalebox{0.6}{$(\gamma^{-1},\gamma{z})$}}(v)
			\Big)	
			\bigg)
			\\
			&=&
			(id\widehat{\otimes}\lambda^{\scalebox{0.7}{$\gamma$}}_*)
			\Bigg(\theta\cdot
			e^{-\theta^{2}}
			\widehat{\otimes}~ 
			e^{
				-\Big\|
				\varPhi 
				\big( U^{\scalebox{0.6}{$(\gamma^{-1},\gamma{z})$}}(v)-v_{0} 
				\big)
				\Big\|^2
			}
			+
			e^{-\theta^{2}}\widehat{\otimes}~ 
			e^{
				-\Big\|
				\varPhi 
				\big( U^{\scalebox{0.6}{$(\gamma^{-1},\gamma{z})$}}(v)-v_{0} 
				\big)
				\Big\|^2
			} \cdot 
			\mathfrak{B}
			\Big(
			U^{\scalebox{0.6}{$(\gamma^{-1},\gamma{z})$}}(v)-v_{0}
			\Big)
			\Bigg)
			\\
			&=&
			\theta\cdot	e^{-\theta^{2}}
			\widehat{\otimes}~ 
			e^{
				-\Big\|
				\varPhi 
				\big( U^{\scalebox{0.6}{$(\gamma^{-1},\gamma{z})$}}(v)-v_{0} 
				\big)
				\Big\|^2
			}+
			e^{-\theta^{2}}\widehat{\otimes}~ 
			e^{
				-\Big\|
				\varPhi 
				\big( U^{\scalebox{0.6}{$(\gamma^{-1},\gamma{z})$}}(v)-v_{0} 
				\big)
				\Big\|^2
			} \cdot 
			\mathfrak{B}
			\Big(
			\lambda^{\gamma}
			\big(
			U^{\scalebox{0.6}{$(\gamma^{-1},\gamma{z})$}}(v)-v_{0}
			\big)
			\Big)
			\\
			&=&
			\theta\cdot	e^{-\theta^{2}}\widehat{\otimes} 
			e^{
				-\Big\|
				\varPhi  \big(v+\lambda^{\gamma}(\zeta^{\scalebox{0.6}{$(\gamma^{-1},\gamma{z})$}}-v_{0}
				)\big)
				\Big\|^2
			}
			+
			e^{-\theta^{2}}
			\widehat{\otimes} 
			e^{
				-\Big\|
				\varPhi  \big(v+\lambda^{\gamma}(\zeta^{\scalebox{0.6}{$(\gamma^{-1},\gamma{z})$}}-v_{0}
				)\big)
				\Big\|^2
			} \cdot	
			\mathfrak{B}
			\Big(
			v+\lambda^{\gamma}\big(
			\zeta^{\scalebox{0.6}{$(\gamma^{-1},\gamma{z})$}}-v_{0}
			\big)
			\Big)
			\\
			&=& 
			\big(\beta_{w_{0}}(f)\big) 
			(\theta,v),
		\end{eqnarray*}
		where $w_{0}=\lambda^{\gamma}\big(v_{0}-\zeta^{\scalebox{0.6}{$(\gamma^{-1},\gamma{z})$}}\big)=U^{\scalebox{0.6}{$(\gamma,z)$}}
		(v_{0})$.

		Since the $C^*$-algebra $\mathcal{S}$ is generated by  $e^{-\theta^{2}}$ and $\theta\cdot{e}^{-\theta^{2}}$,
		\eqref{eq:rulesforisometricforalgebras} holds for all $f\in \mathcal{S}$.
	\end{proof}

	The above lemma allows us to restrict the $*$-isomorphisms 
	$\mathcal{U}^{\scalebox{0.6}{$(\gamma,z)$}}$  to  $\mathcal{A}(\mathscr{X}_{\Gamma})$, 
	and we obtain a family of
	$*$-isomorphisms 
	$$
	\Big\{	\mathcal{U}^{\scalebox{0.6}{$(\gamma,z)$}}: \mathcal{A}(\mathscr{X}_{\Gamma})
	\longrightarrow
	\mathcal{A}(\mathscr{X}_{\Gamma})
	\Big\}_ 
	{\gamma\in\Gamma, z\in {Z}}.
	$$
	For each 
	$\gamma\in\Gamma$ 
	and each
	$f\in 
	\mathcal{A}
	\big(
	C_{b}(Z, \mathscr{X}_{\Gamma})
	\big)$,
	we define 	
	$$
	\gamma\cdot {f}: Z \longrightarrow \mathcal{A}(\mathscr{X}_{\Gamma})
	$$
	by
	\begin{equation}
		\label{eq:actionsinducedbyisometrics}
		(\gamma\cdot {f})(z)=
		\mathcal{U}^{\scalebox{0.6}{$(\gamma,\gamma^{-1}z)$}}
		\big(
		f(\gamma^{-1}z)
		\big) 
	\end{equation}
	for 
	$z\in{Z}$.

	\begin{lem} \label{zenmenengmeiyoulakdjlkfanedkfd}
		$
		\gamma\cdot f
		\in 
		\mathcal{A}
		\big(
		C_{b}(Z, \mathscr{X}_{\Gamma})
		\big)
		$
		for all 
		$\gamma\in\Gamma$ 
		and 
		$f\in 
		\mathcal{A}
		\big(
		C_{b}(Z, \mathscr{X}_{\Gamma})
		\big)$.
	\end{lem}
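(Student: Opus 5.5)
The plan is to reduce to generators and then pass to the closure. Fix $\gamma$, and consider the map $f\mapsto\gamma\cdot f$ on $C_b\big(Z,\mathcal{A}(\mathscr{X}_{\Gamma})\big)$, defined by \eqref{eq:actionsinducedbyisometrics}. This map is a $*$-homomorphism, because each $\mathcal{U}^{(\gamma,\gamma^{-1}z)}$ is a $*$-isomorphism and the formula is applied pointwise in $z$. It is also isometric in the supremum norm, since $z\mapsto\gamma^{-1}z$ is a bijection of $Z$. It therefore suffices to show that $\gamma\cdot\beta_{\xi}(f)$ lies in $\mathcal{A}\big(C_b(Z,\mathscr{X}_{\Gamma})\big)$ for every generator $\beta_{\xi}(f)$ with $f\in\mathcal{S}$ and $\xi\in C_b(Z,\mathscr{X}_{\Gamma})$. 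The general case then follows, because the set of $f$ with $\gamma\cdot f\in\mathcal{A}\big(C_b(Z,\mathscr{X}_{\Gamma})\big)$ is a closed $*$-subalgebra. Strictly speaking, one applies the map to the whole algebra generated inside $\mathcal{F}\big(Z,\mathcal{A}(\mathscr{X}_\Gamma)\big)$, where it is continuous, and observes that the image of the generators lands in the target algebra.

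For a generator, the identity \eqref{eq:rulesforisometricforalgebras} from the previous lemma gives, for each $z\in Z$,
\[
(\gamma\cdot\beta_{\xi}(f))(z)=\mathcal{U}^{(\gamma,\gamma^{-1}z)}\big(\beta_{\xi(\gamma^{-1}z)}(f)\big)=\beta_{U^{(\gamma,\gamma^{-1}z)}(\xi(\gamma^{-1}z))}(f).
\]
Hence $\gamma\cdot\beta_{\xi}(f)=\beta_{\gamma\cdot\xi}(f)$, where $\gamma\cdot\xi$ is given by \eqref{eq:theactionsofgammaonsections}:
\[
(\gamma\cdot\xi)(z)=U^{(\gamma,\gamma^{-1}z)}\big(\xi(\gamma^{-1}z)\big).
\]
By Lemma \ref{The-very-most-essentional-idea}, $\gamma\cdot\xi\in C_b(Z,\mathscr{X}_{\Gamma})$. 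So $\beta_{\gamma\cdot\xi}(f)$ is itself one of the defining generators of $\mathcal{A}\big(C_b(Z,\mathscr{X}_{\Gamma})\big)$.

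The remaining points are routine. A product or sum of generators maps to the corresponding product or sum, by the pointwise $*$-homomorphism property. Norm limits are preserved because the map is isometric. The only genuine content is the equivariance identity for generators, and that was already established in \eqref{eq:rulesforisometricforalgebras}.

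The step needing the most care is the use of Lemma \ref{The-very-most-essentional-idea}, namely that $\gamma\cdot\xi$ is again bounded and uniformly continuous. This is what guarantees that the transformed section is admissible, and it is exactly where bornologousness of $h$ (through the bound \eqref{eq:upperboundforthecocycles} and the Mazur-map estimate) enters. With that lemma in hand, the argument closes.
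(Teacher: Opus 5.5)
Your proposal is correct and follows the paper's own proof. You reduce to generators $\beta_{\xi}(f)$, use \eqref{eq:rulesforisometricforalgebras} to obtain $\gamma\cdot\beta_{\xi}(f)=\beta_{\gamma\cdot\xi}(f)$, and conclude by Lemma \ref{The-very-most-essentional-idea}. Your extra remark about viewing $f\mapsto\gamma\cdot f$ as an isometric pointwise $*$-homomorphism on $\mathcal{F}\big(Z,\mathcal{A}(\mathscr{X}_{\Gamma})\big)$ simply makes explicit the ``it suffices'' reduction that the paper leaves implicit.
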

	\begin{proof}
		It suffices to verify that
		$$
		\gamma \cdot \beta_{\xi}
		(f)\in 
		\mathcal{A}
		\big(
		C_{b}(Z, \mathscr{X}_{\Gamma})
		\big)
		$$
		for all $\gamma\in\Gamma$,
		$f\in\mathcal{S}$ 
		and 
		$\xi\in 
		C_{b}(Z, \mathscr{X}_{\Gamma})$.
		Using  \eqref{eq:actionsinducedbyisometrics},
		\eqref{eq:secondfunctionalcalculusforsections}, \eqref{eq:rulesforisometricforalgebras} and \eqref{eq:theactionsofgammaonsections},
		we compute that for every $z\in Z$,
		\begin{eqnarray*}
			\big( 
			\gamma \cdot \beta_{\xi}
			(f)
			\big)  
			(z) &=& 
			\mathcal{U}^{\scalebox{0.6}{$(\gamma,\gamma^{-1}z)$}}
			\Big(
			\beta_{\xi}
			(f)
			(\gamma^{-1}z)
			\Big)
			\\
			&=& 
			\mathcal{U}^{\scalebox{0.6}{$(\gamma,\gamma^{-1}z)$}}
			\Big(
			\beta_{\xi(\gamma^{-1}z)}
			(f)
			\Big)
			\\
			&=& 
			\beta_{
				U^{\scalebox{0.6}{$(\gamma,\gamma^{-1}z)$}}
				\big(\xi(\gamma^{-1}z)\big)
			}
			(f)  
			\\
			&=&
			\beta_{(\gamma \cdot \xi)(z)}
			(f)
			\\
			&=&
			\big( 
			\beta_{\gamma \cdot \xi}
			(f)
			\big)(z). 
		\end{eqnarray*}
		Hence, for all $\gamma\in\Gamma$,
		$f\in\mathcal{S}$ 
		and 
		$\xi\in 
		C_{b}(Z, \mathscr{X}_{\Gamma})$,
		we have
		\begin{equation}
			\label{eq:rulesforgammaactionforelementary}
			\gamma \cdot \beta_{\xi}
			(f)
			= 
			\beta_{\gamma \cdot \xi}
			(f).
		\end{equation}
		Combining this with the fact that
		$\gamma\cdot\xi
		\in 
		C_{b}(Z, \mathscr{X}_{\Gamma})$,
		we obtain 
		$\gamma \cdot \beta_{\xi}
		(f)\in 
		\mathcal{A}
		\big(
		C_{b}(Z, \mathscr{X}_{\Gamma})
		\big)$.
	\end{proof}

	By Lemma \ref{affineisometryproperty22} and Lemma \ref{zenmenengmeiyoulakdjlkfanedkfd}, 
	\eqref{eq:actionsinducedbyisometrics} gives rise to a $\Gamma$-action on
	$\mathcal{A}
	\big(
	C_{b}(Z, \mathscr{X}_{\Gamma})
	\big)$.
	We conclude this section by showing that  
	$\mathcal{A}
	\big(
	C_{b}(Z, \mathscr{X}_{\Gamma})
	\big)$
	is a  
	proper $\Gamma$-$C^*$-algebra
	whenever 
	$h$
	is a coarse embedding.
	It should be noted that our construction of proper algebras is more concise than the one in \cite{KY}.

	\begin{lem}
		[{\cite[Lemma 6.6]{GWY}\label{propermethod}}] Let $W$ be a locally compact Hausdorff space and let $\Gamma$ be a countable discrete group. Let $\Gamma\curvearrowright_{\alpha} W$ be an action by homeomorphisms and let $\alpha_*$ be the induced action on $C_{0}(W)$. Then the action $\alpha$ is (topologically) proper
		if and only if for any $f\in C_{0}(W)$,
		$$
		\lim\limits_{\gamma\rightarrow\infty}
		\Big\|
		\big(\alpha_*(\gamma)f\big)\cdot f\Big\|=0.
		$$
	\end{lem}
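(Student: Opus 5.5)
We would prove both implications directly from the definition of topological properness: for every compact $K\subseteq W$, the set $\{\gamma\in\Gamma \mid \gamma K\cap K\neq\emptyset\}$ is finite. Throughout we use the formula $\big(\alpha_*(\gamma)f\big)(w)=f(\gamma^{-1}w)$, so that
$$\Big\|\big(\alpha_*(\gamma)f\big)\cdot f\Big\|=\sup_{w\in W}\big|f(\gamma^{-1}w)\big|\cdot\big|f(w)\big|.$$
Here ``$\lim_{\gamma\to\infty}$'' means convergence along the cofinite filter on $\Gamma$. That is, for every $\epsilon>0$ the inequality should hold outside a finite subset of $\Gamma$.

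\emph{Properness implies decay.} Fix $f\in C_0(W)$ and $\epsilon>0$. Set $\epsilon'=\epsilon/(\|f\|+1)$ and $K=\{w\in W \mid |f(w)|\geq \epsilon'\}$. Since $f$ vanishes at infinity, $K$ is compact. By properness, the set $F=\{\gamma \mid \gamma K\cap K\neq\emptyset\}$ is finite. Now take $\gamma\notin F$ and any $w\in W$. We cannot have both $w\in K$ and $\gamma^{-1}w\in K$, because that would put $w\in \gamma K\cap K$. Hence at least one of $|f(w)|$ and $|f(\gamma^{-1}w)|$ is smaller than $\epsilon'$, while the other factor is at most $\|f\|$. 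It follows that
$$\Big\|\big(\alpha_*(\gamma)f\big)\cdot f\Big\|\leq \epsilon'\|f\|<\epsilon$$
for all $\gamma\notin F$, which is the required limit.

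\emph{Decay implies properness.} Let $K\subseteq W$ be compact. Since $W$ is locally compact Hausdorff, Urysohn's lemma provides $f\in C_c(W)\subseteq C_0(W)$ with $0\leq f\leq 1$ and $f\equiv 1$ on $K$. Suppose $\gamma K\cap K\neq\emptyset$, and pick $w$ in this intersection. Then $w\in K$ and $\gamma^{-1}w\in K$, so $f(\gamma^{-1})\cdots$ more precisely $f(\gamma^{-1}w)f(w)=1$, and therefore $\|(\alpha_*(\gamma)f)\cdot f\|\geq 1$. By hypothesis this norm is less than $1$ for all $\gamma$ outside some finite set. Consequently $\{\gamma \mid \gamma K\cap K\neq\emptyset\}$ is finite, and the action is proper.

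Neither direction contains a real obstacle. The only points needing care are the following two. First, the interpretation of $\gamma\to\infty$ must be the cofinite filter on $\Gamma$, which is automatic for a countable discrete group. Second, the reverse direction requires a compactly supported bump function equal to $1$ on $K$, and this is exactly where the local compactness and Hausdorff assumptions on $W$ are used.
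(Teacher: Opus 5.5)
Your proof is correct and is the standard argument for this fact; the paper does not reproduce a proof but cites it from Gong--Wu--Yu. The one small variation from the more common route is that you handle a general $f\in C_0(W)$ directly via the compact superlevel set $\{w : |f(w)|\geq\epsilon'\}$, instead of reducing to $f\in C_c(W)$ and then arguing by density; also delete the stray fragment ``$f(\gamma^{-1})\cdots$ more precisely'' in the converse direction.
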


	Recall from \cite{GHT,K1988} 
	that a $\Gamma$-$C^*$-algebra 
	$A$ 
	is \emph{proper} 
	if there is
	a locally compact, 
	proper 
	$\Gamma$-space $X$ 
	and an equivariant
	$*$-homomorphism from 
	$C_0(X)$ 
	into the center of the multiplier algebra of $A$
	such that 
	$C_0(X)\cdot A$
	is dense in $A$.

	\begin{prop}\label{babycase}
		Suppose that
		$h:\Gamma\rightarrow\mathscr{X}$
		is a coarse embedding,  where  $\mathscr{X}$ is a Property $(H)$ Banach space. 
		Then
		$\mathcal{A}
		\big(
		C_{b}(Z, \mathscr{X}_{\Gamma})
		\big)$
		is a proper
		$\Gamma$-$C^*$-algebra.
	\end{prop}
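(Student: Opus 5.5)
To show that $\mathcal{A}\big(C_{b}(Z,\mathscr{X}_{\Gamma})\big)$ is a proper $\Gamma$-$C^*$-algebra, I would exhibit a locally compact proper $\Gamma$-space $W$ together with an equivariant map from $C_0(W)$ into the center of the multiplier algebra whose range acts nondegenerately. The natural candidate is $W=\Gamma$ itself, with the left translation action, which is proper.

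\textbf{Step 1: the central functions.} For $g\in C_0(\Gamma)$ I would define a function $F_g$ on $Z\times\mathbb{R}\times\mathscr{X}_{\Gamma}$ by
\[
F_g(z,\theta,v)=\sum_{\tilde\gamma}g(\tilde\gamma)\,\chi_{\tilde\gamma}(z,\theta,v),
\]
where $\chi_{\tilde\gamma}$ is a partition of unity localizing near the point $\zeta$-cocycle position $-\zeta^{(\tilde\gamma^{-1},\,\cdot)}$, i.e.\ near $\kappa^{\tilde\gamma}$. This choice turns out to be awkward, so instead I would use a scalar function $\rho$ on $\mathbb{R}\times\mathscr{X}_{\Gamma}$ that detects the ``center of mass.'' Concretely, the convenient approach is to take $W=\overline{\Gamma}$-fibered data: for $g\in C_c(\Gamma)$, set
\[
(g\cdot f)(z)(\theta,v)=\Big(\sum_{\tilde\gamma}g(\tilde\gamma)\,\psi\big(\|\mathcal{M}_\phi^\eta(v-\kappa^{\tilde\gamma}(z))\|\big)\Big)f(z)(\theta,v),
\]
with $\psi$ a cutoff normalized so that these functions sum to a partition of unity over $\tilde\gamma$. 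Equivariance, $\gamma\cdot(g f)=(\lambda_\gamma g)(\gamma\cdot f)$, would follow from the cocycle identity in Lemma \ref{affineisometryproperty1} together with \eqref{eq:rulesforisometricforalgebras}.

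\textbf{Alternative simpler route (preferred).} Rather than building $W$ explicitly, I would verify properness via the criterion of Lemma \ref{propermethod}, applied to the spectrum of a suitable commutative subalgebra. Take $W$ to be the spectrum of the $\Gamma$-invariant commutative $C^*$-subalgebra $\mathcal{C}$ of the center of the multiplier algebra generated by the even elements $\beta_\xi(f_0)$, $f_0\in\mathcal{S}_{\rm even}$, restricted to the scalar part. These act centrally, since even functional calculus gives scalars by Remark \ref{functionalcalculusforvectors}, and $\mathcal{C}\cdot\mathcal{A}$ is dense because $f=\lim f\cdot e^{-\epsilon(\theta^2+\|\cdot\|^2)}$. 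By Lemma \ref{propermethod}, it then suffices to show that for generators,
\[
\lim_{\gamma\to\infty}\big\|(\gamma\cdot a)\,a\big\|=0 .
\]

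\textbf{Step 2: the key estimate.} By \eqref{eq:rulesforgammaactionforelementary}, $\gamma\cdot\beta_\xi(f)=\beta_{\gamma\cdot\xi}(f)$, and $(\gamma\cdot\xi)(z)=\lambda^\gamma\xi(\gamma^{-1}z)+\kappa^\gamma(z)$. Take $f$ compactly supported in $[-N,N]$. Then $\beta_{w_1}(f)\beta_{w_2}(f)$ vanishes at $(\theta,v)$ unless both $\|\varPhi(v-w_1)\|\le N$ and $\|\varPhi(v-w_2)\|\le N$. By property (4) of proper extensions this forces $\|v-w_i\|\le L_N$, and hence $\|w_1-w_2\|\le 2L_N$. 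Now set $w_1=\xi(z)$ and $w_2=(\gamma\cdot\xi)(z)$. Since $\xi$ is bounded, say by $M$, we have $\|w_1-w_2\|\ge\|\kappa^\gamma(z)\|-2M$. By Proposition \ref{strongproperness}, $\inf_z\|\kappa^\gamma(z)\|\to\infty$, so for $\gamma$ outside a finite set the product vanishes identically in $z$. The same argument extends to products of generators, since supports are controlled through $\mathcal{A}_R$, and density then gives the limit for all elements.

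\textbf{Main obstacle.} The hard part is making the central subalgebra rigorous. One must check that the even elements really are central multipliers, which requires graded commutativity: the even parts commute with Clifford elements. One must also check that their spectrum is a locally compact $\Gamma$-space on which the action is by homeomorphisms, so that Lemma \ref{propermethod} applies. I would handle this by using only the scalar-valued functions $z\mapsto\exp\big(-\theta^2-\|\varPhi(v-\xi(z))\|^2\big)$ and taking $W$ to be their joint spectrum.
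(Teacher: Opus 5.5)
Your preferred route is correct and is essentially the paper's own proof. The paper also takes the central subalgebra generated by the $\beta_\xi(f)$ with $f\in\mathcal{S}_{\rm even}$ and passes to its Gelfand spectrum. It then verifies the criterion of Lemma \ref{propermethod} by showing $(\gamma\cdot x)x=0$ for $x$ with uniformly bounded support once $\inf_{z}\|\kappa^{\gamma}(z)\|$ is large (Proposition \ref{strongproperness}), and concludes by approximation.

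You should simply delete the abandoned Step 1. The map $g\mapsto\sum_{\tilde\gamma}g(\tilde\gamma)\chi_{\tilde\gamma}$ on $C_0(\Gamma)$ is a $*$-homomorphism only if the $\chi_{\tilde\gamma}$ are orthogonal idempotents, and continuous cutoff functions are not.
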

	\begin{proof}
		As in Definition \ref{proper-algebra-by-geng-most-key}, 
		let
		$\mathcal{Z}
		\big(
		C_{b}(Z,\mathscr{X}_{\Gamma})
		\big)$
		be  the graded $C^*$-subalgebra of
		$\mathcal{A}
		\big(
		C_{b}(Z,\mathscr{X}_{\Gamma})
		\big)$
		generated by the set
		\begin{eqnarray*}
			\Big\{
			\beta_{\xi}
			(f) ~\Big|~ f\in\mathcal{S}_{\rm even}, ~ 
			\xi\in 
			C_{b}(Z, \mathscr{X}_{\Gamma}) 
			\Big\},
		\end{eqnarray*}
		where $\beta_{\xi}$ is given by \eqref{eq:secondfunctionalcalculusforsections}.	
		It is a $\Gamma$-invariant subalgebra
		contained in the center of
		$\mathcal{A}
		\big(
		C_{b}(Z, \mathscr{X}_{\Gamma})
		\big)$.
		Moreover,
		$\mathcal{Z}
		\big(
		C_{b}(Z, \mathscr{X}_{\Gamma})
		\big)\cdot 
		\mathcal{A}
		\big(
		C_{b}(Z, \mathscr{X}_{\Gamma})
		\big)$ 
		is dense in 
		$\mathcal{A}
		\big(
		C_{b}(Z, \mathscr{X}_{\Gamma})
		\big)$.
		The Gelfand spectrum
		$$\Delta:={\rm Spec}
		\Big(
		\mathcal{Z}
		\big(
		C_{b}(Z, \mathscr{X}_{\Gamma})
		\big)\Big)
		$$ 
		is a locally compact Hausdorff  space equipped with the $\Gamma$-action induced from that on 
		$\mathcal{Z}
		\big(
		C_{b}(Z, \mathscr{X}_{\Gamma})
		\big)$. 
		The Gelfand transform 
		yields a canonical $*$-isomorphism
		$$
		\mathcal{Z}
		\big(
		C_{b}(Z, \mathscr{X}_{\Gamma})
		\big)
		\cong 
		C_{0}(\Delta),
		$$
		which is equivariant with respect to the induced $\Gamma$-action.
		Hence, by Lemma \ref{propermethod}, 
		to obtain the properness of the $\Gamma$-action on 
		$\mathcal{A}
		\big(
		C_{b}(Z, \mathscr{X}_{\Gamma})
		\big)$,
		it suffices to verify that 
		$$
		\lim\limits_{\gamma\rightarrow\infty}
		\big\|
		(\gamma\cdot{f}){f}
		\big\|=0
		$$	
		for every
		$f\in 
		\mathcal{Z}
		\big(
		C_{b}(Z, \mathscr{X}_{\Gamma})
		\big)$.

		For each $R>0$,
		we set
		$$
		\mathcal{Z}_{R}
		\big(
		C_{b}(Z, \mathscr{X}_{\Gamma})
		\big) = \mathcal{Z}
		\big(
		C_{b}(Z, \mathscr{X}_{\Gamma})
		\big) \cap 
		\mathcal{A}_{R}
		\big(
		C_{b}(Z, \mathscr{X}_{\Gamma})
		\big).
		$$
		For every 
		$f\in 
		\mathcal{Z}_{R}
		\big(
		C_{b}(Z, \mathscr{X}_{\Gamma})
		\big)$,
		it follows from \eqref{eq:actionsinducedbyisometrics}, \eqref{eq:isometricsforalgebras} and \eqref{eq:affineisometricesgeneralone} that
		\begin{equation*}
			\label{eq:thegammaactionsforelementsinalgebrasthekthlevel}
			\begin{array}{rcl}
				\big(
				(\gamma\cdot {f})(z)
				\big)(\theta,v)
				&=&
				\Big( \mathcal{U}^{\scalebox{0.6}{$(\gamma,\gamma^{-1}z)$}}
				\big(
				f(\gamma^{-1}z)
				\big)
				\Big)
				(\theta,v)
				\\
				&=&
				(id\widehat{\otimes}\lambda^{\scalebox{0.6}{$\gamma$}}_{*})
				\bigg(
				f(\gamma^{-1}z) 
				\Big(
				\theta,		U^{\scalebox{0.6}{$(\gamma^{-1},z)$}}(v)
				\Big)	
				\bigg)
				\\
				&=&
				(id\widehat{\otimes}\lambda^{\scalebox{0.6}{$\gamma$}}_{*})
				\bigg(
				f(\gamma^{-1}z) 
				\Big(
				\theta,		\lambda^{\scalebox{0.6}{$\gamma^{-1}$}}(v)+\zeta^{\scalebox{0.6}{$(\gamma^{-1},z)$}}
				\Big)	
				\bigg)
			\end{array}
		\end{equation*}
		for all 
		$\gamma\in\Gamma$, 
		$z\in {Z}$ 
		and 
		$(\theta,v)\in \mathbb{R} \times \mathscr{X}_{\Gamma}$.
		Combining this with
		Proposition \ref{strongproperness}, 
		we obtain 
		$(\gamma\cdot{f}){f}=0$ for  all
		$\gamma$ large enough.
		Since
		\begin{eqnarray*}
			\mathcal{Z}
			\big(
			C_{b}(Z, \mathscr{X}_{\Gamma})
			\big)=
			\overline{\bigcup_{R>0} \mathcal{Z}_{R}
				\big(
				C_{b}(Z, \mathscr{X}_{\Gamma})
				\big)},
		\end{eqnarray*}
		the general case follows by a standard approximation argument.
	\end{proof}

	\subsection{Case for the direct sum of  infinitely many  Property 
		$(H)$
		Banach spaces}
	
Based on the results of  Section 5.1, 
we construct the proper 
	$\Gamma$-$C^*$-algebra for a countable discrete group that 
	admits a
	coarse embedding   with finite complexity 
	into an
	$\ell^2$-direct sum of infinitely many  Property 
	$(H)$
	Banach spaces. 
The second condition in the definition of coarse embedding  with finite complexity  (Definition \ref{mostkeyidea1})  
 will play a crucial role in establishing  the properness of the $\Gamma$-action on the $C^*$-algebra (see Proposition \ref{properalgebra}).

	Suppose that
	$h:\Gamma\to\mathscr{E}=
	\bigoplus_{p=1}^{\infty}
	\mathscr{X}_p$ 
	is a coarse embedding  with finite complexity, 
	where each
	$\mathscr{X}_p$
	is a Property 
	$(H)$
	Banach space with a Property $(H)$ map 
	$\phi_p: 
	S(\mathscr{X}_p) \to S(\mathscr{H})$.
	The third condition of Definition \ref{mostkeyidea1}  imposes no requirements on the first finitely many maps  $\phi_p$. 
	In other words,  all but finitely many of the maps $\phi_p$ are required to be Lipschitz.
	We may assume, without loss of generality,  
	that  
	$\phi_p$ is Lipschitz for all $p\geq 2$
	since  the direct sum of finitely many Property 
	$(H)$
	Banach spaces also has Property 
	$(H)$.

	Denote by 
	$\mathscr{E}_{k}$
	the 
	$\ell^2$-direct sum 
	$\bigoplus_{p=1}^{k}
	\mathscr{X}_p$
	and by 
	$\mathscr{H}_{k}$
	the corresponding  
	$\ell^2$-direct sum 
	$\bigoplus_{p=1}^{k}
	\mathscr{H}$.	
	Let $\mathfrak{h}_{k}: \Gamma \to \mathscr{E}_{k}$  be the projection of $h$ onto  $\mathscr{E}_{k}$,
	then
	$$
	\mathfrak{h}_{k}=\oplus_{p=1}^{k} h_{p}.
	$$
	As in Section 3, 
	we write
	\begin{center}
		$\mathscr{X}_{p,\Gamma}=\ell^2(\Gamma, \mathscr{X}_p)$, \   
		$\mathscr{H}_{\Gamma}=\ell^2(\Gamma, \mathscr{H})$, \    $\mathscr{E}_{k,\Gamma}=\ell^2(\Gamma, \mathscr{E}_k)$, \      
		$\mathscr{H}_{k,\Gamma}=\ell^2(\Gamma, \mathscr{H}_k)$.
	\end{center}
	Since each 
	$\mathfrak{h}_{k}: \Gamma \to \mathscr{E}_{k}$
	is  bornologous, 
	the construction in  Section 3 
	produces  
	a family of affine isometries on $\mathscr{E}_{k,\Gamma}$,
	indexed by
	$\gamma \in \Gamma$ and $z\in Z$.
	More precisely, 
	for every 
	$k \geq 1$,
	$\gamma\in\Gamma$ and 
	$z \in {Z}$,
	we have
	\begin{center}
		$\zeta_{k}^{\scalebox{0.6}{$(\gamma,z)$}} \in \mathscr{E}_{k,\Gamma}$, \quad 
		$\lambda_{k}^{\scalebox{0.6}{$\gamma$}}: \mathscr{E}_{k,\Gamma}\longrightarrow \mathscr{E}_{k,\Gamma}$, \quad 
		and \quad  
		$U_{k}^{\scalebox{0.6}{$(\gamma,z)$}}: 
		\mathscr{E}_{k,\Gamma}
		\longrightarrow 
		\mathscr{E}_{k,\Gamma}$,
	\end{center} 
	which are given
	by
	\begin{equation}
		\label{eq:cocyclesforthekthlevel}
		\zeta_{k}^{\scalebox{0.6}{$(\gamma,z)$}}(\tilde{\gamma})=	
		\sqrt{z_{\scalebox{0.6}{$\gamma^{-1}\tilde{\gamma}$}}}
		\cdot 
		\Big( \mathfrak{h}_{k}(\gamma^{-1}\tilde{\gamma})-
		\mathfrak{h}_{k}(\tilde{\gamma})
		\Big),
	\end{equation}
	\begin{equation}
		\label{eq:hahjkkkkdjoikjxkluiowejio}
		\big( \lambda_{k}^{\scalebox{0.6}{$\gamma$}}(f)
		\big)(\tilde{\gamma})=
		f(\gamma^{-1}\tilde{\gamma}),
	\end{equation}
	and
	\begin{equation}
		\label{eq:affineisometriesforthekthlevel}
		U_{k}^{\scalebox{0.6}{$(\gamma,z)$}}(f) = 	\lambda_{k}^{\scalebox{0.6}{$\gamma$}}(f)+\zeta_{k}^{\scalebox{0.6}{$(\gamma,z)$}}
	\end{equation}	
	for 
	$\tilde{\gamma}\in\Gamma$ and 
	$f\in\mathscr{E}_{k,\Gamma}$.
	Furthermore, for every 
	$k \geq 1$,
	the family of affine isometries 
	$$\Big\{ U_{k}^{\scalebox{0.6}{$(\gamma,z)$}} ~\Big|~ \gamma\in\Gamma, z\in  Z  
	\Big\}$$
	induces a $\Gamma$-action 
	on
	$C_{b}(Z, \mathscr{E}_{k,\Gamma})$
	via 
	\begin{equation}
		\label{eq:Gammaactionsforthekthlevel}
		\quad  (\gamma \cdot {f})
		(z)
		=	U_{k}^{\scalebox{0.6}{$(\gamma,\gamma^{-1}z)$}}
		\big(
		f(\gamma^{-1}z)
		\big), \quad 
		\gamma\in\Gamma,\ 
		f\in
		C_{b}(Z, \mathscr{E}_{k,\Gamma}),\ 
		z\in {Z}.
	\end{equation}

	Since each
	$\phi_p: 
	S(\mathscr{X}_p) \to S(\mathscr{H})$
	is a Property $(H)$ map,
	it follows from Proposition \ref{PropertyHmapforLpspaces}
	that  
	$$
	\mathcal{M}_{\phi_p}:
	S\big( 
	\mathscr{X}_{p,\Gamma}
	\big) \longrightarrow 
	S\big( 
	\mathscr{H}_{\Gamma}
	\big)
	$$ 	
	is a Property $(H)$ map for 
	$\mathscr{X}_{p,\Gamma}$.
	For $p\geq2$, 
	the map $\phi_p$ is Lipschitz.  
	Hence, by Proposition \ref{LipofextenedMazurmap},
	$\mathcal{M}_{\phi_p}$ 
	is
	Lipschitz with Lipschitz constant at most
	$2 \cdot  L_{\phi_p}+1$. 
	Moreover,  by Lemma \ref{3lip}, 
	the Lipschitz constant of its scalar extension $\mathcal{M}_{\phi_p}^{I}$   
	is  at most
	$4 \cdot  L_{\phi_p}+ 3$.
	For $p=1$, 
	let 
	$\mathcal{M}_{\phi_1}^{\eta}:
	\mathscr{X}_{1,\Gamma}
	\to
	\mathscr{H}_{\Gamma}$ 
	be an $\eta$-scalar proper extension of $\mathcal{M}_{\phi_1}$
	given by Corollary \ref{finitemodulusofcontinuity}.
	Denote by $\varPsi_k$
	the composition of the following maps
	\begin{equation}
		\label{eq:ThepropertyHmapforthekthlevel}
		\begin{array}{rcl}
			\mathscr{E}_{k,\Gamma} & \xlongrightarrow{\cong} & \bigoplus_{p=1}^{k}
			\mathscr{X}_{p,\Gamma} 
			\\
			& &    \  \  \  \  \  \  \
			\Bigg\downarrow \mathcal{M}_{\phi_1}^{\eta} \oplus \cdots \oplus \mathcal{M}_{\phi_k}^{I}  
			\\
			\mathscr{H}_{k,\Gamma} & \xlongleftarrow{\cong} &
			\bigoplus_{p=1}^{k}
			\mathscr{H}_{\Gamma},
		\end{array}
	\end{equation}
	where the isometric isomorphisms are canonical, and 
	$\mathcal{M}_{\phi_1}^{\eta} \oplus \cdots \oplus \mathcal{M}_{\phi_k}^{I}$ denotes the direct sum of
	$\mathcal{M}_{\phi_1}^{\eta}$
	and 
	$\mathcal{M}_{\phi_p}^{I}$ ($2\leq p \leq k$). 
	By Remark \ref{otheidrectsumofetascalarextensionsslaoproper},  
	$\mathcal{M}_{\phi_1}^{\eta} \oplus \cdots \oplus \mathcal{M}_{\phi_k}^{I}$
	is a proper  extension of the  Property $(H)$ map 
	$$
	\big(
	\mathcal{M}_{\phi_1}^{\eta} \oplus \cdots \oplus \mathcal{M}_{\phi_k}^{I}
	\big)\Big|_{\scalebox{0.7}{$S\big(\bigoplus_{p=1}^{k}
			\mathscr{X}_{p,\Gamma}\big)$}}.
	$$ 
	The same conclusion holds for
	$\varPsi_k$.

	The following lemma plays a key role in the construction of the $\Gamma$-equivariant asymptotic morphism in Proposition \ref{asymptoticGammathemostkeypoint1}.

	\begin{lem}\label{inequalityforPsi1}
		For every $k\geq1$
		and 
		$v,w
		\in
		\mathscr{E}_{k,\Gamma}$, 
		we have 	
		\begin{eqnarray*}
			&& 
			\Big\| 
			\varPsi_{k}
			(v+w)
			-
			\varPsi_{k}
			(v)
			\Big\|
			\leq
			\sqrt{	
				\bigg(
				\omega_{\scalebox{0.6}{$
						\mathcal{M}_{\phi_1}^{\eta}
						$}}
				\Big(
				\big\|
				\varrho_{k,1}(w)
				\big\|
				\Big)
				\bigg)^2	
				+
				16 \cdot \sum\limits_{p=2}^{k}
				(L_{\phi_p}+1)^2 \cdot
				\big\|
				\varrho_{k,p}(w)
				\big\|^2
			},
		\end{eqnarray*} 
		where 
		$\varrho_{k}: \mathscr{E}_{k,\Gamma} 
		\to  
		\bigoplus_{p=1}^{k}
		\mathscr{X}_{p,\Gamma}$ is the isometric isomorphism in \eqref{eq:ThepropertyHmapforthekthlevel}
		and 
		$\varrho_{k,p}: 
		\mathscr{E}_{k,\Gamma}  
		\xlongrightarrow{\varrho_{k}}  
		\bigoplus_{p=1}^{k}
		\mathscr{X}_{p,\Gamma}
		\to 
		\mathscr{X}_{p,\Gamma}$ 
		is the projection onto the $p$-th component.
	\end{lem}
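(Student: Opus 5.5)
The plan is to reduce everything to a componentwise estimate. Since $\varPsi_k$ is, up to the canonical isometric isomorphisms $\varrho_k$ and $\mathscr{H}_{k,\Gamma}\cong\bigoplus_{p=1}^k\mathscr{H}_\Gamma$, the direct sum $\mathcal{M}_{\phi_1}^{\eta}\oplus\mathcal{M}_{\phi_2}^{I}\oplus\cdots\oplus\mathcal{M}_{\phi_k}^{I}$, and all these sums are $\ell^2$-sums, we first write
\[
\big\|\varPsi_k(v+w)-\varPsi_k(v)\big\|^2
=\big\|\mathcal{M}_{\phi_1}^{\eta}(v_1+w_1)-\mathcal{M}_{\phi_1}^{\eta}(v_1)\big\|^2
+\sum_{p=2}^{k}\big\|\mathcal{M}_{\phi_p}^{I}(v_p+w_p)-\mathcal{M}_{\phi_p}^{I}(v_p)\big\|^2,
\]
where $v_p=\varrho_{k,p}(v)$ and $w_p=\varrho_{k,p}(w)$. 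Here we use that $\varrho_k$ is linear, so that $\varrho_{k,p}(v+w)=v_p+w_p$. Each term is then bounded separately, and taking square roots gives the claim.

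For the first term we use only the definition of the modulus of continuity. Since $\|(v_1+w_1)-v_1\|=\|w_1\|$, the term is at most $\omega_{\mathcal{M}_{\phi_1}^{\eta}}(\|w_1\|)^2$. This quantity is finite by Corollary \ref{finitemodulusofcontinuity}, although finiteness is not needed for the inequality itself.

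For the terms with $p\ge 2$, $\phi_p$ is Lipschitz. By Proposition \ref{LipofextenedMazurmap}, $\mathcal{M}_{\phi_p}$ is Lipschitz with constant at most $2L_{\phi_p}+1$. By Lemma \ref{3lip}, its scalar extension $\mathcal{M}_{\phi_p}^{I}$ is then Lipschitz with constant at most $2(2L_{\phi_p}+1)+1=4L_{\phi_p}+3$. Since $4L_{\phi_p}+3\le 4(L_{\phi_p}+1)$, each such term is at most $16(L_{\phi_p}+1)^2\|w_p\|^2$.

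There is no real obstacle here. The only points needing care are that the target decomposition $\mathscr{H}_{k,\Gamma}\cong\bigoplus_p\mathscr{H}_\Gamma$ is isometric with the $\ell^2$-norm, so the squared norm genuinely splits as a sum, and that the constant $16$ absorbs $(4L+3)^2\le 16(L+1)^2$.
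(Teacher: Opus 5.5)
Your proposal is correct and matches the paper's proof: you split the squared norm over the $\ell^2$-components via $\varrho_k$ and bound the first term by the modulus of continuity of $\mathcal{M}_{\phi_1}^{\eta}$. For $p\geq 2$ you use the Lipschitz constant $4L_{\phi_p}+3\leq 4(L_{\phi_p}+1)$ of $\mathcal{M}_{\phi_p}^{I}$, obtained from Proposition \ref{LipofextenedMazurmap} and Lemma \ref{3lip}, exactly as the paper does.
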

	\begin{proof}
		For  every $k\geq1$ and 
		$v,w
		\in
		\mathscr{E}_{k,\Gamma}$,   	
		\begin{eqnarray*}
			&& 
			\Big\| 
			\varPsi_{k}
			(v+w)
			-
			\varPsi_{k}
			(v)
			\Big\|^2 
			\\
			&=&
			\bigg\|
			\Big(
			\mathcal{M}_{\phi_1}^{\eta} \oplus \cdots \oplus \mathcal{M}_{\phi_k}^{I}
			\Big) 
			\big( 
			\varrho_{k}(v+w)
			\big) 
			- \Big(
			\mathcal{M}_{\phi_1}^{\eta} \oplus \cdots \oplus \mathcal{M}_{\phi_k}^{I}
			\Big) 
			\big( 
			\varrho_{k}(v)
			\big) 
			\bigg\|^2
			\\
			&=&
			\Big\| 
			\mathcal{M}_{\phi_1}^{\eta}	
			\big( 
			\varrho_{k,1}(v)+\varrho_{k,1}(w)
			\big) 
			-
			\mathcal{M}_{\phi_1}^{\eta}
			\big( 
			\varrho_{k,1}(v)
			\big) 
			\Big\|^2 
			+
			\sum\limits_{p=2}^{k}
			\Big\|
			\mathcal{M}_{\phi_p}^{I}
			\big( 
			\varrho_{k,p}(v)+\varrho_{k,p}(w)
			\big) 
			-
			\mathcal{M}_{\phi_p}^{I}
			\big( 
			\varrho_{k,p}(v)
			\big) 
			\Big\|^2
			\\
			&\leq& 
			\bigg(
			\omega_{\scalebox{0.6}{$
					\mathcal{M}_{\phi_1}^{\eta}	
					$}}
			\Big(
			\big\|
			\varrho_{k,1}(w)
			\big\|
			\Big)
			\bigg)^2	
			+
			16 \cdot \sum\limits_{p=2}^{k}
			(L_{\phi_p}+1)^2 \cdot
			\big\|
			\varrho_{k,p}(w)
			\big\|^2.
		\end{eqnarray*} 
		This completes the proof. 
	\end{proof}

	\begin{defn}\label{Cliffordmap}
		We define
		$\mathfrak{B}_{k} :\mathscr{E}_{k,\Gamma} \rightarrow
		\text{Cliff}_{\mathbb{C}}
		(\mathscr{H}_{k,\Gamma})$
		by 
		$v\mapsto  C\big(\varPsi_k(v)\big)$. 
		Moreover,
		for any  
		$v_{0}\in \mathscr{E}_{k,\Gamma}$,  we define 
		$\mathfrak{B}_{k}^{v_{0}}: \mathscr{E}_{k,\Gamma} \rightarrow
		\text{Cliff}_{\mathbb{C}}
		(\mathscr{H}_{k,\Gamma})$ 
		by 
		$\mathfrak{B}_{k}^{v_{0}}(v) = \mathfrak{B}_{k}(v-v_{0})$.
	\end{defn}

	By \eqref{eq:functionalcalculusone} and \eqref{eq:functionalcalculustwo},
	we obtain the graded $*$-homomorphisms 
	\begin{equation}
		\label{eq:thethirdfunctionalcalculusfortheKthlevel}
		\begin{array}{rcl}
			\beta_{k,v}: ~ \mathcal{S} &\longrightarrow &
			\mathcal{S}\widehat{\otimes}{C}_{0}
			\big(
			\mathscr{E}_{k,\Gamma},
			\text{Cliff}_{\mathbb{C}}
			(\mathscr{H}_{k,\Gamma})
			\big)
			\\
			f &\longmapsto & 
			f\big(
			\varTheta \widehat{\otimes} 1+1\widehat{\otimes} 	\mathfrak{B}_{k}^{v}
			\big),
		\end{array}
	\end{equation}
	and 
	\begin{equation}
		\label{eq:thethirdfunctionalcalculusfortheKthlevelforgeneralsections}
		\begin{array}{rcl}
			\beta_{k,\xi}:~ \mathcal{S} &\longrightarrow &
			\mathcal{F}
			\big(
			Z, ~ 
			\mathcal{A}(\mathscr{E}_{k,\Gamma})
			\big)
			\\
			f &\longmapsto & 
			\Big\{
			z\mapsto 
			\beta_{k,\xi(z)}(f)
			\Big\},
		\end{array}
	\end{equation}
	where
	$k\in\mathbb{N}$,
	$v\in\mathscr{E}_{k,\Gamma}$,
	$\xi\in 
	C_{b}(Z, \mathscr{E}_{k,\Gamma})$,
	and 
	$Z$ is the metric space defined in \eqref{eq:simplicityforthebasespaceZ}.
	For each 
	$k\geq1$,
	following  Section 4.4,  
	we have the graded $C^*$-algebra
	$\mathcal{A}
	\big(
	C_{b}(Z, \mathscr{E}_{k,\Gamma})
	\big)$.
	Following Section 5.1, 
	the family of affine isometries
	$$\Big\{ U_{k}^{\scalebox{0.6}{$(\gamma,z)$}}~\Big|~ \gamma\in\Gamma, z\in Z
	\Big\}$$ 
	defined in \eqref{eq:affineisometriesforthekthlevel}
	induces a
	family  of $*$-isomorphisms
	$$
	\Big\{	\mathcal{U}_{k}^{\scalebox{0.6}{$(\gamma,z)$}}: 	
	\mathcal{A}(\mathscr{E}_{k,\Gamma})
	\longrightarrow 
	\mathcal{A}(\mathscr{E}_{k,\Gamma})
	\Big\}_ 
	{\gamma\in\Gamma, z\in {Z}},
	$$
	which in turn induces a $\Gamma$-action on 
	$\mathcal{A}
	\big(
	C_{b}(Z, \mathscr{E}_{k,\Gamma})
	\big)$.
	More precisely, 
	the 
	$*$-isomorphism 
	$\mathcal{U}_{k}^{\scalebox{0.6}{$(\gamma,z)$}}$
	is given by 
	\begin{equation}
		\label{eq:isometricsforthekthlevelalgebras}
		\big(\mathcal{U}^{\scalebox{0.6}{$(\gamma,z)$}}_{k}
		(f)
		\big)(\theta,v)
		=
		(id\widehat{\otimes}\lambda^{\scalebox{0.6}{$\gamma$}}_{k,*})
		\bigg(
		f\Big(
		\theta,		U^{\scalebox{0.6}{$(\gamma^{-1},\gamma{z})$}}_{k}(v)
		\Big)	
		\bigg)
	\end{equation}
	for 
	$f\in \mathcal{A}(\mathscr{E}_{k,\Gamma})$ 
	and 
	$(\theta,v)\in \mathbb{R} \times \mathscr{E}_{k,\Gamma}$,
	where $\lambda^{\scalebox{0.6}{$\gamma$}}_{k,*}:\text{Cliff}_{\mathbb{C}}
	(\mathscr{H}_{k,\Gamma}) \to \text{Cliff}_{\mathbb{C}}
	(\mathscr{H}_{k,\Gamma})$
	is the $*$-isomorphism  as in \eqref{eq:commutativeofriowithcliffalgebrariojdklk}.
	The $\Gamma$-action on 
	$\mathcal{A}
	\big(
	C_{b}(Z, \mathscr{E}_{k,\Gamma})
	\big)$ 
	is given by
	\begin{equation}
		\label{eq:GammaactionsfortheKthlevelalgebras}
		(\gamma\cdot {f})(z)=
		\mathcal{U}_{k}^{\scalebox{0.6}{$(\gamma,\gamma^{-1}z)$}}
		\big(
		f(\gamma^{-1}z)
		\big)
	\end{equation}
	for 
	$\gamma\in\Gamma$, 
	$f\in\mathcal{A}
	\big(
	C_{b}(Z, \mathscr{E}_{k,\Gamma})
	\big)$ 
	and 
	$z\in{Z}$.

	Note that each
	$\mathcal{A}
	\big(
	C_{b}(Z, \mathscr{E}_{k,\Gamma})
	\big)$
	may not be a proper $\Gamma$-$C^*$-algebra since $\mathfrak{h}_{k}: \Gamma \to \mathscr{E}_{k}$  is only bornologous.
	The properness will be ensured in the limit as
	$k \to +\infty$.
	We now introduce the limiting construction.

	For a family of $C^*$-algebras 
	$\{A_k\}_{k\in\mathbb{N}}$,
	let 
	$$\prod\limits_{k=1}^{\infty}
	A_k$$
	denote the $C^*$-algebra of all bounded sequences 
	$(a_1,a_2,\cdots)$
	with $a_k\in A_k$,
	equipped with the supremum norm.
	Moreover, let
	$$\bigoplus\limits_{k=1}^{\infty}
	A_k \subseteq \prod\limits_{k=1}^{\infty}
	A_k$$ 
	be the $C^*$-subalgebra  
	consisting of sequences $(a_1,a_2,\cdots)$  such that  $\lim_{k\to\infty}\|a_{k}\|=0$.
	Recall from Definition \ref{proper-algebra-by-geng-most-key} that, for $k\geq1$ and $R>0$, 
	$\mathcal{A}_{R}
	\big(
	C_{b}(Z, \mathscr{E}_{k,\Gamma})
	\big)$ 
	is the graded
	$*$-subalgebra of
	$\mathcal{A}
	\big(
	C_{b}(Z, \mathscr{E}_{k,\Gamma})
	\big)$ 
	consisting of those  
	$f$ 
	such that 
	${\rm Supp} \big(f(z)\big) \subseteq 
	B_{\mathbb{R}\times\mathscr{E}_{k,\Gamma}}(0,R)$ 
	for all 
	$z\in Z$.

	\begin{defn}\label{A-prod}
		We define  
	$\mathcal{A}^{\scalebox{0.5}{$\prod$}}(Z,\mathscr{E})$
	to be the quotient 
	$C^*$-algebra 
	$$
	\dfrac{\prod\limits_{k=1}^{\infty}
		\mathcal{A}
		\big(
		C_{b}(Z, \mathscr{E}_{k,\Gamma})
		\big)
	}{\bigoplus\limits_{k=1}^{\infty}
		\mathcal{A}
		\big(
		C_{b}(Z, \mathscr{E}_{k,\Gamma})
		\big)
	}.
	$$
Moreover, 
we define  
$\mathcal{A}^{\flat}(Z,\mathscr{E})$
to be the $C^*$-subalgebra of  
$\mathcal{A}^{\scalebox{0.5}{$\prod$}}(Z,\mathscr{E})$
generated by the set
	\begin{eqnarray*}
			\Big\{ 
			[(f_{1},f_{2},\cdots)]
			~\Big|~ 
			(f_{1},f_{2},\cdots)\in\prod\limits_{k=1}^{\infty}
			\mathcal{A}_{R}
			\big(
			C_{b}(Z, \mathscr{E}_{k,\Gamma})
			\big)  
			\text{ for some } R>0
			\Big\},
		\end{eqnarray*}
		where $[\cdot]$ denotes the equivalence class of an element in the quotient $C^*$-algebra. 
	\end{defn}	
	
We use the superscript  $\flat$  to indicate that  the algebra $\mathcal{A}^{\flat}(Z,\mathscr{E})$ 
is generated by elements with uniformly bounded supports.
The $\Gamma$-actions on the family of $C^*$-algebras
	$$\Big\{
	\mathcal{A}
	\big(
	C_{b}(Z, \mathscr{E}_{k,\Gamma})
	\big)
	\Big\}_{k\in\mathbb{N}}$$  
	given by \eqref{eq:GammaactionsfortheKthlevelalgebras},
	induce a $\Gamma$-action on  $\mathcal{A}^{\scalebox{0.5}{$\prod$}}(Z,\mathscr{E})$
	via
	\begin{eqnarray*}
		\gamma\cdot[(f_{1},f_{2},\cdots)]=\big[(\gamma\cdot f_{1},~\gamma\cdot f_{2},~\cdots)\big].
	\end{eqnarray*}
	The following result shows that this action restricts to a $\Gamma$-action on the $C^*$-subalgebra  	
	$\mathcal{A}^{\flat}(Z,\mathscr{E})$.
	Hence, $\mathcal{A}^{\flat}(Z,\mathscr{E})$
	is a $\Gamma$-$C^*$-algebra.
	We emphasize that  $\mathcal{A}^{\scalebox{0.5}{$\prod$}}(Z,\mathscr{E})$  need not be a proper
	$\Gamma$-$C^*$-algebra, 
	whereas its subalgebra $\mathcal{A}^{\flat}(Z,\mathscr{E})$ is proper.

	\begin{lem}\label{Gammaalgebraforthespecialcase}
		$\gamma\cdot [(f_{1},f_{2},\cdots)]\in 
		\mathcal{A}^{\flat}(Z,\mathscr{E})$ 
		for all 
		$\gamma\in\Gamma$ 
		and 
		$[(f_{1},f_{2},\cdots)]\in 
		\mathcal{A}^{\flat}(Z,\mathscr{E})$.	 
	\end{lem}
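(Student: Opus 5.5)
We will reduce the statement to the generators of $\mathcal{A}^{\flat}(Z,\mathscr{E})$. The action of $\gamma$ on $\mathcal{A}^{\scalebox{0.5}{$\prod$}}(Z,\mathscr{E})$ is a $*$-automorphism. It is well defined on the quotient because each $\gamma$ acts isometrically on every $\mathcal{A}\big(C_{b}(Z,\mathscr{E}_{k,\Gamma})\big)$ by \eqref{eq:GammaactionsfortheKthlevelalgebras}, and hence preserves $\bigoplus_k$. Since it is continuous, it suffices to show that $\gamma$ maps each generator $[(f_1,f_2,\cdots)]$ with $(f_k)\in\prod_k\mathcal{A}_{R}\big(C_{b}(Z,\mathscr{E}_{k,\Gamma})\big)$ into $\mathcal{A}^{\flat}(Z,\mathscr{E})$. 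We will in fact show that $(\gamma\cdot f_k)_k$ lies in $\prod_k\mathcal{A}_{R'}\big(C_{b}(Z,\mathscr{E}_{k,\Gamma})\big)$ for some $R'$ depending only on $R$ and $\gamma$.

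First, each $\gamma\cdot f_k$ lies in $\mathcal{A}\big(C_{b}(Z,\mathscr{E}_{k,\Gamma})\big)$ by the $k$-th level analogue of Lemma \ref{zenmenengmeiyoulakdjlkfanedkfd}, and $\sup_k\|\gamma\cdot f_k\|=\sup_k\|f_k\|<\infty$. It remains to control supports. By \eqref{eq:GammaactionsfortheKthlevelalgebras} and \eqref{eq:isometricsforthekthlevelalgebras},
\[
\big((\gamma\cdot f_k)(z)\big)(\theta,v)=(id\widehat{\otimes}\lambda^{\gamma}_{k,*})\Big(f_k(\gamma^{-1}z)\big(\theta,\lambda_k^{\gamma^{-1}}(v)+\zeta_k^{(\gamma^{-1},z)}\big)\Big).
\]
This is nonzero only if $\theta^2+\big\|\lambda_k^{\gamma^{-1}}(v)+\zeta_k^{(\gamma^{-1},z)}\big\|^2\le R^2$. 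Since $\lambda_k^{\gamma^{-1}}$ is an isometry, this forces $\|v\|\le R+\|\zeta_k^{(\gamma^{-1},z)}\|$.

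The key estimate is therefore a bound on $\|\zeta_k^{(\gamma^{-1},z)}\|$ that is uniform in $k$ and $z$. By \eqref{eq:upperboundforthecocycles} applied to the bornologous map $\mathfrak{h}_k$, this norm is at most the square root of $\sum_{\tilde\gamma}z_{\gamma\tilde\gamma}\|\mathfrak{h}_k(\gamma\tilde\gamma)-\mathfrak{h}_k(\tilde\gamma)\|^2$. Using condition (1) of Definition \ref{mostkeyidea1} coordinatewise, each summand satisfies
\[
\|\mathfrak{h}_k(\gamma\tilde\gamma)-\mathfrak{h}_k(\tilde\gamma)\|^2=\sum_{p=1}^k\|h_p(\gamma\tilde\gamma)-h_p(\tilde\gamma)\|^2\le\sum_{p=1}^{\infty}\rho_p^+(t)^2,\qquad t=d(\gamma\tilde\gamma,\tilde\gamma)=d(\gamma,e).
\]
Here $d(\gamma\tilde\gamma,\tilde\gamma)=d(\gamma,e)$ by right invariance. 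The tail $\sum_{p>N}\rho_p^+(t)^2$ is finite by condition (3), since $L_{\phi_p}+1\ge1$, and the first $N$ terms are finite. Hence $\|\zeta_k^{(\gamma^{-1},z)}\|\le C_\gamma:=\big(\sum_p\rho_p^+(d(\gamma,e))^2\big)^{1/2}$ for all $k$ and $z$.

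Consequently, the support of $(\gamma\cdot f_k)(z)$ lies in $B_{\mathbb{R}\times\mathscr{E}_{k,\Gamma}}(0,R+C_\gamma)$. So $(\gamma\cdot f_k)_k\in\prod_k\mathcal{A}_{R+C_\gamma}\big(C_{b}(Z,\mathscr{E}_{k,\Gamma})\big)$, and its class is a generator of $\mathcal{A}^{\flat}(Z,\mathscr{E})$. The main obstacle is exactly this uniformity in $k$: for a fixed $k$ any finite bound suffices, but the $\flat$-algebra requires a single radius for all $k$, and this is where finite complexity is used.
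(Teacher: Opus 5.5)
Your proposal is correct and is essentially the paper's own proof: you reduce to generators coming from $\prod_k\mathcal{A}_R\big(C_b(Z,\mathscr{E}_{k,\Gamma})\big)$ and unwind the action as $f_k(\gamma^{-1}z)$ evaluated at $\big(\theta,\lambda_k^{\gamma^{-1}}(v)+\zeta_k^{(\gamma^{-1},z)}\big)$. You then bound $\|\zeta_k^{(\gamma^{-1},z)}\|$ uniformly in $k$ and $z$ by $\big(\sum_{p\ge1}\rho_p^+(d(\gamma,e))^2\big)^{1/2}<\infty$ (finite by condition (3) of Definition~\ref{mostkeyidea1}), which puts all supports in a ball of $k$-independent radius $R+R_\gamma$. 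The one cosmetic point is that $\|v\|\le R+R_\gamma$ alone gives a slightly larger radius; to get exactly $R+R_\gamma$, apply the triangle inequality in $\mathbb{R}\times\mathscr{E}_{k,\Gamma}$ to $(\theta,\lambda_k^{\gamma^{-1}}(v))=(\theta,\lambda_k^{\gamma^{-1}}(v)+\zeta_k^{(\gamma^{-1},z)})-(0,\zeta_k^{(\gamma^{-1},z)})$, though any $k$-independent radius suffices for the conclusion.
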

	\begin{proof}
		It suffices to show that for every $R>0$,  every
		$(f_{1},f_{2},\cdots)\in\prod\limits_{k=1}^{\infty}
		\mathcal{A}_{R}
		\big(
		C_{b}(Z, \mathscr{E}_{k,\Gamma})
		\big)$, and 
		every $\gamma\in\Gamma$,
		$$
		\gamma\cdot [(f_{1},f_{2},\cdots)]\in 
		\mathcal{A}^{\flat}(Z,
		\mathscr{E}).
		$$
		It follows from \eqref{eq:GammaactionsfortheKthlevelalgebras}, \eqref{eq:isometricsforthekthlevelalgebras} and \eqref{eq:affineisometriesforthekthlevel}
		that
		\begin{equation}
			\label{eq:thegammaactionsforelementsinalgebrasthekthlevel}
			\begin{array}{rcl}
				\big(
				(\gamma\cdot {f_k})(z)
				\big)(\theta,v)
				&=&
				\Big( \mathcal{U}^{\scalebox{0.6}{$(\gamma,\gamma^{-1}z)$}}_k
				\big(
				f_k(\gamma^{-1}z)
				\big)
				\Big)
				(\theta,v)
				\\
				&=&
				(id\widehat{\otimes}\lambda^{\scalebox{0.6}{$\gamma$}}_{k,*})
				\bigg(
				f_k(\gamma^{-1}z) 
				\Big(
				\theta,		U_k^{\scalebox{0.6}{$(\gamma^{-1},z)$}}(v)
				\Big)	
				\bigg)
				\\
				&=&
				(id\widehat{\otimes}\lambda^{\scalebox{0.6}{$\gamma$}}_{k,*})
				\bigg(
				f_k(\gamma^{-1}z) 
				\Big(
				\theta,		\lambda^{\scalebox{0.6}{$\gamma^{-1}$}}_k(v)+\zeta^{\scalebox{0.6}{$(\gamma^{-1},z)$}}_k
				\Big)	
				\bigg)
			\end{array}
		\end{equation}
		for all 
		$k\geq1$,
		$z\in Z$,
		and 
		$(\theta,v)\in \mathbb{R} \times \mathscr{E}_{k,\Gamma}$.
		Moreover,
		by \eqref{eq:cocyclesforthekthlevel} and Definition \ref{mostkeyidea1}, 
		we obtain	
		\begin{equation}
			\label{eq:theupperboundcalculationsforcocycles}
			\begin{array}{rcl}
				\Big\|
				\zeta^{\scalebox{0.6}{$(\gamma^{-1},z)$}}_k
				\Big\|
				&=&	
				\sqrt{ 
					\sum\limits_{\tilde{\gamma}\in\Gamma} 
					\Big\|
					\zeta_{k}^{\scalebox{0.6}{$(\gamma^{-1},z)$}}(\tilde{\gamma})
					\Big\|^2 ~}
				\\
				&=&
				\sqrt{
					\sum\limits_{\tilde{\gamma}\in\Gamma} 
					\bigg\|
					\sqrt{z_{\scalebox{0.6}{$\gamma\tilde{\gamma}$}}}
					\cdot 
					\Big( \mathfrak{h}_{k}(\gamma\tilde{\gamma})-
					\mathfrak{h}_{k}(\tilde{\gamma})
					\Big)
					\bigg\|^2~
				}
				\\
				&=&
				\sqrt{
					\sum\limits_{\tilde{\gamma}\in\Gamma}z_{\scalebox{0.6}{$\gamma\tilde{\gamma}$}}
					\cdot 
					\sum\limits_{p=1}^{k} 
					\big\|
					h_{p}(\gamma\tilde{\gamma})-
					h_{p}(\tilde{\gamma})
					\big\|^2~
				}
				\\
				&\leq& 
				\sqrt{
					\sum\limits_{\tilde{\gamma}\in\Gamma}z_{\scalebox{0.6}{$\gamma\tilde{\gamma}$}}
					\cdot 
					\sum\limits_{p=1}^k 
					\Big[
					\rho_{p}^{+}\big(d(\gamma,e)\big)\Big]^{2}~
				}
				\\
				&=& 
				\sqrt{
					\sum\limits_{p=1}^k 
					\Big[
					\rho_{p}^{+}\big(d(\gamma,e)\big)\Big]^{2}~
				}
				\\
				&\leq&
				\sqrt{
					\sum\limits_{p=1}^{\infty} 
					\Big[
					\rho_{p}^{+}\big(d(\gamma,e)\big)\Big]^{2}~
				}
				<+\infty
			\end{array}
		\end{equation}
		for all
		$\gamma\in\Gamma$, 
		$z \in {Z}$ 
		and 
		$k\geq1$.
		Set
		$$
		R_{\gamma}=\sqrt{
			\sum\limits_{p=1}^{\infty} 
			\Big[
			\rho_{p}^{+}\big(d(\gamma,e)\big)\Big]^{2}~
		}.$$
		Then, combining \eqref{eq:thegammaactionsforelementsinalgebrasthekthlevel} and \eqref{eq:theupperboundcalculationsforcocycles}, 
		we deduce that 
		$$\text{Supp} 
		\Big(
		(\gamma\cdot{f}_{k})(z)
		\Big) \subseteq 
		B_{\scalebox{0.6}{$\mathbb{R} \times \mathscr{E}_{k,\Gamma}$}}(0,R_{\gamma}+R)$$
		for all $z\in Z$ and $k\geq1$,
		that is, 
		$$
		(\gamma\cdot f_{1},\gamma\cdot f_{2},\cdots)\in\prod\limits_{k=1}^{\infty}
		\mathcal{A}_{R_{\gamma}+R}
		\big(
		C_{b}(Z, \mathscr{E}_{k,\Gamma})
		\big).
		$$
		This finishes the proof.
	\end{proof}

	We conclude this section by showing that  the $\Gamma$-$C^*$-algebra $\mathcal{A}^{\flat}(Z,\mathscr{E})$
	is proper.
	
	\begin{prop}\label{properalgebra}
		Suppose that $\Gamma$ admits a coarse embedding  with finite complexity into
		$\mathscr{E}=
		\bigoplus_{p=1}^{\infty}
		\mathscr{X}_p$. 
		Then 
		$\mathcal{A}^{\flat}(Z,\mathscr{E})$
		is a proper $\Gamma$-$C^*$-algebra.
	\end{prop}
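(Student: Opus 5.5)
We will follow the strategy of Proposition \ref{babycase}, replacing the single cocycle estimate by a uniform-in-$k$ estimate supplied by condition (2) of Definition \ref{mostkeyidea1}.

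\textbf{Step 1: the central subalgebra.} Let $\mathcal{Z}^{\flat}(Z,\mathscr{E})$ be the $C^*$-subalgebra of $\mathcal{A}^{\flat}(Z,\mathscr{E})$ generated by the classes $[(f_1,f_2,\cdots)]$ with $(f_k)\in\prod_k \mathcal{A}_R\big(C_b(Z,\mathscr{E}_{k,\Gamma})\big)$ for some $R>0$, where each $f_k$ lies in the even part $\mathcal{Z}\big(C_b(Z,\mathscr{E}_{k,\Gamma})\big)$, i.e.\ is built from $\beta_{k,\xi}(f)$ with $f\in\mathcal{S}_{\rm even}$. At each level $k$ the even functional calculus gives scalar-valued functions, and these are central. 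Hence $\mathcal{Z}^{\flat}$ is commutative, central, and $\Gamma$-invariant; invariance follows from the proof of Lemma \ref{Gammaalgebraforthespecialcase}. It is also nondegenerate: take an even $g\in C_c(\mathbb{R})$ equal to $1$ on $[-R',R']$ and $\xi\equiv 0$. Then $\beta_{k,{\bf 0}}(g)$ acts as the identity on elements supported in the ball of radius $R'$, uniformly in $k$, so $\mathcal{Z}^{\flat}\cdot\mathcal{A}^{\flat}$ is dense. By Gelfand duality $\mathcal{Z}^{\flat}\cong C_0(\Delta)$ equivariantly, and by Lemma \ref{propermethod} it suffices to show $\|(\gamma\cdot F)F\|\to 0$ as $\gamma\to\infty$ for $F$ in a dense subset of $\mathcal{Z}^{\flat}$.

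\textbf{Step 2: support estimate.} Take $F=[(f_k)]$ with every $f_k(z)$ supported in $B_{\mathbb{R}\times\mathscr{E}_{k,\Gamma}}(0,R)$. By \eqref{eq:thegammaactionsforelementsinalgebrasthekthlevel}, $(\gamma\cdot f_k)(z)$ is supported in $\{(\theta,v): \|\lambda^{\gamma^{-1}}_k(v)+\zeta_k^{(\gamma^{-1},z)}\|\le R\}$, so its $v$-support lies within distance $R$ of $-\lambda^{\gamma}_k\zeta_k^{(\gamma^{-1},z)}$. The product $(\gamma\cdot f_k)(z)f_k(z)$ therefore vanishes once $\|\zeta_k^{(\gamma^{-1},z)}\|>2R$. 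As in Proposition \ref{strongproperness},
\[
\big\|\zeta_k^{(\gamma^{-1},z)}\big\|^2=\sum_{\tilde\gamma}z_{\gamma\tilde\gamma}\sum_{p=1}^k\|h_p(\gamma\tilde\gamma)-h_p(\tilde\gamma)\|^2\ \ge\ \sum_{p=1}^k\rho_p^-\big(d(\gamma,e)\big)^2 .
\]

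\textbf{Step 3: the main obstacle.} This lower bound involves only finitely many $p$ at level $k$, while the quotient by $\bigoplus$ lets us ignore only finitely many $k$. For fixed $\gamma$ the bound $\sum_{p\le k}\rho_p^-(d(\gamma,e))^2$ increases in $k$. Hence for all $k\ge k_\gamma$ it exceeds $\tfrac12\sum_{p=1}^\infty\rho_p^-(d(\gamma,e))^2$, or exceeds $4R^2$ if that sum is infinite. By condition (2) there is $T$ with $\sum_p\rho_p^-(t)^2>8R^2$ for $t\ge T$. Condition (2) is stated as a limit, so this holds for every $t\ge T$, with no monotonicity assumption needed. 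Thus for every $\gamma$ with $d(\gamma,e)\ge T$ we get $(\gamma\cdot f_k)f_k=0$ for all $k\ge k_\gamma$, and so $(\gamma\cdot F)F=0$ in the quotient. The value of $k_\gamma$ depends on $\gamma$, but this does not matter, since only the tail in $k$ counts in the quotient.

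\textbf{Step 4: extending to all of $\mathcal{Z}^{\flat}$.} Products and sums of such generators keep the support bound, so the identity holds on a dense $*$-subalgebra. A standard $\epsilon/3$ approximation then extends $\lim_{\gamma\to\infty}\|(\gamma\cdot F)F\|=0$ to all $F\in\mathcal{Z}^{\flat}$, which proves properness.
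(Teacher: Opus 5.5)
Your proposal is correct and follows essentially the same route as the paper. It uses the central subalgebra $\mathcal{Z}^{\flat}(Z,\mathscr{E})$ built from even functional calculus with uniformly bounded supports, reduction via Lemma~\ref{propermethod}, the lower bound $\|\zeta_k^{(\gamma^{-1},z)}\|^2\ge\sum_{p\le k}\rho_p^-(d(\gamma,e))^2$, and the observation that a $\gamma$-dependent threshold in $k$ is harmless because the quotient by $\bigoplus_k$ only sees the tail. Your thresholds ($8R^2$, and half the full sum) differ only cosmetically from the paper's ($4R^2+1$ and $4R^2$), and your explicit nondegeneracy check via $\beta_{k,\mathbf{0}}(g)$ fills in a density claim the paper simply asserts.
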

	\begin{proof}
		For  
		$k\geq1$,
		let 
		$\mathcal{Z}
		\big(
		C_{b}(Z, \mathscr{E}_{k,\Gamma})
		\big)$
		denote  the graded $C^*$-subalgebra of
		$\mathcal{A}
		\big(
		C_{b} (Z, 
		\mathscr{E}_{k,\Gamma})
		\big)$
		generated by 
		$$\Big\{
		\beta_{k,\xi}
		(f) ~\Big|~ f\in\mathcal{S}_{\rm even}, ~ 
		\xi\in 
		C_{b}(Z, \mathscr{E}_{k,\Gamma}) 
		\Big\},
		$$ 
		where  $\beta_{k,\xi}$ is given by \eqref{eq:thethirdfunctionalcalculusfortheKthlevelforgeneralsections}.
		It lies in the center of
		$\mathcal{A}
		\big(
		C_{b}(Z, \mathscr{E}_{k,\Gamma})
		\big)$.
		We 
		define 
		$\mathcal{Z}^{\flat}(Z,\mathscr{E})$
		to be the graded $C^*$-subalgebra 
		of  
	$\mathcal{A}^{\scalebox{0.5}{$\prod$}}(Z,\mathscr{E})$
		generated by the set
		\begin{equation*}
			\Big\{ 
			[(f_{1},f_{2},\cdots)]
			~\Big|~ 
			(f_{1},f_{2},\cdots)\in\prod\limits_{k=1}^{\infty}
			\mathcal{Z}_{R}
			\big(
			C_{b}(Z, \mathscr{E}_{k,\Gamma})
			\big) 
			\text{ for some } R>0
			\Big\},
		\end{equation*}
		where 
		$$
		\mathcal{Z}_{R}
		\big(
		C_{b}(Z, \mathscr{E}_{k,\Gamma})
		\big) := \mathcal{Z}
		\big(
		C_{b}(Z, \mathscr{E}_{k,\Gamma})
		\big) \cap \mathcal{A}_{R}
		\big(
		C_{b}(Z, \mathscr{E}_{k,\Gamma})
		\big).
		$$
		This algebra is commutative and lies in the center of 
		$\mathcal{A}^{\flat}(Z,\mathscr{E})$. 
		Furthermore, 
		$\mathcal{Z}^{\flat}(Z,\mathscr{E})
		\cdot 
		\mathcal{A}^{\flat}(Z,\mathscr{E})$
		is dense in
		$\mathcal{A}^{\flat}(Z,\mathscr{E})$.
		Note that  the $\Gamma$-action on 
		$\mathcal{A}^{\flat}(Z,\mathscr{E})$ induces a $\Gamma$-action on
		$\mathcal{Z}^{\flat}(Z,\mathscr{E})$
		by restriction.
		Thus, 
		the Gelfand spectrum
		$$\Delta:={\rm Spec}
		\Big(
		\mathcal{Z}^{\flat}(Z,\mathscr{E})\Big)
		$$ 
		admits  
		an induced $\Gamma$-action.
		The Gelfand transform 
		yields a canonical $*$-isomorphism
		$$
		\mathcal{Z}^{\flat}(Z,\mathscr{E})
		\cong 
		C_{0}(\Delta),
		$$
		which is equivariant with respect to the $\Gamma$-actions.
		Hence,
		by Lemma \ref{propermethod}, 
		to obtain the properness of the $\Gamma$-action on 
		$\mathcal{A}^{\flat}(Z,\mathscr{E})$,
		it suffices to verify that 
		$$
		\lim\limits_{\gamma\rightarrow\infty}
		\big\|
		(\gamma\cdot{x}){x}
		\big\|=0
		$$	
		for every
		$x \in \mathcal{Z}^{\flat}(Z,\mathscr{E})$.

		We first consider the case where  $x=\big[(f_{1},f_{2},\cdots)\big]$
		with $(f_{1},f_{2},\cdots)\in\prod\limits_{k=1}^{\infty}
		\mathcal{Z}_{R}
		\big(
		C_{b}(Z, \mathscr{E}_{k,\Gamma})
		\big)$
		for $R>0$. 
		For such an $R>0$,	
		by the second condition of Definition \ref{mostkeyidea1},
		there exists 
		$c>0$ 
		such that 
		\begin{equation*}
			\sum\limits_{p=1}^{\infty}\Big[\rho_{p}^{-}(t)\Big]^2\geq 4R^{2}+1 \quad \text{for all}\   
			t\geq c.
		\end{equation*}
		Moreover, 
		for each $\gamma\in\Gamma$ with $d(\gamma,e)\geq c$, 
		there  exists a positive integer $M_{\gamma}$ such that 
		\begin{equation*}
			\sum\limits_{p=1}^{k}\Big[\rho_{p}^{-}\big(d(\gamma,e)\big)\Big]^2 > 4R^{2} \quad 
			\text{for all}\  
			k\geq 
			M_{\gamma}.
		\end{equation*}
		Thus, for each
		$\gamma\in\Gamma$ with $d(\gamma,e)\geq c$, 
		it follows from \eqref{eq:cocyclesforthekthlevel} and Definition \ref{mostkeyidea1} that 
		\begin{equation}
			\label{eq:thelowerboundcalculationforcocyclesthekthlevel}
			\begin{array}{rcl}
				\Big\|
				\zeta^{\scalebox{0.6}{$(\gamma^{-1},z)$}}_k
				\Big\|
				&=&	
				\sqrt{ 
					\sum\limits_{\tilde{\gamma}\in\Gamma} 
					\Big\|
					\zeta_{k}^{\scalebox{0.6}{$(\gamma^{-1},z)$}}(\tilde{\gamma})
					\Big\|^2 ~}
				\\
				&=&
				\sqrt{
					\sum\limits_{\tilde{\gamma}\in\Gamma} 
					\bigg\|
					\sqrt{z_{\scalebox{0.6}{$\gamma\tilde{\gamma}$}}}
					\cdot 
					\Big( \mathfrak{h}_{k}(\gamma\tilde{\gamma})-
					\mathfrak{h}_{k}(\tilde{\gamma})
					\Big)
					\bigg\|^2~
				}
				\\
				&=&
				\sqrt{
					\sum\limits_{\tilde{\gamma}\in\Gamma}z_{\scalebox{0.6}{$\gamma\tilde{\gamma}$}}
					\cdot 
					\sum\limits_{p=1}^{k} 
					\big\|
					h_{p}(\gamma\tilde{\gamma})-
					h_{p}(\tilde{\gamma})
					\big\|^2~
				}
				\\
				&\geq& 
				\sqrt{
					\sum\limits_{\tilde{\gamma}\in\Gamma}z_{\scalebox{0.6}{$\gamma\tilde{\gamma}$}}
					\cdot 
					\sum\limits_{p=1}^k 
					\Big[
					\rho_{p}^{-}\big(d(\gamma,e)\big)\Big]^{2}~
				}
				\\
				&=& 
				\sqrt{
					\sum\limits_{p=1}^k 
					\Big[
					\rho_{p}^{-}\big(d(\gamma,e)\big)\Big]^{2}~
				}
				\\
				&>& 2R
			\end{array}
		\end{equation}
		for all  $k\geq 
		M_{\gamma}$ and 
		all $z \in {Z}$.   
		Combining  
		\eqref{eq:thegammaactionsforelementsinalgebrasthekthlevel}
		and 
		\eqref{eq:thelowerboundcalculationforcocyclesthekthlevel},
		we obtain that,
		for every $\gamma\in\Gamma$ satisfying $d(\gamma,e)\geq c$,
		\begin{equation*}
			\text{Supp} 
			\Big(
			(\gamma\cdot{f}_{k})(z)
			\Big) 
			\cap {B}_{\scalebox{0.7}{$\mathbb{R}\times\mathscr{E}_{k,\Gamma}$}}(0, R)=\emptyset \quad 
			\text{for
				all} \ 
			k\geq 
			M_{\gamma}\  
			\text{and all}\  
			z\in{Z}.
		\end{equation*}
		This implies 
		$(\gamma\cdot {f}_{k})
		{f}_{k}=0$ 
		for all 
		$k\geq 
		M_{\gamma}$, 
		and hence
		$(\gamma\cdot{x}) {x}=0$
		for every 
		$\gamma\in\Gamma$ 
		with 
		$d(\gamma,e)\geq c$.

		The general case follows by a standard approximation argument.
	\end{proof}

	\section{The Bott element for $\mathcal{A}^{\flat}(Z,\mathscr{E})$}

Let 	
$\mathcal{A}^{\flat}(Z,\mathscr{E})$
be the proper $\Gamma$-$C^*$-algebra 
associated  to  a coarse embedding  with finite complexity 
$h: \Gamma \to \mathscr{E}=
	\bigoplus_{p=1}^{\infty}
	\mathscr{X}_p$,
constructed in Section 5.2.
In this section,
	we construct the Bott element	
	$\mathfrak{b}\in 
	KK_{0}^{\Gamma}
	\big( 
	\mathcal{S},~
	\mathcal{A}^{\flat}(Z,\mathscr{E})
	\big)$.
	We describe this element in terms of asymptotic morphisms. The notion of a special $\Gamma$-equivariant asymptotic morphism  used throughout this section is recalled in Construction~\ref{constr:KK-facts-asymptotic} in the Appendix.
	As explained in the Appendix,  the
	Bott element 
	$\mathfrak{b}$  can induce  a homomorphism  
	from $KK$-theory  with coefficients in $\mathcal{S}$ to   
	 $KK$-theory  with coefficients in
	 the proper $\Gamma$-$C^*$-algebra 
	 $\mathcal{A}^{\flat}(Z,\mathscr{E})$.
	The third condition in the definition of  coarse embedding  with finite complexity  (Definition \ref{mostkeyidea1})  	
plays a  crucial role  in establishing the $\Gamma$-equivariance in Proposition \ref{asymptoticGammathemostkeypoint1}.

	Suppose that
	$h:\Gamma\rightarrow\mathscr{E}=
	\bigoplus_{p=1}^{\infty}
	\mathscr{X}_p$ 
	is a coarse embedding  with finite complexity.  
	For each $t\geq1$,
	we define a map
	\begin{equation}
		\label{eq:thebottmapforelementaryalgebras}
		\mathfrak{T}_{t}: 
		\mathcal{S}
		\longrightarrow  
		\mathcal{A}^{\flat}(Z,\mathscr{E})
	\end{equation}
	by
	$$
	\mathfrak{T}_{t}(f) =
	\Big[\big(~ 
	\beta_{1,{\bf 0}}(f_t),
	~\beta_{2,{\bf 0}}(f_t),
	~\cdots ~  
	\big)\Big],
	$$
	where 
	$f_t(x)=f(\frac{x}{t})$,
	${\bf 0}$ is the zero function in 
	$C_{b}(Z, \mathscr{E}_{k,\Gamma})$
	and 
	$\beta$ is given by 
	\eqref{eq:thethirdfunctionalcalculusfortheKthlevelforgeneralsections}.
	By definition,
	each $\mathfrak{T}_{t}$ is
	a $*$-homomorphism.
	
	\begin{prop}\label{asymptoticGammathemostkeypoint1}
		For every 
		$\gamma\in\Gamma$ 
		and 
		$f\in\mathcal{S}$,  
		we have
		$$
		\lim\limits_{t\rightarrow \infty}
		\Big\|
		\gamma
		\cdot
		\big(
		\mathfrak{T}_{t}(f)
		\big)-
		\mathfrak{T}_{t}(f)
		\Big\|
		=0.
		$$
	\end{prop}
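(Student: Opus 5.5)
The strategy is to turn the action of $\gamma$ on $\mathfrak{T}_t(f)$ into a change of base point, and then to show that this shift of base point costs $O(1/t)$, uniformly in the level $k$ and in $z\in Z$.

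\emph{Step 1: the action as a change of base point.} Let ${\bf 0}$ be the zero section in $C_b(Z,\mathscr{E}_{k,\Gamma})$. The identity \eqref{eq:rulesforgammaactionforelementary} holds at every level $k$, since the proof of Lemma \ref{zenmenengmeiyoulakdjlkfanedkfd} uses only the formalism of Section 5.1. Applying it gives
\[
\gamma\cdot\mathfrak{T}_t(f)=\big[\big(\beta_{1,\gamma\cdot{\bf 0}}(f_t),\ \beta_{2,\gamma\cdot{\bf 0}}(f_t),\ \cdots\big)\big].
\]
By \eqref{eq:Gammaactionsforthekthlevel}, the section $\gamma\cdot{\bf 0}$ is $z\mapsto \zeta_k^{(\gamma,\gamma^{-1}z)}$. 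The quotient norm is bounded by the supremum over $k$, so it suffices to show
\[
\sup_{k\geq 1}\ \sup_{z\in Z}\ \big\|\beta_{k,v}(f_t)-\beta_{k,0}(f_t)\big\|\longrightarrow 0 \quad (t\to\infty),\qquad v=\zeta_k^{(\gamma,\gamma^{-1}z)}.
\]
Each $\beta$ is a $*$-homomorphism, hence contractive. So it suffices to treat $f\in C_c(\mathbb{R})$ and then pass to general $f\in\mathcal{S}$ by approximation.

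\emph{Step 2: a uniform bound on the displacement.} We use the identification from Section 4.3. At a point $(\theta,w)$, the element $\beta_{k,v}(f_t)$ equals $f$ applied, via Clifford functional calculus, to the vector $x=\tfrac1t(\theta,\varPsi_k(w-v))\in\mathbb{R}\oplus\mathscr{H}_{k,\Gamma}$. Likewise $\beta_{k,0}(f_t)$ corresponds to $y=\tfrac1t(\theta,\varPsi_k(w))$. By Lemma \ref{inequalityforPsi1}, applied with base point $w-v$ and increment $v$,
\[
\|x-y\|\le \frac1t\sqrt{\omega_{\mathcal{M}^{\eta}_{\phi_1}}\big(\|\varrho_{k,1}(v)\|\big)^2+16\sum_{p=2}^{k}(L_{\phi_p}+1)^2\|\varrho_{k,p}(v)\|^2 }.
\]
The computation in \eqref{eq:theupperboundcalculationsforcocycles}, done one coordinate at a time, gives $\|\varrho_{k,p}(v)\|\le \rho_p^+(d(\gamma^{-1},e))$ for every $k$ and $z$. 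Since $\omega_{\mathcal{M}^{\eta}_{\phi_1}}$ is monotone and finite (Corollary \ref{finitemodulusofcontinuity}), we get $\|x-y\|\le R'_\gamma/t$, where
\[
R'_\gamma=\sqrt{\omega_{\mathcal{M}^{\eta}_{\phi_1}}\big(\rho_1^+(d(\gamma^{-1},e))\big)^2+16\sum_{p=2}^{\infty}(L_{\phi_p}+1)^2\rho_p^+(d(\gamma^{-1},e))^2}.
\]
This constant is finite by condition (3) of Definition \ref{mostkeyidea1}, and it does not depend on $k$, $z$, $\theta$ or $w$. This step is exactly where finite complexity enters, and it is the main point.

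\emph{Step 3: the pointwise estimate.} Next I repeat the pointwise argument from the proof of Lemma \ref{comparingbasepoint}. That argument only used two vectors $x,y$ in a real Hilbert space with $\|x-y\|\le r$. Split $f=f_0+f_1$ into even and odd parts. The even part is controlled by $\omega_f(r)$. For the odd part, if one vector has norm below $r$ the bound is $3\omega_f(r)$. Otherwise the angle estimate gives $\omega_f(r)+\Lambda_f(r)$. Altogether
\[
\big\|f(C(x))-f(C(y))\big\|\le 4\,\omega_f(r)+\Lambda_f(r),\qquad r=R'_\gamma/t .
\]
Here no restriction to bounded regions of $\mathscr{E}_{k,\Gamma}$ is needed, because the estimate is pointwise in $(\theta,w)$. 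Taking the supremum over $(\theta,w)$, $z$ and $k$ bounds the norm by $4\omega_f(R'_\gamma/t)+\Lambda_f(R'_\gamma/t)$, which tends to $0$ as $t\to\infty$ since $\Lambda_f(r)\to 0$ as $r\to 0^+$.

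The step I expect to require the most care is Step 2. One must get the sign and argument order right when invoking Lemma \ref{inequalityforPsi1}. One must also check that the $\eta$-extended first coordinate, which is not Lipschitz, only contributes the finite term $\omega(\rho_1^+)$, so that the bound is genuinely uniform in $k$.
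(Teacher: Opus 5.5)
Your argument is correct, and its core is the paper's own. Step~1 rewrites $\gamma\cdot\beta_{k,\mathbf{0}}(f_t)$ as $\beta_{k,\gamma\cdot\mathbf{0}}(f_t)$ via \eqref{eq:rulesforgammaactionforelementary}. Step~2 proves the displacement bound $\sup_{w}\|\varPsi_k(w-v)-\varPsi_k(w)\|\le R'_\gamma$, uniformly in $k$ and $z$, from Lemma~\ref{inequalityforPsi1}, the cocycle estimate, and condition (3) of Definition~\ref{mostkeyidea1}. Both steps are exactly what the paper does.

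The only difference is the last step, where the displacement bound is turned into a norm estimate.
\begin{itemize}
\item The paper uses Stone--Weierstrass to reduce to the single generator $f(x)=(x+i)^{-1}$. The resolvent identity for the unbounded multipliers then gives directly $\|\gamma\cdot\beta_{k,\mathbf{0}}(f_t)-\beta_{k,\mathbf{0}}(f_t)\|\le \frac{1}{t}\sup_z\|\mathfrak{B}_k^{v}-\mathfrak{B}_k\|\le R'_\gamma/t$.
\item You instead pass to the function-algebra picture of Section~4.3. There you reuse the pointwise even/odd/angle estimate from the proof of Lemma~\ref{comparingbasepoint}, which gives the bound $4\omega_f(R'_\gamma/t)+\Lambda_f(R'_\gamma/t)$ for every $f$ at once.
\end{itemize}

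The paper's route is shorter and yields a clean $O(1/t)$ rate for the generator. Yours gives an explicit rate for each individual $f\in\mathcal{S}$. It needs no Stone--Weierstrass step and no resolvent manipulations with unbounded multipliers. You also correctly observe that the restriction to bounded regions in Lemma~\ref{comparingbasepoint} is unnecessary here, because your bound on $\|x-y\|$ is global.

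One small point: your reduction to $f\in C_c(\mathbb{R})$ is superfluous. The pointwise estimate holds verbatim for all $f\in C_0(\mathbb{R})$, since $\omega_f(r)\to 0$ and $\Lambda_f(r)\to 0$ as $r\to 0^+$.
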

	\begin{proof}
		By the Stone--Weierstrass 
		theorem, it suffices to prove the proposition
		for the function
		$f(x) = (x + i)^{-1}$.

		For every $\gamma\in\Gamma$, $k\in\mathbb{N}$ and $t\geq1$, 
		combining \eqref{eq:rulesforgammaactionforelementary},   
		\eqref{eq:affineisometriesforthekthlevel},  
		\eqref{eq:Gammaactionsforthekthlevel},
		\eqref{eq:thethirdfunctionalcalculusfortheKthlevel},  
		\eqref{eq:thethirdfunctionalcalculusfortheKthlevelforgeneralsections},    
		Lemma \ref{inequalityforPsi1}, 
		\eqref{eq:cocyclesforthekthlevel}
		and \eqref{eq:upperboundforthecocycles},
		we obtain
		\begin{eqnarray*}
			&& 
			\Big\|
			\gamma
			\cdot
			\beta_{k,{\bf 0}}(f_t)  
			-
			\beta_{k,{\bf 0}}(f_t)  
			\Big\|
			\\
			&=& 
			\Big\|
			\beta_{k,\gamma
				\cdot{\bf 0}}(f_t)  
			-
			\beta_{k,{\bf 0}}(f_t)  
			\Big\|
			\\
			&=&
			\sup_{\scalebox{0.7}{$z\in{Z}$}}
			\biggg\| 
			\Bigg[
			\frac{1}{t}
			\bigg(
			\varTheta \widehat{\otimes} 1+1\widehat{\otimes}\mathfrak{B}_{k}^{\scalebox{0.7}{$\zeta^{\scalebox{0.6}{$(\gamma,\gamma^{-1}z)$}}_{k}$}}
			\bigg)+
			i\Bigg]^{-1} 
			-    
			\bigg[
			\frac{1}{t}
			\big(
			\varTheta \widehat{\otimes} 1+1\widehat{\otimes}\mathfrak{B}_{k}
			\big)+
			i\bigg]^{-1} 
			\biggg\|
			\\
			&\leq&
			\sup_{\scalebox{0.7}{$z\in{Z}$}}
			\Biggg\{
			\biggg\| 
			\Bigg[
			\frac{1}{t}
			\bigg(
			\varTheta \widehat{\otimes} 1+1\widehat{\otimes}\mathfrak{B}_{k}^{\scalebox{0.7}{$\zeta^{\scalebox{0.6}{$(\gamma,\gamma^{-1}z)$}}_{k}$}}
			\bigg)+
			i\Bigg]^{-1}
			\biggg\| 
			\cdot 
			\Bigg\|    
			\bigg[
			\frac{1}{t}
			\big(
			\varTheta \widehat{\otimes} 1+1\widehat{\otimes}\mathfrak{B}_{k}
			\big)+
			i\bigg]^{-1} 
			\Bigg\|
			\cdot 
			\Bigg\| 
			\frac{
				\mathfrak{B}_{k}^{\scalebox{0.7}{$\zeta^{\scalebox{0.6}{$(\gamma,\gamma^{-1}z)$}}_{k}$}}-
				\mathfrak{B}_{k} 
			}{t} 
			\Bigg\|
			\Biggg\}
			\\
			&\leq&
			\frac{1}{t}\cdot\sup_{\scalebox{0.7}{$z\in{Z}$}} 
			\Big\| \mathfrak{B}_{k}^{\scalebox{0.7}{$\zeta^{\scalebox{0.6}{$(\gamma,\gamma^{-1}z)$}}_{k}$}}-
			\mathfrak{B}_{k}
			\Big\|
			\\
			&=&
			\frac{1}{t}\cdot\sup_{\scalebox{0.7}{$z\in{Z}$}} \sup_{\scalebox{0.7}{$v\in \mathscr{E}_{k,\Gamma}$}}
			\Big\| 
			\varPsi_{k}
			\big(
			v-
			\zeta^{\scalebox{0.6}{$(\gamma,\gamma^{-1}z)$}}_{k}
			\big)
			-
			\varPsi_{k}
			(v)
			\Big\|
			\\
			&\leq&
			\frac{1}{t}\cdot\sup_{\scalebox{0.7}{$z\in{Z}$}} 		\sqrt{	
				\bigg[
				\omega_{\scalebox{0.6}{$
						\mathcal{M}_{\phi_1}^{\eta}
						$}}
				\Big(
				\big\|
				\varrho_{k,1}\big(\zeta^{\scalebox{0.6}{$(\gamma,\gamma^{-1}z)$}}_{k}\big)
				\big\|
				\Big)
				\bigg]^2	
				+
				16\cdot \sum\limits_{p=2}^{k}
				\big(L_{\phi_p}+1\big)^2 
				\cdot
				\Big\|
				\varrho_{k,p}\big(\zeta^{\scalebox{0.6}{$(\gamma,\gamma^{-1}z)$}}_{k}\big)
				\Big\|^2	
			}
			\\
			&\leq&
			\frac{1}{t}\cdot	\sqrt{	
				\bigg[
				\omega_{\scalebox{0.6}{$
						\mathcal{M}_{\phi_1}^{\eta}
						$}}
				\Big(
				\rho_{1}^{+}\big(d(\gamma^{-1},e)
				\big)
				\Big)
				\bigg]^2	
				+
				16\cdot \sum\limits_{p=2}^{k}
				\Big( 
				(L_{\phi_p}+1)
				\cdot
				\rho_{p}^{+}\big(d(\gamma^{-1},e)
				\big)
				\Big) ^2
			}
			\\
			&\leq&
			\frac{1}{t}\cdot	\sqrt{	
				\bigg[
				\omega_{\scalebox{0.6}{$
						\mathcal{M}_{\phi_1}^{\eta}
						$}}
				\Big(
				\rho_{1}^{+}\big(d(\gamma^{-1},e)
				\big)
				\Big)
				\bigg]^2	
				+
				16\cdot \sum\limits_{p=2}^{\infty}
				\Big( 
				(L_{\phi_p}+1)
				\cdot
				\rho_{p}^{+}\big(d(\gamma^{-1},e)
				\big)
				\Big) ^2
			}.
		\end{eqnarray*} 
		Note that 
		$h:\Gamma\to\mathscr{E}=
		\bigoplus_{p=1}^{\infty}
		\mathscr{X}_p$ 
		is a coarse embedding  with finite complexity.
		It follows from the third condition of Definition \ref{mostkeyidea1}  that 
		\begin{eqnarray*}
			\sum\limits_{p=2}^{\infty}
			\Big( 
			(L_{\phi_p}+1)
			\cdot
			\rho_{p}^{+}\big(d(\gamma^{-1},e)
			\big)
			\Big) ^2<+\infty \quad  \text{for every} \ 
			\gamma\in\Gamma.
		\end{eqnarray*}
		On the other hand, 
		by Corollary \ref{finitemodulusofcontinuity}, 
		$$	
		\omega_{\scalebox{0.6}{$
				\mathcal{M}_{\phi_1}^{\eta}
				$}}
		\Big(
		\rho_{1}^{+}\big(d(\gamma^{-1},e)
		\big)
		\Big)
		<+\infty \quad \text{for every}\ 	\gamma\in\Gamma.
		$$
		Therefore, for every 
		$\gamma\in\Gamma$,   
		\begin{eqnarray*}
			&&
			\lim\limits_{t\rightarrow \infty}
			\Big\|
			\gamma
			\cdot
			\big(
			\mathfrak{T}_{t}(f)
			\big)-
			\mathfrak{T}_{t}(f)
			\Big\|
			\\
			&=& 	
			\lim\limits_{t\rightarrow \infty}
			\bigg\|
			\Big[
			\Big( 
			\gamma
			\cdot
			\beta_{1,{\bf 0}}(f_t)  
			-
			\beta_{1,{\bf 0}}(f_t),~
			\gamma
			\cdot
			\beta_{2,{\bf 0}}(f_t)  
			-
			\beta_{2,{\bf 0}}(f_t),~
			\cdots ~
			\Big)
			\Big]
			\bigg\|
			\\
			&\leq&
			\lim\limits_{t\rightarrow \infty}
			\sup_{k\in\mathbb{N}}
			\Big\|
			\gamma
			\cdot
			\beta_{k,{\bf 0}}(f_t)  
			-
			\beta_{k,{\bf 0}}(f_t) 
			\Big\|
			\\
			&=&
			0.
		\end{eqnarray*}
		This completes the proof.
	\end{proof}

	By Proposition \ref{asymptoticGammathemostkeypoint1} and Construction \ref{constr:KK-facts-asymptotic} in the Appendix, 
	we have  
	$$\big[(
	\mathfrak{T}_{t}
	)_{t\in[1,\infty)}\big] 
	\in 
	KK^{\Gamma}_{0} 
	\big(
	\mathcal{S}, ~
	\mathcal{A}^{\flat}(Z,\mathscr{E})
	\big).$$

	\begin{rmk}\label{Ithiknitisaintemediatepartforthelatersections}
		The results of the preceding sections are sufficient to prove  the rational injectivity of the assembly map (Theorem \ref{main-result1}) 
		in a special case where each map
		$$\mathcal{S}\longrightarrow  \mathcal{A}(\mathscr{E}_{k,\Gamma}),\quad  f\mapsto \beta_{k,0}(f),$$ 
		given by \eqref{eq:thethirdfunctionalcalculusfortheKthlevel}, 
		induces an isomorphism in $K$-theory.   
		We now briefly outline the main idea of the proof.
		
		It follows from 
		$\big[(
		\mathfrak{T}_{t}
		)_{t\in[1,\infty)}\big] 
		\in 
		KK^{\Gamma}_{0} 
		\big(
		\mathcal{S}, ~ 
		\mathcal{A}^{\flat}(Z,\mathscr{E})
		\big)$ and
		\eqref{eq:BC-assembly-natural} 
		that,
		the diagram 
		\begin{equation*}\label{babycaseformainresult}
			\xymatrix@R=1.3cm@C=0.8cm{
				KK_{*}^{\Gamma}(E\Gamma,\mathcal{S}) \ar[r]^{\pi_*} \ar[d]^{\big[(
					\mathfrak{T}_{t}
					)_{t\in[1,\infty)}\big] }  & KK^\Gamma_*(\underline{E}\Gamma, \mathcal{S}) \ar[r]^\mu \ar[d]^{\big[(
					\mathfrak{T}_{t}
					)_{t\in[1,\infty)}\big] } & K_*(\mathcal{S} \rtimes_{\operatorname{r}} \Gamma) \ar[d]^{
					\big[(
					\mathfrak{T}_{t}
					)_{t\in[1,\infty)}\big]  \rtimes_{\operatorname{r}} \Gamma} \\
				KK_{*}^{\Gamma}
				\big(E\Gamma, ~	\mathcal{A}^{\flat}(Z,\mathscr{E})\big) 
				\ar[r]^{\pi_*} & KK^\Gamma_*
				\big(
				\underline{E}\Gamma, ~ \mathcal{A}^{\flat}(Z,\mathscr{E})
				\big) 
				\ar[r]^\mu & K_*\big(
				\mathcal{A}^{\flat}(Z,\mathscr{E}) \rtimes_{\operatorname{r}} \Gamma
				\big)
			}
		\end{equation*}	
		commutes,
		where 
		$E\Gamma$ is the universal space for proper and free action of the group $\Gamma$, $\underline{E}\Gamma$ is the universal space for proper action of the group $\Gamma$, and 
		$\pi_*$ is induced by the natural $\Gamma$-equivariant continuous map $\pi: E\Gamma \to \underline{E}\Gamma$.
		The vertical maps on the left and in the middle are  usually called  \emph{Bott maps}.
		Combining Theorem \ref{thm:proper-GHT} and Proposition \ref{properalgebra},
		we obtain that 
		the bottom map $\mu$ is an isomorphism. On the other hand, the bottom $\pi_*$ is rationally injective 
		by Lemma \ref{jdskljfaljsdkfjsqqq} in the Appendix.
		If the left Bott map 
		is injective,
		then the commutative diagram implies that the composition of the maps in the top row is rationally injective, as desired. 
		The injectivity of the left Bott map can be established in the special case by 
		combining Lemma \ref{isomorphismbetweenKtheory-babycase},
		Lemma \ref{isomorphismasmallkeyforlaterresults} and Lemma \ref{smalltrick}.

		In general,
		each map 
		$\mathcal{S} \to \mathcal{A}(\mathscr{E}_{k,\Gamma})$ 
		induces only an injective homomorphism in $K$-theory,
		which does not ensure the injectivity of the left Bott map. 
		To overcome this difficulty, we enlarge the algebra
		$\mathcal{A}^{\flat}(Z,\mathscr{E})$ 
		and employ a deformation argument. 
		A detailed proof of the injectivity in the general case will be given in
		Section 9.
		
		The material in Sections 5 and 6  serves as a prototype for constructing a larger proper $\Gamma$-algebra and  its associated Bott element, which will be developed in the subsequent sections.
	\end{rmk}

	\section{Deformation of affine isometries and larger proper $\Gamma$-$C^*$-algebras} 
	
	In order to obtain a $C^*$-algebra  in which the proper $\Gamma$-action can be continuously deformed to an almost trivial $\Gamma$-action (that is, an action which only moves points in the base space and acts trivially on the fibres), 
	we enlarge the proper $\Gamma$-algebra constructed in Section 5. 
	This deformation of group actions will allow us to establish the injectivity of the Bott map (see Section 9).
	Suppose that 
	$\Gamma$  
	admits a
	coarse embedding  with finite complexity
	into an
	$\ell^2$-direct sum of infinitely many Property 
	$(H)$ 
	Banach spaces. 
	As in Section 5,
	we decompose the infinite direct sum into an increasing filtration by finite partial sums. 
	Each term in this filtration is a finite direct sum of Property 
	$(H)$ Banach spaces and hence also has Property $(H)$.
	For a single Banach space with Property 
	$(H)$, 
	an enlarged $\Gamma$-$C^*$-algebra will be constructed in Section  7.2. 
	We then take the limit of these algebras to obtain the enlarged   $\Gamma$-$C^*$-algebra in the general setting. 
	This will be carried out in Section  7.3.

	\subsection{Affine isometries for the larger space}
	
	Suppose that
$h: \Gamma \to \mathscr{X}$ 
is bornologous, 
where  $\mathscr{X}$ is a  Banach space.
A $\Gamma$-action on the function space $C_{b}(Z, \mathscr{X}_{\Gamma})$
has already been constructed  in Section 3.
In this subsection,
we replace $\mathscr{X}_{\Gamma}$  by a larger space  $L^2 ([0,1], \mathscr{X}_{\Gamma})$,
and  construct a family (parametrized by $s\in[0,1]$) 
of $\Gamma$-actions  
on the function space
$C_{b}
\big(
Z, ~
L^2 ([0,1], \mathscr{X}_{\Gamma})
\big)$.
This setting will be used in the construction of the enlarged algebras in the following two subsections.

We first construct a family of affine isometries on 
	$L^2 ([0,1],\mathscr{X}_{\Gamma})$
	parametrized by 
	$\gamma\in\Gamma$, $z\in Z$ and $s\in[0,1]$.
	Based on the construction in Section 3, 
	for each  $\gamma\in\Gamma$,  $z\in Z$ and  $s\in[0,1]$,
	there exists an 
	affine isometry  
	$$
	U^{\scalebox{0.7}{$(\gamma,z,s)$}}:
	L^2 ([0,1], \mathscr{X}_{\Gamma})
	\longrightarrow 
	L^2 ([0,1], \mathscr{X}_{\Gamma})
	$$
	defined by
	\begin{eqnarray}
		\label{eq:theaffineisometricsforenlargedspaceelementaryone}
		\big(
		U^{\scalebox{0.7}{$(\gamma,z,s)$}}(f)
		\big)(t)= 
		\begin{cases}
			U^{\scalebox{0.7}{$(\gamma,z)$}}
			\big(
			f(t)
			\big), & t\in[0,s],
			\\
			f(t), & t\in(s,1]
		\end{cases}
	\end{eqnarray}
	for 
	$f\in 
	L^2 ([0,1], \mathscr{X}_{\Gamma})$.

	\begin{rmk}\label{degeneratesdfkjalsdjkfls}
		For $s=1$,  $U^{\scalebox{0.7}{$(\gamma,z,1)$}}$ encodes the original affine isometry  $U^{\scalebox{0.6}{$(\gamma,z)$}}$. 
		In contrast, for $s=0$, 
		$U^{\scalebox{0.7}{$(\gamma,z,0)$}}$ is the identity operator for all $\gamma\in\Gamma$ and $z\in {Z}$. 
	\end{rmk}

	It is convenient to rewrite 
	$U^{\scalebox{0.7}{$(\gamma,z,s)$}}$
	in the form of \eqref{eq:affineisometricesgeneralone}.
	To this end, 
	for  $\gamma\in\Gamma$, $z\in Z$ and $s\in[0,1]$,
	define 	$\lambda^{\scalebox{0.7}{$(\gamma,s)$}}: L^2 ([0,1], \mathscr{X}_{\Gamma})
	\to 
	L^2 ([0,1], \mathscr{X}_{\Gamma})$
	by 
	\begin{eqnarray*}
		\big(
		\lambda^{\scalebox{0.7}{$(\gamma,s)$}}(f)
		\big)(t)= 
		\begin{cases}
			\lambda^{\scalebox{0.7}{$\gamma$}}
			\big(
			f(t)
			\big),& t\in[0,s],
			\\
			f(t), & t\in(s,1],
		\end{cases}
	\end{eqnarray*}
	and
	define 
	$\zeta^{\scalebox{0.7}{$(\gamma,z,s)$}}\in 
	L^2 ([0,1], \mathscr{X}_{\Gamma})$
	by 
	\begin{eqnarray}
		\label{eq:thecocyclesfortheenlargedspaceelementaryone}
		\zeta^{\scalebox{0.7}{$(\gamma,z,s)$}}(t)
		= 
		\begin{cases}
			\zeta^{\scalebox{0.7}{$(\gamma,z)$}},& t\in[0,s],
			\\
			0, & t\in(s,1],
		\end{cases}
	\end{eqnarray}
	where
	$\zeta^{\scalebox{0.7}{$(\gamma,z)$}}$
	is given by \eqref{eq:cocycles}.
	Then we have
	\begin{eqnarray}
		\label{eq:theaffineisometriesfortheenlargedspaces}
		U^{\scalebox{0.7}{$(\gamma,z,s)$}}(f)
		= 
		\lambda^{\scalebox{0.7}{$(\gamma,s)$}}(f) + \zeta^{\scalebox{0.7}{$(\gamma,z,s)$}}
	\end{eqnarray}
	for all 
	$f\in 
	L^2 ([0,1], \mathscr{X}_{\Gamma})$.

	The following result is immediate from Lemma \ref{affineisometryproperty1}. 
	\begin{lem}
		For each  $s\in[0,1]$,
		the following statements hold: 
		\begin{itemize}
			\item [(1)]	$U^{\scalebox{0.7}{$(e,z,s)$}}$
			is the 
			identity operator 
			for all 
			$z\in{Z}$;
			\item [(2)]	$U^{\scalebox{0.7}{$(\gamma, z^\prime,s)$}} U^{\scalebox{0.7}{$(\gamma^\prime,z,s)$}}
			=
			U^{\scalebox{0.7}{$(\gamma\gamma^\prime,z,s)$}}$
			for all 
			$\gamma, \gamma^\prime\in\Gamma$ and $z,z^\prime\in {Z}$ 
			such that
			$z^\prime=\gamma^\prime z$. 
		\end{itemize}	
	\end{lem}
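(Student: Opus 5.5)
The plan is to reduce both statements pointwise in $t\in[0,1]$ to Lemma \ref{affineisometryproperty1}, using the piecewise definition \eqref{eq:theaffineisometricsforenlargedspaceelementaryone} of $U^{(\gamma,z,s)}$. Fix $s\in[0,1]$ and $f\in L^2([0,1],\mathscr{X}_{\Gamma})$. For each $t$, the value $(U^{(\gamma,z,s)}f)(t)$ is either $U^{(\gamma,z)}(f(t))$ when $t\le s$, or $f(t)$ when $t>s$. Since $s$ is fixed, the splitting $[0,s]\cup(s,1]$ does not depend on $\gamma$ or $z$, so composition can be computed separately on each piece.

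For (1), on $[0,s]$ we get $U^{(e,z)}(f(t))=f(t)$ by Lemma \ref{affineisometryproperty1}(1), and on $(s,1]$ the map is the identity by definition. Hence $U^{(e,z,s)}f=f$ almost everywhere, so it equals $f$ in $L^2$.

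For (2), let $z'=\gamma'z$. For $t\in[0,s]$ we have $(U^{(\gamma',z,s)}f)(t)=U^{(\gamma',z)}(f(t))$, which again lies in the $[0,s]$ regime. Applying $U^{(\gamma,z',s)}$ therefore gives $U^{(\gamma,z')}U^{(\gamma',z)}(f(t))=U^{(\gamma\gamma',z)}(f(t))$ by Lemma \ref{affineisometryproperty1}(2). For $t\in(s,1]$ both maps fix $f(t)$, and so does $U^{(\gamma\gamma',z,s)}$. The two sides therefore agree pointwise.

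Well-definedness on $L^2$ is not really an obstacle, but it needs a brief check. Each $U^{(\gamma,z,s)}$ does map $L^2$ into $L^2$ and is an affine isometry, by the decomposition \eqref{eq:theaffineisometriesfortheenlargedspaces}: $\lambda^{(\gamma,s)}$ acts pointwise isometrically, and $\zeta^{(\gamma,z,s)}$ has $L^2$-norm $\sqrt{s}\,\|\zeta^{(\gamma,z)}\|<\infty$ by \eqref{eq:upperboundforthecocycles}. Measurability of $t\mapsto U^{(\gamma,z)}(f(t))$ is clear because $U^{(\gamma,z)}$ is continuous. Beyond this, the argument is pure bookkeeping.
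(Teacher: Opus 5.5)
Your proof is correct and matches the paper, which simply states that this lemma is immediate from Lemma \ref{affineisometryproperty1}. Your argument unpacks exactly that remark: since the split $[0,s]\cup(s,1]$ does not depend on $\gamma$ or $z$, you apply that lemma pointwise on $[0,s]$ and use the identity on $(s,1]$, and your extra check that each $U^{(\gamma,z,s)}$ is a well-defined affine isometry of $L^2([0,1],\mathscr{X}_{\Gamma})$ is a harmless addition.
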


	As in Section 3,
	for each $s\in[0,1]$,
	the family of affine isometries 
	$$\Big\{	
	U^{\scalebox{0.7}{$(\gamma,z,s)$}} ~\Big|~ \gamma\in\Gamma, z\in {Z}
	\Big\}$$
	induces a  $\Gamma$-action  
	on 
	$C_{b}
	\big(
	Z, ~
	L^2 ([0,1], \mathscr{X}_{\Gamma})
	\big)$
	via
	\begin{eqnarray}
		\label{eq:theationsforenlargedspaces}
		\big(
		\gamma\cdot_{\alpha_{s}} f
		\big)
		(z)
		=	U^{\scalebox{0.7}{$(\gamma,\gamma^{-1}z,s)$}}
		\big(
		f(\gamma^{-1}z)
		\big),
	\end{eqnarray}
	where $\gamma\in\Gamma$, $z\in {Z}$ and $f\in C_{b}
	\big(
	Z, ~
	L^2 ([0,1], \mathscr{X}_{\Gamma})
	\big)$.
	To specify the action that depends on $s\in[0,1]$,
	we write a subscript   
	$\alpha_s$ in \eqref{eq:theationsforenlargedspaces}.
	
	\begin{rmk}\label{bubiandehomotopyoftheendsofactiondlkfj}
		Note that for $s=1$, the $\Gamma$-$\alpha_1$-action on  $C_{b}
		\big(
		Z, ~
		L^2 ([0,1], \mathscr{X}_{\Gamma})
		\big)$ encodes the original $\Gamma$-action on 
		$C_{b}(
		Z, \mathscr{X}_{\Gamma})$. 
		In contrast, for $s=0$, the $\Gamma$-$\alpha_0$-action on  $C_{b}
		\big(
		Z, ~
		L^2 ([0,1], \mathscr{X}_{\Gamma})
		\big)$ is induced only by precomposition with the action of $\Gamma$ on $Z$, while leaving the $L^2 ([0,1], \mathscr{X}_{\Gamma})$-values of the functions unchanged.
	\end{rmk}

\subsection{The larger
		$\Gamma$-$C^*$-algebra generated by uniform  simple functions}

In this subsection, 
	we construct a larger $\Gamma$-$C^*$-algebra associated to a bornologous map $h: \Gamma \to \mathscr{X}$,
	where  $\mathscr{X}$ is a Property $(H)$ Banach space.
	This construction is not a straightforward extension of that in Section 5.1, 
since the algebra to be constructed here must support a continuous deformation of the $\Gamma$-$\alpha_s$-actions. 
The $C^*$-algebra 
generated by the entire function space
$C_{b}
\big(Z, ~
L^2([0,1], \mathscr{X}_{\Gamma}) 
\big)$ 
is too large to ensure the continuity of this deformation.
We therefore need to choose a suitable subspace of
$C_{b}
\big(Z, ~
L^2([0,1], \mathscr{X}_{\Gamma}) 
\big)$ 
and consider  the 
$C^*$-subalgebra  generated by it 
(see Definition \ref{algebrasgeneratedbysimplefunctions}).

	Suppose that
	$h: \Gamma \to \mathscr{X}$ 
	is bornologous, where  $\mathscr{X}$ is a Property $(H)$ Banach space with a Property 
	$(H)$
	map 
	$\phi: S(\mathscr{X})\to S(\mathscr{H})$.
	As noted in Remark \ref{properHmapsforiteratedconstructionkdjlkjl}, 
	applying Proposition \ref{PropertyHmapforLpspaces} twice yields a
	Property $(H)$ map
	$$
	\mathcal{M}_{\mathcal{M}_{\phi}}: 
	S\big( 
	L^2 ([0,1], \mathscr{X}_{\Gamma})
	\big) 
	\longrightarrow 
	S\big( 
	L^2 ([0,1], \mathscr{H}_{\Gamma}) 
	\big)
	$$ 	
	for the Property $(H)$ Banach space
	$L^2 ([0,1], \mathscr{X}_{\Gamma})$.

	We set, for simplicity, 
	$$
	\phi_{\scalebox{0.6}{$[0,1]$}} = \mathcal{M}_{\mathcal{M}_{\phi}}.
	$$
	Let 
	$\phi_{\scalebox{0.6}{$[0,1]$}}^{\eta}:
	L^2 ([0,1], \mathscr{X}_{\Gamma})
	\to 
	L^2 ([0,1], \mathscr{H}_{\Gamma})$
	be an  
	$\eta$-scalar proper extension of $\phi_{\scalebox{0.6}{$[0,1]$}}$
	given by Corollary \ref{finitemodulusofcontinuity}.

	\begin{defn}\label{Cliffordmap}
		We define
		$\mathfrak{B}_{\scalebox{0.5}{$[0,1]$}}:
		L^2 ([0,1], \mathscr{X}_{\Gamma}) 
		\rightarrow
		\text{Cliff}_{\mathbb{C}}
		\big(
		L^2 ([0,1], \mathscr{H}_{\Gamma}) 
		\big)$
		by
		$v\mapsto C\big(\phi_{\scalebox{0.6}{$[0,1]$}}^{\eta}(v)\big)$. 
		Moreover,
		for any  
		$v_{0}\in L^2 ([0,1], \mathscr{X}_{\Gamma})$,  
		we define $\mathfrak{B}_{\scalebox{0.5}{$[0,1]$}}^{v_{0}}: L^2 ([0,1], \mathscr{X}_{\Gamma}) 
		\rightarrow
		\text{Cliff}_{\mathbb{C}}
		\big(
		L^2 ([0,1], \mathscr{H}_{\Gamma}) 
		\big)$ 
		by 
		$\mathfrak{B}_{\scalebox{0.5}{$[0,1]$}}^{v_{0}}(v) = \mathfrak{B}_{\scalebox{0.5}{$[0,1]$}}(v-v_{0})$.
	\end{defn}

	By  \eqref{eq:functionalcalculusone} and \eqref{eq:functionalcalculustwo}, 
	we obtain the  graded $*$-homomorphisms 
	\begin{equation}
		\label{eq:thefunctionalcalculusforenlargeddpacefirstone}
		\begin{array}{rcl}
			\beta_{v}:~ \mathcal{S} &\longrightarrow&
			\mathcal{S}\widehat{\otimes} 
			C_{0}
			\Big(
			L^2 ([0,1], \mathscr{X}_{\Gamma}),~ 
			\text{Cliff}_{\mathbb{C}}
			\big(
			L^2 ([0,1], \mathscr{H}_{\Gamma}) 
			\big)
			\Big)
			\\
			f & \longmapsto & 
			f \big(
			\varTheta \widehat{\otimes} 1+1\widehat{\otimes} 		\mathfrak{B}_{\scalebox{0.5}{$[0,1]$}}^{v}
			\big),
		\end{array}
	\end{equation}
	and 
	\begin{equation}
		\label{eq:thefunctionalcalculusforenlargedspacesecondoneforsections}
		\begin{array}{rcl}
			\beta_{\xi}:~ \mathcal{S} &\longrightarrow &
			\mathcal{F}
			\Big(
			Z, ~
			\mathcal{A}
			\big(
			L^2 ([0,1], \mathscr{X}_{\Gamma})
			\big)
			\Big)
			\\
			f&\longmapsto & 
			\Big\{
			z\mapsto 
			\beta_{\xi(z)}(f)
			\Big\},
		\end{array}
	\end{equation}
	where 
	$v\in L^2 ([0,1], \mathscr{X}_{\Gamma})$,
	$\xi\in 
	C_{b}
	\big(
	Z, ~
	L^2 ([0,1], \mathscr{X}_{\Gamma})
	\big)$, 
	and $Z$ is the metric space defined in \eqref{eq:simplicityforthebasespaceZ}.

	Following  Section 4.4,   
	we have the graded $C^*$-algebra 
	$$\mathcal{A}
	\Big(
	C_{b}
	\big(Z, ~
	L^2([0,1], \mathscr{X}_{\Gamma}) 
	\big)
	\Big).
	$$
	For each $s\in[0,1]$,
	following  Section 5.1, 
	the family of affine isometries
	$$
	\Big\{ 
	U^{\scalebox{0.7}{$(\gamma,z,s)$}}
	~\Big|~ 
	\gamma\in\Gamma, z\in Z
	\Big\}
	$$ 
	defined in \eqref{eq:theaffineisometricsforenlargedspaceelementaryone}
	induces a
	family  of $*$-isomorphisms
	$$
	\bigg\{	
	\mathcal{U}^{\scalebox{0.7}{$(\gamma,z,s)$}}: 	
	\mathcal{A}
	\big(
	L^2 ([0,1], \mathscr{X}_{\Gamma})
	\big)
	\longrightarrow 
	\mathcal{A}
	\big(
	L^2 ([0,1], \mathscr{X}_{\Gamma})
	\big)
	\bigg\}_ 
	{z\in {Z}, \gamma\in\Gamma},
	$$
	which in turn  induces a $\Gamma$-$\alpha_s$-action on 
	$\mathcal{A}
	\Big(
	C_{b}
	\big(
	Z, ~
	L^2 ([0,1], \mathscr{X}_{\Gamma})
	\big)
	\Big)$.
	More precisely, 
	the 
	$*$-isomorphism 
	$\mathcal{U}^{\scalebox{0.7}{$(\gamma,z,s)$}}$
	is given by  
	\begin{eqnarray}
		\label{eq:isometricsbetweenenlargedalgebras}
		& \big(\mathcal{U}^{\scalebox{0.7}{$(\gamma,z,s)$}}(f)
		\big)(\theta,v)
		=
		(id\widehat{\otimes}\lambda^{\scalebox{0.7}{$(\gamma,s)$}}_*)
		\bigg(
		f\Big(
		\theta,		U^{\scalebox{0.7}{$(\gamma^{-1},\gamma{z},s)$}}(v)
		\Big)	
		\bigg)
	\end{eqnarray}
	for 
	$f\in\mathcal{A}
	\big(
	L^2 ([0,1], \mathscr{X}_{\Gamma})
	\big)$ 
	and
	$(\theta,v)\in \mathbb{R} \times 
	L^2 ([0,1], \mathscr{X}_{\Gamma})$,
	where the $*$-isomorphism  $\lambda^{\scalebox{0.7}{$(\gamma,s)$}}_*$ 
	is constructed as in \eqref{eq:commutativeofriowithcliffalgebrariojdklk}.
	The $\Gamma$-$\alpha_s$-action is given by
	\begin{eqnarray}
		\label{eq:theGammaactionsontheenlargedalgebraselementraycase}
		& 
		(\gamma \cdot_{\alpha_s} f)(z)=
		\mathcal{U}^{\scalebox{0.7}{$(\gamma,\gamma^{-1}z,s)$}}
		\big(
		f(\gamma^{-1}z)
		\big) 
	\end{eqnarray}
	for 
	$\gamma\in\Gamma$,
	$z\in {Z}$ and
	$f\in 
	\mathcal{A}
	\Big(
	C_{b}
	\big(
	Z, ~
	L^2 ([0,1], \mathscr{X}_{\Gamma})
	\big)
	\Big)$.
	We insert the symbol $\alpha_s$  above to specify the action depending on  $s\in[0,1]$.
	
	\begin{rmk}\label{taizongyaolewoqunitayejdksjfklsd}
		For $s=1$,  $\mathcal{U}^{\scalebox{0.7}{$(\gamma,z,1)$}}$ 
		encodes the original map  $\mathcal{U}^{\scalebox{0.6}{$(\gamma,z)$}}$.
		In contrast, for $s=0$, 
		$\mathcal{U}^{\scalebox{0.7}{$(\gamma,z,0)$}}$ is the identity map for all $\gamma\in\Gamma$ and $z\in {Z}$.      
		Hence, 
		for $s=1$, the  $\Gamma$-$\alpha_1$-action on 
		$\mathcal{A}
		\Big(
		C_{b}
		\big(
		Z, ~
		L^2 ([0,1], \mathscr{X}_{\Gamma})
		\big)
		\Big)$ 
		encodes the original $\Gamma$-action on 
		$\mathcal{A}
		\big(
		C_{b}(Z, \mathscr{X}_{\Gamma})
		\big)$.
		In contrast, for $s=0$, the $\Gamma$-$\alpha_0$-action on  
		$\mathcal{A}
		\Big(
		C_{b}
		\big(
		Z, ~
		L^2 ([0,1], \mathscr{X}_{\Gamma})
		\big)
		\Big)$ 
		is induced only by precomposition with the action of $\Gamma$ on $Z$, while leaving the $\mathcal{A}\big(L^2 ([0,1], \mathscr{X}_{\Gamma})\big)$-values of the functions unchanged.
	\end{rmk}

	We now point out the following important observation.
	\begin{rmk}
		The family of $\Gamma$-$\alpha_s$-actions on $\mathcal{A}
		\Big(
		C_{b}
		\big(
		Z, ~
		L^2 ([0,1], \mathscr{X}_{\Gamma})
		\big)
		\Big)$
		is not continuous with respect to $s\in[0,1]$, in the sense that
		there exist
		$f \in 
		\mathcal{A}
		\Big(
		C_{b}
		\big(
		Z, ~
		L^2 ([0,1], \mathscr{X}_{\Gamma})
		\big)
		\Big)$  
		and $\gamma\in\Gamma$ 
		such that the map $s\mapsto\gamma\cdot_{\alpha_s} f$
		is not continuous on  
		$[0,1]$. 
		This failure reflects that
		$C_{b}
		\big(
		Z, ~
		L^2 ([0,1], \mathscr{X}_{\Gamma})
		\big)$ 
		is too large to support a continuous deformation  of group  actions.
		To remedy this problem,
		we shall construct a $\Gamma$-$C^*$-subalgebra of 
		$\mathcal{A}
		\Big(
		C_{b}
		\big(
		Z, ~
		L^2 ([0,1], \mathscr{X}_{\Gamma})
		\big)
		\Big)$ 
		generated by a suitably chosen (not necessarily closed) subspace of 
		$C_{b}
		\big(
		Z, ~
		L^2 ([0,1],\mathscr{X}_{\Gamma})
		\big)$. 
	\end{rmk}

	\begin{defn}\label{Simplefunctionsjdlksjafkjsdf}
		Given finitely many functions  	
		$f_{1}, \cdots, f_{k}\in 
		C_{b}(Z,\mathscr{X}_{\Gamma})$ 
		and a finite measurable partition 
		$\{E_1, \cdots, E_k\}$ of 
		$[0,1]$,
		we define a function
		$$
		F_{_{\scalebox{0.6}{$(f_{1},\cdots,f_{k}; E_{1},\cdots,E_{k})$}}}: Z\longrightarrow 
		L^2 ([0,1], \mathscr{X}_{\Gamma})
		$$
		by
		\[
		\Big(
		F_{_{\scalebox{0.6}{$(f_{1},\cdots, f_{k}; E_{1},\cdots,E_{k})$}}}
		(z)
		\Big)(t)= 
		\begin{cases}
			f_{1}(z), & t\in E_{1}, \\
			f_{2}(z), & t\in E_{2}, \\
			\hspace{0.4cm} \vdots    &  
			\hspace{0.4cm} \vdots \\
			f_{k}(z), & t\in E_{k}.
		\end{cases}
		\]
		We call it  
		\emph{uniform simple function generated by
			$f_{1},  \cdots, f_{k}$ over 
			$E_{1},  \cdots, E_{k}$}.  
	\end{defn}

	By definition,   
	$$
	F_{_{\scalebox{0.6}{$(f_{1},\cdots, f_{k}; E_{1},\cdots,E_{k})$}}}
	\in 
	C_{b}
	\big(
	Z,~
	L^2 ([0,1], \mathscr{X}_{\Gamma})
	\big).
	$$
	We denote by 
	$C_{\rm Sim}
	\big(
	Z,~
	L^2 ([0,1], \mathscr{X}_{\Gamma})
	\big)$ 
	the subspace of 
	$C_{b}
	\big(
	Z,~
	L^2 ([0,1], \mathscr{X}_{\Gamma})
	\big)$ 
	consisting of all uniform simple functions.  
	Note that it is not a closed subspace.

	\begin{rmk}\label{alpha-s-invariant}
		It follows directly from the definition that 
		$C_{\rm Sim}
		\big(
		Z,~
		L^2 ([0,1], \mathscr{X}_{\Gamma})
		\big)$
		is $\Gamma$-$\alpha_s$-invariant for all 
		$s\in [0,1]$, i.e.,
		\[ \quad 
		\gamma\cdot_{\alpha_s}f 
		\in  C_{\rm Sim}
		\big(
		Z,~
		L^2 ([0,1], \mathscr{X}_{\Gamma})
		\big) \quad 
		\text{for all } 
		s\in[0,1],\ 
		\gamma\in\Gamma,\  
		f\in 
		C_{\rm Sim}
		\big(
		Z,~
		L^2 ([0,1], \mathscr{X}_{\Gamma})
		\big).
		\] 
	\end{rmk}

	\begin{defn}\label{algebrasgeneratedbysimplefunctions}
		Define 
		$\mathcal{A}
		\Big(
		C_{\rm Sim}
		\big(
		Z, ~
		L^2 ([0,1], \mathscr{X}_{\Gamma})
		\big)
		\Big)$
		to be the $C^*$-subalgebra of 
		$C_{b}
		\Big(
		Z, ~
		\mathcal{A}\big( 
		L^2 ([0,1], \mathscr{X}_{\Gamma})
		\big)
		\Big)$
		generated by
		\begin{eqnarray*}
			\Big\{
			\beta_{\xi}(f) 
			~ \Big| ~ 
			f\in\mathcal{S}, 
			~ \xi\in 
			C_{\rm Sim}
			\big(
			Z, ~
			L^2 ([0,1], \mathscr{X}_{\Gamma})
			\big)
			\Big\},
		\end{eqnarray*}
		where $\beta_{\xi}$ is given by \eqref{eq:thefunctionalcalculusforenlargedspacesecondoneforsections}.
		Moreover,  for $R>0$, 
		we set	
		$$
		\mathcal{A}_{R}
		\Big(
		C_{\rm Sim}
		\big(
		Z, ~
		L^2 ([0,1], \mathscr{X}_{\Gamma})
		\big)
		\Big) = \mathcal{A}
		\Big(
		C_{\rm Sim}
		\big(
		Z, ~
		L^2 ([0,1], \mathscr{X}_{\Gamma})
		\big)
		\Big)
		\cap
		\mathcal{A}_{R}
		\Big(
		C_{b}
		\big(
		Z, ~
		L^2 ([0,1], \mathscr{X}_{\Gamma})
		\big)
		\Big).
		$$
	\end{defn}

	By definition,   
	\begin{eqnarray*}
		\mathcal{A}
		\Big(
		C_{\rm Sim}
		\big(
		Z, ~
		L^2 ([0,1], \mathscr{X}_{\Gamma})
		\big)
		\Big) =
		\overline{\bigcup_{R>0} 
			\mathcal{A}_{R}
			\Big(
			C_{\rm Sim}
			\big(
			Z, ~
			L^2 ([0,1], \mathscr{X}_{\Gamma})
			\big)
			\Big)}.
	\end{eqnarray*}
	
	It follows from 
	\eqref{eq:rulesforgammaactionforelementary}  and Remark \ref{alpha-s-invariant} 
	that, for each $s\in[0,1]$, 
	the  
	$\Gamma$-$\alpha_s$-action on 
	\[\mathcal{A}
	\Big(
	C_{b}
	\big(
	Z, ~
	L^2 ([0,1], \mathscr{X}_{\Gamma})
	\big)
	\Big)\]
	given by \eqref{eq:theGammaactionsontheenlargedalgebraselementraycase}, 
	restricts to a $\Gamma$-action on
	the $C^*$-subalgebra
	\[\mathcal{A}
	\Big(
	C_{\rm Sim}
	\big(
	Z, ~
	L^2 ([0,1], \mathscr{X}_{\Gamma})
	\big)
	\Big),\]
	which we continue to denote by 
	$\Gamma$-$\alpha_s$-action.
	The arguments of Remark \ref{taizongyaolewoqunitayejdksjfklsd} remain valid for this subalgebra.

	\begin{rmk}\label{aneasyembed}
		The $C^*$-algebra 
		$\mathcal{A}
		\big(
		L^2 ([0,1], \mathscr{X}_{\Gamma})
		\big)$
		embeds canonically into
		$C_{b}
		\Big(
		Z, ~
		\mathcal{A}\big( 
		L^2 ([0,1], \mathscr{X}_{\Gamma})
		\big)
		\Big)$
		as the subalgebra of constant functions.
		Since simple functions are dense in  
		$L^2 ([0,1], \mathscr{X}_{\Gamma})$,  
		the  algebra 
		$\mathcal{A}
		\big(
		L^2 ([0,1], \mathscr{X}_{\Gamma})
		\big)$
		can be generated by the set
		\begin{eqnarray*}
			\Big\{
			\beta_{g}
			(f) ~ \Big| ~ 
			f\in\mathcal{S}, ~
			g\in L^2 ([0,1], \mathscr{X}_{\Gamma}) \text{ is simple function}
			\Big\},
		\end{eqnarray*}
		where $\beta$ is given by \eqref{eq:thefunctionalcalculusforenlargeddpacefirstone}.
		Consequently, under the above embedding, every element of  $\mathcal{A}
		\big(
		L^2 ([0,1], \mathscr{X}_{\Gamma})
		\big)$
		is mapped into 
		$$\mathcal{A}
		\Big(
		C_{\rm Sim}
		\big(
		Z, ~
		L^2 ([0,1], \mathscr{X}_{\Gamma})
		\big)
		\Big),$$ 
		and we thereby obtain a well-defined embedding
		$$
		\iota: 
		\mathcal{A}
		\big(
		L^2 ([0,1], \mathscr{X}_{\Gamma})
		\big)
		\longrightarrow 
		\mathcal{A}
		\Big(
		C_{\rm Sim}
		\big(
		Z, ~
		L^2 ([0,1], \mathscr{X}_{\Gamma})
		\big)
		\Big).
		$$  
		By Remark \ref{taizongyaolewoqunitayejdksjfklsd},
		the $\Gamma$-$\alpha_0$-action on  
		$\mathcal{A}
		\Big(
		C_{\rm Sim}
		\big(
		Z, ~
		L^2 ([0,1], \mathscr{X}_{\Gamma})
		\big)
		\Big)$ 
		is induced only by precomposition with the action of $\Gamma$ on $Z$, while leaving the $\mathcal{A}\big(L^2 ([0,1], \mathscr{X}_{\Gamma})\big)$-values of the functions unchanged.
		Hence, $\iota$ is a $\Gamma$-equivariant $*$-homomorphism with respect to the trivial $\Gamma$-action on 
		$\mathcal{A}
		\big(
		L^2 ([0,1], \mathscr{X}_{\Gamma})
		\big)$ 
		and the  $\Gamma$-$\alpha_0$-action on $\mathcal{A}
		\Big(
		C_{\rm Sim}
		\big(
		Z, ~
		L^2 ([0,1], \mathscr{X}_{\Gamma})
		\big)
		\Big)$. 
	\end{rmk}

	The $K$-theory of this new algebra is also computable.
	The following result is
	an analogue of  
	Lemma \ref{isomorphismbetweenKtheory-babycase}.

	\begin{prop}\label{isomorphismnewone}
		For any Property $(H)$ Banach space $\mathscr{X}$,	
		the embedding  \begin{eqnarray*}
			\iota:
			\mathcal{A}
			\big(
			L^2 ([0,1], \mathscr{X}_{\Gamma})
			\big)
			&\longrightarrow&
			\mathcal{A}
			\Big(
			C_{\rm Sim}
			\big(
			Z, ~
			L^2 ([0,1], \mathscr{X}_{\Gamma})
			\big)
			\Big),
		\end{eqnarray*} 
		given in Remark \ref{aneasyembed},
		induces an isomorphism
		\begin{eqnarray*}
			\iota_*:
			K_*\Big(
			\mathcal{A}
			\big(
			L^2 ([0,1], \mathscr{X}_{\Gamma})
			\big)
			\Big)
			&\longrightarrow&
			K_*\bigg(
			\mathcal{A}
			\Big(
			C_{\rm Sim}
			\big(
			Z, ~
			L^2 ([0,1], \mathscr{X}_{\Gamma})
			\big)
			\Big)
			\bigg).
		\end{eqnarray*} 
	\end{prop}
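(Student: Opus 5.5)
We will mimic the proof of Lemma \ref{isomorphismbetweenKtheory-babycase}, showing that $\iota$ is a homotopy equivalence with homotopy inverse given by evaluation at a base point. Fix $z_0\in Z$ (for instance $z_0=e$). Evaluation ${\rm ev}_{z_0}$ maps $C_b\big(Z,\mathcal{A}(L^2([0,1],\mathscr{X}_\Gamma))\big)$ onto $\mathcal{A}(L^2([0,1],\mathscr{X}_\Gamma))$. On generators it sends $\beta_\xi(f)$ with $\xi=F_{(f_1,\dots,f_k;E_1,\dots,E_k)}$ to $\beta_{\xi(z_0)}(f)$, and $\xi(z_0)$ is a simple function in $L^2([0,1],\mathscr{X}_\Gamma)$. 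Hence ${\rm ev}_{z_0}$ restricts to a $*$-homomorphism from $\mathcal{A}\big(C_{\rm Sim}(Z,L^2([0,1],\mathscr{X}_\Gamma))\big)$ to $\mathcal{A}(L^2([0,1],\mathscr{X}_\Gamma))$. Clearly ${\rm ev}_{z_0}\circ\iota={\rm id}$, so it remains to homotope $\iota\circ{\rm ev}_{z_0}$ to the identity.

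For this we use the same straight-line contraction of the convex bounded set $Z\subseteq\ell^1(\Gamma)$ as before:
\[
\big(\varUpsilon(F)(t)\big)(z)=F\big(t z_0+(1-t)z\big).
\]
The estimate $\|(sz_0+(1-s)z)-(tz_0+(1-t)z)\|\le 2|s-t|$, together with the uniform continuity of $F$, shows that $t\mapsto\varUpsilon(F)(t)$ is norm continuous. This gives a $*$-homomorphism
\[
\varUpsilon\colon C_b\big(Z,\mathcal{A}(L^2([0,1],\mathscr{X}_\Gamma))\big)\longrightarrow C\Big([0,1],C_b\big(Z,\mathcal{A}(L^2([0,1],\mathscr{X}_\Gamma))\big)\Big),
\]
with ${\rm ev}_0\circ\varUpsilon={\rm id}$ and ${\rm ev}_1\circ\varUpsilon=\iota\circ{\rm ev}_{z_0}$.

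The step that needs care, and the only real difference from Lemma \ref{isomorphismbetweenKtheory-babycase}, is that $\varUpsilon$ must preserve the subalgebra generated by uniform simple functions. It suffices to check this on generators. For $\xi=F_{(f_1,\dots,f_k;E_1,\dots,E_k)}$ we have $\varUpsilon(\beta_\xi(f))(t)=\beta_{\xi_t}(f)$, where
\[
\xi_t(z)=\xi\big(tz_0+(1-t)z\big)=F_{(f_{1,t},\dots,f_{k,t};E_1,\dots,E_k)}(z)
\quad\text{with}\quad f_{j,t}(z)=f_j\big(tz_0+(1-t)z\big).
\]
Each $f_{j,t}$ lies in $C_b(Z,\mathscr{X}_\Gamma)$, since it is the composition of a bounded uniformly continuous map with a $1$-Lipschitz affine map of $Z$ into itself. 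The partition $\{E_j\}$ is unchanged. Thus $\xi_t\in C_{\rm Sim}(Z,L^2([0,1],\mathscr{X}_\Gamma))$ and $\varUpsilon(\beta_\xi(f))(t)$ lies in the subalgebra for each $t$.

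Continuity in $t$ is inherited from the ambient algebra, so $\varUpsilon$ maps $\mathcal{A}\big(C_{\rm Sim}(Z,L^2([0,1],\mathscr{X}_\Gamma))\big)$ into $C\big([0,1],\mathcal{A}(C_{\rm Sim}(Z,L^2([0,1],\mathscr{X}_\Gamma)))\big)$, because the latter is closed and $\varUpsilon$ is a $*$-homomorphism. Therefore $\iota\circ{\rm ev}_{z_0}$ is homotopic to the identity, and $\iota_*$ is an isomorphism with inverse $({\rm ev}_{z_0})_*$.
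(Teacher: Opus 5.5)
Your proposal is correct and follows the paper's approach. The paper proves this by repeating the proof of Lemma \ref{isomorphismbetweenKtheory-babycase} verbatim: evaluation at a base point serves as a homotopy inverse, and the straight-line contraction of $Z$ supplies the homotopy. Your check that precomposing with $z\mapsto tz_0+(1-t)z$ leaves the partition $\{E_j\}$ unchanged, and so preserves $C_{\rm Sim}$, is exactly the detail that ``verbatim'' leaves implicit.
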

	\begin{proof}
		The proof of Lemma \ref{isomorphismbetweenKtheory-babycase} 	 may be repeated verbatim, but with 
		$\mathcal{A}
		(\mathscr{X})$ 
		replaced
		by 
		$\mathcal{A}
		\big(
		L^2 ([0,1], \mathscr{X}_{\Gamma})
		\big)$, 
		and
		$\mathcal{A}
		\big(
		C_{b}(Y,\mathscr{X})
		\big)$
		replaced  by
		$\mathcal{A}
		\Big(
		C_{\rm Sim}
		\big(
		Z, ~
		L^2 ([0,1], \mathscr{X}_{\Gamma})
		\big)
		\Big)$.
	\end{proof}

	We now consider the algebra that  
	contains the information about continuous deformation of $\Gamma$-$\alpha_{s}$-actions.

	\begin{lem}\label{continuityofmodifiedgroupaction} 
		For all 
		$\gamma\in\Gamma$ 
		and
		$\varphi\in\mathcal{A}
		\Big(
		C_{\rm Sim}
		\big(
		Z, ~
		L^2 ([0,1], \mathscr{X}_{\Gamma})
		\big)
		\Big)$,
		the function
		$$
		[0,1]\ni {s} \ \longmapsto \  \gamma\cdot_{\alpha_{s}} \varphi
		\in\mathcal{A}
		\Big(
		C_{\rm Sim}
		\big(
		Z, ~
		L^2 ([0,1], \mathscr{X}_{\Gamma})
		\big)
		\Big)
		$$
		is continuous on $[0,1]$.
	\end{lem}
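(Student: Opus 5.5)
Fix $\gamma\in\Gamma$. Each map $\varphi\mapsto\gamma\cdot_{\alpha_s}\varphi$ is a $*$-automorphism and hence has norm one, uniformly in $s$. So the set of $\varphi$ for which $s\mapsto\gamma\cdot_{\alpha_s}\varphi$ is continuous is a closed $*$-subalgebra: it is closed under sums, products and adjoints, and an $\epsilon/3$ argument gives closure under norm limits. It therefore suffices to check continuity on the generators $\beta_\xi(f)$ with $f\in\mathcal{S}$ and $\xi=F_{(f_1,\cdots,f_k;E_1,\cdots,E_k)}\in C_{\rm Sim}\big(Z,L^2([0,1],\mathscr{X}_{\Gamma})\big)$. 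As in the proof of Proposition \ref{asymptoticGammathemostkeypoint1}, a Stone--Weierstrass argument reduces further to $f(x)=(x+i)^{-1}$, and hence to the functions $f(x)=(x\pm i)^{-1}$, which generate $\mathcal{S}$.

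The analogue of \eqref{eq:rulesforgammaactionforelementary} holds for the $\alpha_s$-action, with the same proof based on \eqref{eq:rulesforisometricforalgebras}. It gives $\gamma\cdot_{\alpha_s}\beta_\xi(f)=\beta_{\gamma\cdot_{\alpha_s}\xi}(f)$, where $(\gamma\cdot_{\alpha_s}\xi)(z)=U^{(\gamma,\gamma^{-1}z,s)}(\xi(\gamma^{-1}z))$. Writing $g_j=\gamma\cdot f_j\in C_b(Z,\mathscr{X}_\Gamma)$ for the action of Section 3, we get
\[
(\gamma\cdot_{\alpha_s}\xi)(z)(t)=\begin{cases} g_j(z), & t\in E_j\cap[0,s],\\ f_j(\gamma^{-1}z), & t\in E_j\cap(s,1].\end{cases}
\]
For $s<s'$ the two functions $\gamma\cdot_{\alpha_s}\xi$ and $\gamma\cdot_{\alpha_{s'}}\xi$ therefore differ only on $(s,s']$, so
\[
\sup_{z\in Z}\big\|(\gamma\cdot_{\alpha_s}\xi)(z)-(\gamma\cdot_{\alpha_{s'}}\xi)(z)\big\|\le |s-s'|^{1/2}\,M,
\]
where $M=\max_j\big(\|f_j\|+\|g_j\|\big)<\infty$ because all these functions are bounded. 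Here the uniform simple structure is essential: the bound is uniform in $z$. In the remark preceding the lemma this uniformity fails for general elements of $C_b$.

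The remaining step, which I expect to be the main obstacle, is to turn this uniform $L^2$-closeness into norm closeness of $\beta_{\xi_s}(f)$ and $\beta_{\xi_{s'}}(f)$, where $\xi_s:=\gamma\cdot_{\alpha_s}\xi$. The values $\xi_s(z)$ all lie in a fixed bounded ball $B(0,R')$ with $R'=M$. Proposition \ref{localuniformlycontinuous}, applied to the Property $(H)$ space $L^2([0,1],\mathscr{X}_\Gamma)$ with the proper extension $\phi_{[0,1]}^{\eta}$, gives uniform continuity of $v\mapsto\beta_v(f)$ on $B(0,R')$. Hence
\[
\sup_{z\in Z}\big\|\beta_{\xi_s(z)}(f)-\beta_{\xi_{s'}(z)}(f)\big\|\to0 \quad\text{as } |s-s'|\to0.
\]
This is exactly $\|\beta_{\xi_s}(f)-\beta_{\xi_{s'}}(f)\|\to0$ in the sup norm of $C_b\big(Z,\mathcal{A}(L^2([0,1],\mathscr{X}_\Gamma))\big)$, which proves continuity on generators. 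The density argument of the first paragraph then yields the lemma.

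Note that the direct route from Proposition \ref{localuniformlycontinuous} works for every $f\in\mathcal{S}$, so the reduction to resolvents is only a convenience. One must check carefully that Lemma \ref{comparingbasepoint} applies with the same constant $c$ uniformly over the ball of radius $R'$; this follows from its statement.
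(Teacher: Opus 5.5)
Your proposal is correct and follows essentially the paper's own proof. You reduce to generators $\beta_\xi(f)$ with $\xi$ a uniform simple function, use $\gamma\cdot_{\alpha_s}\beta_\xi(f)=\beta_{\gamma\cdot_{\alpha_s}\xi}(f)$ together with the uniform-in-$z$ bound $\|(\gamma\cdot_{\alpha_s}\xi)(z)-(\gamma\cdot_{\alpha_{s'}}\xi)(z)\|\le M|s-s'|^{1/2}$, and conclude via Proposition~\ref{localuniformlycontinuous} on a bounded ball; as you note yourself, the extra reduction to resolvents is unnecessary.
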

	\begin{proof}
		By a standard approximation argument, 
		it suffices to verify the assertion
		for 
		$\varphi=\beta_{\xi}(f)$,
		where 
		$f\in\mathcal{S}$ 
		and 
		$\xi\in 
		C_{\rm Sim}
		\big(
		Z, ~
		L^2 ([0,1], \mathscr{X}_{\Gamma})
		\big)$.
		We may write $\xi=F_{_{\scalebox{0.6}{$(f_{1},\cdots, f_{k}; E_{1},\cdots,E_{k})$}}}$,
		where 
		$f_{1},f_{2},\cdots, f_{k}\in 
		C_{b}(Z,\mathscr{X}_{\Gamma})$ and
		$E_1,E_2,\cdots, E_k\subseteq[0,1]$
		are pairwise disjoint measurable subsets satisfying
		$\bigcup_{j=1}^{k}E_j=[0,1]$.
		
		Fix $\gamma\in\Gamma$.
		Since each
		$f_j$
		is bounded,
		we have 
		\begin{eqnarray*}
			M:=
			\sup_{z\in Z}
			\max_{1\leq j\leq k}
			\Big\|
			f_j(\gamma^{-1}z)-U^{\scalebox{0.7}{$(\gamma,\gamma^{-1}z)$}}\big(
			f_j(\gamma^{-1}z)
			\big) 
			\Big\|
			<+\infty.
		\end{eqnarray*}
		In view of \eqref{eq:theationsforenlargedspaces},
		\eqref{eq:theaffineisometricsforenlargedspaceelementaryone} and Definition \ref{Simplefunctionsjdlksjafkjsdf}, we obtain 
		\begin{equation}
			\label{eq:woqunidayedekdlkjalkdjk}
			\begin{array}{cll} 
				&& ~ \Big\| \big(\gamma\cdot_{\alpha_s}\xi\big)(z)-\big(\gamma\cdot_{\alpha_t}\xi\big)(z)
				\Big\|
				\\
				&=& 
				\bigg\|
				U^{\scalebox{0.7}{$(\gamma,\gamma^{-1}z,s)$}}
				\Big(F_{_{\scalebox{0.7}{$(f_{1},\cdots, f_{k}; E_{1},\cdots,E_{k})$}}}(\gamma^{-1}z)
				\Big)-
				U^{\scalebox{0.7}{$(\gamma,\gamma^{-1}z,t)$}}
				\Big( F_{_{\scalebox{0.7}{$(f_{1},\cdots, f_{k}; E_{1},\cdots,E_{k})$}}}(\gamma^{-1}z)
				\Big)
				\bigg\|
				\\
				&\leq&
				|s-t|^{\frac{1}{2}}
				\cdot 	
				\max\limits_{1\leq j\leq k}
				\Big\|
				f_j(\gamma^{-1}z)-U^{\scalebox{0.7}{$(\gamma,\gamma^{-1}z)$}}\big(
				f_j(\gamma^{-1}z)
				\big) 
				\Big\|
				\\
				&\leq&
				M\cdot |s-t|^{\frac{1}{2}}
			\end{array}
		\end{equation}
		for all $z\in Z$ 
		and $s,t\in[0,1]$. 
		Note that 
		$\Big\{ 
		\big(\gamma\cdot_{\alpha_s}\xi\big)(z)
		~\Big|~ z\in Z, ~ s\in [0,1]
		\Big\}$ 
		is a bounded subset in 
		$L^2 ([0,1], \mathscr{X}_{\Gamma})$.
		Hence,
		the assertion follows from  \eqref{eq:woqunidayedekdlkjalkdjk}, 
		Proposition \ref{localuniformlycontinuous} and
		\begin{eqnarray*}
			\Big\|
			\gamma\cdot_{\alpha_s} \beta_{\xi}(f)-
			\gamma\cdot_{\alpha_t} \beta_{\xi}(f)
			\Big\|
			&=& 
			\Big\|
			\beta_{\gamma\cdot_{\alpha_s}\xi}(f)-
			\beta_{\gamma\cdot_{\alpha_t}\xi}(f)
			\Big\|
			\\ 
			&=&
			\sup_{z\in Z} 	
			\Big\|
			\beta_{
				\scalebox{0.7}{$\big(\gamma\cdot_{\alpha_s}\xi\big)(z)$}
			}(f)-
			\beta_{
				\scalebox{0.7}{$\big(\gamma\cdot_{\alpha_t}\xi\big)(z)$}
			}(f)
			\Big\|.
		\end{eqnarray*}	
		This completes the proof.
	\end{proof}

	For  every
	$\gamma\in\Gamma$
	and
	$f\in 
	C\bigg(
	[0,1],~
	\mathcal{A}
	\Big(
	C_{\rm Sim}
	\big(
	Z, ~
	L^2 ([0,1], \mathscr{X}_{\Gamma})
	\big)
	\Big)
	\bigg)$, 
	define 
	$$
	\gamma\cdot_{\alpha_{[0,1]}} f: [0,1] \longrightarrow \mathcal{A}
	\Big(
	C_{\rm Sim}
	\big(
	Z, ~
	L^2 ([0,1], \mathscr{X}_{\Gamma})
	\big)
	\Big)
	$$
	by 
	\begin{eqnarray}
		\label{eq:theparamitrizedactionsoncontinuousalgebra}
		(\gamma\cdot_{\alpha_{[0,1]}} f)(t) 
		= \gamma\cdot_{\alpha_{t}} f(t)
	\end{eqnarray}
	for 
	$t\in[0,1]$.

	\begin{lem}\label{keylemmafordeformation}
		For all 
		$\gamma\in\Gamma$
		and
		$f\in 
		C\bigg(
		[0,1],~
		\mathcal{A}
		\Big(
		C_{\rm Sim}
		\big(
		Z, ~
		L^2 ([0,1], \mathscr{X}_{\Gamma})
		\big)
		\Big)
		\bigg)$, 
		we have
		$$\gamma\cdot_{\alpha_{[0,1]}} f \in  C\bigg(
		[0,1],~
		\mathcal{A}
		\Big(
		C_{\rm Sim}
		\big(
		Z, ~
		L^2 ([0,1], \mathscr{X}_{\Gamma})
		\big)
		\Big)
		\bigg).
		$$
	\end{lem}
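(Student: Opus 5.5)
The plan is to prove continuity of $t\mapsto \gamma\cdot_{\alpha_t} f(t)$ by a standard $\varepsilon/2$ argument. Two ingredients do the work: each $\alpha_s(\gamma)$ is an isometric $*$-automorphism, and Lemma \ref{continuityofmodifiedgroupaction} gives continuity in $s$ for a fixed element. Write $\mathcal{B}$ for $\mathcal{A}\big(C_{\rm Sim}(Z, L^2([0,1],\mathscr{X}_{\Gamma}))\big)$.

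First I note that for each fixed $s$ the map $\varphi\mapsto \gamma\cdot_{\alpha_s}\varphi$ is a $*$-automorphism of $\mathcal{B}$. It is a $*$-homomorphism by construction, pointwise given by the $*$-isomorphisms $\mathcal{U}^{(\gamma,\gamma^{-1}z,s)}$ composed with translation on $Z$. It preserves $\mathcal{B}$ by Remark \ref{alpha-s-invariant} and \eqref{eq:rulesforgammaactionforelementary}, and it has inverse $\gamma^{-1}\cdot_{\alpha_s}$. Since $*$-automorphisms of $C^*$-algebras are isometric,
\[
\|\gamma\cdot_{\alpha_s}\varphi-\gamma\cdot_{\alpha_s}\psi\| = \|\varphi-\psi\|
\]
for all $s\in[0,1]$ and $\varphi,\psi\in\mathcal{B}$.

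Next, fix $t_0\in[0,1]$ and split
\[
\big\|\gamma\cdot_{\alpha_t} f(t)-\gamma\cdot_{\alpha_{t_0}} f(t_0)\big\|
\le \big\|\gamma\cdot_{\alpha_t}\big(f(t)-f(t_0)\big)\big\| + \big\|\gamma\cdot_{\alpha_t} f(t_0)-\gamma\cdot_{\alpha_{t_0}} f(t_0)\big\|.
\]
By the isometry property, the first term equals $\|f(t)-f(t_0)\|$, which tends to $0$ as $t\to t_0$ because $f$ is continuous. The second term tends to $0$ by Lemma \ref{continuityofmodifiedgroupaction}, applied to the fixed element $\varphi=f(t_0)\in\mathcal{B}$. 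Hence $\gamma\cdot_{\alpha_{[0,1]}} f$ is continuous, and since its values lie in $\mathcal{B}$, it belongs to $C\big([0,1],\mathcal{B}\big)$.

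The only point needing care is the uniformity of the isometry property in $s$. Because each $\gamma\cdot_{\alpha_s}$ is an injective $*$-homomorphism, the equality holds exactly for every $s$, with no dependence on $s$, so the first term needs no estimate that is uniform in $t$. The remaining work, continuity in $s$ for a fixed element, was already done in Lemma \ref{continuityofmodifiedgroupaction}; that was the genuine obstacle, and it rested on the $|s-t|^{1/2}$ estimate for uniform simple functions.
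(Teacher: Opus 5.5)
Your proposal is correct and is essentially the paper's proof. The paper uses the same triangle inequality with the intermediate term $\gamma\cdot_{\alpha_t} f(t_0)$, and bounds the two terms the same way: the isometry of each $\gamma\cdot_{\alpha_t}$ handles the term where the element varies, and Lemma \ref{continuityofmodifiedgroupaction} handles the term where the parameter varies for a fixed element.
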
	
	\begin{proof}
		For  any
		$s,t\in[0,1]$,	
		\begin{eqnarray*}
			\Big\|
			\big(
			\gamma\cdot_{\alpha_{[0,1]}} f
			\big)(s)
			-\big(
			\gamma\cdot_{\alpha_{[0,1]}} f
			\big)(t)
			\Big\|
			&=&
			\Big\| 
			\gamma\cdot_{\alpha_s} f(s)
			-\gamma\cdot_{\alpha_t} f(t)
			\Big\|
			\\
			&\leq&
			\Big\| 
			\gamma\cdot_{\alpha_s} f(s)
			-	\gamma\cdot_{\alpha_t} f(s)
			\Big\| 
			+
			\Big\| 
			\gamma\cdot_{\alpha_t} f(s)
			-\gamma\cdot_{\alpha_t} f(t)
			\Big\|
			\\
			&=&
			\Big\| 
			\gamma\cdot_{\alpha_s} f(s)
			-	\gamma\cdot_{\alpha_t} f(s)
			\Big\| 
			+
			\Big\| 
			f(s)
			- f(t)
			\Big\|.
		\end{eqnarray*}		
		This inequality, combined with Lemma \ref{continuityofmodifiedgroupaction}  and the continuity of $f$,  
		implies  that	
		$\gamma\cdot_{\alpha_{[0,1]}} f$
		is continuous at  $s\in[0,1]$.
	\end{proof}

	Combined with Lemma \ref{keylemmafordeformation},  \eqref{eq:theparamitrizedactionsoncontinuousalgebra} gives rise to a  $\Gamma$-$\alpha_{[0,1]}$-action on the $C^*$-algebra
	$$C\bigg(
	[0,1],~
	\mathcal{A}
	\Big(
	C_{\rm Sim}
	\big(
	Z, ~
	L^2 ([0,1], \mathscr{X}_{\Gamma})
	\big)
	\Big)
	\bigg).$$
	Moreover, for each $s\in[0,1]$,  
	the  evaluation map  
	\begin{equation}
		\label{eq:vlationsatsforthespecialcaselkjrek}
		{\rm ev}_s:  
		C\bigg(
		[0,1],~
		\mathcal{A}
		\Big(
		C_{\rm Sim}
		\big(
		Z, ~
		L^2 ([0,1], \mathscr{X}_{\Gamma})
		\big)
		\Big)
		\bigg) 
		\longrightarrow  
		\mathcal{A}
		\Big(
		C_{\rm Sim}
		\big(
		Z, ~
		L^2 ([0,1], \mathscr{X}_{\Gamma})
		\big)
		\Big),
	\end{equation}
	which sends $f$ to $f(s)$, 
	intertwines the $\Gamma$-$\alpha_{[0,1]}$-action  
	and
	$\Gamma$-$\alpha_{s}$-action.

We remark that 
the algebra
$$C\bigg(
[0,1],~
\mathcal{A}
\Big(
C_{\rm Sim}
\big(
Z, ~
L^2 ([0,1], \mathscr{X}_{\Gamma})
\big)
\Big)
\bigg)$$ 
encodes the information of a continuous deformation of group actions. 
The evaluation maps in \eqref{eq:vlationsatsforthespecialcaselkjrek}, 
which  intertwine  the 
$\Gamma$-$\alpha_{[0,1]}$-action  
with each
$\Gamma$-$\alpha_{s}$-action,
will play a crucial role in Section 9.

	\subsection{The larger proper $\Gamma$-$C^*$-algebra}
	
	Based on the results of  previous two subsections, 
	we construct the larger proper 
	$\Gamma$-$C^*$-algebra for a countable discrete group that 
	admits a
	coarse embedding   with finite complexity 
	into an
	$\ell^2$-direct sum of infinitely many  Property 
	$(H)$
	Banach spaces.

	Suppose that
	$h:\Gamma\to\mathscr{E}=
	\bigoplus_{p=1}^{\infty}
	\mathscr{X}_p$ 
	is a coarse embedding  with finite complexity, 
	where each
	$\mathscr{X}_p$
	is a Property 
	$(H)$
	Banach space with a Property $(H)$ map 
	$\phi_p: 
	S(\mathscr{X}_p) \to S(\mathscr{H})$.
	As discussed at the beginning of Section 5.2,
	we assume, without loss of generality, that $\phi_p$ is Lipschitz for all 
	$p \geq 2$.
	
	We now apply the results of the previous two subsections to the present setting.
	Since each 
	$\mathfrak{h}_{k}: \Gamma \to \mathscr{E}_{k}$
	is  bornologous, 
	the construction in Section 7.1  
	produces  
	a family of affine isometries on 
	$L^2 ([0,1], \mathscr{E}_{k,\Gamma})$,
	parametrized by 
	$\gamma\in\Gamma, z\in Z$ and $s\in[0,1]$.
	More precisely, 
	for every 
	$k \geq 1$,
	$\gamma\in\Gamma$, 
	$z \in {Z}$ and $s\in[0,1]$,
	we have 
	\begin{center}
		$\zeta_{k}^{\scalebox{0.7}{$(\gamma,z,s)$}}\in 
		L^2 ([0,1],\mathscr{E}_{k,\Gamma})$, \quad 
		$\lambda_{k}^{\scalebox{0.7}{$(\gamma,s)$}}: 	
		L^2 ([0,1], \mathscr{E}_{k,\Gamma})
		\longrightarrow 
		L^2 ([0,1], \mathscr{E}_{k,\Gamma})$, \quad  and 
		$U_{k}^{\scalebox{0.7}{$(\gamma,z,s)$}}:
		L^2 
		([0,1], \mathscr{E}_{k,\Gamma})
		\longrightarrow 
		L^2 
		([0,1], \mathscr{E}_{k,\Gamma})$,
	\end{center}
	which are given by 
	\begin{equation}
		\label{eq:thecocyclesfortheenlargedspacethekthlevels}
		\zeta_{k}^{\scalebox{0.7}{$(\gamma,z,s)$}}(t)
		= 
		\begin{cases}
			\zeta_{k}^{\scalebox{0.7}{$(\gamma,z)$}},& t\in[0,s],
			\\
			0, & t\in(s,1],
		\end{cases}
	\end{equation}
	\begin{equation*}
		\big(
		\lambda_{k}^{\scalebox{0.7}{$(\gamma,s)$}}(f)
		\big)(t)= 
		\begin{cases}
			\lambda_{k}^{\scalebox{0.7}{$\gamma$}}
			\big(
			f(t)
			\big),& t\in[0,s],
			\\
			f(t), & t\in(s,1],
		\end{cases}
	\end{equation*}
	and
	\begin{equation}
		\label{eq:thekthlevelaffineisometriesforenlargeddspaces}
		\big(
		U_{k}^{\scalebox{0.7}{$(\gamma,z,s)$}}(f)
		\big)(t)= 
		\begin{cases}
			U_{k}^{\scalebox{0.7}{$(\gamma,z)$}}
			\big(
			f(t)
			\big), & t\in[0,s],
			\\
			f(t), & t\in(s,1]
		\end{cases}
	\end{equation}
	for 
	$f\in 
	L^2 ([0,1], \mathscr{E}_{k,\Gamma})$. 
	It follows from \eqref{eq:affineisometriesforthekthlevel}  that 
	\begin{eqnarray}
		\label{eq:thekthlevelaffineisometriesforenlargedspaces}
		U_{k}^{\scalebox{0.7}{$(\gamma,z,s)$}}(f)
		= 
		\lambda_{k}^{\scalebox{0.7}{$(\gamma,s)$}}(f) + \zeta_{k}^{\scalebox{0.7}{$(\gamma,z,s)$}}
	\end{eqnarray}
	for all 
	$f\in 
	L^2 ([0,1], \mathscr{E}_{k,\Gamma})$. 
	By Remark \ref{alpha-s-invariant}, 
	for every $k\geq1$ and $s\in[0,1]$,
	the family of affine isometries 
	$$
	\Big\{	U_{k}^{\scalebox{0.7}{$(\gamma,z,s)$}} ~\Big|~ \gamma\in\Gamma, z\in {Z}
	\Big\}
	$$
	induces a $\Gamma$-$\alpha_s$-action
	on 
	$C_{\rm Sim}
	\big(
	Z, ~
	L^2 ([0,1], \mathscr{E}_{k,\Gamma})
	\big)$
	via 
	\begin{eqnarray}
		\label{eq:theGammasctionsparametrizedbysforthekthlevelsections}
		\big(
		\gamma\cdot_{\alpha_{s}} f
		\big)
		(z)
		=	U^{\scalebox{0.7}{$(\gamma,\gamma^{-1}z,s)$}}_{k}
		\big(
		f(\gamma^{-1}z)
		\big),
	\end{eqnarray}
	where $\gamma\in\Gamma$, $z\in {Z}$ and 
	$f\in C_{\rm Sim}
	\big(
	Z, ~
	L^2 ([0,1], \mathscr{E}_{k,\Gamma})
	\big)$.

	\begin{rmk}\label{bubiandehomotopyoftheendsofactiondlkfjkkkiejoiuxoksaleuaslkrso}
		As explained in Remarks \ref{degeneratesdfkjalsdjkfls} and \ref{bubiandehomotopyoftheendsofactiondlkfj}, 
		for $s=1$, the $\Gamma$-$\alpha_1$-action on  
		$C_{\rm Sim}
		\big(
		Z, ~
		L^2 ([0,1], \mathscr{E}_{k,\Gamma})
		\big)$ 
		encodes the original $\Gamma$-action on 
		$C_{b}(
		Z, \mathscr{E}_{k,\Gamma})$. 
		For $s=0$, the $\Gamma$-$\alpha_0$-action on 
		$C_{\rm Sim}
		\big(
		Z, ~
		L^2 ([0,1], \mathscr{E}_{k,\Gamma})
		\big)$  
		is induced only by precomposition with the action of $\Gamma$ on $Z$, while leaving the $L^2 ([0,1], \mathscr{E}_{k,\Gamma})$-values of the functions unchanged.
	\end{rmk}

	As noted in Remark \ref{properHmapsforiteratedconstructionkdjlkjl}, for each $p\geq1$,
	applying Proposition \ref{PropertyHmapforLpspaces} twice yields a
	Property $(H)$ map
	$$
	\mathcal{M}_{\mathcal{M}_{\phi_p}}: 
	S\big( 
	L^2 ([0,1], \mathscr{X}_{p,\Gamma})
	\big) 
	\longrightarrow 
	S\big( 
	L^2 ([0,1], \mathscr{H}_{\Gamma}) 
	\big)
	$$ 
	for the Property $(H)$ Banach space
	$L^2 ([0,1], \mathscr{X}_{p,\Gamma})$.

	We set 
	$$\phi_{p,\scalebox{0.6}{$[0,1]$}} = \mathcal{M}_{\mathcal{M}_{\phi_p}}.$$
	For $p\geq2$,  the map
	$\phi_p$ is Lipschitz.  
	Thus, by applying Proposition \ref{LipofextenedMazurmap} twice,
	$\phi_{p,\scalebox{0.6}{$[0,1]$}}$
	is Lipschitz with Lipschitz constant at most
	$4 \cdot  L_{\phi_p}+3$. 
	Moreover, by Lemma \ref{3lip}, 
	the Lipschitz constant of its scalar extension $\phi_{p,\scalebox{0.6}{$[0,1]$}}^I$ 
	is at most
	$8 \cdot  L_{\phi_p}+ 7$.
	For $p=1$, 
	let 
	$\phi_{\scalebox{0.6}{$1,[0,1]$}}^{\eta}:
	L^2 ([0,1], \mathscr{X}_{1,\Gamma})
	\to 
	L^2 ([0,1], \mathscr{H}_{\Gamma})$
	be an  
	$\eta$-scalar proper extension of $\phi_{\scalebox{0.6}{$1,[0,1]$}}$
	given by Corollary \ref{finitemodulusofcontinuity}.
	Denote by $\varPsi_{\scalebox{0.6}{$k,[0,1]$}}$
	the composition of the following maps
	\begin{equation}
		\label{eq:thepropertyHmapfortheKthlevelfortheenlargedspacesssss}
		\begin{array}{ccc}
			L^2 ([0,1], \mathscr{E}_{k,\Gamma}) & \xlongrightarrow{\cong}
			L^2 
			\big( 
			[0,1], \bigoplus_{p=1}^{k}
			\mathscr{X}_{p,\Gamma} 
			\big)& \xlongrightarrow{\cong}  \bigoplus_{p=1}^{k}
			L^2 ([0,1], \mathscr{X}_{p,\Gamma})
			\\
			&& \ \ \ \    \ \ \  
			\Bigg\downarrow \phi_{\scalebox{0.6}{$1,[0,1]$}}^{\eta} \oplus \cdots \oplus \phi_{\scalebox{0.6}{$k,[0,1]$}}^I 
			\\
			L^2 ([0,1],\mathscr{H}_{k,\Gamma})& \xlongleftarrow{\cong} 
			L^2 
			\big( 
			[0,1],
			\bigoplus_{p=1}^{k}
			\mathscr{H}_{\Gamma} 
			\big)& \xlongleftarrow{\cong} 
			\bigoplus_{p=1}^{k}
			L^2 ([0,1], \mathscr{H}_{\Gamma}),
		\end{array}
	\end{equation}
	where the isometric isomorphisms are canonical, and 
	$\phi_{\scalebox{0.6}{$1,[0,1]$}}^{\eta} \oplus \cdots \oplus \phi_{\scalebox{0.6}{$k,[0,1]$}}^I$ is the direct sum of
	$\phi_{\scalebox{0.6}{$1,[0,1]$}}^{\eta}$
	and 
	$\phi_{p,\scalebox{0.6}{$[0,1]$}}^I$ ($2\leq p \leq k$). 
	As the arguments above Lemma \ref{inequalityforPsi1}, 
	$\varPsi_{\scalebox{0.6}{$k, [0,1]$}}$ is a proper extension of the Property $(H)$ map 
	$$
	\varPsi_{\scalebox{0.6}{$k, [0,1]$}}\Big|_{\scalebox{0.7}{$S\big(L^2([0,1],\mathscr{E}_{k,\Gamma})\big)$}}.
	$$

	The following lemma plays a key role in the construction of the $\Gamma$-equivariant asymptotic morphism in Proposition \ref{asymptoticGammathemostkeypoint}.

	\begin{lem}\label{inequalityforPsi}
		For every $k\geq1$
		and  
		$v,w
		\in
		L^2 ([0,1], \mathscr{E}_{k,\Gamma})$, 	
		\begin{eqnarray*}
			&& 
			\Big\| \varPsi_{\scalebox{0.6}{$k, [0,1]$}}
			(v+w)
			-
			\varPsi_{\scalebox{0.6}{$k, [0,1]$}}
			(v)
			\Big\|
			\leq
			\sqrt{	
				\bigg(
				\omega_{\scalebox{0.7}{$
						\phi_{\scalebox{0.6}{$1,[0,1]$}}^{\eta}
						$}}
				\Big(
				\big\|
				\varrho_{k,1}(w)
				\big\|
				\Big)
				\bigg)^2	
				+
				64 \cdot \sum\limits_{p=2}^{k}
				(L_{\phi_p}+1)^2
				\cdot
				\big\|
				\varrho_{k,p}(w)
				\big\|^2
			}\  ,
		\end{eqnarray*} 
		where 
		$\varrho_{k}: 
		L^2 ([0,1], \mathscr{E}_{k,\Gamma}) 
		\to  
		\bigoplus_{p=1}^{k}
		L^2 ([0,1], \mathscr{X}_{p,\Gamma})$ 
		is the isometric isomorphism in \eqref{eq:thepropertyHmapfortheKthlevelfortheenlargedspacesssss}
		and 
		$\varrho_{k,p}: 
		L^2 ([0,1], \mathscr{E}_{k,\Gamma}) 
		\xlongrightarrow{\varrho_{k}}  
		\bigoplus_{p=1}^{k}
		L^2 ([0,1], \mathscr{X}_{p,\Gamma})
		\to 
		L^2 ([0,1], \mathscr{X}_{p,\Gamma})$ 
		is the projection onto the $p$-th component.
	\end{lem}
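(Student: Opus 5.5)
The argument is the same as for Lemma \ref{inequalityforPsi1}, with the single-level Lipschitz bounds replaced by the iterated ones recorded just before the statement. First, use the canonical isometric isomorphisms in \eqref{eq:thepropertyHmapfortheKthlevelfortheenlargedspacesssss} to pull everything back to $\bigoplus_{p=1}^{k} L^2([0,1],\mathscr{X}_{p,\Gamma})$. Since these identifications are isometric and linear, and the direct sum is taken in the $\ell^2$-sense, we get
\begin{equation*}
\Big\|\varPsi_{k,[0,1]}(v+w)-\varPsi_{k,[0,1]}(v)\Big\|^2
=\Big\|\phi_{1,[0,1]}^{\eta}\big(\varrho_{k,1}(v)+\varrho_{k,1}(w)\big)-\phi_{1,[0,1]}^{\eta}\big(\varrho_{k,1}(v)\big)\Big\|^2
+\sum_{p=2}^{k}\Big\|\phi_{p,[0,1]}^{I}\big(\varrho_{k,p}(v)+\varrho_{k,p}(w)\big)-\phi_{p,[0,1]}^{I}\big(\varrho_{k,p}(v)\big)\Big\|^2 .
\end{equation*}
This uses the linearity of $\varrho_k$, which gives $\varrho_{k,p}(v+w)=\varrho_{k,p}(v)+\varrho_{k,p}(w)$.

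Next, bound each summand separately. For $p=1$, the definition of the modulus of continuity gives the bound $\omega_{\phi_{1,[0,1]}^{\eta}}\big(\|\varrho_{k,1}(w)\|\big)$. This quantity is finite by Corollary \ref{finitemodulusofcontinuity}, although finiteness is not needed for the inequality itself. For $2\le p\le k$, apply Proposition \ref{LipofextenedMazurmap} twice to get $L_{\phi_{p,[0,1]}}\le 2(2L_{\phi_p}+1)+1=4L_{\phi_p}+3$. Lemma \ref{3lip} then gives $L_{\phi^I_{p,[0,1]}}\le 2(4L_{\phi_p}+3)+1=8L_{\phi_p}+7\le 8(L_{\phi_p}+1)$. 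Squaring produces the factor $64(L_{\phi_p}+1)^2\|\varrho_{k,p}(w)\|^2$, and taking square roots yields the claim.

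There is no real obstacle. The only point needing care is the constant bookkeeping in the iterated Lipschitz estimate: Proposition \ref{LipofextenedMazurmap} bounds the restricted Mazur map, and we need the scalar extension of the doubly iterated map. These constants can be checked directly as above.
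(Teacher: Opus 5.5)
Your proposal is correct and matches the paper's proof essentially step for step. You split the squared norm via the isometric $\ell^2$-decomposition, bound the $p=1$ term by the modulus of continuity of $\phi^{\eta}_{1,[0,1]}$, and bound the $p\ge 2$ terms using the Lipschitz estimate $8L_{\phi_p}+7\le 8(L_{\phi_p}+1)$ obtained from two applications of Proposition \ref{LipofextenedMazurmap} followed by Lemma \ref{3lip}.
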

	\begin{proof}
		For every $k\geq1$ 
		and 
		$v,w
		\in
		L^2 ([0,1], \mathscr{E}_{k,\Gamma})$, 	
		\begin{eqnarray*}
			&& 
			\Big\| \varPsi_{\scalebox{0.6}{$k, [0,1]$}}
			(v+w)
			-
			\varPsi_{\scalebox{0.6}{$k, [0,1]$}}
			(v)
			\Big\|^2 
			\\
			&=&
			\bigg\|
			\Big(
			\phi_{\scalebox{0.6}{$1,[0,1]$}}^{\eta} \oplus \cdots \oplus \phi_{\scalebox{0.6}{$k,[0,1]$}}^I 
			\Big) 
			\big( 
			\varrho_{k}(v+w)
			\big) 
			- \Big(
			\phi_{\scalebox{0.6}{$1,[0,1]$}}^{\eta} \oplus \cdots \oplus \phi_{\scalebox{0.6}{$k,[0,1]$}}^I 
			\Big) 
			\big( 
			\varrho_{k}(v)
			\big) 
			\bigg\|^2
			\\
			&=&
			\Big\| 
			\phi_{\scalebox{0.6}{$1,[0,1]$}}^{\eta}	\big( 
			\varrho_{k,1}(v)+\varrho_{k,1}(w)
			\big) 
			-
			\phi_{\scalebox{0.6}{$1,[0,1]$}}^{\eta}
			\big( 
			\varrho_{k,1}(v)
			\big) 
			\Big\|^2 
			+
			\sum\limits_{p=2}^{k}
			\Big\|
			\phi_{p,\scalebox{0.6}{$[0,1]$}}^I 
			\big( 
			\varrho_{k,p}(v)+\varrho_{k,p}(w)
			\big) 
			-
			\phi_{p,\scalebox{0.6}{$[0,1]$}}^I 
			\big( 
			\varrho_{k,p}(v)
			\big) 
			\Big\|^2
			\\
			&\leq& 
			\bigg(
			\omega_{\scalebox{0.7}{$
					\phi_{\scalebox{0.6}{$1,[0,1]$}}^{\eta}
					$}}
			\Big(
			\big\|
			\varrho_{k,1}(w)
			\big\|
			\Big)
			\bigg)^2	
			+
			64 \cdot \sum\limits_{p=2}^{k}
			(L_{\phi_p}+1)^2
			\cdot
			\big\|
			\varrho_{k,p}(w)
			\big\|^2.
		\end{eqnarray*} 
		This completes the proof.
	\end{proof}

	\begin{defn}\label{Cliffordmap}
		We define
		$\mathfrak{B}_{\scalebox{0.6}{$k,[0,1]$}} :
		L^2 ([0,1], \mathscr{E}_{k,\Gamma})
		\rightarrow
		\text{Cliff}_{\mathbb{C}}
		\big(
		L^2 ([0,1], \mathscr{H}_{k,\Gamma}) 
		\big)$
		by 
		$v\mapsto 
		C\big(
		\varPsi_{\scalebox{0.6}{$k, [0,1]$}}
		(v)
		\big)$. 
		Moreover,
		for any  
		$v_{0}\in 
		L^2 ([0,1], \mathscr{E}_{k,\Gamma})$,  
		we define $\mathfrak{B}_{\scalebox{0.6}{$k,[0,1]$}}^{v_{0}}: L^2 ([0,1], \mathscr{E}_{k,\Gamma})
		\rightarrow
		\text{Cliff}_{\mathbb{C}}
		\big(
		L^2 ([0,1], \mathscr{H}_{k,\Gamma}) 
		\big)$ 
		by
		$\mathfrak{B}_{\scalebox{0.6}{$k,[0,1]$}}^{v_{0}}(v) = \mathfrak{B}_{\scalebox{0.6}{$k,[0,1]$}}(v-v_{0})$. 
	\end{defn}

	By  \eqref{eq:functionalcalculusone} and \eqref{eq:functionalcalculustwo},  
	we obtain the graded $*$-homomorphisms 
	\begin{equation}
		\label{eq:theenlargedfunctionalcalculusforthekthlevellsdjfk}
		\begin{array}{rcl}
			\beta_{k,v}: ~ \mathcal{S}&\longrightarrow &
			\mathcal{S}\widehat{\otimes} 
			C_{0}
			\Big(
			L^2 ([0,1], \mathscr{E}_{k,\Gamma}),~ 
			\text{Cliff}_{\mathbb{C}}
			\big(
			L^2 ([0,1], \mathscr{H}_{k,\Gamma}) 
			\big)
			\Big)
			\\
			f & \longmapsto & 
			f \big(
			\varTheta \widehat{\otimes} 1+1\widehat{\otimes} 		\mathfrak{B}_{\scalebox{0.6}{$k,[0,1]$}}^{v}
			\big),
		\end{array}
	\end{equation}
	and 
	\begin{equation}
		\label{eq:theenlargedfunctionalcalculusforthekthlevelsectionsttds}
		\begin{array}{rcl}
			\beta_{k,\xi}: ~ \mathcal{S} &\longrightarrow &
			\mathcal{F}
			\Big(
			Z, ~
			\mathcal{A}
			\big(
			L^2 ([0,1], \mathscr{E}_{k,\Gamma})
			\big)
			\Big)
			\\
			f &\longmapsto & 
			\Big\{
			z\mapsto 
			\beta_{k,\xi(z)}(f)
			\Big\},
		\end{array}
	\end{equation}
	where 
	$v\in L^2 ([0,1], \mathscr{E}_{k,\Gamma})$,
	$\xi\in 
	C_{b}
	\big(
	Z, ~
	L^2 ([0,1], \mathscr{E}_{k,\Gamma})
	\big)$,
	and 
	$Z$ is the metric space defined in \eqref{eq:simplicityforthebasespaceZ}.
	For $v=0$, we simply write $\beta_{k}$ in place of $\beta_{k,v}$.
	Likewise, if $\xi$ is the zero function, we write 
	$\beta_{k,{\bf 0}}$ in place of $\beta_{k,\xi}$.
	
	For each  $k \geq 1$, 
	following Section 7.2,
	we have the
	graded 
	$C^*$-algebras
	\begin{center}
		$\mathcal{A}
		\Big(
		C_{\rm Sim}
		\big(
		Z, ~
		L^2 ([0,1], \mathscr{E}_{k,\Gamma})
		\big)
		\Big)$ \quad 
		and \quad 
		$C\bigg(
		[0,1],~
		\mathcal{A}
		\Big(
		C_{\rm Sim}
		\big(
		Z, ~
		L^2 ([0,1], \mathscr{E}_{k,\Gamma})
		\big)
		\Big)
		\bigg)$.
	\end{center}
	Moreover,
	for every $s\in[0,1]$ and $k\geq1$,
	the family of affine isometries
	$$\Big\{ U_{k}^{\scalebox{0.7}{$(\gamma,z,s)$}}
	~\Big|~
	\gamma\in\Gamma, z\in Z
	\Big\}$$ 
	given by \eqref{eq:thekthlevelaffineisometriesforenlargeddspaces} or \eqref{eq:thekthlevelaffineisometriesforenlargedspaces},
	induces a
	family  of $*$-isomorphisms
	$$
	\Big\{	
	\mathcal{U}_{k}^{\scalebox{0.7}{$(\gamma,z,s)$}}: 	
	\mathcal{A}
	\big(
	L^2 ([0,1], \mathscr{E}_{k,\Gamma})
	\big)
	\longrightarrow 
	\mathcal{A}
	\big(
	L^2 ([0,1], \mathscr{E}_{k,\Gamma})
	\big)
	\Big\}_ 
	{\gamma\in\Gamma, z\in {Z}},
	$$
	which in turn induces a $\Gamma$-$\alpha_s$-action on 
	$\mathcal{A}
	\Big(
	C_{\rm Sim}
	\big(
	Z, ~
	L^2 ([0,1], \mathscr{E}_{k,\Gamma})
	\big)
	\Big)$. 
	More precisely, 
	the 
	$*$-isomorphism 
	$\mathcal{U}_{k}^{\scalebox{0.7}{$(\gamma,z,s)$}}$
	is given by  
	\begin{equation}
		\label{eq:theparametrizedactionsonthekthlevelalgebrashahah}
		\big(\mathcal{U}^{\scalebox{0.7}{$(\gamma,z,s)$}}_{k}(f)
		\big)(\theta,v)
		=
		(id\widehat{\otimes}\lambda^{\scalebox{0.7}{$(\gamma,s)$}}_{k,*})
		\bigg(
		f\Big(
		\theta,		U^{\scalebox{0.7}{$(\gamma^{-1},\gamma{z},s)$}}_{k}(v)
		\Big)	
		\bigg)
	\end{equation}
	for 
	$f\in\mathcal{A}
	\big(
	L^2 ([0,1], \mathscr{E}_{k,\Gamma})
	\big)$ 
	and 
	$(\theta,v)\in \mathbb{R} \times 
	L^2 ([0,1], \mathscr{E}_{k,\Gamma})$,
	where $\lambda^{\scalebox{0.7}{$(\gamma,s)$}}_{k,*}:\text{Cliff}_{\mathbb{C}}
	\big(
	L^2 ([0,1], \mathscr{H}_{k,\Gamma}) 
	\big)\to \text{Cliff}_{\mathbb{C}}
	\big(
	L^2 ([0,1], \mathscr{H}_{k,\Gamma}) 
	\big)$ 
	is the $*$-isomorphism  as in \eqref{eq:commutativeofriowithcliffalgebrariojdklk}.
	The $\Gamma$-$\alpha_s$-action is given by
	\begin{equation}
		\label{eq:theparemetrizedgammaactions} 
		(\gamma \cdot_{\alpha_s} f)(z)=
		\mathcal{U}_k^{\scalebox{0.7}{$(\gamma,\gamma^{-1}z,s)$}}
		\big(
		f(\gamma^{-1}z)
		\big)
	\end{equation}
	for 
	$\gamma\in\Gamma$,
	$z\in{Z}$ and
	$f\in 
	\mathcal{A}
	\Big(
	C_{\rm Sim}
	\big(
	Z, ~
	L^2 ([0,1], \mathscr{E}_{k,\Gamma})
	\big)
	\Big)$.
	
	\begin{rmk}\label{taizongyaolewoqunitayejdksjfklsdhahawoqunidkfjaoksd}
		The arguments in Remark \ref{taizongyaolewoqunitayejdksjfklsd} remain valid here. 
		For each  $k \geq 1$,  $\mathcal{U}_{k}^{\scalebox{0.7}{$(\gamma,z,1)$}}$ 
		encodes the original map  $\mathcal{U}^{\scalebox{0.6}{$(\gamma,z)$}}_{k}$.
		In contrast, for each  $k \geq 1$, 
		$\mathcal{U}_{k}^{\scalebox{0.7}{$(\gamma,z,0)$}}$  is the identity map for all $\gamma\in\Gamma$ and $z\in {Z}$.      
		Hence, 
		the  $\Gamma$-$\alpha_1$-action on 
		$\mathcal{A}
		\Big(
		C_{\rm Sim}
		\big(
		Z, ~
		L^2 ([0,1], \mathscr{E}_{k,\Gamma})
		\big)
		\Big)$ 
		encodes the original $\Gamma$-action on 
		$\mathcal{A}
		\big(
		C_{b}
		(Z, \mathscr{E}_{k,\Gamma})
		\big)$. 
		In contrast,  the $\Gamma$-$\alpha_0$-action on  
		$\mathcal{A}
		\Big(
		C_{\rm Sim}
		\big(
		Z, ~
		L^2 ([0,1], \mathscr{E}_{k,\Gamma})
		\big)
		\Big)$ 
		is induced only by precomposition with the action of $\Gamma$ on $Z$, while leaving the $\mathcal{A}\big(L^2 ([0,1], \mathscr{E}_{k,\Gamma})\big)$-values of the functions unchanged.
		Thus, by Remark \ref{aneasyembed},
		we obtain a family of embeddings 
		\begin{equation*}
			\bigg\{ 
			\iota_{\scalebox{0.55}{$k$}}:
			\mathcal{A}
			\big(
			L^2 
			([0,1], \mathscr{E}_{k,\Gamma})
			\big) \longrightarrow 
			\mathcal{A}
			\Big(
			C_{\rm Sim}
			\big(
			Z, ~
			L^2 ([0,1], \mathscr{E}_{k,\Gamma})
			\big)
			\Big)\bigg\}_{k\in\mathbb{N}},
		\end{equation*}
		where each of them intertwines the trivial $\Gamma$-action 
		and the
		$\Gamma$-$\alpha_{0}$-action.
		This will help us to prove  Proposition \ref{dklajlksdjlfkjasdkkkkkkkk}.
	\end{rmk}

	In addition, for each $k \geq 1$, 
	the $\Gamma$-$\alpha_{[0,1]}$-action on $C\bigg(
	[0,1],~
	\mathcal{A}
	\Big(
	C_{\rm Sim}
	\big(
	Z, ~
	L^2 ([0,1], \mathscr{E}_{k,\Gamma})
	\big)
	\Big)
	\bigg)$ 
	is given by
	\begin{equation}\label{eq:thekthlevelofthingkdiewlaskjfljsad}
		(\gamma\cdot_{\alpha_{[0,1]}} f)(t) 
		= \gamma\cdot_{\alpha_{t}} f(t)
	\end{equation}
	for  
	$\gamma\in\Gamma$, $t\in[0,1]$ 
	and 
	$f\in 
	C\bigg(
	[0,1],~
	\mathcal{A}
	\Big(
	C_{\rm Sim}
	\big(
	Z, ~
	L^2 ([0,1], \mathscr{E}_{k,\Gamma})
	\big)
	\Big)
	\bigg)$.

	We are now ready to introduce the proper algebra.

	\begin{defn}\label{A-infty-Simp}
		Let $\mathcal{K}$ be the $C^*$-algebra of compact operators on a separable infinite-dimensional complex Hilbert space.
		We define  
		$\mathcal{A}^{\scalebox{0.5}{$\prod$}}_{\rm Sim}(Z,[0,1],\mathscr{E})$
		to be the quotient 
		$C^*$-algebra 
		$$
		\dfrac{\prod\limits_{k=1}^{\infty}
			\hspace{0.3cm} 
			\mathcal{A}
			\Big(
			C_{\rm Sim}
			\big(
			Z, ~
			L^2 ([0,1], \mathscr{E}_{k,\Gamma})
			\big)
			\Big)
			{\otimes}\mathcal{K}}{\bigoplus\limits_{k=1}^{\infty}
			\hspace{0.3cm}
			\mathcal{A}
			\Big(
			C_{\rm Sim}
			\big(
			Z, ~
			L^2 ([0,1], \mathscr{E}_{k,\Gamma})
			\big)
			\Big)
			{\otimes}\mathcal{K}}.
		$$
		Moreover, 
		we define  
		$\mathcal{A}^{\flat}_{\rm Sim}
		(Z,[0,1],\mathscr{E})$
		to be the $C^*$-subalgebra of  
		$\mathcal{A}^{\scalebox{0.5}{$\prod$}}_{\rm Sim}
		(Z,[0,1],\mathscr{E})$
		generated by the set
		\begin{equation*}
			\bigg\{ 
			\big[(a_{1},a_{2},\cdots)\big]
			~\bigg|~ 
			(a_{1},a_{2},\cdots)\in\prod\limits_{k=1}^{\infty} 
			\mathcal{A}_{R}
			\Big(
			C_{\rm Sim}
			\big(
			Z, ~
			L^2 ([0,1], \mathscr{E}_{k,\Gamma})
			\big)
			\Big)
			{\odot} 
			\mathcal{K}
			\text{ for some } R>0
			\bigg\},
		\end{equation*}
		where $\odot$
		denotes 
		the algebraic tensor product.
	\end{defn}

We use the superscript  $\flat$  to indicate that  the algebra 	
$\mathcal{A}^{\flat}_{\rm Sim}
(Z,[0,1],\mathscr{E})$
is generated by elements with uniformly bounded supports.
We emphasize that  	$\mathcal{A}^{\scalebox{0.5}{$\prod$}}_{\rm Sim}(Z,[0,1],\mathscr{E})$ 
need not be a proper  $\Gamma$-$C^*$-algebra, 
	whereas its  subalgebra $\mathcal{A}^{\flat}_{\rm Sim}
	(Z,[0,1],\mathscr{E})$ is proper (Proposition \ref{properalgebrabySimple}). 	
Both	
$\mathcal{A}^{\scalebox{0.5}{$\prod$}}_{\rm Sim}(Z,[0,1],\mathscr{E})$  and the algebra $\mathcal{A}^{\scalebox{0.5}{$\prod$}}_{{\rm Sim}, [0,1]}
(Z,[0,1],\mathscr{E})$
to be introduced below  will play key roles in establishing the injectivity of the Bott map in Sections 8 and 9.

	\begin{defn}\label{Key-idea-deformation}
		Define  
		$\mathcal{A}^{\scalebox{0.5}{$\prod$}}_{{\rm Sim}, [0,1]}
		(Z,[0,1],\mathscr{E})$
		to be the quotient 
		$C^*$-algebra 
		$$
		\dfrac{\prod\limits_{k=1}^{\infty}
			\hspace{0.3cm}
			C\bigg(
			[0,1],~
			\mathcal{A}
			\Big(
			C_{\rm Sim}
			\big(
			Z, ~
			L^2 ([0,1], \mathscr{E}_{k,\Gamma})
			\big)
			\Big)
			\bigg)
			{\otimes}\mathcal{K}}{\bigoplus\limits_{k=1}^{\infty}
			\hspace{0.3cm}
			C\bigg(
			[0,1],~
			\mathcal{A}
			\Big(
			C_{\rm Sim}
			\big(
			Z, ~
			L^2 ([0,1], \mathscr{E}_{k,\Gamma})
			\big)
			\Big)
			\bigg)
			{\otimes}\mathcal{K}}.
		$$
	\end{defn}

	\begin{rmk}\label{evaluationswichisgammaequivalentwithrestoeachaction}
		The algebra 	
		$\mathcal{K}$ 
		introduced above is useful in the computation of $K$-theory in Section 9.
		For every $k \geq 1$ and  $s\in[0,1]$,  
		the  $C^*$-algebra
		$\mathcal{A}
		\Big(
		C_{\rm Sim}
		\big(
		Z, ~
		L^2 ([0,1], \mathscr{E}_{k,\Gamma})
		\big)
		\Big)
		{\otimes}\mathcal{K}$  
		admits a $\Gamma$-$\alpha_s$-action given diagonally by 
		\[
		\gamma\cdot_{\alpha_s} (f{\otimes}K)=(\gamma\cdot_{\alpha_s} f){\otimes}K.
		\]
		This in turn
		induces a  $\Gamma$-$\alpha_s$-action on 
		$\mathcal{A}^{\scalebox{0.5}{$\prod$}}_{\rm Sim}
		(Z,[0,1],\mathscr{E})$  via
		\begin{eqnarray*}
			\gamma\cdot_{\alpha_s}[(a_{1},a_{2},\cdots)]=\big[(\gamma\cdot_{\alpha_s}a_{1},~\gamma\cdot_{\alpha_s}a_{2},~\cdots)\big],
		\end{eqnarray*}
		where  
		$a_{k}\in\mathcal{A}
		\Big(
		C_{\rm Sim}
		\big(
		Z, ~
		L^2 ([0,1], \mathscr{E}_{k,\Gamma})
		\big)
		\Big)
		{\otimes}\mathcal{K}$.
		Similarly, there is a $\Gamma$-$\alpha_{[0,1]}$-action on each
		$$
		C\bigg(
		[0,1],~
		\mathcal{A}
		\Big(
		C_{\rm Sim}
		\big(
		Z, ~
		L^2 ([0,1], \mathscr{E}_{k,\Gamma})
		\big)
		\Big)
		\bigg)
		{\otimes}\mathcal{K},
		$$ 
		which induces a $\Gamma$-$\alpha_{[0,1]}$-action on
		$\mathcal{A}^{\scalebox{0.5}{$\prod$}}_{{\rm Sim}, [0,1]}
		(Z,[0,1],\mathscr{E})$.
		It is clear that the family of evaluation maps 
		at $s\in[0,1]$
		\begin{equation*}
			\Bigg\{ 
			{\rm ev}_{k,s}: 
			C\bigg(
			[0,1],~
			\mathcal{A}
			\Big(
			C_{\rm Sim}
			\big(
			Z, ~
			L^2 ([0,1], \mathscr{E}_{k,\Gamma})
			\big)
			\Big)
			\bigg)
			\longrightarrow 
			\mathcal{A}
			\Big(
			C_{\rm Sim}
			\big(
			Z, ~
			L^2 ([0,1], \mathscr{E}_{k,\Gamma})
			\big)
			\Big)\Bigg\}_{k\in\mathbb{N}},
		\end{equation*}
		induces  a  $*$-homomorphism  
		\begin{equation}
			\label{eq:theevaluationsmapfromsimtosim}
			{\rm ev}_s: 
			\mathcal{A}^{\scalebox{0.5}{$\prod$}}_{{\rm Sim}, [0,1]}
			(Z,[0,1],\mathscr{E})
			\longrightarrow 
			\mathcal{A}^{\scalebox{0.5}{$\prod$}}_{\rm Sim}
			(Z,[0,1],\mathscr{E}).
		\end{equation}
		By \eqref{eq:thekthlevelofthingkdiewlaskjfljsad}, 
		both of them 
		intertwine 
		 the 
		$\Gamma$-$\alpha_{[0,1]}$-action  
		and the
		$\Gamma$-$\alpha_{s}$-action.
		We emphasize that 
		the algebra
		$\mathcal{A}^{\scalebox{0.5}{$\prod$}}_{{\rm Sim}, [0,1]}
	(Z,[0,1],\mathscr{E})$
		encodes the information of a continuous deformation of group actions. 
		The homomorphisms in \eqref{eq:theevaluationsmapfromsimtosim}, 
		which  intertwine  the 
		$\Gamma$-$\alpha_{[0,1]}$-action  
		with each
		$\Gamma$-$\alpha_{s}$-action,
		will play a crucial role in Section 9.

	\end{rmk}

	The following result shows that for each $s\in[0,1]$, 
	the  
	$\Gamma$-$\alpha_s$-action on  
	$\mathcal{A}^{\scalebox{0.5}{$\prod$}}_{\rm Sim}
	(Z,[0,1],\mathscr{E})$  
	restricts to an action on 
	$\mathcal{A}^{\flat}_{\rm Sim}
	(Z,[0,1],\mathscr{E})$. 
	Hence, 
	$\mathcal{A}^{\flat}_{\rm Sim}
	(Z,[0,1],\mathscr{E})$  becomes a $\Gamma$-$\alpha_s$-$C^*$-algebra for every 
	$s\in[0,1]$.

	\begin{lem}\label{Gammaalgebraforthespecialcase123}
		For all $s\in[0,1]$, 
		$\gamma\in\Gamma$ 
		and 
		$[(a_{1},a_{2},\cdots)]\in 
		\mathcal{A}^{\flat}_{\rm Sim}
		(Z,[0,1],\mathscr{E})$,
		we have 	
		\begin{eqnarray*}
			\gamma\cdot_{\alpha_s} [(a_{1},a_{2},\cdots)]
			\in 
			\mathcal{A}^{\flat}_{\rm Sim}
			(Z,[0,1],\mathscr{E}).
		\end{eqnarray*} 
	\end{lem}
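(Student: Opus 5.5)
The argument is the one used for Lemma \ref{Gammaalgebraforthespecialcase}, with the extra parameter $s$ and the factor $\mathcal{K}$ added. Since each $\gamma\cdot_{\alpha_s}$ is a $*$-automorphism of $\mathcal{A}^{\scalebox{0.5}{$\prod$}}_{\rm Sim}(Z,[0,1],\mathscr{E})$ (it is induced levelwise and preserves the ideal $\bigoplus$), it suffices to treat generators. So fix $R>0$ and
$$(a_{1},a_{2},\cdots)\in\prod_{k}\mathcal{A}_{R}\Big(C_{\rm Sim}\big(Z,L^2([0,1],\mathscr{E}_{k,\Gamma})\big)\Big)\odot\mathcal{K}.$$
Because the action is diagonal, $\gamma\cdot_{\alpha_s}(f\otimes K)=(\gamma\cdot_{\alpha_s}f)\otimes K$, and each $a_k$ is a finite sum of elementary tensors. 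Hence it is enough to show the following: there is $R'$, depending only on $\gamma$ and $R$ (not on $k$, $s$, or $f$), such that $\gamma\cdot_{\alpha_s}$ maps $\mathcal{A}_{R}\big(C_{\rm Sim}(Z,L^2([0,1],\mathscr{E}_{k,\Gamma}))\big)$ into $\mathcal{A}_{R'}\big(C_{\rm Sim}(Z,L^2([0,1],\mathscr{E}_{k,\Gamma}))\big)$.

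Two things must be checked for this. The first is that $\gamma\cdot_{\alpha_s}f$ still lies in $\mathcal{A}\big(C_{\rm Sim}(\cdots)\big)$. This follows from \eqref{eq:rulesforgammaactionforelementary}, in its $\alpha_s$-version $\gamma\cdot_{\alpha_s}\beta_{k,\xi}(f)=\beta_{k,\gamma\cdot_{\alpha_s}\xi}(f)$, together with Remark \ref{alpha-s-invariant}. The second is the support estimate. Unwinding \eqref{eq:theparemetrizedgammaactions} and \eqref{eq:theparametrizedactionsonthekthlevelalgebrashahah} as in \eqref{eq:thegammaactionsforelementsinalgebrasthekthlevel} gives
$$\big((\gamma\cdot_{\alpha_s}f)(z)\big)(\theta,v)=(id\widehat{\otimes}\lambda^{(\gamma,s)}_{k,*})\Big(f(\gamma^{-1}z)\big(\theta,\lambda_k^{(\gamma^{-1},s)}(v)+\zeta_k^{(\gamma^{-1},z,s)}\big)\Big).$$
Since $\lambda_k^{(\gamma^{-1},s)}$ is a linear isometry, the support of $(\gamma\cdot_{\alpha_s}f)(z)$ lies in $B(0,R+\|\zeta_k^{(\gamma^{-1},z,s)}\|)$.

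It therefore remains to bound $\|\zeta_k^{(\gamma^{-1},z,s)}\|$ uniformly. By \eqref{eq:thecocyclesfortheenlargedspacethekthlevels},
$$\|\zeta_k^{(\gamma^{-1},z,s)}\|^2=s\,\|\zeta_k^{(\gamma^{-1},z)}\|^2\le\|\zeta_k^{(\gamma^{-1},z)}\|^2.$$
The computation \eqref{eq:theupperboundcalculationsforcocycles} then bounds this by $R_\gamma^2=\sum_{p}\rho_p^+(d(\gamma,e))^2$, which is finite by condition (3) of Definition \ref{mostkeyidea1} (the first $N$ terms are finite individually). We may thus take $R'=R+R_\gamma$, which is uniform in $k$, $s$ and $z$. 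Consequently $(\gamma\cdot_{\alpha_s}a_k)_k$ is again a generator of $\mathcal{A}^{\flat}_{\rm Sim}(Z,[0,1],\mathscr{E})$, and the closure argument completes the proof.

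The only delicate point is the uniformity of the bound in $s$ and $k$. The factor $\sqrt{s}\le 1$ handles $s$, and condition (3) of Definition \ref{mostkeyidea1} handles $k$.
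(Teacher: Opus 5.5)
Your proposal is correct and follows essentially the same argument as the paper. Both bound the support shift by the cocycle estimate $\|\zeta_k^{(\gamma^{-1},z,s)}\|=\sqrt{s}\,\|\zeta_k^{(\gamma^{-1},z)}\|$, uniformly in $k$ and $z$, and then pass to the algebraic tensor product with $\mathcal{K}$ and to sequences. The only difference is cosmetic: you take the radius $R+R_\gamma$, whereas the paper uses the slightly sharper $R+R_{\gamma,s}$ with $R_{\gamma,s}=\sqrt{s}\,R_\gamma$.
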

	\begin{proof}
		Set
		$$R_{\gamma,s}=\sqrt{s}\cdot\sqrt{
			\sum\limits_{p=1}^{\infty} 
			\Big[
			\rho_{p}^{+}\big(d(\gamma,e)\big)\Big]^{2}
		}$$
		for 
		$\gamma\in\Gamma$ and  
		$s\in[0,1]$.
		We first show that for every $k\geq1$, 
		\begin{equation*}
			f\in\mathcal{A}_{R}
			\Big(
			C_{\rm Sim}
			\big(
			Z, ~
			L^2 ([0,1], \mathscr{E}_{k,\Gamma})
			\big)
			\Big) \ 
			\Longrightarrow \    
			\gamma\cdot_{\alpha_s} f \in 	\mathcal{A}_{R_{\gamma,s}+R}
			\Big(
			C_{\rm Sim}
			\big(
			Z, ~
			L^2 ([0,1], \mathscr{E}_{k,\Gamma})
			\big)
			\Big).
		\end{equation*}
		For such $f$, 
		it follows from \eqref{eq:theparemetrizedgammaactions}, \eqref{eq:theparametrizedactionsonthekthlevelalgebrashahah} and  \eqref{eq:thekthlevelaffineisometriesforenlargedspaces}  
		that,
		\begin{eqnarray}
			\label{eq:theparametrizedactionsfotthekthleveleverykdjljlk}
			\begin{array}{rcl}
				\big( 
				(\gamma\cdot_{\alpha_s} f)(z)
				\big)
				(\theta,v) 
				& = & 
				\Big( \mathcal{U}_{k}^{\scalebox{0.7}{$(\gamma,\gamma^{-1}z,s)$}}
				\big(
				f (\gamma^{-1}z)
				\big)
				\Big)
				(\theta,v) 
				\\
				& = &
				(id\widehat{\otimes}\lambda^{\scalebox{0.7}{$(\gamma,s)$}}_{k,*})
				\bigg(
				f(\gamma^{-1}z) 
				\Big(
				\theta,	U_k^{\scalebox{0.7}{$(\gamma^{-1},z,s)$}}(v)
				\Big)	
				\bigg)
				\\
				&= &
				(id\widehat{\otimes}\lambda^{\scalebox{0.7}{$(\gamma,s)$}}_{k,*})
				\bigg(
				f(\gamma^{-1}z) 
				\Big(
				\theta,		\lambda^{\scalebox{0.7}{$(\gamma^{-1},s)$}}_k(v)+\zeta^{\scalebox{0.7}{$(\gamma^{-1},z,s)$}}_k
				\Big)	
				\bigg)
			\end{array}
		\end{eqnarray}
		for all  
		$z\in{Z}$ 
		and 
		$(\theta,v) 
		\in 
		\mathbb{R} \times 
		L^2 ([0,1], \mathscr{E}_{k,\Gamma})$.
		Moreover,
		by \eqref{eq:theupperboundcalculationsforcocycles} and \eqref{eq:thecocyclesfortheenlargedspacethekthlevels},
		we obtain 	
		\begin{equation}
			\label{eq:whatisthakkkkijasdfosmmm}
			\begin{array}{c} 
				\bigg\|
				\zeta^{\scalebox{0.7}{$(\gamma^{-1},z,s)$}}_k
				\bigg\|
				=
				\sqrt{s}\cdot
				\Big\|
				\zeta^{\scalebox{0.7}{$(\gamma^{-1},z)$}}_k
				\Big\|
				\leq
				R_{\gamma,s}
			\end{array}
		\end{equation}
		for all $k\geq1$, $s\in[0,1]$, $\gamma\in\Gamma$, and 
		$z \in {Z}$.	
		Then, combining \eqref{eq:theparametrizedactionsfotthekthleveleverykdjljlk} and \eqref{eq:whatisthakkkkijasdfosmmm}, 
		we deduce that,  
		$$\text{Supp} 
		\Big(
		(\gamma\cdot_{\alpha_s} f)(z)
		\Big) \subseteq 
		B_{\scalebox{0.55}[0.6]{$\mathbb{R} \times 
				L^2 
				\big( 
				[0,1], 
				\mathscr{E}_{k,\Gamma}
				\big)$}}(0,R_{\gamma,s}+R)$$
		for all  $s\in[0,1]$, $\gamma\in\Gamma$, and 
		$z \in {Z}$, that is,
		$$\gamma\cdot_{\alpha_s} f\in 	\mathcal{A}_{R_{\gamma,s}+R}
		\Big(
		C_{\rm Sim}
		\big(
		Z, ~
		L^2 ([0,1], \mathscr{E}_{k,\Gamma})
		\big)
		\Big).$$ 
		
		It follows from the above fact that,
		for every $k\geq1$, 
		\begin{equation*}
			a\in\mathcal{A}_{R}
			\Big(
			C_{\rm Sim}
			\big(
			Z, ~
			L^2 ([0,1], \mathscr{E}_{k,\Gamma})
			\big)
			\Big){\odot}\mathcal{K} \
			\Longrightarrow\   
			\gamma\cdot_{\alpha_s} a\in 	\mathcal{A}_{R_{\gamma,s}+R}
			\Big(
			C_{\rm Sim}
			\big(
			Z, ~
			L^2 ([0,1], \mathscr{E}_{k,\Gamma})
			\big)
			\Big){\odot}\mathcal{K}
		\end{equation*}
		and hence 
		\begin{equation*}
			\begin{array}{r}
				(a_{1},a_{2},\cdots)\in\prod\limits_{k=1}^{\infty} 
				\mathcal{A}_{R}
				\Big(
				C_{\rm Sim}
				\big(
				Z, ~
				L^2 ([0,1], \mathscr{E}_{k,\Gamma})
				\big)
				\Big)
				{\odot}\mathcal{K}\hspace{8cm} 
				\\
				\Longrightarrow
				(\gamma\cdot_{\alpha_s} a_{1},~\gamma\cdot_{\alpha_s} a_{2},~\cdots) \in
				\prod\limits_{k=1}^{\infty}
				\mathcal{A}_{R_{\gamma,s}+R}
				\Big(
				C_{\rm Sim}
				\big(
				Z, ~
				L^2 ([0,1], \mathscr{E}_{k,\Gamma})
				\big)
				\Big)
				{\odot}\mathcal{K}.
			\end{array}
		\end{equation*}
		This completes the proof.
	\end{proof}

	As explained in  Remark \ref{taizongyaolewoqunitayejdksjfklsdhahawoqunidkfjaoksd}, the  $\Gamma$-$\alpha_0$-action on
	$\mathcal{A}^{\flat}_{\rm Sim}
	(Z,[0,1],\mathscr{E})$
	is induced only by precomposition with the action of $\Gamma$ on $Z$, while leaving the values unchanged.
	On the other hand, 
	the $\Gamma$-$\alpha_1$-action on
	$\mathcal{A}^{\flat}_{\rm Sim}
	(Z,[0,1],\mathscr{E})$
	encodes the original $\Gamma$-action on $\mathcal{A}^{\flat}(Z,\mathscr{E})$, and is therefore proper. 
	We now provide the details of the proof.
	Note that the proof follows essentially the same strategy as that of Proposition \ref{properalgebra}; the only additional ingredient is the algebra of compact operators $\mathcal{K}$,
which makes the argument slightly more involved.
The second condition in the definition of coarse embedding  with finite complexity
(Definition \ref{mostkeyidea1})   
plays  a crucial role in establishing  the properness.

	\begin{prop}\label{properalgebrabySimple}
		Suppose that $\Gamma$ admits a coarse embedding  with finite complexity into
		$\mathscr{E}=
		\bigoplus_{p=1}^{\infty}
		\mathscr{X}_p$.
		Then 
		$\mathcal{A}^{\flat}_{\rm Sim}
		(Z,[0,1],\mathscr{E})$
		is a proper $\Gamma$-$\alpha_1$-$C^*$-algebra.
	\end{prop}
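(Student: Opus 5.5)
The plan is to repeat the argument of Proposition \ref{properalgebra}, now with the factor $\mathcal{K}$ and the enlarged spaces $L^2([0,1],\mathscr{E}_{k,\Gamma})$. For each $k$, let $\mathcal{Z}\big(C_{\rm Sim}(Z,L^2([0,1],\mathscr{E}_{k,\Gamma}))\big)$ be the $C^*$-subalgebra generated by $\beta_{k,\xi}(f)$ with $f\in\mathcal{S}_{\rm even}$ and $\xi$ uniform simple. By Remark \ref{functionalcalculusforvectors}, these elements are scalar valued in the Clifford variable, so they are central. Let $\mathcal{Z}_R$ be its intersection with $\mathcal{A}_R$. Define $\mathcal{Z}^{\flat}_{\rm Sim}$ as the $C^*$-subalgebra of $\mathcal{A}^{\scalebox{0.5}{$\prod$}}_{\rm Sim}(Z,[0,1],\mathscr{E})$ generated by classes $[(g_1\otimes 1,g_2\otimes 1,\cdots)]$ with $(g_k)\in\prod_k\mathcal{Z}_R$. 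Since $\mathcal{K}$ is nonunital, I would realize these in the multiplier algebra, or equivalently work with $C_0(\Delta)$ acting through the first tensor factor.

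I would then check three things.
\begin{itemize}
\item[(a)] $\mathcal{Z}^{\flat}_{\rm Sim}$ is commutative, $\Gamma$-$\alpha_1$-invariant, and maps into the center of the multiplier algebra of $\mathcal{A}^{\flat}_{\rm Sim}(Z,[0,1],\mathscr{E})$. Invariance follows from the support estimate in Lemma \ref{Gammaalgebraforthespecialcase123} together with the identity $\gamma\cdot_{\alpha_1}\beta_{k,\xi}(f)=\beta_{k,\gamma\cdot_{\alpha_1}\xi}(f)$, the analogue of \eqref{eq:rulesforgammaactionforelementary}.
\item[(b)] $\mathcal{Z}^{\flat}_{\rm Sim}\cdot\mathcal{A}^{\flat}_{\rm Sim}$ is dense. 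For a generator $(a_k)$ with $a_k\in\mathcal{A}_R\odot\mathcal{K}$, multiply by $\beta_{k,\mathbf 0}(g)\otimes 1$, where $g\in\mathcal{S}_{\rm even}$ equals $1$ on $[-R,R]$ and is compactly supported. Here $\beta_{k,\mathbf 0}(g)(\theta,v)=g(\|(\theta,\varPsi_{k,[0,1]}(v))\|)$. Since $\varPsi_{k,[0,1]}$ restricts to the identity-like scalar extension near $0$ with $\eta(t)=t$ on $[0,1]$, and is a direct sum of $\eta$- and scalar extensions, it satisfies $\|\varPsi_{k,[0,1]}(v)\|\le \max(\|v\|,\eta(\|v\|))$ uniformly in $k$. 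Choosing $g$ by this uniform bound, the product reproduces $a_k$ exactly.
\item[(c)] By Gelfand duality and Lemma \ref{propermethod}, it suffices to show $\|(\gamma\cdot_{\alpha_1}x)x\|\to0$ for $x\in\mathcal{Z}^{\flat}_{\rm Sim}$.
\end{itemize}

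The core of (c) is a lower bound on the cocycle. For $s=1$, \eqref{eq:thecocyclesfortheenlargedspacethekthlevels} gives $\|\zeta_k^{(\gamma^{-1},z,1)}\|=\|\zeta_k^{(\gamma^{-1},z)}\|$. The chain of estimates in \eqref{eq:thelowerboundcalculationforcocyclesthekthlevel}, which uses condition (2) of Definition \ref{mostkeyidea1}, then yields $c>0$ and, for each $\gamma$ with $d(\gamma,e)\ge c$, an integer $M_\gamma$ with $\|\zeta_k^{(\gamma^{-1},z,1)}\|>2R$ for all $k\ge M_\gamma$ and $z\in Z$. By \eqref{eq:theparametrizedactionsfotthekthleveleverykdjljlk} at $s=1$, the support of $(\gamma\cdot_{\alpha_1}g_k)(z)$ is the support of $g_k(\gamma^{-1}z)$ moved by an affine isometry with translation part of norm $>2R$. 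A ball of radius $R$ about $0$ is carried into $B(\zeta,R)$, which is disjoint from $B(0,R)$. Hence $(\gamma\cdot_{\alpha_1}g_k)g_k=0$ for $k\ge M_\gamma$, and so $(\gamma\cdot_{\alpha_1}x)x=0$ in the quotient by $\bigoplus_k$. Density of the union over $R$ then extends the conclusion to all $x$.

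I expect the main obstacle to be the bookkeeping with $\mathcal{K}$ in (a) and (b). One has to make precise that $C_0(\Delta)$ acts centrally and nondegenerately, and that the uniform support bound survives tensoring with $\mathcal{K}$ and passing to the quotient. The properness estimate itself transfers directly from Proposition \ref{properalgebra}.
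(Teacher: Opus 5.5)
Your proposal matches the paper's proof essentially step for step. The paper also forms the central subalgebra $\mathcal{Z}^{\flat}_{\rm Sim}(Z,[0,1],\mathscr{E})$ without the $\mathcal{K}$ factor and sends it into the center of the multiplier algebra via $[(f_k)]\mapsto\big([(a_k)]\mapsto[((f_k\otimes\mathrm{I})a_k)]\big)$. It then reduces to Lemma \ref{propermethod} and uses condition (2) of Definition \ref{mostkeyidea1} at $s=1$ to get $\|\zeta_k^{(\gamma^{-1},z,1)}\|>2R$ for $k\ge M_\gamma$, hence $(\gamma\cdot_{\alpha_1}x)x=0$ whenever $d(\gamma,e)\ge c$.

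The only point to tighten is your density step (b), which the paper merely asserts. To place $(\beta_{k,\mathbf 0}(g))_k$ in $\prod_k\mathcal{Z}_{R'}$ you need a $k$-uniform bound on its support, not only the upper bound you state. Both bounds follow, independently of $k$, from the identity $\|\varPsi_{k,[0,1]}(v)\|^2=\eta(\|\varrho_{k,1}(v)\|)^2+\sum_{p=2}^{k}\|\varrho_{k,p}(v)\|^2$.
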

	\begin{proof}
		For 
		$k\geq1$,
		let 
		\[\mathcal{Z}
		\Big(
		C_{\rm Sim}
		\big(
		Z, ~
		L^2 ([0,1], \mathscr{E}_{k,\Gamma})
		\big)
		\Big)\]
		be the graded $C^*$-subalgebra of
		\[\mathcal{A}
		\Big(
		C_{\rm Sim}
		\big(
		Z, ~
		L^2 ([0,1], \mathscr{E}_{k,\Gamma})
		\big)
		\Big)\]
		generated by 
		\begin{equation*}
			\Big\{
			\beta_{k,\xi}(f) 
			~ \Big| ~ 
			f\in\mathcal{S}_{\rm even}, 
			~ \xi\in 
			C_{\rm Sim}
			\big(
			Z, ~
			L^2 ([0,1], \mathscr{E}_{k,\Gamma})
			\big)
			\Big\},
		\end{equation*}
		where $\beta_{k,\xi}$ is given by \eqref{eq:theenlargedfunctionalcalculusforthekthlevelsectionsttds}.
		It is in the center of 
		$\mathcal{A}
		\Big(
		C_{\rm Sim}
		\big(
		Z, ~
		L^2 ([0,1], \mathscr{E}_{k,\Gamma})
		\big)
		\Big)$.
		We 
		define 
		$\mathcal{Z}^{\flat}_{\rm Sim}
		(Z,[0,1],\mathscr{E})$
		to be the graded $C^*$-subalgebra 
		of  
		$$
		\dfrac{\prod\limits_{k=1}^{\infty}
			~\mathcal{A}
			\Big(
			C_{\rm Sim}
			\big(
			Z, ~
			L^2 ([0,1], \mathscr{E}_{k,\Gamma})
			\big)
			\Big)
		}{\bigoplus\limits_{k=1}^{\infty}
			~\mathcal{A}
			\Big(
			C_{\rm Sim}
			\big(
			Z, ~
			L^2 ([0,1], \mathscr{E}_{k,\Gamma})
			\big)
			\Big)
		}
		$$
		generated by the set
		\begin{eqnarray*}
			\Big\{ 
			\big[(f_{1},f_{2},\cdots)\big]
			~\Big|~ 
			(f_{1},f_{2},\cdots)\in\prod\limits_{k=1}^{\infty}
			\mathcal{Z}_{R}
			\Big(
			C_{\rm Sim}
			\big(
			Z, ~
			L^2 ([0,1], \mathscr{E}_{k,\Gamma})
			\big)
			\Big)
			\text{ for some } R>0
			\Big\},
		\end{eqnarray*}
		where 
		$$
		\mathcal{Z}_{R}
		\Big(
		C_{\rm Sim}
		\big(
		Z, ~
		L^2 ([0,1], \mathscr{E}_{k,\Gamma})
		\big)
		\Big) := \mathcal{Z}
		\Big(
		C_{\rm Sim}
		\big(
		Z, ~
		L^2 ([0,1], \mathscr{E}_{k,\Gamma})
		\big)
		\Big) \cap  \mathcal{A}_{R}
		\Big(
		C_{\rm Sim}
		\big(
		Z, ~
		L^2 ([0,1], \mathscr{E}_{k,\Gamma})
		\big)
		\Big).
		$$

		Let 
		$\mathcal{L}
		\big(
		\mathcal{A}^{\flat}_{\rm Sim}
		(Z,[0,1],\mathscr{E})
		\big)$
		denote the set of all
		adjointable module homomorphisms on 
		$\mathcal{A}^{\flat}_{\rm Sim}
		(Z,[0,1],\mathscr{E})$.
		This is the multiplier algebra of 
		$\mathcal{A}^{\flat}_{\rm Sim}
		(Z,[0,1],\mathscr{E})$.
		Define a $*$-homomorphism
		$$
		\varpi: \mathcal{Z}^{\flat}_{\rm Sim}
		(Z,
		[0,1],
		\mathscr{E}) \longrightarrow \mathcal{L}
		\big(
		\mathcal{A}^{\flat}_{\rm Sim}
		(Z,[0,1],\mathscr{E})
		\big)
		$$
		by
		\begin{equation*}
			\begin{array}{ccc} 
				\varpi
				\big([(f_{1},f_{2},\cdots)]\big)
				\Big([(a_1,a_2,\cdots)]\Big) =
				[ (~ (f_{1}{\otimes}\mathrm{I})\cdot a_{1},~ (f_{2}{\otimes}\mathrm{I})\cdot  a_{2},~ \cdots ~ ) ],  
			\end{array}
		\end{equation*}
		where $\mathrm{I}$ is the identity operator.  
		Viewing 
		$\mathcal{A}^{\flat}_{\rm Sim}
		(Z,
		[0,1],
		\mathscr{E})$ as an essential ideal of 
		$\mathcal{L}
		\big(
		\mathcal{A}^{\flat}_{\rm Sim}
		(Z,[0,1],\mathscr{E})
		\big)$,
		we see that the image of 
		$\varpi$ 
		lies
		in the center of 
		$\mathcal{L}
		\big(
		\mathcal{A}^{\flat}_{\rm Sim}
		(Z,[0,1],\mathscr{E})
		\big)$,
		since each $\varpi(x)$
		commutes with  every element of
		$\mathcal{A}^{\flat}_{\rm Sim}
		(Z,
		[0,1],
		\mathscr{E})$.
		Moreover, 
		$\varpi \big( 
		\mathcal{Z}^{\flat}_{\rm Sim}
		(Z,
		[0,1],
		\mathscr{E}) \big) 
		\cdot \mathcal{A}^{\flat}_{\rm Sim}
		(Z,[0,1],\mathscr{E})$
		is dense in
		$\mathcal{A}^{\flat}_{\rm Sim}
		(Z,[0,1],\mathscr{E})$.
		For each $s\in[0,1]$, the $\Gamma$-$\alpha_s$-actions on 
		the family of $C^*$-algebras
		\[\bigg\{
		\mathcal{Z}
		\Big(
		C_{\rm Sim}
		\big(
		Z, ~
		L^2 ([0,1], \mathscr{E}_{k,\Gamma})
		\big)
		\Big)
		\bigg\}_{k\in\mathbb{N}}\]
		induce, as in Lemma \ref{Gammaalgebraforthespecialcase123}, a $\Gamma$-$\alpha_s$-action  on 
		$\mathcal{Z}^{\flat}_{\rm Sim}
		(Z,[0,1],\mathscr{E})$
		that is 
		compatible with the $\Gamma$-$\alpha_s$-action on 
		$\mathcal{A}^{\flat}_{\rm Sim}
		(Z,[0,1],\mathscr{E})$.
		Thus, 
		$\varpi$ is  $\Gamma$-$\alpha_s$-equivariant
		for all $s\in[0,1]$.
		The Gelfand spectrum
		$$\Delta:={\rm Spec}\Big(\mathcal{Z}^{\flat}_{\rm Sim}
		(Z,
		[0,1],
		\mathscr{E}
		)\Big)
		$$ 
		is a locally compact Hausdorff  space equipped with the $\Gamma$-$\alpha_s$-action induced from that on 
		$\mathcal{Z}^{\flat}_{\rm Sim}
		(Z,[0,1],\mathscr{E})$. 
		The Gelfand transform 
		yields a canonical $*$-isomorphism
		$$\mathcal{Z}^{\flat}_{\rm Sim}
		(Z,[0,1],\mathscr{E})
		\cong 
		C_{0}(\Delta),
		$$
		which is equivariant with respect to the induced actions.
		Hence, by Lemma \ref{propermethod}, 
		to obtain the properness of the $\Gamma$-$\alpha_1$-action on 
		$\mathcal{A}^{\flat}_{\rm Sim}
		(Z,[0,1],\mathscr{E})$,
		it suffices to verify that  
		$$
		\lim\limits_{\gamma\rightarrow\infty}
		\big\|
		(\gamma\cdot_{\alpha_1}x) {x}
		\big\|
		=0
		$$
		for every 
		$x \in \mathcal{Z}^{\flat}_{\rm Sim}
		(Z,[0,1],\mathscr{E})$.

		We first consider the case where  $x=[(f_{1},f_{2},\cdots)]$
		with 
		$$
		(f_{1},f_{2},\cdots)\in\prod\limits_{k=1}^{\infty}
		\mathcal{Z}_{R}
		\Big(
		C_{\rm Sim}
		\big(
		Z, ~
		L^2 ([0,1], \mathscr{E}_{k,\Gamma})
		\big)
		\Big)
		$$
		for  $R>0$. 
		For such an $R>0$,	
		by the second condition of Definition \ref{mostkeyidea1},
		there exists 
		$c>0$ 
		such that 
		\begin{equation*}
			\sum\limits_{p=1}^{\infty}\Big[\rho_{p}^{-}(t)\Big]^2\geq 4R^{2}+1 \quad \text{for all}\   
			t\geq c.
		\end{equation*}
		Moreover, 
		for each $\gamma\in\Gamma$ with $d(\gamma,e)\geq c$, 
		there  exists a positive integer $M_{\gamma}$ such that 
		\begin{equation*}
			\sum\limits_{p=1}^{k}\Big[\rho_{p}^{-}\big(d(\gamma,e)\big)\Big]^2 > 4R^{2} \quad 
			\text{for all}\  
			k\geq 
			M_{\gamma}.
		\end{equation*}
		Thus, for each
		$\gamma\in\Gamma$ with $d(\gamma,e)\geq c$, 
		it follows from  \eqref{eq:thecocyclesfortheenlargedspacethekthlevels} and  \eqref{eq:thelowerboundcalculationforcocyclesthekthlevel}
		that
		\begin{equation}
			\label{eq:theolwerboundcaldulationforthekthlevel}
			\Big\|\zeta^{\scalebox{0.7}{$(\gamma^{-1},z,1)$}}_k
			\Big\|=
			\Big\|
			\zeta^{\scalebox{0.7}{$(\gamma^{-1},z)$}}_k
			\Big\|
			\geq
			\sqrt{
				\sum\limits_{p=1}^k 
				\Big[
				\rho_{p}^{-}\big(d(\gamma,e)\big)\Big]^{2}
			}\ 
			> 2R \quad 
			\text{for all }  
			k\geq 
			M_{\gamma} \text{ and all } 
			z \in {Z}. 
		\end{equation}
		Combining  
		\eqref{eq:theparametrizedactionsfotthekthleveleverykdjljlk} and
		\eqref{eq:theolwerboundcaldulationforthekthlevel},
		we obtain that, 
		for every $\gamma\in\Gamma$ satisfying $d(\gamma,e)\geq c$,
		\begin{equation*}
			\text{Supp} 
			\Big(
			(\gamma\cdot_{\alpha_1}f_{k})(z)
			\Big) 
			\cap {B}_{\scalebox{0.7}{$\mathbb{R} \times 
					L^2 
					\big( 
					[0,1], 
					\mathscr{E}_{k,\Gamma}
					\big)$}}(0, R)=\emptyset \quad 
			\text{for all}\   k\geq 
			M_{\gamma}\  \text{and 
				all}\  z \in {Z}.	
		\end{equation*}
		This implies 
		$(\gamma \cdot_{\alpha_1} f_{k})
		f_{k}=0$ 
		for all 
		$k\geq 
		M_{\gamma}$, 
		and hence
		$(\gamma\cdot_{\alpha_1}x) x=0$
		for every 
		$\gamma\in\Gamma$ 
		with 
		$d(\gamma,e)\geq c$.

		The general case follows by a standard approximation argument.
	\end{proof}

\section{Bott maps}
	
Let 
$\mathcal{A}^{\flat}_{\rm Sim}
(Z,[0,1],\mathscr{E})$
be  the  $C^*$-algebra  associated to
a coarse embedding  with finite complexity 
$h: \Gamma \to \mathscr{E}=
\bigoplus_{p=1}^{\infty}
\mathscr{X}_p$, 
constructed in the previous section.
In this section, 
we construct the Bott element	
	$\mathfrak{b}\in 
	KK_{0}^{\Gamma}
	\big( 
	\mathcal{S}, ~
	\mathcal{A}^{\flat}_{\rm Sim}
	(Z,
	[0,1],
	\mathscr{E})
	\big)$ 
	together with  the associated Bott maps.

	\subsection{The Bott element for the larger algebra} 
	As in Section 6, 
	we describe  the Bott element		$\mathfrak{b}\in 
	KK_{0}^{\Gamma}
	\big( 
	\mathcal{S}, ~
	\mathcal{A}^{\flat}_{\rm Sim}
	(Z,
	[0,1],
	\mathscr{E})
	\big)$ 
	in terms of  a special $\Gamma$-equivariant asymptotic morphism   recalled in Construction~\ref{constr:KK-facts-asymptotic} in the Appendix.
	The third condition in the definition of 
	coarse embedding  with finite complexity (Definition \ref{mostkeyidea1})  	
plays a  crucial role  in establishing 
the $\Gamma$-equivariance  in Proposition \ref{asymptoticGammathemostkeypoint}.

	Fix a rank-one projection $p_0\in\mathcal{K}$.	For each $t\geq1$,
	we define a map
	\begin{equation}
		\label{eq:theBottmapfottheenlargedalgebraselementarycasekdk}
		\mathfrak{T}_{t}^{\rm Sim}: 
		\mathcal{S}
		\longrightarrow  
		\mathcal{A}^{\flat}_{\rm Sim}
		(Z,[0,1],\mathscr{E})
		\subseteq 
		\mathcal{A}^{\scalebox{0.5}{$\prod$}}_{\rm Sim}
		(Z,[0,1],\mathscr{E})
	\end{equation}
	by
	$$	
	\mathfrak{T}_{t}^{\rm Sim} (f) =
	\Big[\big(~ 
	\beta_{1,{\bf 0}}(f_t) {\otimes} p_{0},~
	\beta_{2,{\bf 0}}(f_t) {\otimes} p_{0},~
	\cdots ~
	\big)\Big],
	$$
	where 
	$f_t(x)=f(\frac{x}{t})$,
	${\bf 0}$ is the zero function in 
	$C_{\rm Sim}
	\big(
	Z, ~
	L^2 ([0,1], \mathscr{E}_{k,\Gamma})
	\big)$  and 
	$\beta$ is given by  
	\eqref{eq:theenlargedfunctionalcalculusforthekthlevelsectionsttds}.
	By definition, 
	each $\mathfrak{T}_{t}^{\rm Sim}$ 
	is a $*$-homomorphism.

	\begin{prop}\label{asymptoticGammathemostkeypoint}
		For every 
		$\gamma\in\Gamma$ 
		and 
		$f\in\mathcal{S}$,  
		$$
		\lim\limits_{t\rightarrow\infty}
		\sup\limits_{\substack{s\in[0,1] \\  k\in\mathbb{N}}}
		\Big\|
		\gamma
		\cdot_{\alpha_{s}}
		\big(
		\beta_{k,{\bf 0}}(f_t)
		\big)-
		\beta_{k,{\bf 0}}(f_t)
		\Big\|=0.
		$$
		Moreover, for every
		$\gamma\in\Gamma$ 
		and 
		$f\in\mathcal{S}$,
		$$
		\lim\limits_{t\rightarrow \infty}
		\sup_{s\in[0,1]}
		\Big\|
		\gamma
		\cdot_{\alpha_{s}}
		\big(
		\mathfrak{T}_{t}^{\rm Sim}(f)
		\big)-
		\mathfrak{T}_{t}^{\rm Sim}(f)
		\Big\|
		=0.
		$$
	\end{prop}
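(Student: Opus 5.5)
The plan is to mirror the proof of Proposition \ref{asymptoticGammathemostkeypoint1}, keeping track of the parameter $s$ and of the level $k$ so that the resulting bound is uniform in both. By the Stone--Weierstrass theorem it suffices to treat $f(x)=(x+i)^{-1}$. For a general $f\in\mathcal{S}$ we approximate by polynomials in $(x\pm i)^{-1}$. The maps $\gamma\cdot_{\alpha_s}$ and $\beta_{k,{\bf 0}}(\cdot_t)$ are $*$-homomorphisms, hence contractive, so the approximation error is uniform in $s$, $k$ and $t$. Products are handled by the usual estimate $\|\gamma\cdot(ab)-ab\|\le\|\gamma\cdot a-a\|\|b\|+\|a\|\|\gamma\cdot b-b\|$.

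First I will establish the analogue of \eqref{eq:rulesforgammaactionforelementary} for the $\alpha_s$-actions, namely $\gamma\cdot_{\alpha_s}\beta_{k,\xi}(f)=\beta_{k,\gamma\cdot_{\alpha_s}\xi}(f)$. This identity follows by the same computation as in Section 5.1, using \eqref{eq:theparametrizedactionsonthekthlevelalgebrashahah}. The one new input is the commutativity of $\lambda^{(\gamma,s)}_k$ with $\varPsi_{k,[0,1]}$, which holds because $\varPsi_{k,[0,1]}$ is built from $\eta$-scalar and scalar extensions of Banach-valued Mazur maps; these act pointwise in $t\in[0,1]$ and $\tilde\gamma\in\Gamma$, so they commute with translation on the $[0,s]$ part.

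Next, $(\gamma\cdot_{\alpha_s}{\bf 0})(z)=\zeta_k^{(\gamma,\gamma^{-1}z,s)}$. The resolvent identity then gives, exactly as before,
\[
\big\|\gamma\cdot_{\alpha_s}\beta_{k,{\bf 0}}(f_t)-\beta_{k,{\bf 0}}(f_t)\big\|\le \frac1t\sup_{z\in Z}\sup_{v}\Big\|\varPsi_{k,[0,1]}\big(v-\zeta_k^{(\gamma,\gamma^{-1}z,s)}\big)-\varPsi_{k,[0,1]}(v)\Big\|.
\]
I will apply Lemma \ref{inequalityforPsi} with $w=-\zeta_k^{(\gamma,\gamma^{-1}z,s)}$. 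By \eqref{eq:thecocyclesfortheenlargedspacethekthlevels} we have $\|\varrho_{k,p}(w)\|=\sqrt{s}\,\|\varrho_{k,p}(\zeta_k^{(\gamma,\gamma^{-1}z)})\|$. The computation in \eqref{eq:theupperboundcalculationsforcocycles}, done componentwise, gives $\|\varrho_{k,p}(\zeta_k^{(\gamma,\gamma^{-1}z)})\|\le\rho_p^+(d(\gamma^{-1},e))$. Since $s\le1$ and $\omega$ is increasing, this yields the bound
\[
\frac1t\sqrt{\Big[\omega_{\phi^{\eta}_{1,[0,1]}}\big(\rho_1^+(d(\gamma^{-1},e))\big)\Big]^2+64\sum_{p=2}^{\infty}\Big((L_{\phi_p}+1)\rho_p^+(d(\gamma^{-1},e))\Big)^2}.
\]
This quantity is independent of $s$, $k$ and $z$. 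It is finite by Corollary \ref{finitemodulusofcontinuity} together with the third condition of Definition \ref{mostkeyidea1}, and so it tends to $0$ as $t\to\infty$. That proves the first assertion.

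The second assertion follows from the first. The action on $\mathcal{A}^{\flat}_{\rm Sim}(Z,[0,1],\mathscr{E})$ is diagonal and trivial on $\mathcal{K}$, so $\gamma\cdot_{\alpha_s}(\beta_{k,{\bf 0}}(f_t)\otimes p_0)-\beta_{k,{\bf 0}}(f_t)\otimes p_0$ has norm equal to the level-$k$ difference. The quotient norm is then bounded by the supremum over $k$.

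I expect the main obstacle to be the equivariance identity for $\beta$ under the $\alpha_s$-action, specifically checking that the partial translation $\lambda^{(\gamma,s)}_k$ intertwines with the $\eta$-scalar proper extension on the $p=1$ component. A norm on $L^2([0,1],\cdot)$ that is not preserved pointwise would break this. The check I will rely on is that $\lambda^{(\gamma,s)}_k$ is an isometry commuting with $\phi_{1,[0,1]}$ on spheres, which preserves $\|v\|$ and hence commutes with $\phi^{\eta}$.
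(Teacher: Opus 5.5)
Your proposal is correct and follows the paper's proof essentially step by step: the reduction to $(x+i)^{-1}$, the identity $\gamma\cdot_{\alpha_s}\beta_{k,\xi}(f)=\beta_{k,\gamma\cdot_{\alpha_s}\xi}(f)$, the resolvent estimate, Lemma \ref{inequalityforPsi} with the $\sqrt{s}$ factor from \eqref{eq:thecocyclesfortheenlargedspacethekthlevels}, and finiteness via the third condition of Definition \ref{mostkeyidea1} together with Corollary \ref{finitemodulusofcontinuity}. The paper simply cites \eqref{eq:rulesforgammaactionforelementary} for the equivariance identity, and your explicit check that $\lambda^{(\gamma,s)}_k$ commutes with $\varPsi_{k,[0,1]}$ (pointwise Mazur-type maps, with norm preservation handling the $\eta$-scalar part) supplies that step.
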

	\begin{proof}
		
		By the Stone--Weierstrass 
		theorem, it suffices to prove the proposition
		for the function
		$f(x) = (x + i)^{-1}$.
		
		For 
		$s\in[0,1]$,
		$\gamma\in\Gamma$,
		$k\in\mathbb{N}$
		and
		$z \in {Z}$, 
		recall from  \eqref{eq:thecocyclesfortheenlargedspacethekthlevels}
		that 
		the function $\zeta_{k}^{\scalebox{0.7}{$(\gamma,\gamma^{-1}z,s)$}}\in 
		L^2 ([0,1], \mathscr{E}_{k,\Gamma})$
		has the form:
		\begin{eqnarray*}
			\zeta_{k}^{\scalebox{0.7}{$(\gamma,\gamma^{-1}z,s)$}}(t)
			= 
			\begin{cases}
				\zeta_{k}^{\scalebox{0.7}{$(\gamma,\gamma^{-1}z)$}},& t\in[0,s],
				\\
				0, & t\in(s,1],
			\end{cases}
		\end{eqnarray*}
		where 
		$\zeta_{k}^{\scalebox{0.7}{$(\gamma,\gamma^{-1}z)$}}\in\mathscr{E}_{k,\Gamma}$
		is given by
		\begin{eqnarray*}
			\zeta^{\scalebox{0.7}{$(\gamma,\gamma^{-1}z)$}}_{k}(\tilde{\gamma})
			=	
			\sqrt{z_{\tilde{\gamma}}}
			\cdot 
			\Big( \mathfrak{h}_{k}(\gamma^{-1}\tilde{\gamma})-
			\mathfrak{h}_{k}(\tilde{\gamma})
			\Big), \quad  \tilde{\gamma}\in\Gamma.
		\end{eqnarray*}
		For each
		$p\geq 1$, 
		define 
		$\xi^{\scalebox{0.7}{$(\gamma,\gamma^{-1}z)$}}_{p}
		\in 
		\mathscr{X}_{p,\Gamma}$
		by
		\begin{eqnarray*}
			\xi^{\scalebox{0.7}{$(\gamma,\gamma^{-1}z)$}}_{p}(\tilde{\gamma})
			=	
			\sqrt{z_{\tilde{\gamma}}}
			\cdot 
			\Big( h_{p}(\gamma^{-1}\tilde{\gamma})-
			h_{p}(\tilde{\gamma})
			\Big), \quad  \tilde{\gamma}\in\Gamma,
		\end{eqnarray*}
		and define
		$\xi^{\scalebox{0.7}{$(\gamma,\gamma^{-1}z,s)$}}_{p}\in
		L^2 ([0,1], \mathscr{X}_{p,\Gamma})$ 
		by 
		\begin{eqnarray*}
			\xi^{\scalebox{0.7}{$(\gamma,\gamma^{-1}z,s)$}}_{p}(t)
			= 
			\begin{cases}
				\xi^{\scalebox{0.7}{$(\gamma,\gamma^{-1}z)$}}_{p},& t\in[0,s],
				\\
				0, & t\in(s,1].
			\end{cases}
		\end{eqnarray*}
		By construction,   
		$$
		\varrho_{k,p}\Big(\zeta_{k}^{\scalebox{0.7}{$(\gamma,\gamma^{-1}z,s)$}}\Big)=\xi^{\scalebox{0.7}{$(\gamma,\gamma^{-1}z,s)$}}_{p} \quad \text{for every } 
		1\leq p\leq k,
		$$  
		where 
		$\varrho_{k,p}$ is the map given in Lemma \ref{inequalityforPsi}. 
		Note that, by \eqref{eq:upperboundforthecocycles},
		\begin{equation}\label{eq:asmallestimatesonthevectiorjdlkjfklkkdkjflajkds}
			\big\|
			\xi^{\scalebox{0.7}{$(\gamma,\gamma^{-1}z)$}}_{p}
			\big\|  
			\leq 
			\rho^{+}_{p}
			\big(
			d(\gamma^{-1},e)
			\big)
		\end{equation}
		for all $p\geq1$, 
		$\gamma\in\Gamma$ 
		and 
		$z\in Z$.

		For every $\gamma\in\Gamma$, $s\in[0,1]$,  $k\in\mathbb{N}$  and $t\geq1$, 
		combining 
		\eqref{eq:rulesforgammaactionforelementary}, 
		\eqref{eq:thekthlevelaffineisometriesforenlargedspaces},  \eqref{eq:theGammasctionsparametrizedbysforthekthlevelsections},   \eqref{eq:theenlargedfunctionalcalculusforthekthlevellsdjfk},  \eqref{eq:theenlargedfunctionalcalculusforthekthlevelsectionsttds},   
		Lemma \ref{inequalityforPsi} and \eqref{eq:asmallestimatesonthevectiorjdlkjfklkkdkjflajkds},
		we obtain
		\begin{eqnarray*}
			&& 
			\Big\|
			\gamma
			\cdot_{\alpha_{s}}
			\big( \beta_{k,{\bf 0}}(f_t) \big)  
			-
			\beta_{k,{\bf 0}}(f_t)  
			\Big\|
			\\
			&=& 
			\Big\|
			\beta_{k, \gamma
				\cdot_{\alpha_{s}}{\bf 0}}(f_t)  
			-
			\beta_{k,{\bf 0}}(f_t)  
			\Big\|
			\\
			&=&
			\sup_{\scalebox{0.7}{$z\in{Z}$}}
			\bigg\| 
			f_{t}\Big(
			\varTheta \widehat{\otimes} 1+1\widehat{\otimes} 		\mathfrak{B}_{\scalebox{0.6}{$k,[0,1]$}}^{\scalebox{0.7}{$\zeta^{\scalebox{0.7}{$(\gamma,\gamma^{-1}z,s)$}}_{k}$}}
			\Big) 
			-  
			f_{t}\big(
			\varTheta \widehat{\otimes} 1+1\widehat{\otimes} 		\mathfrak{B}_{\scalebox{0.6}{$k,[0,1]$}}
			\big)
			\bigg\|
			\\
			&=&
			\sup_{\scalebox{0.7}{$z\in{Z}$}}
			\biggg\| 
			\Bigg[
			\frac{1}{t}
			\bigg(
			\varTheta \widehat{\otimes} 1+1\widehat{\otimes}\mathfrak{B}_{\scalebox{0.6}{$k,[0,1]$}}^{\scalebox{0.7}{$\zeta^{\scalebox{0.7}{$(\gamma,\gamma^{-1}z,s)$}}_{k}$}}
			\bigg)+
			i\Bigg]^{-1} 
			-    
			\bigg[
			\frac{1}{t}
			\big(
			\varTheta \widehat{\otimes} 1+1\widehat{\otimes}\mathfrak{B}_{\scalebox{0.6}{$k,[0,1]$}}
			\big)+
			i\bigg]^{-1} 
			\biggg\|
			\\
			&\leq&
			\sup_{\scalebox{0.7}{$z\in{Z}$}}
			\Biggg\{~
			\biggg\| 
			\Bigg[
			\frac{1}{t}
			\bigg(
			\varTheta \widehat{\otimes} 1+1\widehat{\otimes}\mathfrak{B}_{\scalebox{0.6}{$k,[0,1]$}}^{\scalebox{0.7}{$\zeta^{\scalebox{0.7}{$(\gamma,\gamma^{-1}z,s)$}}_{k}$}}
			\bigg)+
			i\Bigg]^{-1}
			\biggg\| 
			\cdot 
			\Bigg\|    
			\bigg[
			\frac{1}{t}
			\big(
			\varTheta \widehat{\otimes} 1+1\widehat{\otimes}\mathfrak{B}_{\scalebox{0.6}{$k,[0,1]$}}
			\big)+
			i\bigg]^{-1} 
			\Bigg\|
			\\
			&&
			\cdot 
			\Bigg\| 
			\frac{
				\mathfrak{B}_{\scalebox{0.6}{$k,[0,1]$}}^{^{\scalebox{0.7}{$\zeta^{\scalebox{0.7}{$(\gamma,\gamma^{-1}z,s)$}}_{k}$}}}-
				\mathfrak{B}_{\scalebox{0.6}{$k,[0,1]$}} 
			}{t} 
			\Bigg\|
			~\Biggg\}
			\\
			&\leq&
			\frac{1}{t}\cdot\sup_{\scalebox{0.7}{$z\in{Z}$}} 
			\Big\| \mathfrak{B}_{\scalebox{0.6}{$k,[0,1]$}}^{\scalebox{0.7}{$\zeta^{\scalebox{0.7}{$(\gamma,\gamma^{-1}z,s)$}}_{k}$}}-
			\mathfrak{B}_{\scalebox{0.6}{$k,[0,1]$}}\Big\|
			\\
			&=&
			\frac{1}{t}\cdot\sup_{\scalebox{0.7}{$z\in{Z}$}} \sup_{\scalebox{0.7}{$v\in
					L^2 
					\big( 
					[0,1], \mathscr{E}_{k,\Gamma}\big)
					$}}
			\bigg\| \varPsi_{\scalebox{0.6}{$k, [0,1]$}}
			\Big(
			v-
			\zeta^{\scalebox{0.7}{$(\gamma,\gamma^{-1}z,s)$}}_{k}
			\Big)
			-
			\varPsi_{\scalebox{0.6}{$k, [0,1]$}}
			(v)
			\bigg\|
			\\
			&\leq&
			\frac{1}{t}\cdot\sup_{\scalebox{0.7}{$z\in{Z}$}} 		\sqrt{	
				\bigg[
				\omega_{\scalebox{0.7}{$
						\phi_{\scalebox{0.6}{$1,[0,1]$}}^{\eta}
						$}}
				\Big(
				\big\|
				\xi^{\scalebox{0.7}{$(\gamma,\gamma^{-1}z,s)$}}_{1}
				\big\|
				\Big)
				\bigg]^2	
				+
				64 \cdot \sum\limits_{p=2}^{k}
				\Big(
				(L_{\phi_p}+1) 
				\cdot
				\big\|
				\xi^{\scalebox{0.7}{$(\gamma,\gamma^{-1}z,s)$}}_{p}
				\big\|
				\Big)^2
			}
			\\
			&\leq&
			\frac{1}{t}\cdot\sup_{\scalebox{0.7}{$z\in{Z}$}} 		\sqrt{	
				\bigg[
				\omega_{\scalebox{0.7}{$
						\phi_{\scalebox{0.6}{$1,[0,1]$}}^{\eta}
						$}}
				\Big(
				\big\|
				\xi^{\scalebox{0.7}{$(\gamma,\gamma^{-1}z)$}}_{1}
				\big\|
				\Big)
				\bigg]^2	
				+
				64 \cdot \sum\limits_{p=2}^{k}
				\Big(
				(L_{\phi_p}+1) 
				\cdot
				\big\|
				\xi^{\scalebox{0.7}{$(\gamma,\gamma^{-1}z)$}}_{p}
				\big\|
				\Big)^2
			}
			\\
			&\leq&
			\frac{1}{t}\cdot	\sqrt{	
				\bigg[
				\omega_{\scalebox{0.7}{$
						\phi_{\scalebox{0.6}{$1,[0,1]$}}^{\eta}
						$}}
				\Big(
				\rho_{1}^{+}\big(d(\gamma^{-1},e)
				\big)
				\Big)
				\bigg]^2	
				+
				64 \cdot \sum\limits_{p=2}^{k}
				\Big( 
				(L_{\phi_p}+1)
				\cdot
				\rho_{p}^{+}\big(d(\gamma^{-1},e)
				\big)
				\Big) ^2
			}
			\\
			&\leq&
			\frac{1}{t}\cdot	\sqrt{	
				\bigg[
				\omega_{\scalebox{0.7}{$
						\phi_{\scalebox{0.6}{$1,[0,1]$}}^{\eta}
						$}}
				\Big(
				\rho_{1}^{+}\big(d(\gamma^{-1},e)
				\big)
				\Big)
				\bigg]^2	
				+
				64 \cdot \sum\limits_{p=2}^{\infty}
				\Big( 
				(L_{\phi_p}+1)
				\cdot
				\rho_{p}^{+}\big(d(\gamma^{-1},e)
				\big)
				\Big) ^2
			}.
		\end{eqnarray*} 
		Note that $h:\Gamma\to\mathscr{E}=
		\bigoplus_{p=1}^{\infty}
		\mathscr{X}_p$ 
		is a coarse embedding  with finite complexity.
		It follows from the third condition of Definition \ref{mostkeyidea1}  that  
		\begin{eqnarray*}
			\sum\limits_{p=2}^{\infty}
			\Big( 
			(L_{\phi_p}+1)
			\cdot
			\rho_{p}^{+}\big(d(\gamma^{-1},e)
			\big)
			\Big) ^2 <+\infty \quad  \text{for every} \ 
			\gamma\in\Gamma.
		\end{eqnarray*}
		On the other hand, 
		by Corollary \ref{finitemodulusofcontinuity}, 
		$$	
		\omega_{\scalebox{0.7}{$
				\phi_{\scalebox{0.6}{$1,[0,1]$}}^{\eta}
				$}}
		\Big(
		\rho_{1}^{+}\big(d(\gamma^{-1},e)
		\big)
		\Big)
		<+\infty \quad  \text{for every} \ 
		\gamma\in\Gamma.
		$$
		Therefore,  
		$$
		\lim\limits_{t\rightarrow\infty}
		\sup\limits_{\substack{s\in[0,1] \\  k\in\mathbb{N}}}
		\Big\|
		\gamma
		\cdot_{\alpha_{s}}
		\big( \beta_{k,{\bf 0}}(f_t) \big)  
		-
		\beta_{k,{\bf 0}}(f_t)  
		\Big\|=0 \quad \text{for each} \ 
		\gamma\in\Gamma.
		$$

		Furthermore, for every 
		$\gamma\in\Gamma$,    
		\begin{equation*}
			\lim\limits_{t\rightarrow \infty}
			\sup_{s\in[0,1]}
			\Big\|
			\gamma
			\cdot_{\alpha_{s}}
			\big(
			\mathfrak{T}_{t}^{\rm Sim}(f)
			\big)-
			\mathfrak{T}_{t}^{\rm Sim}(f)
			\Big\|
			~ \leq ~
			\lim\limits_{t\rightarrow\infty}
			\sup\limits_{\substack{s\in[0,1] \\  k\in\mathbb{N}}}
			\Big\|
			\gamma
			\cdot_{\alpha_{s}}
			\big( \beta_{k,{\bf 0}}(f_t) \big)  
			-
			\beta_{k,{\bf 0}}(f_t)  
			\Big\| ~ = ~ 0.
		\end{equation*}
		This completes the proof.
	\end{proof}

	From now on,
	we insert the superscripts $\alpha_s$ into the notation for equivariant $KK$-groups (see the Appendix)
	to specify the $\Gamma$-$\alpha_s$-actions.
	When the action is trivial, we suppress the superscript.

	By  Proposition \ref{asymptoticGammathemostkeypoint} 
	and 
	Construction \ref{constr:KK-facts-asymptotic} 
	in the Appendix, 
	we have  	 	
	$$
	\big[\big(
	\mathfrak{T}_{t}^{\rm Sim}
	\big)_{t\in[1,\infty)}\big]
	\in KK^{\Gamma,\alpha_s}_{0} 
	\big(
	\mathcal{S}, ~
	\mathcal{A}^{\flat}_{\rm Sim}
	(Z,[0,1],\mathscr{E})
	\big)
	$$
	and 
\begin{equation}
	\label{eq:thebottmawoqunimaklkdsjflksjlkfjlkkkkofghhjjo}
	\big[\big(
	\mathfrak{T}_{t}^{\rm Sim}
	\big)_{t\in[1,\infty)}\big]
	\in KK^{\Gamma,\alpha_s}_{0} 
	\big(
	\mathcal{S}, ~
	\mathcal{A}^{\scalebox{0.5}{$\prod$}}_{\rm Sim}
	(Z,[0,1],\mathscr{E})
	\big)
\end{equation}
	for each $s\in[0,1]$.
The first of these two elements induces the following Bott map, and the second will be used to establish the injectivity of the Bott map. 

Let $E\Gamma$ be the universal space for proper and free action of the group $\Gamma$ and let $\underline{E}\Gamma$ be the universal space for proper action of the group $\Gamma$. The group homomorphisms ($s\in[0,1]$)
\begin{eqnarray}
	\label{eq:thebottmapforfreeactionofGamma}
	&	KK^{\Gamma}_{*}
	(E\Gamma,
	\mathcal{S}) 
	\longrightarrow 
	KK^{\Gamma, \alpha_{s}}_{*}
	\Big( 
	E\Gamma,~
	\mathcal{A}^{\flat}_{\rm Sim}
	(Z,[0,1],\mathscr{E})
	\Big)
\end{eqnarray}
and 
\begin{eqnarray}
	\label{eq:thebottmapforproperactionfofgammaassdf}
	&    KK^{\Gamma}_{*}
	(\underline{E}\Gamma,
	\mathcal{S}) 
	\longrightarrow 
	KK^{\Gamma, \alpha_{s}}_{*}
	\Big( 
	\underline{E}\Gamma,~
	\mathcal{A}^{\flat}_{\rm Sim}
	(Z,[0,1],\mathscr{E})
	\Big)
\end{eqnarray}
induced by 
taking Kasparov product with  
$\big[\big(
\mathfrak{T}_{t}^{\rm Sim}
\big)_{t\in[1,\infty)}\big]
\in KK^{\Gamma,\alpha_s}_{0} 
\big(
\mathcal{S}, ~
\mathcal{A}^{\flat}_{\rm Sim}
(Z,[0,1],\mathscr{E})
\big)$
are called
\emph{Bott maps}.

Note that 
for each fixed  $t^{\prime}\geq1$,  \eqref{eq:rulesforgammaactionforelementary} and Remark \ref{bubiandehomotopyoftheendsofactiondlkfjkkkiejoiuxoksaleuaslkrso}
	imply that
	$\mathfrak{T}_{t^{\prime}}^{\rm Sim}$ 
	is a $\Gamma$-equivariant $*$-homomorphism with respect to the $\Gamma$-$\alpha_0$-action on 
	$\mathcal{A}^{\scalebox{0.5}{$\prod$}}_{\rm Sim}
	(Z,[0,1],\mathscr{E})$.
	It follows that
	each 	
	$\mathfrak{T}_{t^{\prime}}^{\rm Sim}$ 
	determines an element
		\begin{equation*}
		\label{eq:theequalityforasymptoticandonetwoqunidayekdjkdskldsjflm}
		\big[
		\mathfrak{T}_{t^\prime}^{\rm Sim}
		\big]
		\in 
		KK^{\Gamma,\alpha_0}_{0} 
		\big(
		\mathcal{S},~ 
		\mathcal{A}^{\scalebox{0.5}{$\prod$}}_{\rm Sim}
		(Z,[0,1],\mathscr{E})
		\big).
	\end{equation*}
Moreover,	
	\begin{equation}
		\label{eq:theequalityforasymptoticandoneterm}
		\big[\big(
		\mathfrak{T}_{t}^{\rm Sim}
		\big)_{t\in[1,\infty)}\big]
		=\big[
		\mathfrak{T}_{t^\prime}^{\rm Sim}
		\big]
		\quad \text{in }
		KK^{\Gamma,\alpha_0}_{0} 
		\big(
		\mathcal{S},~ 
		\mathcal{A}^{\scalebox{0.5}{$\prod$}}_{\rm Sim}
		(Z,[0,1],\mathscr{E})
		\big)
	\end{equation}
	for any 
	$t^\prime\in[1,\infty)$.
This fact will be useful in the proof of Proposition \ref{injectivityofbottmapdklsjfalksdjlfkjas}.

\subsection{Factorizations of the asymptotic morphism} 
	
As mentioned in the previous subsection, 
the asymptotic morphism in \eqref{eq:thebottmawoqunimaklkdsjflksjlkfjlkkkkofghhjjo} 
will be used to establish  the injectivity of the Bott map.
In this subsection, 
we construct factorizations of this asymptotic morphism  which will  play a crucial role in proving
the injectivity of the Bott map 
in Section 9.

	For each $t \geq 1$, in analogy with \eqref{eq:theBottmapfottheenlargedalgebraselementarycasekdk},
	we obtain a $*$-homomorphism
	\begin{eqnarray*}
		\mathfrak{T}_{[0,1],t}^{\rm Sim}: 
		\mathcal{S}
		& \longrightarrow & 
		\mathcal{A}^{\scalebox{0.5}{$\prod$}}_{{\rm Sim}, [0,1]}
		(Z,[0,1],\mathscr{E})
		\\
		f & \longmapsto &
		\big[( 
		F_{1,t} {\otimes} p_{0},~ F_{2,t} {\otimes} p_{0},~ 
		\cdots ~)\big],
	\end{eqnarray*}
	where each $F_{k,t}$ is a constant function
	on $[0,1]$ with value
	$\beta_{k,{\bf 0}}(f_t) \in \mathcal{A}
	\Big(
	C_{\rm Sim}
	\big(
	Z, ~
	L^2 ([0,1], \mathscr{E}_{k,\Gamma})
	\big)
	\Big)$.

	We remark that Proposition \ref{asymptoticGammathemostkeypoint} 
	implies that
	$$
	\lim\limits_{t\rightarrow \infty}
	\Big\|
	\gamma
	\cdot_{\alpha_{[0,1]}}
	\big(
	\mathfrak{T}_{[0,1],t}^{\rm Sim}(f)
	\big)-
	\mathfrak{T}_{[0,1],t}^{\rm Sim}(f)
	\Big\|
	=0
	$$
	for all $\gamma\in\Gamma$ and $f\in\mathcal{S}$.
	Combined with Construction \ref{constr:KK-facts-asymptotic},
	this yields   
	$$
	\big[	
	\big(
	\mathfrak{T}_{[0,1],t}^{\rm Sim}
	\big)_{t\in[1,\infty)}\big]\in KK^{\Gamma,\alpha_{[0,1]}}_{0} 
	\big(
	\mathcal{S}, ~
	\mathcal{A}^{\scalebox{0.5}{$\prod$}}_{{\rm Sim}, [0,1]}
	(Z,[0,1],\mathscr{E})
	\big).
	$$
	Moreover, for every $t\geq1$ and $s\in[0,1]$, 
	the following diagram commutes:
	\[\begin{tikzcd}[row sep=large]
		\mathcal{A}^{\scalebox{0.5}{$\prod$}}_{{\rm Sim}, [0,1]}
		(Z,[0,1],\mathscr{E})
		\arrow[r, "{\rm ev}_s"] &
		\mathcal{A}^{\scalebox{0.5}{$\prod$}}_{\rm Sim}
		(Z,[0,1],\mathscr{E}),
		\\
		\mathcal{S} 
		\arrow[u, "\mathfrak{T}_{[0,1],t}^{\rm Sim}" '] \arrow[ur, "\mathfrak{T}_{t}^{\rm Sim}" '] 
		&
	\end{tikzcd}\] 
	where ${\rm ev}_s$ is the $*$-homomorphism  given in Remark \ref{evaluationswichisgammaequivalentwithrestoeachaction}. 
	Hence, 
	\begin{eqnarray}
		\label{eq:Kasparovsproductforevaluationmaps}
		\big[	
		\big(
		\mathfrak{T}_{[0,1],t}^{\rm Sim}
		\big)_{t\in[1,\infty)}\big]
		\otimes 
		[{\rm ev}_s]
		= \big[\big(
		\mathfrak{T}_{t}^{\rm Sim}
		\big)_{t\in[1,\infty)}\big],
	\end{eqnarray}
	where 
	$\big[\big(
	\mathfrak{T}_{t}^{\rm Sim}
	\big)_{t\in[1,\infty)}\big]
	\in KK^{\Gamma,\alpha_s}_{0} 
	\big(
	\mathcal{S}, ~
	\mathcal{A}^{\scalebox{0.5}{$\prod$}}_{\rm Sim}
	(Z,[0,1],\mathscr{E})
	\big)$ and
	$\otimes$ is the Kasparov product in \eqref{eq:thekasparovproductorinalpapers}.

	\begin{defn}\label{withoutZalgebra} 
		We define $\mathcal{A}^{\scalebox{0.5}{$\prod$}}
		([0,1],	\mathscr{E})$
		to be the quotient 
		$C^*$-algebra 
		\begin{eqnarray*}
			\dfrac{\prod\limits_{k=1}^{\infty}
				\hspace{0.3cm}\mathcal{A}
				\big(
				L^2 
				([0,1], \mathscr{E}_{k,\Gamma})
				\big)
				{\otimes}\mathcal{K}}{\bigoplus\limits_{k=1}^{\infty}
				\hspace{0.3cm}\mathcal{A}
				\big(
				L^2 
				([0,1], \mathscr{E}_{k,\Gamma})
				\big)
				{\otimes}\mathcal{K}},
		\end{eqnarray*} 
		where each  
		$\mathcal{A}
		\big(
		L^2 
		([0,1], \mathscr{E}_{k,\Gamma})
		\big)$ 
		is obtained from Definition \ref{basicproperalgebra-geng} by using the 
		$*$-homomorphism \eqref{eq:theenlargedfunctionalcalculusforthekthlevellsdjfk}.
	\end{defn}
	We endow $\mathcal{A}^{\scalebox{0.5}{$\prod$}}
	([0,1],	\mathscr{E})$ 
	with the trivial  $\Gamma$-action.
	This algebra will be useful in the proof of Proposition \ref{klsdjkkkkjksljflksdjf},
	since  the $*$-homomorphisms 
	$\mathfrak{T}_{t}^{\rm Sim}$ 
	from 
	$\mathcal{S}$  to	$\mathcal{A}^{\scalebox{0.5}{$\prod$}}_{\rm Sim}
	(Z,[0,1],\mathscr{E})$
	factor through  
	$\mathcal{A}^{\scalebox{0.5}{$\prod$}}
	([0,1],	\mathscr{E})$. 
	We now introduce it.

	For each $t \geq 1$,  
	in analogy with \eqref{eq:theBottmapfottheenlargedalgebraselementarycasekdk},
	we obtain a $*$-homomorphism
	\begin{equation}
		\label{eq:thebottmapforthequiteemelemtarycasewithoutgammaactions}
		\begin{array}{rcl}
			\mathfrak{T}_{t}: ~ \mathcal{S} 
			& \longrightarrow & 
			\mathcal{A}^{\scalebox{0.5}{$\prod$}}
			([0,1],	\mathscr{E})
			\\
			f & \longmapsto &
			\big[( ~
			\beta_{1}(f_t) {\otimes} p_{0},~
			\beta_{2}(f_t) {\otimes} p_{0},~
			\cdots ~)
			\big],
		\end{array}
	\end{equation}
	where 
	$\beta_{k}$ is given by 
	\eqref{eq:theenlargedfunctionalcalculusforthekthlevellsdjfk}.
	We use the same symbol as in \eqref{eq:thebottmapforelementaryalgebras},
	which causes no confusion, since the map in
	\eqref{eq:thebottmapforelementaryalgebras}  is not used after Section 6.

	The family of embeddings 
	$\{\iota_{\scalebox{0.55}{$k$}}\}_{k\in\mathbb{N}}$ 
	in Remark \ref{taizongyaolewoqunitayejdksjfklsdhahawoqunidkfjaoksd}
	induces an embedding	
	\begin{equation}
		\label{eq:theembeddingmapforkthlevelalgebraskjl}
		\iota: 
		\mathcal{A}^{\scalebox{0.5}{$\prod$}}
		([0,1],	\mathscr{E})
		\longrightarrow
		\mathcal{A}^{\scalebox{0.5}{$\prod$}}_{\rm Sim}
		(Z,[0,1],\mathscr{E}),
	\end{equation}
	which intertwines the 
	trivial $\Gamma$-action on 
	$\mathcal{A}^{\scalebox{0.5}{$\prod$}}
	([0,1],\mathscr{E})$
	and the
	$\Gamma$-$\alpha_0$-action on 
	$\mathcal{A}^{\scalebox{0.5}{$\prod$}}_{\rm Sim}
	(Z,[0,1],\mathscr{E})$.
	Since,
	for every $t\geq1$,
	the diagram
	\[\begin{tikzcd}[row sep=large]
		&
		\mathcal{A}^{\scalebox{0.5}{$\prod$}}_{\rm Sim}
		(Z,[0,1],\mathscr{E})
		\\
		\mathcal{S} 
		\arrow[ur, "\mathfrak{T}_{t}^{\rm Sim}" ] 
		\arrow[r, "\mathfrak{T}_{t}" ']&
		\mathcal{A}^{\scalebox{0.5}{$\prod$}}
		([0,1],\mathscr{E}) 
		\arrow[u, "\iota" ']
	\end{tikzcd}\]
	commutes,
	we obtain 
	\begin{eqnarray}
		\label{eq:thekasparocproductforembeddingmap}
		&& \big[	
		\mathfrak{T}_{t}\big]
		\otimes 
		[\iota]
		= \big[
		\mathfrak{T}_{t}^{\rm Sim}
		\big] \quad \text{for each } 
		t \geq 1,
	\end{eqnarray}
	where 
	$\big[	
	\mathfrak{T}_{t}\big]\in 
	KK_{0}\big(\mathcal{S}, ~
	\mathcal{A}^{\scalebox{0.5}{$\prod$}}
	([0,1],\mathscr{E})\big)$
	and
	$\big[
	\mathfrak{T}_{t}^{\rm Sim}
	\big]
	\in KK^{\Gamma,\alpha_0}_{0} 
	\big(
	\mathcal{S}, ~ 
	\mathcal{A}^{\scalebox{0.5}{$\prod$}}_{\rm Sim}
	(Z,[0,1],\mathscr{E})
	\big)$.

	\section{Rational injectivity of the Bott map}

	In this section,
	we show that the Bott map constructed in the previous section is rationally injective. 
	More precisely, 
	for each $s\in[0,1]$,
	we  prove 
	the composition of the group homomorphisms  
	\[\begin{tikzcd}[row sep=1cm, column sep=0.6cm]
		KK^{\Gamma}_{*}
		(E\Gamma,
		\mathcal{S}) 
		\otimes_{\mathbb{Z}} \mathbb{Q}	\arrow[r] &		KK^{\Gamma, \alpha_{s}}_{*}
		\Big( 
		E\Gamma,~
		\mathcal{A}^{\flat}_{\rm Sim}
		(Z,[0,1],\mathscr{E})
		\Big)
		\otimes_{\mathbb{Z}} \mathbb{Q} \arrow[r] &
		KK_{\mathbb{R}, *}^{\Gamma,\alpha_{s}}
		\Big( 
		E\Gamma,~
		\mathcal{A}^{\flat}_{\rm Sim}
		(Z,[0,1],\mathscr{E})
		\Big)
	\end{tikzcd}\]
	is injective,
	where
	the first map is given by 
	\eqref{eq:thebottmapforfreeactionofGamma} 
	and the second one is given by the natural map mentioned in Construction \ref{AASdakljdksjf}.
	For simplicity,
	we denote this composition by 
	\[
	\begin{matrix}
		KK^{\Gamma}_{*}
		(E\Gamma,
		\mathcal{S}) 
		\otimes_{\mathbb{Z}} \mathbb{Q}
		\xlongrightarrow{\big[\big(
			\mathfrak{T}_{t}^{\rm Sim}
			\big)_{t\in[1,\infty)}\big]}
		KK_{\mathbb{R}, *}^{\Gamma,\alpha_{s}}
		\Big( 
		E\Gamma,~
		\mathcal{A}^{\flat}_{\rm Sim}
		(Z,[0,1],\mathscr{E})
		\Big),
	\end{matrix}
	\] 
	and all the other similar maps in this section will be simplified in the same way.

	It suffices to prove the injectivity of group homomorphism
	\begin{eqnarray}
		\label{eq:therationallybottmapforfreeactionofgammas}
		& KK^{\Gamma}_{*}
		(E\Gamma,
		\mathcal{S}) 
		\otimes_{\mathbb{Z}} \mathbb{Q}
		\xlongrightarrow{\big[\big(
			\mathfrak{T}_{t}^{\rm Sim}
			\big)_{t\in[1,\infty)}\big]}
		KK_{\mathbb{R}, *}^{\Gamma,\alpha_{s}}
		\Big( 
		E\Gamma,~
		\mathcal{A}^{\scalebox{0.5}{$\prod$}}_{\rm Sim}
		(Z,[0,1],\mathscr{E})
		\Big)
	\end{eqnarray} 
	due to the commuting diagram
	\begin{eqnarray}
		\label{eq:thecommutativedigramsafortwoalgebrasd}
		\begin{tikzcd}[row sep=0.6cm, column sep=3cm]
			KK^{\Gamma}_{*}
			(E\Gamma,
			\mathcal{S}) 
			\otimes_{\mathbb{Z}} \mathbb{Q} \arrow[r,"{\big[\big(
				\mathfrak{T}_{t}^{\rm Sim}
				\big)_{t\in[1,\infty)}\big]}"] \arrow[dr,"{\big[\big(
				\mathfrak{T}_{t}^{\rm Sim}
				\big)_{t\in[1,\infty)}\big]}" ',pos=0.8] &
			KK_{\mathbb{R}, *}^{\Gamma,\alpha_{s}}
			\Big( 
			E\Gamma,~
			\mathcal{A}^{\flat}_{\rm Sim}
			(Z,[0,1],\mathscr{E})
			\Big)\arrow[d]
			\\
			&KK_{\mathbb{R}, *}^{\Gamma,\alpha_{s}}
			\Big( 
			E\Gamma,~
			\mathcal{A}^{\scalebox{0.5}{$\prod$}}_{\rm Sim}
			(Z,[0,1],\mathscr{E})
			\Big),
		\end{tikzcd}
	\end{eqnarray}
	where the right vertical map is induced by the inclusion  
	$\mathcal{A}^{\flat}_{\rm Sim}
	(Z,[0,1],\mathscr{E})
	\subseteq 
	\mathcal{A}^{\scalebox{0.5}{$\prod$}}_{\rm Sim}
	(Z,[0,1],\mathscr{E})$.

	We briefly outline the strategy of the proof.  
	Consider the following commuting diagram:
	{\small 
		\[\begin{tikzcd}[column sep=0.3cm, row sep=1.8cm]
			&KK_{*}^{\Gamma}(E\Gamma, 	\mathcal{S}) 	
			\otimes_{\mathbb{Z}} \mathbb{Q}
			\arrow[ddl, bend right, "{\big[\big(
				\mathfrak{T}_{t}^{\rm Sim}
				\big)_{t\in[1,\infty)}\big]}" '] 
			\arrow[ddr, bend left, "{\big[\big(
				\mathfrak{T}_{t}^{\rm Sim}
				\big)_{t\in[1,\infty)}\big]}" ] 
			\arrow[d, "{\big[	
				\big(
				\mathfrak{T}_{[0,1],t}^{\rm Sim}
				\big)_{t\in[1,\infty)}\big]
			}" ] &
			\\
			&KK_{\mathbb{R},*}^{\Gamma, \alpha_{[0,1]}}
			\Big(E\Gamma, ~  
			\mathcal{A}^{\scalebox{0.5}{$\prod$}}_{{\rm Sim}, [0,1]}
			(Z,[0,1],\mathscr{E})
			\Big)
			\arrow[dl, "{\otimes 
				[{\rm ev}_0]
			} "]  \arrow[dr, "{\otimes 
				[{\rm ev}_s]
			}" '] &
			\\
			KK_{\mathbb{R}, *}^{\Gamma,\alpha_{0}}
			\Big( 
			E\Gamma, ~
			\mathcal{A}^{\scalebox{0.5}{$\prod$}}_{\rm Sim}
			(Z,[0,1],\mathscr{E})
			\Big)
			&& KK_{\mathbb{R}, *}^{\Gamma,\alpha_{s}}
			\Big( 
			E\Gamma, ~
			\mathcal{A}^{\scalebox{0.5}{$\prod$}}_{\rm Sim}
			(Z,[0,1],\mathscr{E})
			\Big),
		\end{tikzcd}
		\]}
	
	\noindent 
	where  $s \in (0,1]$.
	We first show that 
	the  map 
	$\otimes 
	[{\rm ev}_s]$ 
	is an isomorphism (see Proposition \ref{isomorphismbyevt})
	for every $s\in[0,1]$. 
	Then we prove that         
	the upper-left map is injective 
	(see Proposition \ref{klsdjkkkkjksljflksdjf}) 
	by observing that the $\Gamma$-$\alpha_0$-action on $\mathcal{A}^{\scalebox{0.5}{$\prod$}}_{\rm Sim}
	(Z,[0,1],\mathscr{E})$ is almost trivial.     
	Thus, 
	$\big[	
	\big(
	\mathfrak{T}_{[0,1],t}^{\rm Sim}
	\big)_{t\in[1,\infty)}\big]$
	is injective and hence yields the injectivity of 
	the upper-right map.

	\begin{lem}\label{isomorphismasmallkeyforlaterresults}
		Let 
		$\big\{ 
		\sigma_k: A_k \to  B_k \big\}_{k\in\mathbb{N}}$
		be a family of $*$-homomorphisms 
		between $C^*$-algebras.
		If each $\sigma_k$ induces an isomorphism in $K$-theory, then 
		so does the induced $*$-homomorphism
		\begin{equation*}
			\begin{array}{ccc}
				\dfrac{\prod\limits_{k=1}^{\infty}
					A_k{\otimes}\mathcal{K}}{\bigoplus\limits_{k=1}^{\infty}
					A_k{\otimes}\mathcal{K}} 
				&  \longrightarrow  & 
				\dfrac{\prod\limits_{k=1}^{\infty}
					B_k{\otimes}\mathcal{K}}{\bigoplus\limits_{k=1}^{\infty}
					B_k{\otimes}\mathcal{K}}\ 
				.
			\end{array} 
		\end{equation*} 
	\end{lem}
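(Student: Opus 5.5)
The argument I would give uses the six-term exact sequence attached to the short exact sequence
\[
0\longrightarrow \bigoplus_{k} A_k\otimes\mathcal{K}\longrightarrow \prod_{k} A_k\otimes\mathcal{K}\longrightarrow \frac{\prod_k A_k\otimes\mathcal{K}}{\bigoplus_k A_k\otimes\mathcal{K}}\longrightarrow 0,
\]
and the corresponding sequence for the $B_k$. The family $\{\sigma_k\otimes\mathrm{id}_{\mathcal{K}}\}$ induces a morphism of short exact sequences. By naturality of the boundary maps it therefore induces a morphism between the two six-term exact sequences. By the five lemma, it then suffices to show that the induced maps on $K_*$ of the ideals $\bigoplus_k$ and of the products $\prod_k$ are isomorphisms.

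For the direct sums, continuity of $K$-theory under inductive limits gives $K_*\big(\bigoplus_k A_k\otimes\mathcal{K}\big)\cong\bigoplus_k K_*(A_k)$, naturally in the family. The induced map is then $\bigoplus_k(\sigma_k)_*$, which is an isomorphism by hypothesis.

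The products are the main point, and this is exactly where the stabilization by $\mathcal{K}$ is used. I would invoke the standard fact that for any family $\{C_k\}$ the natural map
\[
K_*\Big(\prod_k C_k\otimes\mathcal{K}\Big)\longrightarrow \prod_k K_*(C_k)
\]
is an isomorphism. The proof runs as follows.

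For \emph{surjectivity} in $K_0$, take classes $x_k=[p_k]-[q_k]$. After stabilization we may take $p_k,q_k$ to be projections in $(C_k\otimes\mathcal{K})^{\sim}$, using a single common unitization level $M_2$ for every $k$; this is possible because $M_n(\mathcal{K})\cong\mathcal{K}$. The norms are uniformly bounded, so $(p_k)$ and $(q_k)$ define projections in the product.

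For \emph{injectivity}, suppose the componentwise classes vanish. Then in each coordinate there is a homotopy or unitary equivalence inside $M_{n_k}(\cdot)$. Because $\mathcal{K}\otimes\mathcal{K}\cong\mathcal{K}$, all these can be absorbed into a single matrix level. The implementing unitaries are automatically bounded by $1$, so they assemble into a unitary in the product.

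For $K_1$, apply suspension: $S\big(\prod_k C_k\otimes\mathcal{K}\big)$ embeds into $\prod_k SC_k\otimes\mathcal{K}$. Instead, I would use that unitaries and paths of unitaries are uniformly bounded, and work with $M_2$ over the stabilized algebra in the same way. The key technical point is to obtain uniform control of the Lipschitz constants of the homotopies. This can be handled by the fact that homotopic unitaries in a stabilized algebra are joined by products of a bounded number of exponentials of bounded norm. Alternatively, it follows from the known result $K_*(\prod C_k\otimes\mathcal{K})\cong\prod K_*(C_k)$, which I would cite if needed.

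Given this, the map on products is $\prod_k(\sigma_k)_*$, again an isomorphism by hypothesis. The five lemma applied to the morphism of six-term sequences then yields the isomorphism on the quotients.
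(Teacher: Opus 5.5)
Your proposal is correct and takes essentially the same route as the paper. The paper also uses the six-term sequences and the five lemma to reduce to two cases: the ideals $\bigoplus_k$, handled by continuity of $K$-theory under inductive limits, and the products $\prod_k$, handled by citing Willett--Yu, Proposition 2.7.12, namely $K_*(\prod_k C_k\otimes\mathcal{K})\cong\prod_k K_*(C_k)$. Citing that result is the right call, because your self-contained $K_1$ sketch does not prove it: a uniform bound on the exponential length of null-homotopic unitaries in stabilized algebras is essentially a restatement of the $K_1$-injectivity you need, not an independent standard fact.
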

	\begin{proof}
		We  consider the following commuting diagram of short exact sequences of $C^*$-algebras:
		\[
		\begin{tikzcd}[row sep=0.8cm, column sep=1cm]
			0 \arrow[r] &  \bigoplus\limits_{k=1}^{\infty}
			A_k{\otimes}\mathcal{K} \arrow[r] \arrow[d,"\bigoplus_k\sigma_k{\otimes} {\rm id}"] &  \prod\limits_{k=1}^{\infty}
			A_k{\otimes}\mathcal{K} \arrow[r] \arrow[d, "\prod_k\sigma_k{\otimes} {\rm id}"] &  \dfrac{\prod\limits_{k=1}^{\infty}
				A_k{\otimes}\mathcal{K}}{\bigoplus\limits_{k=1}^{\infty}
				A_k{\otimes}\mathcal{K}} \arrow[r] \arrow[d] & 0 
			\\
			0 \arrow[r] &  \bigoplus\limits_{k=1}^{\infty}
			B_k{\otimes}\mathcal{K} \arrow[r] &  \prod\limits_{k=1}^{\infty}
			B_k{\otimes}\mathcal{K} \arrow[r]  &  \dfrac{\prod\limits_{k=1}^{\infty}
				B_k{\otimes}\mathcal{K}}{\bigoplus\limits_{k=1}^{\infty}
				B_k{\otimes}\mathcal{K}} \arrow[r]  & 0. 
		\end{tikzcd}
		\]
		By the Mayer--Vietoris  and Five lemma arguments, it suffices to show that $\bigoplus_k\sigma_k{\otimes} {\rm id}$ 
		and $\prod_k\sigma_k{\otimes} {\rm id}$ induce isomorphisms at the level of $K$-theory. 
		
		Since every $\sigma_k$ induces an isomorphism in $K$-theory, 	
		it follows that
		for every  $n \geq 1$, 
		\[
		\begin{array}{cccc}
			\bigoplus\limits_{k=1}^{n}\sigma_k{\otimes} {\rm id}:& \bigoplus\limits_{k=1}^{n}
			A_k{\otimes}\mathcal{K}
			& \longrightarrow & \bigoplus\limits_{k=1}^{n}
			B_k{\otimes}\mathcal{K}
		\end{array} 
		\] 
		induces an isomorphism in $K$-theory. 
		Moreover, 
		\[
		\bigoplus\limits_{k=1}^{\infty}
		A_k{\otimes}\mathcal{K}=\varinjlim_{n} \bigoplus\limits_{k=1}^{n}
		A_k{\otimes}\mathcal{K}
		\quad \text{and} \quad  
		\bigoplus\limits_{k=1}^{\infty}
		B_k{\otimes}\mathcal{K}=\varinjlim_{n} \bigoplus\limits_{k=1}^{n}
		B_k{\otimes}\mathcal{K}.
		\]
		Hence, 
		$\bigoplus_k\sigma_k{\otimes} {\rm id}$ 
		induces  an isomorphism in $K$-theory. 
		
		Finally, the map $\prod_k\sigma_k{\otimes} {\rm id}$ 
		induces an isomorphism in $K$-theory by  Proposition 2.7.12 in \cite{WY}.
	\end{proof}

	\begin{lem}\label{jdsklfajsdfklas}
		For each $s\in[0,1]$,
		the $*$-homomorphism  
		\begin{eqnarray*}
			{\rm ev}_{s}:
			\mathcal{A}^{\scalebox{0.5}{$\prod$}}_{{\rm Sim}, [0,1]}
			(Z,[0,1],\mathscr{E})
			\longrightarrow
			\mathcal{A}^{\scalebox{0.5}{$\prod$}}_{\rm Sim}
			(Z,[0,1],\mathscr{E}),
		\end{eqnarray*}
		given by \eqref{eq:theevaluationsmapfromsimtosim},
		induces an isomorphism
		\begin{eqnarray*}
			({\rm ev}_{s})_*:
			K_*\Big(
			\mathcal{A}^{\scalebox{0.5}{$\prod$}}_{{\rm Sim}, [0,1]}
			(Z,[0,1],\mathscr{E})
			\Big) 
			\longrightarrow
			K_*\Big(
			\mathcal{A}^{\scalebox{0.5}{$\prod$}}_{\rm Sim}
			(Z,[0,1],\mathscr{E})
			\Big). 
		\end{eqnarray*}
	\end{lem}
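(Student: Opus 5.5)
The plan is to reduce the statement to a levelwise claim and then apply Lemma \ref{isomorphismasmallkeyforlaterresults}. By Definitions \ref{A-infty-Simp} and \ref{Key-idea-deformation}, both algebras are quotients of the form $\prod_k A_k\otimes\mathcal{K}/\bigoplus_k A_k\otimes\mathcal{K}$. Here
\[
A_k=C\bigg([0,1],\mathcal{A}\Big(C_{\rm Sim}\big(Z,L^2([0,1],\mathscr{E}_{k,\Gamma})\big)\Big)\bigg),\qquad
B_k=\mathcal{A}\Big(C_{\rm Sim}\big(Z,L^2([0,1],\mathscr{E}_{k,\Gamma})\big)\Big).
\]
By Remark \ref{evaluationswichisgammaequivalentwithrestoeachaction}, the map ${\rm ev}_s$ is precisely the map induced by the family $\{{\rm ev}_{k,s}\colon A_k\to B_k\}_{k\in\mathbb{N}}$. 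So by Lemma \ref{isomorphismasmallkeyforlaterresults} it suffices to show that each ${\rm ev}_{k,s}$ induces an isomorphism in $K$-theory. Only $K$-theory is at stake here, so the $\Gamma$-actions play no role.

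For a single $k$, I would use the standard fact that evaluation $C([0,1],B)\to B$ at any point $s$ is a homotopy equivalence. The inverse is the inclusion $c\colon B\to C([0,1],B)$ of constant functions, and ${\rm ev}_{k,s}\circ c={\rm id}_B$. For the other composite, I would use the family of $*$-homomorphisms $H_u\colon C([0,1],B)\to C([0,1],B)$, $u\in[0,1]$, given by
\[
(H_u f)(r)=f\big((1-u)r+us\big).
\]
Then $H_0={\rm id}$ and $H_1=c\circ{\rm ev}_{k,s}$. The map $u\mapsto H_u f$ is norm-continuous because $f$ is uniformly continuous on the compact interval $[0,1]$, so this is a genuine homotopy of $*$-homomorphisms. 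Homotopy invariance of $K$-theory then makes $({\rm ev}_{k,s})_*$ an isomorphism for every $k$ and every $s\in[0,1]$.

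Combining the two steps gives the lemma. The only point needing care is checking that the quotient map ${\rm ev}_s$ really coincides with the map produced by Lemma \ref{isomorphismasmallkeyforlaterresults}. This is immediate from the definition in Remark \ref{evaluationswichisgammaequivalentwithrestoeachaction}: on tensors one has ${\rm ev}_{k,s}\otimes{\rm id}_{\mathcal{K}}$, and these maps preserve both the product and the direct sum. The substantive input, namely that $K_*$ of the product of stabilized algebras is controlled levelwise (\cite[Proposition 2.7.12]{WY}), is already contained in that lemma, so I expect no real obstacle.
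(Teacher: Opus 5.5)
Your proposal is correct and follows the paper's proof. The paper likewise sets up the commuting diagram of the two extensions $0\to\bigoplus\to\prod\to$ quotient, notes that each ${\rm ev}_{k,s}$ is an isomorphism on $K$-theory, and concludes by Lemma \ref{isomorphismasmallkeyforlaterresults}; your explicit homotopy $(H_uf)(r)=f\big((1-u)r+us\big)$ supplies the levelwise step that the paper asserts without proof.
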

	\begin{proof}
		By definition, we obtain the following commuting diagram of short exact sequences of $C^*$-algebras:
		\[
		\begin{tikzcd}[row sep=0.8cm, column sep=2cm]
			0 \arrow[d] & 0 \arrow[d]
			\\
			\bigoplus\limits_{k=1}^{\infty}
			C\bigg(
			[0,1],~
			\mathcal{A}
			\Big(
			C_{\rm Sim}
			\big(
			Z, ~
			L^2 ([0,1], \mathscr{E}_{k,\Gamma})
			\big)
			\Big)
			\bigg)
			{\otimes}\mathcal{K} \arrow[r,"\bigoplus_k {\rm ev}_{k,s} {\otimes} {\rm id}"] \arrow[d] &   
			\bigoplus\limits_{k=1}^{\infty}
			\mathcal{A}
			\Big(
			C_{\rm Sim}
			\big(
			Z, ~
			L^2 ([0,1], \mathscr{E}_{k,\Gamma})
			\big)
			\Big)
			{\otimes}\mathcal{K} \arrow[d]
			\\      
			\prod\limits_{k=1}^{\infty}
			C\bigg(
			[0,1],~
			\mathcal{A}
			\Big(
			C_{\rm Sim}
			\big(
			Z, ~
			L^2 ([0,1], \mathscr{E}_{k,\Gamma})
			\big)
			\Big)
			\bigg)
			{\otimes}\mathcal{K} \arrow[r,"\prod_k {\rm ev}_{k,s} {\otimes} {\rm id}"] \arrow[d] & 
			\prod\limits_{k=1}^{\infty}
			\mathcal{A}
			\Big(
			C_{\rm Sim}
			\big(
			Z, ~
			L^2 ([0,1], \mathscr{E}_{k,\Gamma})
			\big)
			\Big)
			{\otimes}\mathcal{K} \arrow[d]
			\\     
			\mathcal{A}^{\scalebox{0.5}{$\prod$}}_{{\rm Sim}, [0,1]}
			(Z,[0,1],\mathscr{E}) \arrow[r,"{\rm ev}_{s}"] \arrow[d] &
			\mathcal{A}^{\scalebox{0.5}{$\prod$}}_{\rm Sim}
			(Z,[0,1],\mathscr{E}) \arrow[d]
			\\ 
			0 & \ 
			0.
		\end{tikzcd}
		\]
		For every $s\in[0,1]$ and $k\in\mathbb{N}$,
		the evaluation map
		\begin{equation*}
			{\rm ev}_{k,s}: 
			C\bigg(
			[0,1],~
			\mathcal{A}
			\Big(
			C_{\rm Sim}
			\big(
			Z, ~
			L^2 ([0,1], \mathscr{E}_{k,\Gamma})
			\big)
			\Big)
			\bigg)
			\longrightarrow 
			\mathcal{A}
			\Big(
			C_{\rm Sim}
			\big(
			Z, ~
			L^2 ([0,1], \mathscr{E}_{k,\Gamma})
			\big)
			\Big)
		\end{equation*}
		induces an isomorphism
		at the level of $K$-theory. 
		The result then follows from Lemma \ref{isomorphismasmallkeyforlaterresults}.
	\end{proof}

	Given a countable discrete group $\Gamma$, following \cite{GHT},  we use the term \emph{proper
		$\Gamma$-space} for a metrizable space $X$ with a proper $\Gamma$-action such that the quotient space
	is again metrizable. It is called  a \emph{free and proper $\Gamma$-space} if the action is, in addition, free.

	\begin{prop}\label{isomorphismbyevt}
		Let $\Delta$ be a free and proper $\Gamma$-space, then for each 
		$s\in[0,1]$, 
		the $*$-homomorphism   
		\begin{eqnarray*}
			{\rm ev}_{s}:
			\mathcal{A}^{\scalebox{0.5}{$\prod$}}_{{\rm Sim}, [0,1]}
			(Z,[0,1],\mathscr{E})
			\longrightarrow
			\mathcal{A}^{\scalebox{0.5}{$\prod$}}_{\rm Sim}
			(Z,[0,1],\mathscr{E})
		\end{eqnarray*}
		induces an isomorphism
		$$
		\otimes[{\rm ev}_{s}]:  
		KK^{\Gamma, \alpha_{[0,1]}}_{*}
		\Big( 
		\Delta, ~
		\mathcal{A}^{\scalebox{0.5}{$\prod$}}_{{\rm Sim}, [0,1]}
		(Z,[0,1],\mathscr{E})
		\Big)
		\longrightarrow 
		KK^{\Gamma, \alpha_{s}}_{*}
		\Big( 
		\Delta, ~
		\mathcal{A}^{\scalebox{0.5}{$\prod$}}_{\rm Sim}
		(Z,[0,1],\mathscr{E})
		\Big),
		$$
		where $\otimes$ is given by \eqref{eq:theproductmapforhomomorphismshahahahah}.
	\end{prop}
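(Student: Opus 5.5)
The plan is to reduce the equivariant statement to the non-equivariant $K$-theory isomorphism of Lemma \ref{jdsklfajsdfklas}, using the fact that $\Delta$ is free and proper. For a free and proper $\Gamma$-space, equivariant $KK$-theory with coefficients in a $\Gamma$-algebra $B$ can be computed locally. It is glued from contributions of $\Gamma$-invariant open subsets of the form $\Gamma\times U$, with $U$ open in $\Delta/\Gamma$ (small enough that the quotient map is trivial over it), and on such pieces
\[
KK^{\Gamma}_*\big(C_0(\Gamma\times U), B\big)\cong KK_*\big(C_0(U), B\big),
\]
where the right-hand side uses the underlying algebra $B$ with its action forgotten. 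The key point is that this identification does not see the action on $B$. So on the pieces, $\otimes[{\rm ev}_s]$ becomes the map $KK_*(C_0(U),\mathcal{A}^{\prod}_{{\rm Sim},[0,1]})\to KK_*(C_0(U),\mathcal{A}^{\prod}_{\rm Sim})$ induced by ${\rm ev}_s$. This holds regardless of whether we use the $\alpha_{[0,1]}$ or the $\alpha_s$ action, because ${\rm ev}_s$ intertwines them by Remark \ref{evaluationswichisgammaequivalentwithrestoeachaction}.

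The steps, in order, are as follows.

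First, reduce from $\Delta$ to $\Gamma$-compact subspaces. By definition, $KK^\Gamma_*(\Delta,-)$ is the direct limit over $\Gamma$-compact $\Gamma$-invariant closed subsets, and $\otimes[{\rm ev}_s]$ is natural with respect to the inclusions. So it suffices to treat a $\Gamma$-compact free proper space $Y$.

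Second, cover $Y/\Gamma$ by finitely many open sets $U_1,\dots,U_n$ over which $Y\to Y/\Gamma$ is trivial. Then argue by induction on $n$, using the Mayer--Vietoris six-term exact sequences in the first variable for the pair of algebras and the naturality of the product with $[{\rm ev}_s]$. The five lemma reduces the claim to the intersections and the individual pieces, all of which are again of the form $\Gamma\times V$ with $V$ locally compact (or cofinal limits of such).

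Third, on each piece $\Gamma\times V$, apply the induction/restriction isomorphism above. The claim then becomes that ${\rm ev}_s$ induces an isomorphism
\[
KK_*(C_0(V),\mathcal{A}^{\prod}_{{\rm Sim},[0,1]})\to KK_*(C_0(V),\mathcal{A}^{\prod}_{\rm Sim}).
\]
This follows from Lemma \ref{jdsklfajsdfklas} once upgraded from $K_*$ to $KK_*(C_0(V),-)$. I would do this by running the proof of Lemma \ref{jdsklfajsdfklas} with coefficients: tensoring by $C_0(V)$, or by its suspension, preserves the short exact sequences there. Each ${\rm ev}_{k,s}$ is a homotopy equivalence (the homotopy is $f\mapsto (t\mapsto f((1-u)t+us))$), so it still induces isomorphisms after such tensoring. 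Lemma \ref{isomorphismasmallkeyforlaterresults} then applies with $A_k$ and $B_k$ replaced by their tensor products with $C_0(V)$. This gives the statement for $KK_*(C_0(V),-)$, at least for $V$ a finite CW/nice space, where $KK_*(C_0(V),-)$ is determined by $K$-theory with coefficients via a UCT-free homotopy argument. Alternatively, one can pass to the finite-dimensional case directly through the Mayer--Vietoris induction on a triangulation.

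The main obstacle is the second step. The Mayer--Vietoris argument requires six-term sequences in the first variable of $KK^\Gamma(C_0(\cdot),B)$ for open covers, and for general (possibly non-nuclear) $B$ exactness in the first variable is subtle. I would first try to use the $\mathcal{R}KK$ / $E$-theory-style formulation of $KK^\Gamma_*(\Delta,B)$ from the Appendix, or restrict to $\Gamma$-compact $Y$ that are finite $\Gamma$-CW complexes. This suffices for $E\Gamma$ by cofinality, and there the relevant extensions are semisplit, so the exact sequences hold.
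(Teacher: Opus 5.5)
Your outline follows the paper's proof, which is just Lemma~\ref{jdsklfajsdfklas} combined with ``the standard Mayer--Vietoris and Five Lemma arguments''. Your localization to free pieces $\Gamma\times V$, where $KK^\Gamma(C_0(\Gamma\times V),B)\cong KK(C_0(V),B)$ forgets the action on $B$, is exactly what that phrase abbreviates. This is legitimate because ${\rm ev}_s$ intertwines $\alpha_{[0,1]}$ with $\alpha_s$.

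There is one step that fails as written, and one misdiagnosis.

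\textbf{The local step.} Tensoring the sequences of Lemma~\ref{jdsklfajsdfklas} with $C_0(V)$ computes $K_*(C_0(V)\otimes -)$, which is $K$-theory of $V$ with coefficients. What you need is $KK_*(C_0(V),-)$, which is $K$-homology with coefficients. There is a second problem: Lemma~\ref{isomorphismasmallkeyforlaterresults} applied to $A_k\otimes C_0(V)$ is a statement about $\prod_k (C_0(V)\otimes A_k\otimes\mathcal{K})/\bigoplus_k (C_0(V)\otimes A_k\otimes\mathcal{K})$. That is not $C_0(V)\otimes\mathcal{A}^{\prod}_{{\rm Sim},[0,1]}(Z,[0,1],\mathscr{E})$, because $C_0(V)\otimes\prod_k D_k=C_0(V,\prod_k D_k)$ contains only families that are equicontinuous in $k$.

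Your own fallback is the right fix. The proposition is only applied with $\Delta=E\Gamma$ (Proposition~\ref{injectivityofbottmapdklsjfalksdjlfkjas}), so take the $\Gamma$-compact pieces to be finite free $\Gamma$-CW complexes and induct over skeleta. The relative pieces are then finitely many copies of $\Gamma\times\mathbb{R}^n$, where
\[
KK^\Gamma_*(C_0(\Gamma\times\mathbb{R}^n),B)\cong KK_*(C_0(\mathbb{R}^n),B)\cong K_{*+n}(B)
\]
by Bott periodicity, and Lemma~\ref{jdsklfajsdfklas} applies verbatim. This also disposes of the non-separability of the coefficient algebras. $KK^\Gamma_*(-,B)$ is defined as a limit over separable $\Gamma$-invariant subalgebras $A\subseteq B$, and $\varinjlim_A K_*(A)=K_*(B)$. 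For an arbitrary free proper $\Delta$ covered by arbitrary trivializing opens $V$, the local step would instead have to come from naturality of the UCT for the commutative algebra $C_0(V)$, together with a separable-inheritance argument, not from tensoring.

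\textbf{The ``main obstacle''.} The obstacle you single out is not one. Exactness in the first variable only requires the extensions $0\to C_0(U)\to C_0(Y)\to C_0(Y\setminus U)\to 0$ to be equivariantly semisplit. This holds because these algebras are commutative and the action is proper: average a completely positive splitting with a cutoff function. Whether the coefficient algebra is nuclear plays no role, so no detour through $E$-theory is needed.
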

	\begin{proof}
		Combining Lemma \ref{jdsklfajsdfklas} 
		with the standard Mayer--Vietoris and Five Lemma arguments, we obtain the result. 
	\end{proof}

	\begin{lem}\label{embeddingisomorphsm} 
		The embedding 
		\begin{eqnarray*} 
			\iota: 
			\mathcal{A}^{\scalebox{0.5}{$\prod$}}
			([0,1],\mathscr{E})
			&\longrightarrow&
			\mathcal{A}^{\scalebox{0.5}{$\prod$}}_{\rm Sim}
			(Z,[0,1],\mathscr{E}),
		\end{eqnarray*} 
		given by \eqref{eq:theembeddingmapforkthlevelalgebraskjl},	
		induces an isomorphism 
		\begin{eqnarray*} 
			\iota_*: 
			K_*\Big( 
			\mathcal{A}^{\scalebox{0.5}{$\prod$}}
			([0,1],\mathscr{E})
			\Big) 
			&\longrightarrow&
			K_*\Big( 
			\mathcal{A}^{\scalebox{0.5}{$\prod$}}_{\rm Sim}
			(Z,[0,1],\mathscr{E})
			\Big).
		\end{eqnarray*} 
	\end{lem}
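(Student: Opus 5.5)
The plan is to reduce the statement to a levelwise assertion and then invoke Lemma \ref{isomorphismasmallkeyforlaterresults}. By Definition \ref{withoutZalgebra} and Definition \ref{A-infty-Simp}, the embedding $\iota$ in \eqref{eq:theembeddingmapforkthlevelalgebraskjl} is the map induced on the quotients
\[
\dfrac{\prod_{k} \mathcal{A}\big(L^2([0,1],\mathscr{E}_{k,\Gamma})\big)\otimes\mathcal{K}}{\bigoplus_{k} \mathcal{A}\big(L^2([0,1],\mathscr{E}_{k,\Gamma})\big)\otimes\mathcal{K}}
\longrightarrow
\dfrac{\prod_{k} \mathcal{A}\Big(C_{\rm Sim}\big(Z, L^2([0,1],\mathscr{E}_{k,\Gamma})\big)\Big)\otimes\mathcal{K}}{\bigoplus_{k} \mathcal{A}\Big(C_{\rm Sim}\big(Z, L^2([0,1],\mathscr{E}_{k,\Gamma})\big)\Big)\otimes\mathcal{K}}
\]
by the family of embeddings $\iota_k$ of Remark \ref{taizongyaolewoqunitayejdksjfklsdhahawoqunidkfjaoksd}, tensored with ${\rm id}_{\mathcal{K}}$. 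The first step is therefore to check this description: $\iota_k\otimes{\rm id}$ maps $\bigoplus$ into $\bigoplus$ because each $\iota_k$ is isometric (constant functions), so the sequence-level map descends to the quotient and agrees with $\iota$.

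The second step is to show that every $\iota_k$ induces an isomorphism on $K$-theory. This is exactly Proposition \ref{isomorphismnewone}, applied with $\mathscr{X}$ replaced by the Property $(H)$ space $\mathscr{E}_k=\bigoplus_{p=1}^k\mathscr{X}_p$, using the Property $(H)$ map and proper extension $\varPsi_{k,[0,1]}$ from \eqref{eq:thepropertyHmapfortheKthlevelfortheenlargedspacesssss} in place of $\phi^\eta_{[0,1]}$. The only point to verify is that the proof of Lemma \ref{isomorphismbetweenKtheory-babycase} goes through. That proof uses two ingredients: convexity and boundedness of $Z\subseteq\ell^1(\Gamma)$, and the fact that the contraction homotopy $\varUpsilon$ preserves the generating set. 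For the second ingredient, if $\xi=F_{(f_1,\dots,f_m;E_1,\dots,E_m)}$ is a uniform simple function, then $\xi_t(y)=\xi(ty_0+(1-t)y)$ is again the uniform simple function generated by the $f_j(ty_0+(1-t)\,\cdot\,)\in C_b(Z,\mathscr{E}_{k,\Gamma})$ over the same partition. Hence $\varUpsilon$ maps $\mathcal{A}\big(C_{\rm Sim}\big)$ into $C\big([0,1],\mathcal{A}(C_{\rm Sim})\big)$, and ${\rm ev}_{y_0}$ is a homotopy inverse to $\iota_k$. Continuity in $t$ comes from the uniform continuity of elements of $C_b\big(Z,\mathcal{A}(\cdot)\big)$, exactly as before. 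No properness or equivariance enters here, since $\mathcal{A}^{\prod}([0,1],\mathscr{E})$ carries the trivial action and we only claim a nonequivariant $K$-theory isomorphism.

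The third step is to apply Lemma \ref{isomorphismasmallkeyforlaterresults} with $A_k=\mathcal{A}\big(L^2([0,1],\mathscr{E}_{k,\Gamma})\big)$, $B_k=\mathcal{A}\Big(C_{\rm Sim}\big(Z,L^2([0,1],\mathscr{E}_{k,\Gamma})\big)\Big)$ and $\sigma_k=\iota_k$. This yields that $\iota_*$ is an isomorphism.

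The main obstacle I expect is the product step hidden inside Lemma \ref{isomorphismasmallkeyforlaterresults}, namely the citation of \cite{WY} for $\prod_k$. It is best to observe that it applies directly here. Each $\iota_k$ is a homotopy equivalence, with homotopy inverse ${\rm ev}_{y_0}$ and homotopies $\varUpsilon_k$, all given by the same formula and hence uniformly controlled in $k$ (they are $*$-homomorphisms, so norm-contractive). The products $\prod_k\varUpsilon_k\otimes{\rm id}$ are therefore well defined and continuous. This gives a homotopy equivalence $\prod_k\iota_k\otimes{\rm id}$ directly, and even on the quotients, bypassing the five lemma if desired.
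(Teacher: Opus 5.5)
Your Steps 1--3 are the paper's proof. The paper writes the same diagram of $\bigoplus_k$, $\prod_k$ and quotients, with the maps $\bigoplus_k\iota_k\otimes{\rm id}$, $\prod_k\iota_k\otimes{\rm id}$ and $\iota$. It then concludes from Proposition \ref{isomorphismnewone} applied levelwise (with $\varPsi_{k,[0,1]}$ in place of $\phi^{\eta}_{[0,1]}$, which your check of $\varUpsilon$ on uniform simple functions justifies) together with Lemma \ref{isomorphismasmallkeyforlaterresults}. That part of your proposal is correct.

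The shortcut in your last paragraph is false and should be dropped.

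\begin{itemize}
\item Contractivity of the $\varUpsilon_k$ controls norms, not continuity in the homotopy parameter. The only continuity estimate available is $\|\varUpsilon(f)(s)-\varUpsilon(f)(t)\|\le\omega_f(2L|s-t|)$. It depends on the modulus of continuity of $f$ on $Z$, and a bounded sequence $f_k\in B_k$ need not be equicontinuous.
\item Here is a concrete failure. Let $u$ be a unit vector in the first summand of $L^2([0,1],\mathscr{E}_{k,\Gamma})$, constant in the $[0,1]$-variable. Let $z_e$ denote the coefficient of $z\in Z$ at the identity, and set $\xi_k(z)=k\,z_e\,u$; this is a bounded, Lipschitz uniform simple function. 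Put $f_k=\beta_{k,\xi_k}\big(\exp(-x^2)\big)$, and take $y_0=e$ and $y=\gamma\neq e$ as points of $Z$.
\item Then $\varUpsilon_k(f_k)(s)(y)=\beta_{k,ksu}\big(\exp(-x^2)\big)$. Evaluating at $(\theta,v)=(0,ksu)$ gives $\|\varUpsilon_k(f_k)(s)(y)-\varUpsilon_k(f_k)(t)(y)\|\ge 1-\exp\big(-\eta(k|s-t|)^2\big)\ge 1-\exp(-1)$ whenever $k|s-t|\ge 1$.
\item For fixed $s\neq t$ this holds for all large $k$. Hence $t\mapsto\big(\varUpsilon_k(f_k)(t)\otimes p_0\big)_k$ is discontinuous even modulo $\bigoplus_k$.
\item So $\prod_k\varUpsilon_k\otimes{\rm id}$ lands in $\prod_k C([0,1],B_k\otimes\mathcal{K})$, not in $C\big([0,1],\prod_k B_k\otimes\mathcal{K}\big)$. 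It yields no homotopy equivalence on the product or on the quotient.
\end{itemize}

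This is why the product step genuinely needs the stable formula $K_*\big(\prod_k A_k\otimes\mathcal{K}\big)\cong\prod_k K_*(A_k)$ from \cite{WY}, where the factor $\mathcal{K}$ is essential. Keep that citation (via Lemma \ref{isomorphismasmallkeyforlaterresults}) rather than the direct homotopy.
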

	\begin{proof}
		By definition, we have the following commuting diagram of short exact sequences of $C^*$-algebras:			
		\[\begin{tikzcd}[row sep=0.8cm, column sep=1.7cm]
			0 \arrow[d] & 0 \arrow[d]
			\\
			\bigoplus\limits_{k=1}^{\infty}
			\mathcal{A}
			\big(
			L^2 ([0,1], \mathscr{E}_{k,\Gamma})
			\big)
			{\otimes}\mathcal{K} \arrow[r,"\bigoplus_k \iota_k {\otimes} {\rm id}"] \arrow[d] &   
			\bigoplus\limits_{k=1}^{\infty}
			\mathcal{A}
			\Big(
			C_{\rm Sim}
			\big(
			Z, ~
			L^2 ([0,1], \mathscr{E}_{k,\Gamma})
			\big)
			\Big)
			{\otimes}\mathcal{K} \arrow[d]
			\\      
			\prod\limits_{k=1}^{\infty}
			\mathcal{A}
			\big(
			L^2 ([0,1], \mathscr{E}_{k,\Gamma})
			\big)
			{\otimes}\mathcal{K} \arrow[r,"\prod_k \iota_k {\otimes} {\rm id}"] \arrow[d] & 
			\prod\limits_{k=1}^{\infty}
			\mathcal{A}
			\Big(
			C_{\rm Sim}
			\big(
			Z, ~
			L^2 ([0,1], \mathscr{E}_{k,\Gamma})
			\big)
			\Big)
			{\otimes}\mathcal{K} \arrow[d]
			\\     
			\mathcal{A}^{\scalebox{0.5}{$\prod$}}
			([0,1],\mathscr{E}) \arrow[r,"\iota"] \arrow[d] &
			\mathcal{A}^{\scalebox{0.5}{$\prod$}}_{\rm Sim}
			(Z,[0,1],\mathscr{E}) \arrow[d]
			\\ 
			0 & \ 
			0.
		\end{tikzcd}
		\]
		The result follows from  Proposition \ref{isomorphismnewone} and Lemma \ref{isomorphismasmallkeyforlaterresults}. 
	\end{proof}

	\begin{prop}\label{dklajlksdjlfkjasdkkkkkkkk}
		Let $\Delta$ be a free and proper $\Gamma$-space, then the embedding	\begin{eqnarray*} 
			\iota: 
			\mathcal{A}^{\scalebox{0.5}{$\prod$}}
			([0,1],\mathscr{E})
			&\longrightarrow&
			\mathcal{A}^{\scalebox{0.5}{$\prod$}}_{\rm Sim}
			(Z,[0,1],\mathscr{E})
		\end{eqnarray*} 
		induces an isomorphism
		\begin{eqnarray*} 
			\otimes[\iota]: 
			KK^{\Gamma}_{*}
			\Big(
			\Delta, ~
			\mathcal{A}^{\scalebox{0.5}{$\prod$}}
			([0,1],\mathscr{E})
			\Big) 
			\longrightarrow 
			KK^{\Gamma, \alpha_{0}}_{*}
			\Big( 
			\Delta, ~
			\mathcal{A}^{\scalebox{0.5}{$\prod$}}_{\rm Sim}
			(Z,[0,1],\mathscr{E})
			\Big),
		\end{eqnarray*}
		where  
		$\mathcal{A}^{\scalebox{0.5}{$\prod$}}
		([0,1],\mathscr{E})$
		is equipped with the trivial $\Gamma$-action.
	\end{prop}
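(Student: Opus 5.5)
The plan mirrors Proposition \ref{isomorphismbyevt}. We reduce the equivariant statement to the non-equivariant $K$-theory isomorphism of Lemma \ref{embeddingisomorphsm}, using the standard cutting-and-pasting machinery for free and proper $\Gamma$-spaces. Equivariance of $\iota$ is already in hand: by Remark \ref{taizongyaolewoqunitayejdksjfklsdhahawoqunidkfjaoksd} each $\iota_k$ intertwines the trivial action on $\mathcal{A}(L^2([0,1],\mathscr{E}_{k,\Gamma}))$ with the $\alpha_0$-action, and this passes to the quotients by $\bigoplus$. Hence $[\iota]\in KK^{\Gamma,\alpha_0}_0$ is a genuine class, and $\otimes[\iota]$ is natural in $\Delta$ with respect to equivariant maps and to the Mayer--Vietoris boundary maps.

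First I would treat induced spaces. If $\Delta=\Gamma\times_{F} U$ with $F$ finite, then because the action is free we may take $F$ trivial locally, so $\Delta\cong\Gamma\times U$. In that case the Green--Julg / induction isomorphism identifies $KK^{\Gamma,\alpha}_*(\Gamma\times U, B)$ with $KK_*(U,B)$, i.e. with the $K$-homology-type group of $U$ with coefficients in $B$ with the action forgotten. This identification is natural in $B$ and compatible with $\otimes[\iota]$. The statement on such pieces therefore reduces to showing that $\iota$ induces an isomorphism $KK_*(U,\mathcal{A}^{\prod}([0,1],\mathscr{E}))\to KK_*(U,\mathcal{A}^{\prod}_{\rm Sim}(Z,[0,1],\mathscr{E}))$.

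For a point $U$, this isomorphism is exactly Lemma \ref{embeddingisomorphsm}. To pass from a point to a general $U$ (a contractible or locally finite simplicial piece), I would argue by a further Mayer--Vietoris induction, or by the Künneth-free fact that a $*$-homomorphism inducing an isomorphism on $K_*$ induces an isomorphism on $KK_*(C_0(U),-)$ for $U$ a finite CW complex. The latter holds by induction on cells, using the six-term sequences in the second variable.

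Next I would globalize. By the definition of proper $\Gamma$-space, we may (passing to $\Gamma$-compact subsets and a direct limit, which $KK^\Gamma_*(\Delta,-)$ respects as in the Appendix's convention) assume $\Delta$ is $\Gamma$-compact. We then cover it by finitely many $\Gamma$-invariant open sets of the form $\Gamma\times U_i$, with intersections of the same form. The six-term Mayer--Vietoris sequences for both coefficient algebras, connected by $\otimes[\iota]$, together with the five lemma and induction on the number of pieces, yield the isomorphism.

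The main obstacle is the compatibility of $\otimes[\iota]$ with the induction isomorphism in the free case: the $\alpha_0$-action on $\mathcal{A}^{\prod}_{\rm Sim}$ is not trivial, since it moves the base $Z$. However, on $\Gamma\times U$ the induction isomorphism only requires the action on coefficients to be part of the data, and forgetting the action after restriction to the trivial subgroup is natural. So I expect this to go through formally, with the genuine input being Lemma \ref{embeddingisomorphsm}.
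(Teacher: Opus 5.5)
Your proposal is correct and follows the same route as the paper, whose proof is the one-line appeal to Lemma~\ref{embeddingisomorphsm} plus ``the standard Mayer--Vietoris and Five Lemma arguments''. Your reduction on free pieces $\Gamma\times U$ via the induction isomorphism of Remark~\ref{KKimportantcase}(2) (this is induction, not Green--Julg, which concerns compact groups) is exactly why the nontrivial $\alpha_0$-action causes no trouble.

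The one step to repair is the passage from a point to $U$. The cell induction there uses the six-term sequences in the \emph{first} variable, not the second; these are available because $C_0(U)$ is commutative, hence nuclear. Moreover, contractibility of a non-compact open piece does not reduce to the point case. So you should either induct over $\Gamma$-orbits of open cells $\Gamma\times\mathbb{R}^n$ (which is enough for $E\Gamma$; Bott periodicity turns each cell into a shifted copy of Lemma~\ref{embeddingisomorphsm}), or invoke the UCT for $C_0(U)$.
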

	\begin{proof}
		Combining Lemma  \ref{embeddingisomorphsm} 
		with the standard Mayer--Vietoris and Five Lemma arguments, we obtain the result.   
	\end{proof}

	Next, we show that for each fixed $t\geq1$, 
	the $*$-homomorphism
	$\mathfrak{T}_{t}: \mathcal{S} 
	\to
	\mathcal{A}^{\scalebox{0.5}{$\prod$}}
	([0,1],\mathscr{E})$
	given by \eqref{eq:thebottmapforthequiteemelemtarycasewithoutgammaactions},
	induces an injective homomorphism in $K$-theory.

	\begin{lem}\label{smalltrick} 
		Let
		$p_{0}$
		be a rank-one projection in 
		$\mathcal{K}$,
		then the $*$-homomorphism
		\begin{eqnarray*}
			\varsigma:\mathcal{S}
			\longrightarrow
			\dfrac{
				\prod\limits_{1}^{\infty} \mathcal{S}{\otimes}\mathcal{K}
			}{
				\bigoplus\limits_{1}^{\infty} \mathcal{S}{\otimes}\mathcal{K}
			},
		\end{eqnarray*} 
		which sends $f$ to
		$[(f{\otimes}p_{0},~ f{\otimes}p_{0},~ \cdots)]$, 
		induces an injective homomorphism in $K$-theory.
	\end{lem}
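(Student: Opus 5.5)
The plan is to compute $K_*$ of the quotient algebra $Q:=\prod\mathcal{S}\otimes\mathcal{K}/\bigoplus\mathcal{S}\otimes\mathcal{K}$ explicitly and identify $\varsigma_*$ with the diagonal embedding $K_*(\mathcal{S})\to\prod K_*(\mathcal{S})/\bigoplus K_*(\mathcal{S})$.

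\textbf{Step 1: $K$-theory of the product and the sum.} By Proposition 2.7.12 in \cite{WY}, the thing already used in Lemma \ref{isomorphismasmallkeyforlaterresults}, $K_*$ commutes with countable products of stable algebras:
\[
K_*\Big(\prod\mathcal{S}\otimes\mathcal{K}\Big)\cong\prod_{k} K_*(\mathcal{S}).
\]
By continuity of $K$-theory under inductive limits, $K_*(\bigoplus\mathcal{S}\otimes\mathcal{K})\cong\bigoplus_k K_*(\mathcal{S})$. Under these identifications, the map induced by the inclusion of the ideal is the canonical inclusion $\bigoplus_k K_*(\mathcal{S})\hookrightarrow\prod_k K_*(\mathcal{S})$, which is injective.

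\textbf{Step 2: the six-term sequence.} Apply the six-term exact sequence to $0\to\bigoplus\to\prod\to Q\to 0$. The inclusion map is injective in both degrees. Hence the boundary map $K_*(Q)\to K_{*-1}(\bigoplus)$ has image equal to the kernel of an injective map, so it is zero. Therefore
\[
K_*(Q)\cong \prod_k K_*(\mathcal{S})\Big/\bigoplus_k K_*(\mathcal{S}).
\]

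\textbf{Step 3: identify $\varsigma_*$ and conclude.} The map $\varsigma$ factors as the diagonal $\mathcal{S}\to\prod\mathcal{S}\otimes\mathcal{K}$, $f\mapsto(f\otimes p_0,f\otimes p_0,\cdots)$, followed by the quotient. Each coordinate $f\mapsto f\otimes p_0$ is the stabilization map, which induces an isomorphism on $K$-theory. So $\varsigma_*(x)$ is the class of the constant sequence $(x,x,\cdots)$. Recall $K_0(\mathcal{S})=0$ and $K_1(\mathcal{S})\cong\mathbb{Z}$. If $x\neq0$, the constant sequence has infinitely many nonzero entries, so it does not lie in $\bigoplus$. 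Hence $\varsigma_*$ is injective.

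\textbf{Main obstacle.} The main point needing care is Step 1 for products: one must check that the identification $K_*(\prod A_k\otimes\mathcal{K})\cong\prod K_*(A_k)$ is natural, so that the diagonal element really corresponds to $(x,x,\cdots)$. I would verify this directly in degree one. A unitary representative $u$ of a generator of $K_1(\mathcal{S})$ sits in the unitization of $\mathcal{S}\otimes p_0$. The constant sequence $(u,u,\cdots)$ is a unitary in the unitization of the product whose $k$-th coordinate projection is $[u]$. Naturality of the coordinate projections $\prod\to A_k$ then gives the claim.
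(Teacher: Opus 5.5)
Your proposal is correct and follows essentially the paper's argument. You use the same short exact sequence $0\to\bigoplus\mathcal{S}\otimes\mathcal{K}\to\prod\mathcal{S}\otimes\mathcal{K}\to Q\to 0$, the same identifications $K_*(\bigoplus\mathcal{S}\otimes\mathcal{K})\cong\bigoplus K_*(\mathcal{S})$ and $K_*(\prod\mathcal{S}\otimes\mathcal{K})\cong\prod K_*(\mathcal{S})$ via coordinate maps, and the same observation that the constant sequence $(x,x,\cdots)$ cannot have finite support. The only difference is that you go one step further and use injectivity of $\imath_*$ to kill the boundary map, identifying $K_*(Q)\cong\prod K_*(\mathcal{S})/\bigoplus K_*(\mathcal{S})$ outright; the paper only needs exactness at $K_*(\prod\mathcal{S}\otimes\mathcal{K})$ to conclude from $\jmath_*\sigma_*(x)=0$ that $\sigma_*(x)$ lies in the image of $\bigoplus K_*(\mathcal{S})$.
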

	\begin{proof}
		
		The short exact sequence of $C^*$-algebras
		\begin{eqnarray*}
			\begin{matrix}
				0\longrightarrow 	\bigoplus\limits_{1}^{\scalebox{0.7}{$\infty$}} \mathcal{S}{\otimes}\mathcal{K}
				\stackrel{\imath}{\longrightarrow}	\prod\limits_{1}^{\scalebox{0.7}{$\infty$}} \mathcal{S}{\otimes}\mathcal{K}
				\stackrel{\jmath}{\longrightarrow} 
				\dfrac{
					\prod\limits_{1}^{\infty} \mathcal{S}{\otimes}\mathcal{K}
				}{
					\bigoplus\limits_{1}^{\infty} \mathcal{S}{\otimes}\mathcal{K}
				}
				\longrightarrow 0
			\end{matrix}
		\end{eqnarray*}	
		induces the following exact sequence in $K$-theory:
		\[\begin{matrix}
			K_*\Big(
			\bigoplus\limits_{1}^{\scalebox{0.7}{$\infty$}} \mathcal{S}{\otimes}\mathcal{K}
			\Big)
			\stackrel{\imath_*}{\longrightarrow}
			K_*\Big(
			\prod\limits_{1}^{\scalebox{0.7}{$\infty$}} \mathcal{S}{\otimes}\mathcal{K}
			\Big)
			\stackrel{\jmath_*}{\longrightarrow} 
			K_*\Bigg(
			\dfrac{
				\prod\limits_{1}^{\infty} \mathcal{S}{\otimes}\mathcal{K}
			}{
				\bigoplus\limits_{1}^{\infty} \mathcal{S}{\otimes}\mathcal{K}
			}
			\Bigg).
		\end{matrix}\]
		Thus, we obtain the following commutative diagram:
		\[\begin{tikzcd}[row sep=0.5cm, column sep=1cm]
			\bigoplus\limits_{1}^{\scalebox{0.7}{$\infty$}} 
			K_*(\mathcal{S}) 	\arrow[r, hook] &	\prod\limits_{1}^{\scalebox{0.7}{$\infty$}} 
			K_*(\mathcal{S}) 
			\\
			K_*\Big(
			\bigoplus\limits_{1}^{\scalebox{0.7}{$\infty$}} \mathcal{S}{\otimes}\mathcal{K}
			\Big)
			\arrow[r, "\imath_*"] \arrow[u, "\cong" '] &
			K_*\Big(
			\prod\limits_{1}^{\scalebox{0.7}{$\infty$}} \mathcal{S}{\otimes}\mathcal{K}
			\Big)
			\arrow[r, "\jmath_*"] \arrow[u, "\cong" ']&
			K_*\Bigg(
			\dfrac{
				\prod\limits_{1}^{\infty} \mathcal{S}{\otimes}\mathcal{K}
			}{
				\bigoplus\limits_{1}^{\infty} \mathcal{S}{\otimes}\mathcal{K}
			}
			\Bigg),
			\\
			& K_*(\mathcal{S}) \arrow[u, "\sigma_*" '] \arrow[ur, "\varsigma_*" '] 
		\end{tikzcd}
		\]
		where $\sigma$ is defined by
		$\sigma(f)=(f {\otimes} p_{0},~ f {\otimes} p_{0},~ \cdots)$
		and the top horizontal map is the canonical inclusion.

		Let 
		$x\in	
		K_{*}(\mathcal{S})$
		with
		$\varsigma_*(x)=0$.
		Then 
		$\jmath_*\big(\sigma_*(x)\big)=0$.
		By exactness, 
		there exists 
		$y\in 
		K_*\Big(
		\bigoplus\limits_{1}^{\scalebox{0.7}{$\infty$}} \mathcal{S}{\otimes}\mathcal{K}
		\Big)$
		such that 
		$\imath_*(y)=\sigma_*(x)$.
		Under the standard identifications
		\[
		K_*\Big(
		\bigoplus\limits_{1}^{\scalebox{0.7}{$\infty$}} \mathcal{S}{\otimes}\mathcal{K}
		\Big) \cong \bigoplus\limits_{1}^{\scalebox{0.7}{$\infty$}} 
		K_*(\mathcal{S})  \quad  
		\text{and} \quad  
		K_*\Big(
		\prod\limits_{1}^{\scalebox{0.7}{$\infty$}} \mathcal{S}{\otimes}\mathcal{K}
		\Big) \cong 
		\prod\limits_{1}^{\scalebox{0.7}{$\infty$}} 
		K_*(\mathcal{S}), 
		\]
		we may write
		\[
		y=(*,\cdots,*,0,0,\cdots), \quad 
		\sigma_*(x)= (x,x,\cdots, x, \cdots).
		\]
		It follows that
		$x=0$, and hence
		$\varsigma_*$ is injective.
	\end{proof}

	\begin{lem}\label{InjectiveofKtheory} 
		For each  $t \geq 1$,
		the $*$-homomorphism $\mathfrak{T}_{t}$, given by \eqref{eq:thebottmapforthequiteemelemtarycasewithoutgammaactions}, 
		induces an injective homomorphism
		in $K$-theory, i.e., 
		\[
		\begin{matrix}
			(\mathfrak{T}_{t})_{*}:
			K_{*}(\mathcal{S})
			\xlongrightarrow{}
			K_{*}	
			\big(
			\mathcal{A}^{\scalebox{0.5}{$\prod$}}
			([0,1],\mathscr{E})
			\big)
		\end{matrix}
		\] 
		is injective.
	\end{lem}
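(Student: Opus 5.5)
The plan is to factor $\mathfrak{T}_t$ through the map $\varsigma$ of Lemma \ref{smalltrick}. On each level $k$ we use the $*$-homomorphism $\beta_k\circ(f\mapsto f_t):\mathcal{S}\to\mathcal{A}\big(L^2([0,1],\mathscr{E}_{k,\Gamma})\big)$, and the goal is a $*$-homomorphism going back, at least at the level of $K$-theory,
\[
\pi_k:\ \mathcal{A}\big(L^2([0,1],\mathscr{E}_{k,\Gamma})\big)\longrightarrow \mathcal{S},
\]
such that $\pi_k\circ\beta_k(f_t)$ is homotopic to $f$ (or to $f_t$, which is homotopic to $f$ via $t'\in[1,t]$). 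The natural choice for $\pi_k$ is evaluation at the base point $v=0$: since $\varPsi_{k,[0,1]}(0)=0$, we get
\[
\beta_k(f_t)(\theta,0)=f_t\big(C(\theta,0)\big),
\]
which is a function on $\mathbb{R}\times\{0\}$ with values in $\text{Cliff}_{\mathbb{C}}(0\oplus\cdots)$. Concretely, under the identification of Section 4.3, evaluation at $v=0$ gives a $*$-homomorphism $\mathcal{A}\big(L^2([0,1],\mathscr{E}_{k,\Gamma})\big)\to\mathcal{S}\widehat{\otimes}\text{Cliff}_{\mathbb{C}}(L^2([0,1],\mathscr{H}_{k,\Gamma}))$. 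Composing with the even part, or equivalently restricting to the degree-zero Clifford component, gives a map $\pi_k$ with $\pi_k(\beta_k(g))=g\widehat{\otimes}1$. We must verify that $\pi_k$ is well defined on the generated subalgebra and lands in a copy of $\mathcal{S}\otimes(\text{scalars})$. It suffices to check this on generators $\beta_{k,v}(g)$: at the point $0$ we have $g\big(\varTheta\widehat{\otimes}1+1\widehat{\otimes}C(\varPsi(-v))\big)$, which lies in $C^*\big(\mathcal{S},C(w)\big)$ for a single vector $w$. The honest target is therefore $\mathcal{S}\widehat{\otimes}\text{Cliff}_{\mathbb{C}}(\mathscr{H})$, where $\mathscr{H}=L^2([0,1],\mathscr{H}_{k,\Gamma})$, and we then need a graded map to $\mathcal{S}$ (or to $\mathcal{S}\otimes M_2$ with the same $K$-theory) that sends $g\widehat{\otimes}1\mapsto g$.

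This is the step I expect to be the main obstacle, because $\text{Cliff}_{\mathbb{C}}(\mathscr{H})$ has no character compatible with odd elements. I would first try to avoid this by composing $\pi_k$ with a further localization. The image of $\mathcal{A}$ under $\mathrm{ev}_0$ only involves $C(w)$ for $w$ ranging over $\varPsi(\mathscr{E})$. Using the representation on the Fock space and compressing to the vacuum $\Omega$ (a completely positive, not multiplicative, map) fails. Instead I use the standard fact (as in \cite{KY,GWY}) that $\mathcal{S}\to\mathcal{S}\widehat{\otimes}\text{Cliff}_{\mathbb{C}}(\mathscr{H})$, $g\mapsto g\widehat{\otimes}1$, is injective on $K$-theory, in fact an isomorphism onto the trivially graded $K$-theory of $\mathcal{S}\widehat{\otimes}\text{Cliff}$ for finite-dimensional subspaces, passing to the direct limit over the paving. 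This means I settle for a $K$-theory-level left inverse $\pi_{k,*}$ with $\pi_{k,*}\circ(\beta_k)_*=\mathrm{id}$ on $K_*(\mathcal{S})$, rather than a genuine homomorphism.

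Given these maps $\pi_k$, I tensor with $\mathrm{id}_{\mathcal{K}}$ and assemble them into a $*$-homomorphism
\[
\Pi:\ \mathcal{A}^{\scalebox{0.5}{$\prod$}}([0,1],\mathscr{E})\longrightarrow \dfrac{\prod_k B\otimes\mathcal{K}}{\bigoplus_k B\otimes\mathcal{K}},
\]
where $B$ is the intermediate target (for instance $\mathcal{S}\widehat{\otimes}\text{Cliff}_{\mathbb{C}}(\mathscr{H})$), which is possible because the $\pi_k$ are levelwise and contractive. Then $\Pi\circ\mathfrak{T}_t$ is the diagonal map $f\mapsto[(f_t\widehat{\otimes}1\otimes p_0)_k]$. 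If $g\mapsto g\widehat{\otimes}1$ is $K$-injective into $B$, the argument of Lemma \ref{smalltrick} applies verbatim, via the six-term sequence and the identification of $K_*$ of the product (Proposition 2.7.12 of \cite{WY}): a diagonal class $(x,x,\dots)$ lying in the image of the direct sum forces $x=0$. Finally, the homotopy $f_t\sim f$ shows that this composite agrees in $K$-theory with $\varsigma$ composed with the inclusion. This gives the injectivity of $(\Pi\circ\mathfrak{T}_t)_*$, and hence of $(\mathfrak{T}_t)_*$.
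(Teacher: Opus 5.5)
Your reduction to Lemma \ref{smalltrick} has the right shape. However, everything rests on the claim that $g\mapsto g\widehat{\otimes}1$, $\mathcal{S}\to\mathcal{S}\widehat{\otimes}\text{Cliff}_{\mathbb{C}}(\mathscr{H})$, is injective on $K$-theory. In the $K$-theory this paper uses (gradings disregarded; see the end of Section 4.2, and note that Proposition \ref{shuodianshenmene} relies on $K_1(\mathcal{S})\cong\mathbb{Z}$, $K_0(\mathcal{S})=0$), this map is in fact \emph{zero} as soon as $\dim\mathscr{H}\geq 2$.

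Here is why. The map factors through $\mathcal{S}\widehat{\otimes}\text{Cliff}_{\mathbb{C}}(\mathbb{R}^2)$ via a graded inclusion $\text{Cliff}_{\mathbb{C}}(\mathbb{R}^2)\hookrightarrow\text{Cliff}_{\mathbb{C}}(\mathscr{H})$. Now $\text{Cliff}_{\mathbb{C}}(\mathbb{R}^2)\cong M_2(\mathbb{C})$, with grading implemented by $\epsilon=\mathrm{diag}(1,-1)$. So $a\widehat{\otimes}b\mapsto a\otimes\epsilon^{|a|}b$ (for homogeneous $a$) is an ungraded isomorphism onto $\mathcal{S}\otimes M_2$. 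It sends $g\widehat{\otimes}1$ to $g_0\otimes 1+g_1\otimes\epsilon=\mathrm{diag}(g,\check{g})$, where $\check{g}(x)=g(-x)$. Since $x\mapsto -x$ acts by $-1$ on $K_1(C_0(\mathbb{R}))\cong\mathbb{Z}$, the generator goes to $1+(-1)=0$. With a single Clifford generator it is even worse: $K_1\big(\mathcal{S}\widehat{\otimes}\text{Cliff}_{\mathbb{C}}(\mathbb{R})\big)=0$.

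Consequently $\Pi\circ\mathfrak{T}_t$ is zero on $K$-theory, the diagonal argument of Lemma \ref{smalltrick} has nothing to detect, and no $K$-level left inverse of $(\beta_k)_*$ can factor through $\mathrm{ev}_0$. The result you are recalling from Kasparov--Yu and Gong--Wu--Yu is Bott periodicity for $\mathcal{S}\to\mathcal{S}\widehat{\otimes}C_0\big(V,\text{Cliff}_{\mathbb{C}}(V)\big)$ with $V$ finite-dimensional. There the $C_0(V)$-directions compensate for the Clifford directions, and evaluating at $v=0$ discards exactly those directions.

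The missing idea is to \emph{restrict} to finite-dimensional subspaces rather than evaluate at a point. The paper's argument runs as follows:
\begin{enumerate}
\item It takes pavings $\{\mathscr{V}_{k,n}\}$ and $\{\mathscr{W}_{k,n}\}$ such that $\varPsi_{k,[0,1]}$ restricts to homeomorphisms $S(\mathscr{V}_{k,n})\to S(\mathscr{W}_{k,n})$ and maps $\mathscr{V}_{k,n}$ into $\mathscr{W}_{k,n}$. Then $\mathcal{S}\to\mathcal{A}(\mathscr{V}_{k,n})$ is a $K$-isomorphism.
\item Restriction to $\mathscr{V}_{k,n}$ lands in $\mathcal{A}(\mathscr{V}_{k,n})$ only on the subalgebra $\mathcal{A}\big(L^2([0,1],\mathscr{E}_{k,\Gamma}),\mathscr{V}_{k,n}\big)$ generated by the $\beta_{k,v}(f)$ with $v\in\mathscr{V}_{k,n}$. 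So one indexes by sequences $\vec n=(n_k)\in\mathcal{P}$ and realizes $\mathcal{A}^{\prod}([0,1],\mathscr{E})$ as the inductive limit of the corresponding quotients.
\item Each $\mathfrak{T}^{\vec n}_t$ is $K$-injective because $q\circ\mathfrak{T}^{\vec n}_t=\chi_t\circ\varsigma$. Here $\chi_t$ is a $K$-isomorphism by Lemma \ref{isomorphismasmallkeyforlaterresults}, and $\varsigma$ is $K$-injective by Lemma \ref{smalltrick}.
\item One concludes by continuity of $K$-theory under inductive limits.
\end{enumerate}
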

	\begin{proof}
		
		Recall from Definition \ref{withoutZalgebra} that   
		\begin{eqnarray*}
			\mathcal{A}^{\scalebox{0.5}{$\prod$}}
			([0,1],\mathscr{E})=	\dfrac{\prod\limits_{k=1}^{\infty}
				\hspace{0.2cm}\mathcal{A}
				\big(
				L^2 ([0,1], \mathscr{E}_{k,\Gamma})
				\big)
				{\otimes}\mathcal{K}}{\bigoplus\limits_{k=1}^{\infty}
				\hspace{0.2cm}\mathcal{A}
				\big(
				L^2 ([0,1], \mathscr{E}_{k,\Gamma})
				\big)
				{\otimes}\mathcal{K}}.
		\end{eqnarray*}
		For each $k\geq1$,
		since 
		$L^2 
		([0,1],\mathscr{E}_{k,\Gamma})$
		is a Property $(H)$
		Banach space with respect to the Property $(H)$ map
		$$
		\varPsi_{\scalebox{0.6}{$k, [0,1]$}}\Big|_{\scalebox{0.7}{$S\big(L^2([0,1],\mathscr{E}_{k,\Gamma})\big)$}},
		$$
		there exists
		a paving 
		$\{\mathscr{V}_{k,n}\}_{n\in\mathbb{N}}$
		of 
		$L^2 
		([0,1],\mathscr{E}_{k,\Gamma})$
		and a paving 
		$\{\mathscr{W}_{k,n}\}_{n\in\mathbb{N}}$
		of 
		$L^2 ([0,1],\mathscr{H}_{k,\Gamma})$
		such that the restriction of $\varPsi_{\scalebox{0.6}{$k, [0,1]$}}$ 
		to  $S(\mathscr{V}_{k,n})$
		is a homeomorphism onto $S(\mathscr{W}_{k,n})$ for every $n\in\mathbb{N}$.
		Moreover, we may require that
		$\varPsi_{\scalebox{0.6}{$k, [0,1]$}}$ 
		maps $\mathscr{V}_{k,n}$
		into $\mathscr{W}_{k,n}$ 
		for every $n\in\mathbb{N}$.
		As in \eqref{eq:functionalcalculusone}, one can 
		obtain a family of graded $*$-homomorphisms 
		\begin{equation}\label{eq:finitedimensionalspackejkldjfioe}
			\beta_{k,n,v}: 
			\mathcal{S} \longrightarrow
			\mathcal{S}\widehat{\otimes} {C}_{0}
			\big(\mathscr{V}_{k,n}, 
			\text{Cliff}_{\mathbb{C}}
			(\mathscr{W}_{k,n})
			\big)
		\end{equation}
		by means of the maps 
		$\varPsi_{\scalebox{0.6}{$k, [0,1]$}}\big|_{\mathscr{V}_{k,n}}$
		So by Definition \ref{basicproperalgebra-geng},
		we have the $C^*$-algebras 
		$\mathcal{A}(\mathscr{V}_{k,n})$.
		Note that there exists the restriction $*$-homomorphism from  
		$\mathcal{A}
		\Big(
		L^2 ([0,1],\mathscr{E}_{k,\Gamma}), ~
		\mathscr{V}_{k,n}
		\Big)$
		to $\mathcal{A}(\mathscr{V}_{k,n})$.

		Let 
		$$
		\mathcal{P}=
		\big\{
		(n_1,n_2,n_3,\cdots) 
		~\big|~ n_j\in\mathbb{N} \text{ for all } j\geq1
		\big\}.
		$$
		We equip $\mathcal{P}$ with the partial order $\preccurlyeq$ defined by 
		\begin{eqnarray*}
			(n_1,n_2,n_3,\cdots)
			\preccurlyeq (m_1,m_2,m_3,\cdots) \quad 
			\Longleftrightarrow 
			\quad  n_j\leq m_j \text{ for all }
			j\geq1.
		\end{eqnarray*}
		Then $(\mathcal{P},\preccurlyeq)$
		is a partially ordered set.

		Fix $t \geq 1$.
		For every  
		$\vec{n} 
		=
		(n_1,n_2,n_3,\cdots)
		\in\mathcal{P}$, 
		we have the following commutative diagram:
		\[\begin{tikzcd}[row sep=1cm, column sep=3cm]
			& \prod\limits_{k=1}^{\infty}
			\mathcal{A}
			\Big(
			L^2 ([0,1], \mathscr{E}_{k,\Gamma}), ~
			\mathscr{V}_{k,n_k}
			\Big)
			{\otimes}\mathcal{K}
			\arrow[d,"\pi_{\vec{n}}"] 
			\\
			\mathcal{S} \arrow[r,"	\mathfrak{T}_{t}^{\vec{n}}"]  \arrow[dr,"	\mathfrak{T}_{t}"] &	\dfrac{\prod\limits_{k=1}^{\infty}
				\mathcal{A}
				\Big(
				L^2 ([0,1],\mathscr{E}_{k,\Gamma}), ~
				\mathscr{V}_{k,n_k}
				\Big)
				{\otimes}\mathcal{K}}{\bigoplus
				\limits_{k=1}^{\infty}
				\mathcal{A}
				\Big(
				L^2 
				([0,1],\mathscr{E}_{k,\Gamma}), ~
				\mathscr{V}_{k,n_k}
				\Big)
				{\otimes}\mathcal{K}} 
			\arrow[d,"\imath_{\vec{n}}"] 
			\\
			&\dfrac{\prod\limits_{k=1}^{\infty}
				\mathcal{A}
				\big(
				L^2 
				([0,1],\mathscr{E}_{k,\Gamma})
				\big)
				{\otimes}\mathcal{K}}{\bigoplus\limits_{k=1}^{\infty}
				\mathcal{A}
				\big(
				L^2 ([0,1],\mathscr{E}_{k,\Gamma})
				\big)
				{\otimes}\mathcal{K}}\ ,
		\end{tikzcd}\]
		where  
		$\pi_{\vec{n}}$ 
		is the quotient map, $\imath_{\vec{n}}$ is the map induced by the inclusions
		and
		$\mathfrak{T}_{t}^{\vec{n}}$ is defined analogously to \eqref{eq:thebottmapforthequiteemelemtarycasewithoutgammaactions}.
		It is clear that 
		$\imath_{\vec{n}}$ 
		is injective. 
		Define $\mathcal{A}_{\vec{n}}$ 
		to be the image of  $\imath_{\vec{n}}$.
		Then both
		\begin{center}
			$\big\{
			\mathcal{A}_{\vec{n}}
			\big\}_{\vec{n} 
				\in\mathcal{P}}$ \quad 
			and \quad 
			$\Big\{~
			\prod\limits_{k=1}^{\infty}
			\mathcal{A}
			\Big(
			L^2 
			([0,1],\mathscr{E}_{k,\Gamma}), ~
			\mathscr{V}_{k,n_k}
			\Big)
			{\otimes}\mathcal{K}~
			\Big\}_{\vec{n} 
				\in\mathcal{P}}$
		\end{center}
		form inductive systems of $C^*$-algebras over the partially ordered set  $(\mathcal{P},\preccurlyeq)$.
		Since for each $k\in\mathbb{N}$, 
		the union 
		$\cup_{n\in\mathbb{N}} \mathscr{V}_{k,n}$
		is dense in  
		$L^2 
		([0,1],\mathscr{E}_{k,\Gamma})$,
		it follows that 
		$$\mathcal{A}
		\big(
		L^2 ([0,1], \mathscr{E}_{k,\Gamma})
		\big)
		=
		\varinjlim\limits_{n}
		\mathcal{A}
		\Big(
		L^2 ([0,1],\mathscr{E}_{k,\Gamma}), ~
		\mathscr{V}_{k,n}
		\Big).$$
		Hence,
		\begin{eqnarray*}
			\prod\limits_{k=1}^{\infty}
			\mathcal{A}
			\big(
			L^2 ([0,1],\mathscr{E}_{k,\Gamma})
			\big)
			{\otimes}\mathcal{K}
			=
			\varinjlim_{
				\vec{n}\in\mathcal{P}}
			\prod\limits_{k=1}^{\infty}
			\mathcal{A}
			\Big(
			L^2 
			([0,1],\mathscr{E}_{k,\Gamma}), ~
			\mathscr{V}_{k,n_k}
			\Big)
			{\otimes}\mathcal{K},
		\end{eqnarray*}
		which in turn implies that
		\begin{eqnarray*}
			\varinjlim_{
				\vec{n}\in\mathcal{P}}
			\mathcal{A}_{\vec{n}}
			=
			\dfrac{\prod\limits_{k=1}^{\infty}
				\mathcal{A}
				\big(
				L^2 ([0,1], \mathscr{E}_{k,\Gamma})
				\big)
				{\otimes}\mathcal{K}}{\bigoplus\limits_{k=1}^{\infty}
				\mathcal{A}
				\big(
				L^2 ([0,1], \mathscr{E}_{k,\Gamma})
				\big)
				{\otimes}\mathcal{K}}.
		\end{eqnarray*}
		Note that  
		$\imath_{\vec{n}}$  
		is a $*$-isomorphism onto 
		$\mathcal{A}_{\vec{n}}$.
		To obtain the desired result,
		it suffices to show that 
		$\mathfrak{T}_{t}^{\vec{n}}$
		induces an injective homomorphism 
		at the level of $K$-theory
		for each 
		$\vec{n} 
		\in\mathcal{P}$.
		
		We consider the following commuting diagram:
		\begin{equation}\label{eq:kdjaflkdkkkkdkjlaksdjf}
			\begin{tikzcd}[row sep=1cm, column sep=1.5cm]
				\mathcal{S}  \arrow[r,"	\mathfrak{T}_{t}^{\vec{n}}"] \arrow[d,"\varsigma"]	& \dfrac{\prod\limits_{k=1}^{\infty}
					\mathcal{A}
					\Big(
					L^2([0,1],\mathscr{E}_{k,\Gamma}), ~
					\mathscr{V}_{k,n_k}
					\Big)
					{\otimes}\mathcal{K}}{\bigoplus
					\limits_{k=1}^{\infty}
					\mathcal{A}
					\Big(
					L^2([0,1],\mathscr{E}_{k,\Gamma}), ~
					\mathscr{V}_{k,n_k}
					\Big)
					{\otimes}\mathcal{K}}  \arrow[d,"q"]
				\\
				\dfrac{
					\prod\limits_{1}^{\infty} \mathcal{S}{\otimes}\mathcal{K}
				}{
					\bigoplus\limits_{1}^{\infty} \mathcal{S}{\otimes}\mathcal{K}
				} \arrow[r, "\chi_{t}"] &
				\dfrac{\prod\limits_{k=1}^{\infty}
					\mathcal{A}(\mathscr{V}_{k,n_k})
					{\otimes}\mathcal{K}}{\bigoplus
					\limits_{k=1}^{\infty}
					\mathcal{A}(\mathscr{V}_{k,n_k})
					{\otimes}\mathcal{K}} \  ,
			\end{tikzcd}
		\end{equation}
		where $\varsigma$ 
		is given in Lemma \ref{smalltrick}, 
		$q$ is the restriction $*$-homomorphism and 
		$\chi_{t}$
		is the map induced by the family of $*$-homomorphisms 
		$\{\beta_{k,n_k,0}\}$ 
		in \eqref{eq:finitedimensionalspackejkldjfioe}.
		Note that for all 
		$k,n \in \mathbb{N}$,
		$$K_*(\mathcal{S}) 
		\cong 
		K_*\big(
		\mathcal{A}(\mathscr{V}_{k,n})
		\big).
		$$
		Thus,
		it follows from Lemma \ref{isomorphismasmallkeyforlaterresults}  
		that  
		$\chi_{t}$
		induces 
		an isomorphism in $K$-theory. 
		Combining Lemma 
		\ref{smalltrick} with the commutativity of  diagram \eqref{eq:kdjaflkdkkkkdkjlaksdjf}, we conclude that   $\mathfrak{T}_{t}^{\vec{n}}$
		induces an injective homomorphism
		at the level of $K$-theory
		for each 
		$\vec{n} 
		\in\mathcal{P}$.
	\end{proof}

	\begin{prop}\label{shuodianshenmene}
		For each $t\geq1$,
		the group homomorphism
		\[
		\begin{matrix}
			KK^{\Gamma}_{*}
			(E\Gamma,
			\mathcal{S}) 
			\otimes_{\mathbb{Z}} \mathbb{Q}
			\xlongrightarrow{[\mathfrak{T}_{t}]} 
			KK^{\Gamma}_{\mathbb{R},*}
			\Big(
			E\Gamma,~
			\mathcal{A}^{\scalebox{0.5}{$\prod$}}
			([0,1],\mathscr{E})
			\Big)
		\end{matrix}
		\]
		is injective,
		where $[\mathfrak{T}_{t}]\in 
		KK_{0}\big(\mathcal{S},~
		\mathcal{A}^{\scalebox{0.5}{$\prod$}}
		([0,1],\mathscr{E})\big)$.
	\end{prop}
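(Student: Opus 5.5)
The plan is to reduce the statement to a non-equivariant $K$-theory statement, using that both $\mathcal{S}$ and $\mathcal{A}^{\scalebox{0.5}{$\prod$}}([0,1],\mathscr{E})$ carry the trivial $\Gamma$-action. For a free and proper $\Gamma$-space $\Delta$ and a trivially acted coefficient algebra $B$, the group $KK^{\Gamma}_{*}(\Delta,B)$ is the $K$-homology of $\Delta/\Gamma$ with coefficients in $B$, i.e.\ $KK_*(C_0(\Delta/\Gamma),B)$ in the compactly supported (inductive limit) sense. Naturality in $B$ lets us view $\otimes[\mathfrak{T}_t]$ as the map induced by the $*$-homomorphism $\mathfrak{T}_t:\mathcal{S}\to\mathcal{A}^{\scalebox{0.5}{$\prod$}}([0,1],\mathscr{E})$ on $K$-homology with coefficients. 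First I will write $KK^\Gamma_*(E\Gamma,\cdot)$ as the inductive limit of $KK^\Gamma_*(\Delta,\cdot)$ over $\Gamma$-compact (free, proper) subsets $\Delta\subseteq E\Gamma$, and do the same for $KK^\Gamma_{\mathbb{R},*}$ via Construction \ref{AASdakljdksjf}. Since direct limits are exact and commute with $\otimes\mathbb{Q}$, injectivity reduces to each $\Gamma$-compact $\Delta$, where $\Delta/\Gamma$ is a compact metrizable space. Replacing $\Delta$ by a finite $\Gamma$-CW complex or using a further limit, we may take $\Delta/\Gamma$ to be a finite CW complex.

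Next I will use the rational universal coefficient/Chern character picture. Rationally, $KK_*(C(X),B)\otimes\mathbb{Q}$ for a finite CW complex $X$ splits naturally in $B$ as $\bigoplus_j \mathrm{Hom}(K^j(X),K_{*+j}(B))\otimes\mathbb{Q}$, after tensoring with $\mathbb{Q}$. This uses the UCT on the first variable, applied to the commutative algebra $C(X)$, and the fact that the Ext term is torsion. A cleaner route is a Mayer--Vietoris induction on cells. Both $X\mapsto KK_*(C(X),\mathcal{S})\otimes\mathbb{Q}$ and $X\mapsto KK_*(C(X),\mathcal{A}^{\scalebox{0.5}{$\prod$}}([0,1],\mathscr{E}))\otimes\mathbb{Q}$ are homology theories, and $\mathfrak{T}_t$ induces a natural transformation between them. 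On a point, this transformation is $(\mathfrak{T}_t)_*:K_*(\mathcal{S})\to K_*(\mathcal{A}^{\scalebox{0.5}{$\prod$}}([0,1],\mathscr{E}))$, which is injective by Lemma \ref{InjectiveofKtheory}.

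The main obstacle is that injectivity, unlike bijectivity, does not propagate through the Five Lemma. I will handle this by working over $\mathbb{Q}$ and splitting off the image. Over $\mathbb{Q}$ the injection $K_*(\mathcal{S})\otimes\mathbb{Q}\hookrightarrow K_*(\mathcal{A}^{\scalebox{0.5}{$\prod$}}([0,1],\mathscr{E}))\otimes\mathbb{Q}$ has a $\mathbb{Q}$-linear left inverse. For a finite CW complex $X$ and any $B$, rational $K$-homology with coefficients is naturally $\bigoplus_j H_{j}(X;\mathbb{Q})\otimes K_{*-j}(B)\otimes\mathbb{Q}$ via the Chern character, natural in $B$. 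Hence the induced map is $\mathrm{id}\otimes(\mathfrak{T}_t)_*$ on each summand, which is injective since $\otimes_{\mathbb{Q}}$ with a $\mathbb{Q}$-vector space is exact. I will justify this Künneth-type identification by noting that $KK_*(C(X),B)\cong K_*(X;B)$ and that rationally $K$-homology with coefficients in a fixed group is ordinary homology with coefficients $K_*(B)\otimes\mathbb{Q}$, by Dold's theorem that rational homology theories split.

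The passage to the real-coefficient group $KK_{\mathbb{R}}$ is handled the same way. By Construction \ref{AASdakljdksjf}, this group is (or receives, compatibly and naturally in the coefficient algebra) the rationalization tensored up to $\mathbb{R}$, so injectivity persists after $\otimes_{\mathbb{Q}}\mathbb{R}$ by flatness. Finally, passing back to the limit over $\Delta$ gives the proposition.
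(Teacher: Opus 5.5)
\paragraph{Summary.} Up to the last step your argument is the paper's, but the passage to the real-coefficient target has a genuine gap.

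\paragraph{What is correct.} The trivial $\Gamma$-actions on $\mathcal{S}$ and $\mathcal{A}^{\prod}([0,1],\mathscr{E})$, together with freeness, let you pass to $KK_*(B\Gamma,\cdot)$ (Remark \ref{KKimportantcase}(1)). A rational K\"unneth/Chern--Dold splitting natural in the coefficient algebra then reduces everything to the coefficient map; your Dold argument is a fine substitute for Lemma \ref{lem:KK-separate-variables}. Lemma \ref{InjectiveofKtheory} plus flatness of $\mathbb{Q}$ then correctly shows that
\[
KK^{\Gamma}_{*}(E\Gamma,\mathcal{S})\otimes_{\mathbb{Z}}\mathbb{Q}\longrightarrow KK^{\Gamma}_{*}\big(E\Gamma,\mathcal{A}^{\prod}([0,1],\mathscr{E})\big)\otimes_{\mathbb{Z}}\mathbb{Q}
\]
is injective.

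\paragraph{The gap.} Construction \ref{AASdakljdksjf} only gives a natural comparison map
\[
KK^\Gamma(A,B)\otimes_{\mathbb{Z}}\mathbb{R}\longrightarrow KK^\Gamma_{\mathbb{R}}(A,B)=\varinjlim_{\mathcal{N}}KK^\Gamma(A,B\otimes\mathcal{N}).
\]
This map is neither an identification nor known to be injective. So ``injectivity persists by flatness'' only gets you into $KK^\Gamma_*(E\Gamma,\mathcal{A}^{\prod})\otimes\mathbb{R}$, and the comparison map may then kill your classes.

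For this coefficient algebra the concern is real. $\mathcal{A}^{\prod}([0,1],\mathscr{E})$ is a non-separable $\prod/\bigoplus$ quotient with no K\"unneth theorem available, so $K_*(\mathcal{A}^{\prod}\otimes\mathcal{N})$ need not be $K_*(\mathcal{A}^{\prod})\otimes\mathbb{R}$. Let $[u]$ be a generator of $K_1(\mathcal{S})$. The class $(\mathfrak{T}_t)_*[u]$ has infinite order in $K_1(\mathcal{A}^{\prod})$, yet it could a priori vanish in $\varinjlim_{\mathcal{N}}K_1(\mathcal{A}^{\prod}\otimes\mathcal{N})$.

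\paragraph{The repair.} Apply your splitting directly to the real-coefficient theory $X\mapsto KK_{\mathbb{R},*}(X,B)$. This is what the paper's diagram does, with right-hand column $\bigoplus_j K_{*-j}(B\Gamma)\otimes K_{\mathbb{R},j}(\mathcal{A}^{\prod})$. It reduces the proposition to injectivity of
\[
\mathbb{Q}\cong K_1(\mathcal{S})\otimes_{\mathbb{Z}}\mathbb{Q}\hookrightarrow K_{\mathbb{R},1}(\mathcal{S})\longrightarrow K_{\mathbb{R},1}\big(\mathcal{A}^{\prod}([0,1],\mathscr{E})\big).
\]
This is exactly the step the paper's proof isolates. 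It is a real-coefficient version of Lemma \ref{InjectiveofKtheory} and does not follow from the integral statement alone. Proving it requires carrying the $\prod/\bigoplus$ argument of Lemmas \ref{smalltrick} and \ref{InjectiveofKtheory} through with a II$_1$-factor $\mathcal{N}$ tensored in, where $K_1(\mathcal{S}\otimes\mathcal{N})\cong K_0(\mathcal{N})\cong\mathbb{R}$.
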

	\begin{proof}
		
		Since the $\Gamma$-action on 
		$\mathcal{A}^{\scalebox{0.5}{$\prod$}}
		([0,1],\mathscr{E})$
		is trivial,
		we obtain the following commuting diagram:	
		\[
		\begin{tikzcd}[row sep=0.8cm, column sep=1cm]
			KK_{*}^{\Gamma}(E\Gamma, \mathcal{S}) \otimes_{\mathbb{Z}} \mathbb{Q} \arrow[r] \arrow[d]	& KK_{\mathbb{R}, *}^{\Gamma}
			\Big(
			E\Gamma, ~ \mathcal{A}^{\scalebox{0.5}{$\prod$}}
			([0,1],\mathscr{E})
			\Big)  \arrow[d]
			\\
			KK_{*}(B\Gamma, \mathcal{S}) \otimes_{\mathbb{Z}} \mathbb{Q} \arrow[r] &
			KK_{\mathbb{R}, *}
			\Big(
			B\Gamma, ~ \mathcal{A}^{\scalebox{0.5}{$\prod$}}
			([0,1],\mathscr{E})
			\Big)
			\\
			\bigoplus_{j \in \mathbb{Z}/2\mathbb{Z}} K_{*-j}(B\Gamma) \otimes_{\mathbb{Z}} K_j(\mathcal{S}) \otimes_{\mathbb{Z}} \mathbb{Q} \arrow[r] \arrow[u]& 
			\bigoplus_{j \in \mathbb{Z}/2\mathbb{Z}} K_{*-j}(B\Gamma) \otimes_{\mathbb{Z}} K_{\mathbb{R}, j}	
			\big(
			\mathcal{A}^{\scalebox{0.5}{$\prod$}}
			([0,1],\mathscr{E})
			\big) \arrow[u] ,
		\end{tikzcd}
		\]
		where the upper vertical maps are the natural isomorphisms given by Remark 
		\ref{KKimportantcase}(1),
		the lower vertical maps are the natural isomorphisms given by Lemma \ref{lem:KK-separate-variables},  
		and the
		horizontal maps are induced by taking Kasparov product with
		$[\mathfrak{T}_{t}]\in 
		KK_{0}\big(\mathcal{S}, ~
		\mathcal{A}^{\scalebox{0.5}{$\prod$}}
		([0,1],\mathscr{E})\big)$
		and the change-of-coefficient
		homomorphisms in Construction \ref{AASdakljdksjf}.

		It suffices to show the bottom horizontal map is injective.
		Since this is a homomorphism between $\mathbb{Q}$-vector spaces, it suffices to show the maps on the second tensor components, i.e., the compositions
		\[
		\begin{matrix}
			K_j(\mathcal{S}) \otimes_{\mathbb{Z}} \mathbb{Q} & \longrightarrow &
			K_j
			\big(
			\mathcal{A}^{\scalebox{0.5}{$\prod$}}
			([0,1],\mathscr{E})
			\big)
			\otimes_{\mathbb{Z}} \mathbb{Q}
			& \longrightarrow &
			K_{\mathbb{R}, j}	
			\big(
			\mathcal{A}^{\scalebox{0.5}{$\prod$}}
			([0,1],\mathscr{E})
			\big)
		\end{matrix}
		\]
		for $j=0,1$,
		are injective.
		This is clear for $j=0$ 
		since $K_0(\mathcal{S})\cong 0$.
		For $j=1$,
		we  rewrite the composition as 
		\[
		\begin{matrix}
			\mathbb{Q}\cong  K_1(\mathcal{S}) \otimes_{\mathbb{Z}} \mathbb{Q}
			\hookrightarrow
			K_1(\mathcal{S}) \otimes_{\mathbb{Z}} \mathbb{R}
			\cong 
			K_{\mathbb{R}, 1}(\mathcal{S})
			\xlongrightarrow{}
			K_{\mathbb{R}, 1}	
			\big(
			\mathcal{A}^{\scalebox{0.5}{$\prod$}}
			([0,1],\mathscr{E})
			\big)
		\end{matrix}
		\]
		which is injective by Lemma \ref{InjectiveofKtheory}, as desired.
	\end{proof}

	\begin{prop}\label{klsdjkkkkjksljflksdjf}
		For each $t\geq1$,
		the group homomorphism  
		\[
		\begin{matrix}
			KK^{\Gamma}_{*}
			(E\Gamma,
			\mathcal{S}) 
			\otimes_{\mathbb{Z}} \mathbb{Q}
			\xlongrightarrow{\big[
				\mathfrak{T}_{t}^{\rm Sim}
				\big]} 
			KK_{\mathbb{R}, *}^{\Gamma,\alpha_0}
			\Big( 
			E\Gamma,~
			\mathcal{A}^{\scalebox{0.5}{$\prod$}}_{\rm Sim}
			(Z,[0,1],\mathscr{E})
			\Big)
		\end{matrix}
		\] 
		is injective,
		where 
		$\big[
		\mathfrak{T}_{t}^{\rm Sim}
		\big]
		\in KK^{\Gamma,\alpha_0}_{0} 
		\big(
		\mathcal{S}, ~ 
		\mathcal{A}^{\scalebox{0.5}{$\prod$}}_{\rm Sim}
		(Z,[0,1],\mathscr{E})
		\big)$.
	\end{prop}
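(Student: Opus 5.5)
The plan is to factor the map through the trivially-acted algebra $\mathcal{A}^{\scalebox{0.5}{$\prod$}}([0,1],\mathscr{E})$, using the commutative triangle established at the end of Section 8. By \eqref{eq:thekasparocproductforembeddingmap} we have $[\mathfrak{T}_{t}]\otimes[\iota]=[\mathfrak{T}_{t}^{\rm Sim}]$ in $KK^{\Gamma,\alpha_0}_{0}\big(\mathcal{S},\mathcal{A}^{\scalebox{0.5}{$\prod$}}_{\rm Sim}(Z,[0,1],\mathscr{E})\big)$. Here $\iota$ intertwines the trivial action on $\mathcal{A}^{\scalebox{0.5}{$\prod$}}([0,1],\mathscr{E})$ with the $\alpha_0$-action. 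By associativity of the Kasparov product, the map in question therefore equals the composition
\[
KK^{\Gamma}_{*}(E\Gamma,\mathcal{S})\otimes_{\mathbb{Z}}\mathbb{Q}\xlongrightarrow{[\mathfrak{T}_{t}]}KK^{\Gamma}_{\mathbb{R},*}\big(E\Gamma,\mathcal{A}^{\scalebox{0.5}{$\prod$}}([0,1],\mathscr{E})\big)\xlongrightarrow{\otimes[\iota]}KK^{\Gamma,\alpha_0}_{\mathbb{R},*}\big(E\Gamma,\mathcal{A}^{\scalebox{0.5}{$\prod$}}_{\rm Sim}(Z,[0,1],\mathscr{E})\big).
\]
Before using this, I will check that the change-of-coefficient map to $KK_{\mathbb{R}}$ from Construction \ref{AASdakljdksjf} commutes with the product by $[\iota]$. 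It does, because that map is natural in the coefficient algebra.

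The first arrow is injective by Proposition \ref{shuodianshenmene}. It then suffices to show the second arrow is injective, and I would aim to show it is an isomorphism. Proposition \ref{dklajlksdjlfkjasdkkkkkkkk} already gives the integral statement: for a free and proper $\Gamma$-space $\Delta$, $\otimes[\iota]$ is an isomorphism on $KK^{\Gamma}_{*}(\Delta,\cdot)$. I need the same statement after passing to $KK_{\mathbb{R}}$, and with $E\Gamma$ in place of $\Delta$.

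For that I would redo the argument of Proposition \ref{dklajlksdjlfkjasdkkkkkkkk} with real coefficients, starting from Lemma \ref{embeddingisomorphsm}. That lemma says $\iota_*$ is an isomorphism on $K$-theory. Tensoring with $\mathbb{R}$, or using the $K_{\mathbb{R}}$ construction from the Appendix, which is functorial and half-exact in the same way, gives an isomorphism on $K_{\mathbb{R},*}$. Next I write $E\Gamma$ as the direct limit of its $\Gamma$-compact subspaces. On each of these, the Mayer--Vietoris and Five Lemma induction over $\Gamma$-invariant open covers by pieces of the form $\Gamma\times_{F} U$ reduces the statement to the non-equivariant one. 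The group $F$ here is trivial because the action is free. Passing to the limit, $KK^{\Gamma}$ of $E\Gamma$ is defined as a direct limit over $\Gamma$-compact subsets, and direct limits preserve isomorphisms.

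The main obstacle is this compatibility of the Mayer--Vietoris/Five Lemma machinery with the $\mathbb{R}$-coefficient theory $KK_{\mathbb{R}}$. If the Appendix defines $KK_{\mathbb{R}}$ as a direct limit of $KK$ with coefficients in suitable algebras, or as $KK\otimes\mathbb{R}$ on $\Gamma$-compact pieces, the argument goes through because exactness is preserved under these limits and tensor products. I would first try to deduce the real version formally from Proposition \ref{dklajlksdjlfkjasdkkkkkkkk} by naturality. Only if that fails would I repeat the inductive argument. Composing the injective first arrow with the isomorphism $\otimes[\iota]$ then yields the asserted injectivity.
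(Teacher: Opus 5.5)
Your proposal matches the paper's proof: it factors $[\mathfrak{T}_{t}^{\rm Sim}]=[\mathfrak{T}_{t}]\otimes[\iota]$ via \eqref{eq:thekasparocproductforembeddingmap}, takes injectivity of the first arrow from Proposition \ref{shuodianshenmene}, and gets bijectivity of $\otimes[\iota]$ from Proposition \ref{dklajlksdjlfkjasdkkkkkkkk} together with Construction \ref{AASdakljdksjf}. The paper handles the passage to real coefficients just by citing Construction \ref{AASdakljdksjf}, so your extra discussion only expands that step; note that $KK_{\mathbb{R}}$ is defined as a limit of $KK$ with coefficients $B\otimes\mathcal{N}$ over II$_1$-factors $\mathcal{N}$, not literally as $KK\otimes_{\mathbb{Z}}\mathbb{R}$, so ``tensoring with $\mathbb{R}$'' is not by itself the right justification.
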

	\begin{proof}
		It follows from \eqref{eq:thekasparocproductforembeddingmap}  that, for each $t\geq1$,
		the following diagram commutes:	
		\[
		\begin{tikzcd}[column sep=1cm, row sep=1cm]
			&
			KK_{\mathbb{R}, *}^{\Gamma,\alpha_0}
			\Big( 
			E\Gamma, ~
			\mathcal{A}^{\scalebox{0.5}{$\prod$}}_{\rm Sim}
			(Z,[0,1],\mathscr{E})
			\Big)
			\\
			KK^{\Gamma}_{*}
			(E\Gamma,
			\mathcal{S}) 
			\otimes_{\mathbb{Z}} \mathbb{Q}
			\arrow[r, "{[	
				\mathfrak{T}_{t}]
			}"] 
			\arrow[ur, "{\big[
				\mathfrak{T}_{t}^{\rm Sim}
				\big]}"] 
			&
			KK^{\Gamma}_{\mathbb{R},*}
			\Big(
			E\Gamma, ~
			\mathcal{A}^{\scalebox{0.5}{$\prod$}}
			([0,1],\mathscr{E})
			\Big)
			\arrow[u, " {\otimes[\iota]}" ']. 
		\end{tikzcd}
		\]	
		By combining Construction \ref{AASdakljdksjf},
		Proposition \ref{dklajlksdjlfkjasdkkkkkkkk} 
		and Proposition \ref{shuodianshenmene},
		the desired result follows.	
	\end{proof}

	We now prove the injectivity of the map in \eqref{eq:therationallybottmapforfreeactionofgammas}.

	\begin{prop}\label{injectivityofbottmapdklsjfalksdjlfkjas}
		For each $s\in[0,1]$,
		the group homomorphism  
		\[
		\begin{matrix}
			KK^{\Gamma}_{*}
			(E\Gamma,
			\mathcal{S}) 
			\otimes_{\mathbb{Z}} \mathbb{Q}
			\xlongrightarrow{\big[\big(
				\mathfrak{T}_{t}^{\rm Sim}
				\big)_{t\in[1,\infty)}\big]}
			KK_{\mathbb{R}, *}^{\Gamma,\alpha_{s}}
			\Big( 
			E\Gamma, ~
			\mathcal{A}^{\scalebox{0.5}{$\prod$}}_{\rm Sim}
			(Z,[0,1],\mathscr{E})
			\Big)
		\end{matrix}
		\] 
		is injective,
		where 
		$\big[\big(
		\mathfrak{T}_{t}^{\rm Sim}
		\big)_{t\in[1,\infty)}\big]
		\in KK^{\Gamma,\alpha_s}_{0} 
		\big(
		\mathcal{S}, ~
		\mathcal{A}^{\scalebox{0.5}{$\prod$}}_{\rm Sim}
		(Z,[0,1],\mathscr{E})
		\big)$. 
	\end{prop}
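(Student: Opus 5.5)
The plan is to run the commutative diagram sketched at the beginning of this section. For each $s\in[0,1]$, combine \eqref{eq:Kasparovsproductforevaluationmaps} with associativity of the Kasparov product (and compatibility with the change-of-coefficient maps of Construction \ref{AASdakljdksjf}). This factors the map in question as
\[
KK^{\Gamma}_{*}(E\Gamma,\mathcal{S})\otimes_{\mathbb{Z}}\mathbb{Q}
\xlongrightarrow{\big[\big(\mathfrak{T}_{[0,1],t}^{\rm Sim}\big)_{t\in[1,\infty)}\big]}
KK_{\mathbb{R},*}^{\Gamma,\alpha_{[0,1]}}\Big(E\Gamma,~\mathcal{A}^{\scalebox{0.5}{$\prod$}}_{{\rm Sim},[0,1]}(Z,[0,1],\mathscr{E})\Big)
\xlongrightarrow{\otimes[{\rm ev}_s]}
KK_{\mathbb{R},*}^{\Gamma,\alpha_{s}}\Big(E\Gamma,~\mathcal{A}^{\scalebox{0.5}{$\prod$}}_{\rm Sim}(Z,[0,1],\mathscr{E})\Big).
\]
By Proposition \ref{isomorphismbyevt}, applied with $\Delta=E\Gamma$ and tensored with $\mathbb{R}$ as in Construction \ref{AASdakljdksjf}, the second arrow is an isomorphism for every $s$. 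It therefore suffices to show that the first arrow is injective, and this is independent of $s$.

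For the first arrow, specialize to $s=0$. The composite with $\otimes[{\rm ev}_0]$ is the map given by the class $\big[\big(\mathfrak{T}_{t}^{\rm Sim}\big)_{t\in[1,\infty)}\big]\in KK^{\Gamma,\alpha_0}_0\big(\mathcal{S},\mathcal{A}^{\scalebox{0.5}{$\prod$}}_{\rm Sim}(Z,[0,1],\mathscr{E})\big)$. By \eqref{eq:theequalityforasymptoticandoneterm}, this class equals $[\mathfrak{T}_{t'}^{\rm Sim}]$ for any fixed $t'\geq1$, because for the $\alpha_0$-action each $\mathfrak{T}_{t'}^{\rm Sim}$ is already an equivariant $*$-homomorphism and the path $t\mapsto\mathfrak{T}_t^{\rm Sim}$ is an equivariant homotopy. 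Proposition \ref{klsdjkkkkjksljflksdjf} states that the map induced by $[\mathfrak{T}_{t'}^{\rm Sim}]$ is rationally injective. Hence the composite $\otimes[{\rm ev}_0]\circ\big[\big(\mathfrak{T}_{[0,1],t}^{\rm Sim}\big)\big]$ is injective, which forces $\big[\big(\mathfrak{T}_{[0,1],t}^{\rm Sim}\big)\big]$ to be injective. Composing with the isomorphism $\otimes[{\rm ev}_s]$ then gives injectivity for every $s\in[0,1]$.

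The main obstacle is bookkeeping rather than analysis. First, one must check that the Kasparov product with the asymptotic-morphism class is compatible with pushing forward along the $\Gamma$-equivariant $*$-homomorphism ${\rm ev}_s$: this needs ${\rm ev}_s$ to intertwine the $\alpha_{[0,1]}$- and $\alpha_s$-actions (Remark \ref{evaluationswichisgammaequivalentwithrestoeachaction}), together with the naturality of \eqref{eq:Kasparovsproductforevaluationmaps}. Second, the rationalization $KK\otimes\mathbb{Q}\to KK_{\mathbb{R}}$ must commute with these products. I would verify both using the functoriality facts collected in the Appendix before assembling the argument.
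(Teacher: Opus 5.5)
Your proposal is correct and follows the paper's own proof essentially step for step. You factor the map through $\mathcal{A}^{\prod}_{{\rm Sim},[0,1]}(Z,[0,1],\mathscr{E})$ via \eqref{eq:Kasparovsproductforevaluationmaps}, use Proposition \ref{isomorphismbyevt} (with Construction \ref{AASdakljdksjf}) to see that $\otimes[{\rm ev}_s]$ is an isomorphism, and deduce injectivity of the top arrow from the $s=0$ case via \eqref{eq:theequalityforasymptoticandoneterm} and Proposition \ref{klsdjkkkkjksljflksdjf}.
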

	\begin{proof}
		For every $s\in[0,1]$,	
		it follows from \eqref{eq:Kasparovsproductforevaluationmaps} 
		that the diagram below commutes:	
		\[\begin{tikzcd}[column sep=2.8cm, row sep=1cm]
			KK_{*}^{\Gamma}(E\Gamma, 	\mathcal{S}) 	
			\otimes_{\mathbb{Z}} \mathbb{Q}
			\arrow[r, "{\big[	
				\big(
				\mathfrak{T}_{[0,1],t}^{\rm Sim}
				\big)_{t\in[1,\infty)}\big]
			}" ]
			\arrow[dr, "{\big[\big(
				\mathfrak{T}_{t}^{\rm Sim}
				\big)_{t\in[1,\infty)}\big]}" ',pos=0.7] &
			KK_{\mathbb{R},*}^{\Gamma, \alpha_{[0,1]}}
			\Big(E\Gamma, ~  
			\mathcal{A}^{\scalebox{0.5}{$\prod$}}_{{\rm Sim}, [0,1]}
			(Z,[0,1],\mathscr{E})
			\Big)
			\arrow[d, "{\otimes 
				[{\rm ev}_s]
			} "] 
			\\
			& KK_{\mathbb{R}, *}^{\Gamma,\alpha_{s}}
			\Big( 
			E\Gamma, ~
			\mathcal{A}^{\scalebox{0.5}{$\prod$}}_{\rm Sim}
			(Z,[0,1],\mathscr{E})
			\Big). 
		\end{tikzcd}
		\]
		For each $s\in[0,1]$,	
		the right vertical map is an isomorphism by Proposition \ref{isomorphismbyevt} and 
		Construction \ref{AASdakljdksjf}.
		Hence,  
		it suffices to show that the top map is injective.

		Now we consider the diagram for $s=0$. 
		Combining  \eqref{eq:theequalityforasymptoticandoneterm} 
		and Proposition \ref{klsdjkkkkjksljflksdjf}, 
		we see that the bottom map is injective.
		The injectivity of the top map then follows from commutativity of the diagram.		
	\end{proof}

	Combining Proposition \ref{injectivityofbottmapdklsjfalksdjlfkjas}  with the commutativity of the diagram \eqref{eq:thecommutativedigramsafortwoalgebrasd}, 
	we obtain the following result.

	\begin{cor}\label{finalcorollaryhahahahahwoyinggiahikjlll}
		For each $s\in[0,1]$,
		the group homomorphism  
		\[
		\begin{matrix}
			KK^{\Gamma}_{*}
			(E\Gamma,
			\mathcal{S}) 
			\otimes_{\mathbb{Z}} \mathbb{Q}
			\xlongrightarrow{\big[\big(
				\mathfrak{T}_{t}^{\rm Sim}
				\big)_{t\in[1,\infty)}\big]}
			KK_{\mathbb{R}, *}^{\Gamma,\alpha_{s}}
			\Big( 
			E\Gamma,~
			\mathcal{A}^{\flat}_{\rm Sim}
			(Z,[0,1],\mathscr{E})
			\Big)
		\end{matrix}
		\] 
		is injective,
		where 
		$\big[\big(
		\mathfrak{T}_{t}^{\rm Sim}
		\big)_{t\in[1,\infty)}\big]
		\in KK^{\Gamma,\alpha_s}_{0} 
		\big(
		\mathcal{S}, ~
		\mathcal{A}^{\flat}_{\rm Sim}
		(Z,[0,1],\mathscr{E})
		\big)$. 
	\end{cor}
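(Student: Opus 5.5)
The corollary follows by factoring the composite through the larger coefficient algebra, as announced just before the statement. Fix $s\in[0,1]$. The element $\big[\big(\mathfrak{T}_{t}^{\rm Sim}\big)_{t\in[1,\infty)}\big]\in KK^{\Gamma,\alpha_s}_{0}\big(\mathcal{S},\mathcal{A}^{\flat}_{\rm Sim}(Z,[0,1],\mathscr{E})\big)$ is built from the same $*$-homomorphisms $\mathfrak{T}_t^{\rm Sim}$ as the corresponding element with coefficients in $\mathcal{A}^{\scalebox{0.5}{$\prod$}}_{\rm Sim}(Z,[0,1],\mathscr{E})$. Each $\mathfrak{T}_t^{\rm Sim}$ takes values in $\mathcal{A}^{\flat}_{\rm Sim}$, since $\beta_{k,{\bf 0}}(f_t)$ has uniformly bounded support when $f$ has compact support, and by density for general $f$.

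Let $j:\mathcal{A}^{\flat}_{\rm Sim}(Z,[0,1],\mathscr{E})\hookrightarrow\mathcal{A}^{\scalebox{0.5}{$\prod$}}_{\rm Sim}(Z,[0,1],\mathscr{E})$ be the inclusion. By Lemma \ref{Gammaalgebraforthespecialcase123} it is $\Gamma$-$\alpha_s$-equivariant. Composing the asymptotic morphism with $j$ gives the asymptotic morphism into the larger algebra. Hence, by functoriality of Kasparov products with $*$-homomorphisms (Construction \ref{constr:KK-facts-asymptotic}),
\[
\big[\big(\mathfrak{T}_{t}^{\rm Sim}\big)_{t}\big]_{\flat}\otimes[j]=\big[\big(\mathfrak{T}_{t}^{\rm Sim}\big)_{t}\big]_{\prod}.
\]
The change-of-coefficients map from rational to real $KK$ in Construction \ref{AASdakljdksjf} is natural in the coefficient algebra. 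Therefore the triangle \eqref{eq:thecommutativedigramsafortwoalgebrasd} commutes, with the vertical arrow being $\otimes[j]$ on $KK_{\mathbb{R},*}^{\Gamma,\alpha_s}(E\Gamma,\cdot)$.

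In that triangle, the diagonal map into $KK_{\mathbb{R},*}^{\Gamma,\alpha_{s}}\big(E\Gamma,\mathcal{A}^{\scalebox{0.5}{$\prod$}}_{\rm Sim}(Z,[0,1],\mathscr{E})\big)$ is injective by Proposition \ref{injectivityofbottmapdklsjfalksdjlfkjas}. This diagonal equals the vertical map composed with the horizontal map, so the horizontal map must be injective, which is exactly the assertion.

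The only step requiring care is verifying that naturality: the rationalization and real-coefficient constructions, as well as the product with asymptotic morphisms, must commute with postcomposition by the equivariant inclusion $j$. I expect this to follow directly from the definitions in the Appendix, so there is no genuine obstacle beyond bookkeeping.
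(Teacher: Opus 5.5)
Your proposal is correct and follows the paper's own argument. The paper obtains the corollary by combining Proposition~\ref{injectivityofbottmapdklsjfalksdjlfkjas} with the commutative triangle \eqref{eq:thecommutativedigramsafortwoalgebrasd}, whose vertical arrow is induced by the equivariant inclusion $\mathcal{A}^{\flat}_{\rm Sim}(Z,[0,1],\mathscr{E})\subseteq\mathcal{A}^{\scalebox{0.5}{$\prod$}}_{\rm Sim}(Z,[0,1],\mathscr{E})$. Your additional checks (that $\mathfrak{T}_t^{\rm Sim}$ lands in $\mathcal{A}^{\flat}_{\rm Sim}$, that the inclusion is equivariant, and that the real-coefficient map is natural) only spell out why that triangle commutes.
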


	\section{The proof of the main theorem}
	
	Having established all the necessary ingredients, we now prove the main theorem of this article.
	\begin{proof}[Proof of Theorem 1.1]
		We consider the following commuting diagram:
		{\small  
			\[
			\begin{tikzcd}[row sep=1.8cm, column sep=0.8cm]
				K_{*+1}^{\Gamma}(E\Gamma) 	\otimes_{\mathbb{Z}} \mathbb{Q} 
				\arrow[r, "\pi_*"] \arrow[d, "{\big[\big(
					\mathfrak{T}_{t}^{\rm Sim}
					\big)_{t\in[1,\infty)}\big]}"]
				& 	K_{*+1}^{\Gamma}(\underline{E}\Gamma) 	\otimes_{\mathbb{Z}} \mathbb{Q}
				\arrow[r, "\mu"]  \arrow[d, "{\big[\big(
					\mathfrak{T}_{t}^{\rm Sim}
					\big)_{t\in[1,\infty)}\big]}"]
				& K_{*+1}(C_{\rm r}^*\Gamma) 	\otimes_{\mathbb{Z}} \mathbb{Q} \arrow[d, "{\big[\big(
					\mathfrak{T}_{t}^{\rm Sim}
					\big)_{t\in[1,\infty)}\big]}\rtimes_{\rm r}\Gamma"]
				\\
				KK_{\mathbb{R},*}^{\Gamma,\alpha_{1}}
				\Big(E\Gamma, ~ 
				\mathcal{A}^{\flat}_{\rm Sim}
				(Z,[0,1],\mathscr{E})
				\Big) \arrow[r, "\pi_*"] 
				& 	KK_{\mathbb{R},*}^{\Gamma,\alpha_{1}}
				\Big(\underline{E}\Gamma, ~  
				\mathcal{A}^{\flat}_{\rm Sim}
				(Z,[0,1],\mathscr{E})
				\Big) \arrow[r, "\mu"] 
				& K_{\mathbb{R},*}
				\Big(
				\mathcal{A}^{\flat}_{\rm Sim}
				(Z,[0,1],\mathscr{E})
				\rtimes_{\rm r} \Gamma
				\Big),
			\end{tikzcd}
			\]}	
		
		\noindent 
		where the left horizontal maps are  induced by the natural $\Gamma$-equivariant continuous map $\pi: E\Gamma \to \underline{E}\Gamma$,
		the right horizontal maps are the reduced Baum--Connes assembly maps, 
		and the vertical maps are induced by
		$\big[\big(
		\mathfrak{T}_{t}^{\rm Sim}
		\big)_{t\in[1,\infty)}\big]
		\in KK^{\Gamma,\alpha_1}_{0} 
		\big(
		\mathcal{S}, ~ 
		\mathcal{A}^{\flat}_{\rm Sim}
		(Z,[0,1],\mathscr{E})
		\big)$. 
		
		The map 
		$\mu$ in the bottom row is an isomorphism  by Proposition \ref{properalgebrabySimple} and 
		Theorem \ref{thm:proper-GHT}.
		The map $\pi_*$  in the bottom row is injective by Lemma \ref{jdskljfaljsdkfjsqqq}.
		Moreover, the left vertical map
		is injective by Corollary \ref{finalcorollaryhahahahahwoyinggiahikjlll}.
		Therefore, the composition of the maps in the top row is injective,
		as desired.
	\end{proof}

	\section{Appendix on $KK$-theory}

	In this appendix, we collect several elementary facts from $KK$-theory that are used throughout the paper. A convenient introduction to this material can be found in \cite{GWY}; more comprehensive treatments are given in \cite{GHT,K1988}.

	An extremely potent tool in noncommutative geometry, particularly in relation with the Novikov conjecture, is Kasparov's equivariant $KK$-theory \cite{K1988}, 
	which associates to a locally compact and $\sigma$-compact group 
	$\Gamma$ 
	and two  
	$\Gamma$-$C^*$-algebras 
	$A$ 
	and 
	$B$ 
	(meaning that 
	$\Gamma$ 
	acts on them) the abelian group $KK^\Gamma(A, B)$. 
	It is contravariant in 
	$A$ 
	and covariant in 
	$B$, 
	both with respect to equivariant 
	$*$-homomorphisms. 
	It is equivariantly homotopy-invariant, stably invariant, preserves equivariant split exact sequences and satisfies Bott periodicity, 
	i.e. there are natural isomorphisms
	\[
	KK^\Gamma(A, B) \cong KK^\Gamma(\Sigma^2 A, B) \cong KK^\Gamma(\Sigma A, \Sigma B)  \cong KK^\Gamma(A, \Sigma^2 B), 
	\]
	where 
	$\Sigma^k A$ 
	stands for 
	$C_0(\mathbb{R}^k,A)$ 
	with 
	$k\in\mathbb{N}$ 
	and 
	$\Gamma$ 
	acts trivially on 
	$\mathbb{R}$. 
	These properties ensure that 
	a short exact sequence 
	$0 \to J \to E \to A \to 0$ 
	of 
	$\Gamma$-$C^*$-algebras and 
	equivariant $*$-homomorphisms induces a six-term exact sequence in the second variable, 
	and with extra conditions such as that 
	$E$ is a nuclear 
	(in particular commutative) 
	proper $\Gamma$-$C^*$-algebra, 
	it also induces a six-term exact sequence in the first variable (see \cite{GHT}). 
	When one of the two variables is $\mathbb{C}$, 
	equivariant $KK$-theory recovers 
	\begin{itemize}
		\item equivariant $K$-theory: $KK^\Gamma(\mathbb{C}, B) 
		\cong 
		K^\Gamma_0(B)$;
		\item equivariant $K$-homology: $KK^\Gamma(A, \mathbb{C}) 
		\cong 
		K_\Gamma^0(A)$. 
	\end{itemize}
	
	\begin{rmk}
		The definition of equivariant $KK$-theory is usually tailored to the theory of 
		\emph{graded $C^*$-algebras}. However in this paper, 
		when we consider the equivariant 
		$KK$-groups of graded $C^*$-algebras,  
		we \emph{disregard their gradings} and treat them as trivially graded. 
	\end{rmk}

	The most striking feature that gives
	equivariant $KK$-theory its power is the \emph{Kasparov product}, 
	which gives a group homomorphism
	\begin{equation}
		\label{eq:thekasparovproductorinalpapers}
		KK^\Gamma(A, B) \otimes_{\mathbb{Z}} KK^\Gamma(B, C) \longrightarrow  KK^\Gamma(A, C)
	\end{equation}
	for any three  
	$\Gamma$-$C^*$-algebras 
	$A$, $B$ and $C$. 
	This product is associative. 
	Moreover,  
	the Kasparov product of two elements 
	$x \in KK^\Gamma(A, B)$ 
	and 
	$y \in KK^\Gamma(B, C)$ 
	is often denoted by 
	$x \otimes  y$. 
	In particular, 
	for any $\Gamma$-equivariant $*$-homomorphism 
	$\varphi: A \to B$, 
	the Kasparov products give rise to
	the group homomorphisms 
	$$
	[\varphi] \otimes : KK^\Gamma(B, C) \longrightarrow  KK^\Gamma(A, C)
	$$  
	and 
	\begin{equation}
		\label{eq:theproductmapforhomomorphismshahahahah}
		\otimes [\varphi] : KK^\Gamma(D, A) \longrightarrow   KK^\Gamma(D, B).
	\end{equation}

	When the acting group $\Gamma$ is the trivial group, 
	we simply write $KK(A,B)$ for $KK^\Gamma(A, B)$ and drop the word ``equivariant'' everywhere. 
	There is a forgetful functor from $KK^\Gamma$ to $KK$.

	\begin{rmk}\label{KKimportantcase}
		In some important special cases, 
		we can turn an equivariant $KK$-group $KK^\Gamma(A, B)$ into a related non-equivariant $KK$-group, 
		which is often much easier to study. 
		We summarize them as follows:
		\begin{enumerate}
			\item\label{rmk:KK-facts-de-equivariantize:trivial-B} 
			When $\Gamma$ is a countable discrete group and its action on $B$ is trivial, it is immediate from the definition that there is a natural isomorphism 
			$$
			KK^\Gamma(A, B) \cong KK(A \rtimes \Gamma, B),
			$$ 
			where $A \rtimes \Gamma$ is the maximal crossed product. 
			In particular if 
			$A = C_0(X)$ 
			for a locally compact second countable space $X$ and $\Gamma$ acts freely and properly on $X$, then since 
			$C_0(X) \rtimes \Gamma$ 
			is stably isomorphic to 
			$C_0(X/\Gamma)$, 
			we have a natural isomorphism 
			$$
			KK^\Gamma(C_0(X), B) \cong KK(C_0(X / \Gamma), B).
			$$ 
			\item\label{rmk:KK-facts-de-equivariantize:translation-A} When $\Gamma$ is a countable discrete group and 
			$A = C_0(\Gamma, D)$ 
			with an action of $\Gamma$ by translation on the domain $\Gamma$, 
			there is a natural isomorphism 
			$$
			KK^\Gamma(C_0(\Gamma, D), B) \overset{\cong}{\longrightarrow} KK(D, B)
			$$ 
			which is given by first applying the forgetful functor and then composing with the embedding 
			$D \cong C(\{1_{\Gamma} \}, D) \hookrightarrow C_0(\Gamma, D)$. 
		\end{enumerate}
	\end{rmk}

	In this paper, we will focus on the case when the first variable $A$ in $KK^\Gamma(A,B)$ is commutative and view the theory as a homological theory on the spectrum of $A$. 
	In fact, we will need a variant of it that may be thought of as homology with $\Gamma$-compact support. 
	Recall that a subset of a topological space $X$, on which $\Gamma$ acts, is called \emph{$\Gamma$-compact} if it is contained in $\{\gamma\cdot x ~\big| ~ \gamma \in \Gamma, ~ x \in K \}$ for some compact subset $K$ in $X$.

	\begin{defn}\label{defn:KK-Gam-compact}
		Given a countable discrete group $\Gamma$, 
		a Hausdorff space $X$ with a $\Gamma$-action, a $\Gamma$-$C^*$-algebra $B$ and $n\in\mathbb{Z}$, 
		we write 
		$KK^\Gamma_n(X, B)$ 
		for the inductive limit of the equivariant $KK$-groups 
		$KK^\Gamma 
		\big(
		C_0(Y), C_0(\mathbb{R}^n, A) 
		\big)$, 
		where $Y$ ranges over $\Gamma$-invariant and $\Gamma$-compact subsets of $X$ and $A$ ranges over $\Gamma$-invariant separable $C^*$-subalgebras of $B$, 
		both directed by inclusion. 
		
		We write $K^\Gamma_n(X)$ for $KK^\Gamma_n(X, \mathbb{C})$ 
		and call it the \emph{$\Gamma$-equivariant $K$-homology of $X$ with $\Gamma$-compact supports}. 
	\end{defn}
	
	\begin{rmk}\label{sldkjfljs22222}
		In some cases,
		we need to specify the actions of $\Gamma$ on the $C^*$-algebra $B$ (denoted by $\Gamma\curvearrowright_{\alpha}B$), 
		and then we write 
		$KK^{\Gamma,\alpha}_*(X, B)$ for the equivariant $KK$-group.
	\end{rmk}
	
	It is clear from Bott periodicity that there is a natural isomorphism $KK^\Gamma_n(X, B) \cong 
	KK^\Gamma_{n+2}(X, B)$. 
	Thus we can view the index $n$ as an element of $\mathbb{Z}/2\mathbb{Z}$. Also note that this construction is covariant both in $X$ with respect to continuous maps and in $B$ with respect to equivariant $*$-homomorphisms. Partially generalizing the functoriality in the second variable, the Kasparov product gives us a natural product $KK^\Gamma_n(X, B) \otimes_\mathbb{Z} KK^\Gamma(B,C) \to KK^\Gamma_n(X, C)$ for any 
	$\Gamma$-$C^*$-algebras 
	$B$ and $C$. 
	
	We may think of $KK^\Gamma_n(-, B)$ as an extraordinary homology theory in the sense of Eilenberg--Steenrod. 
	In the non-equivariant case, 
	the coefficient algebra $B$ plays a rather minor role in this picture. 
	
	\begin{lem}\label{lem:KK-separate-variables}
		For any CW-complex $X$, any $C^*$-algebra $B$ and any $n\in\mathbb{Z}/2\mathbb{Z}$, 
		there is a natural isomorphism
		\[
		KK_n(X, B) \otimes_{\mathbb{Z}}\mathbb{Q} 
		\cong \bigoplus_{j \in \mathbb{Z}/2\mathbb{Z}} K_j(X) \otimes_{\mathbb{Z}} K_{n-j}(B)  \otimes_{\mathbb{Z}} \mathbb{Q}.
		\]
	\end{lem}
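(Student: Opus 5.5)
This is the rational Künneth-type statement. I would deduce it from the universal coefficient and Künneth theorems of Rosenberg--Schochet, in the homology form. First reduce to finite CW-complexes. By Definition \ref{defn:KK-Gam-compact} with trivial $\Gamma$, $KK_n(X,B)$ is the inductive limit of $KK(C_0(Y),C_0(\mathbb{R}^n,A))$ over compact $Y\subseteq X$ and separable $A\subseteq B$. Every compact subset of a CW-complex lies in a finite subcomplex, so we may let $Y$ range over finite subcomplexes. Likewise $K_j(X)$ is the corresponding limit of $K_j(Y)$, and $K_{n-j}(B)=\varinjlim K_{n-j}(A)$ by continuity of $K$-theory. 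Tensoring with $\mathbb{Q}$ and forming direct sums commute with filtered colimits. So once we have a natural isomorphism for finite $Y$ and separable $A$, it passes to the limit.

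For a finite CW-complex $Y$, the algebra $C(Y)$ is commutative and hence lies in the bootstrap class. The Rosenberg--Schochet Künneth sequence for $KK(C(Y),-)$ is derived from the UCT, using that the $K$-homology of $Y$ is finitely generated. It gives a natural short exact sequence
\[
0\to \bigoplus_{j} K_j(Y)\otimes_{\mathbb{Z}} K_{n-j}(A)\to KK_n(Y,A)\to \bigoplus_j \mathrm{Tor}^{\mathbb{Z}}_1\big(K_j(Y),K_{n-j-1}(A)\big)\to 0.
\]
The first map is the exterior Kasparov product. Tensoring with $\mathbb{Q}$ is exact and kills the $\mathrm{Tor}$ term, which gives the desired isomorphism for each pair $(Y,A)$. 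Naturality of the exterior product in $Y$ and $A$ ensures compatibility with the connecting maps of the two inductive systems.

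The main obstacle is the Künneth step. It requires that the UCT apply in the first variable with target an arbitrary separable $A$, and that the Künneth map be natural so the limits commute. If one prefers to avoid quoting Künneth, the fallback is to argue by induction on cells. Both sides are homology theories in $Y$: they are homotopy invariant and have Mayer--Vietoris/six-term sequences, with exactness on the right preserved by $\otimes\mathbb{Q}$ over torsion-free rational terms. The exterior product gives a natural transformation between them that is an isomorphism on a point, since $KK(\mathbb{C},A)=K_*(A)$. The five lemma then yields the isomorphism on all finite complexes.
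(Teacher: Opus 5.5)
Your proposal is correct and takes essentially the paper's route. The paper's proof only says the statement follows from a version of the K\"unneth theorem, deferring details to \cite{GWY}. Your reduction to finite subcomplexes $Y$ and separable subalgebras $A\subseteq B$, followed by the Rosenberg--Schochet K\"unneth sequence for $KK(C(Y),A)$ whose $\mathrm{Tor}$ term dies after tensoring with $\mathbb{Q}$, fills in exactly those details. Your cell-induction fallback is also sound, since tensoring with the $\mathbb{Q}$-vector space $K_*(A)\otimes_{\mathbb{Z}}\mathbb{Q}$ is exact and the exterior product is an isomorphism on a point.
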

	
	This follows from a version of the K\"{u}nneth Theorem.
	One can find the details in 
	\cite{GWY}.

	Let $\underline{E}\Gamma$ be the universal space for proper action of the group $\Gamma$. 
	The \emph{reduced Baum--Connes assembly map} for a countable discrete group $\Gamma$ and a $\Gamma$-$C^*$-algebra $B$ is a group homomorphism 
	\[
	\mu: KK^\Gamma_*(\underline{E}\Gamma, B) \longrightarrow  K_*(B \rtimes_{\rm r} \Gamma).
	\]
	It is natural in $B$ with respect to $\Gamma$-equivariant $*$-homomorphisms or more generally with respect to taking Kasparov products, in the sense that any element 
	$b\in KK^\Gamma(B,C)$ 
	induces a commutative diagram
	\begin{equation}
		\label{eq:BC-assembly-natural}
		\begin{array}{ccr}
			KK^\Gamma_*(\underline{E}\Gamma, B) & \xlongrightarrow{\mu} & K_*(B \rtimes_{\operatorname{r}} \Gamma) 
			\\
			\Big\downarrow  b & & \Big\downarrow b\rtimes_{\operatorname{r}} \Gamma 
			\\
			KK^\Gamma_*(\underline{E}\Gamma, C) & \xlongrightarrow{\mu} & K_*(C \rtimes_{\operatorname{r}} \Gamma)
		\end{array}
	\end{equation}
	for an induced group homomorphism
	$b \rtimes_{\rm r} \Gamma$.

	The case when $B=\mathbb{C}$ is of special interest. 
	Let  $E\Gamma$ be the universal space for proper and free action of the group $\Gamma$.
	The \emph{rational strong Novikov conjecture} asserts that the composition
	\[
	K^\Gamma_*(E\Gamma) \overset{\pi_*}{\longrightarrow}
	K^\Gamma_*(\underline{E}\Gamma) \overset{\mu}{\longrightarrow} K_*(C^*_{\rm r} \Gamma) 
	\] 
	is injective after tensoring each term by $\mathbb{Q}$,
	where  $\pi_*$ is induced by the natural 
	$\Gamma$-equivariant continuous map $\pi: E\Gamma \to \underline{E}\Gamma$. 
	It implies the Novikov conjecture,
	the Gromov--Lawson--Rosenberg  conjecture on the nonexistence of positive scalar curvature for aspherical manifolds and Gromov's zero-in-the-spectrum conjecture.
	
	On the other hand, it has proven extremely useful to have the flexibility of a general $\Gamma$-algebra $B$ in the picture, largely due to the following key observation, which is based on a theorem of Green and Julg,
	and an equivariant cutting-and-pasting argument on $B$. 
	
	\begin{thm}[{\cite[Theorem~13.1]{GHT}, \cite[Proposition~ 5.11]{KS2003}}]\label{thm:proper-GHT}
		For any countable discrete group $\Gamma$  and a $\Gamma$-$C^*$-algebra $B$, 
		if $B$ is a proper $\Gamma$-$C^*$-algebra, 
		then the reduced Baum--Connes assembly map 
		\[
		\mu: KK^\Gamma_*(\underline{E}\Gamma, B) \longrightarrow 
		K_*(B \rtimes_{\rm r} \Gamma)
		\]
		is an isomorphism.
	\end{thm}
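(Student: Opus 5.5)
The statement to be proved is Theorem \ref{thm:proper-GHT}: for a proper $\Gamma$-$C^*$-algebra $B$, the reduced assembly map $\mu: KK^\Gamma_*(\underline{E}\Gamma, B)\to K_*(B\rtimes_{\rm r}\Gamma)$ is an isomorphism. The plan is a reduction, via a Mayer--Vietoris/cutting-and-pasting argument over the base space of $B$, to algebras induced from finite subgroups. For these, the Green--Julg theorem identifies both sides.

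The argument runs in four steps.

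\textbf{Step 1 (local model).} Fix the proper $\Gamma$-space $X$ with $C_0(X)\to ZM(B)$ and $C_0(X)B$ dense. Using slice theorems for proper actions of discrete groups, cover $X$ by $\Gamma$-invariant open sets of the form $\Gamma\times_F U$, with $F$ finite and $U$ an $F$-invariant open set. For the ideal $B_U=C_0(\Gamma\times_F U)B$, there is a natural identification $B_U\cong \mathrm{Ind}_F^\Gamma(B'_U)$ for an $F$-algebra $B'_U$. This gives compatible isomorphisms
\[
KK^\Gamma_*(\underline{E}\Gamma,\mathrm{Ind}_F^\Gamma B')\cong KK^F_*(\underline{E}\Gamma,B')\cong KK^F_*(\mathrm{pt},B')=K^F_*(B'),
\]
the middle one because $\underline{E}\Gamma$ is $F$-equivariantly contractible. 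On the other side,
\[
K_*(\mathrm{Ind}_F^\Gamma B'\rtimes_{\rm r}\Gamma)\cong K_*(B'\rtimes F)\cong K^F_*(B')
\]
by Morita equivalence and Green--Julg. I then check that $\mu$ corresponds to the identity under these identifications. This uses the standard compatibility of assembly with induction.

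\textbf{Step 2 (finite unions).} For $B$ supported over finitely many such slices, I pass from the single-slice case by induction on the number of slices. The ideals $B_{U_1\cup U_2}$, $B_{U_1}$, $B_{U_2}$, $B_{U_1\cap U_2}$ form exact sequences. Both sides are functorial and satisfy six-term exact sequences in $B$: the left side because $KK^\Gamma_*(\underline{E}\Gamma,-)$ with $\Gamma$-compact supports is exact in the second variable for proper coefficients, the right side because the reduced crossed product is exact on the ideals in question (they are proper). Naturality of $\mu$ and the five lemma then give the result.

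\textbf{Step 3 (general case).} Write $B$ as the inductive limit of the ideals $B_Y$, where $Y$ ranges over $\Gamma$-compact open subsets of $X$. Each such $Y$ is covered by finitely many slices, so Step 2 applies to $B_Y$. Both sides commute with inductive limits: the left side by the definition of $KK^\Gamma_*(X,B)$ as a limit over separable subalgebras, the right side by continuity of $K$-theory and of the reduced crossed product.

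\textbf{Main obstacle.} I expect the hardest part to be the left-hand exactness in Step 2 together with the compatibility of $\mu$ with induction in Step 1. Exactness in the second variable requires a completely positive splitting or the nuclearity of the relevant pieces, so I would cite \cite{GHT} and \cite{KS2003} for these rather than reprove them. For the compatibility of $\mu$ with induction, I would first verify it on the canonical generator of $KK^F(\mathrm{pt},\mathrm{pt})$ and then extend it by the Kasparov product.
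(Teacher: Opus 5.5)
The paper gives no argument of its own for this theorem. It cites Guentner--Higson--Trout and Kasparov--Skandalis, and only remarks that the proof rests on Green--Julg plus an equivariant cutting-and-pasting argument on $B$. Your outline follows that strategy, but Step 1 has a genuine gap.

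\textbf{The gap in Step 1.} You present the first isomorphism $KK^\Gamma_*(\underline{E}\Gamma,\mathrm{Ind}_F^\Gamma B')\cong KK^F_*(\underline{E}\Gamma,B')$ as a formal consequence of $B_U\cong\mathrm{Ind}_F^\Gamma B'_U$. It is not formal, because it changes the support condition:
\begin{itemize}
\item the left side is a limit of $KK^\Gamma(C_0(Y),\,\cdot\,)$ over $\Gamma$-compact $Y$;
\item your second isomorphism, via $F$-contractibility of $\underline{E}\Gamma$, is only valid with $F$-compact, i.e.\ compact, supports on the right, since an $F$-contraction of $\underline{E}\Gamma$ is not a proper homotopy.
\end{itemize}
No levelwise restriction--induction adjunction bridges this. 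In general $KK^\Gamma(C_0(Y),\mathrm{Ind}_F^\Gamma B')\not\cong KK^F(C_0(Y),B')$ for $\Gamma$-compact $Y$. Take $\Gamma=\mathbb{Z}$, $F=\{e\}$, $B'=\mathbb{C}$ and $Y=\mathbb{R}$. The semisplit extension $0\to C_0(\mathbb{R}\setminus\mathbb{Z})\to C_0(\mathbb{R})\to C_0(\mathbb{Z})\to 0$, together with the compression isomorphism of Remark~\ref{KKimportantcase}(2), reduces the six-term sequence to $1-t$ acting on $K_0(C_0(\mathbb{Z}))=\bigoplus_{\mathbb{Z}}\mathbb{Z}$. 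Hence $KK^{\mathbb{Z}}_*(C_0(\mathbb{R}),C_0(\mathbb{Z}))$ is $\mathbb{Z}$ in even degree and $0$ in odd degree, consistent with $C_0(\mathbb{Z})\rtimes\mathbb{Z}\cong\mathcal{K}$. By contrast, $KK_*(C_0(\mathbb{R}),\mathbb{C})$ is $\mathbb{Z}$ in odd degree and $0$ in even degree.

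\textbf{What is actually needed.} With compact supports on the $F$-side the isomorphism is true. It is, however, the induction theorem $K^{\rm top}_*(\Gamma;\mathrm{Ind}_F^\Gamma B')\cong K^{\rm top}_*(F;B')$ of Chabert--Echterhoff, and that theorem is the real content of Step 1. The compatibility of $\mu$ with induction, which you list as the main obstacle, presupposes it. One way to supply it:
\begin{itemize}
\item Define the map by inducing a cycle over a compact $F$-invariant $K\subseteq\underline{E}\Gamma$ to a cycle over $\Gamma\times_F K$, then pushing forward along $[g,k]\mapsto gk$.
\item Both sides are homology theories on proper $\Gamma$-CW complexes, so Mayer--Vietoris and a colimit reduce to orbits $\Gamma/H$ with $H$ finite.
\item On an orbit, compression and the Mackey decomposition $\mathrm{Res}^H_\Gamma\mathrm{Ind}_F^\Gamma B'\cong\bigoplus_{HgF}\mathrm{Ind}^H_{H\cap gFg^{-1}}({}^{g}B')$ give $\bigoplus_{HgF}K_*^{H\cap gFg^{-1}}({}^{g}B')$ on the left. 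On the right you get $\bigoplus_{FgH}K_*^{F\cap gHg^{-1}}(B')$.
\item These correspond under $HgF\mapsto Fg^{-1}H$, and you must check that the induction map realizes this correspondence.
\end{itemize}

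\textbf{A smaller point in Step 3.} Continuity of the left side in the coefficient algebra does not follow from the definition via separable subalgebras. A separable $\Gamma$-subalgebra of $B$ need not lie in any $B_Y$. This also needs the cell-by-cell reduction, where Green--Julg makes each orbit term continuous.
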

	
	This is the basis of the 
	\emph{Dirac-dual-Dirac} method, 
	which was applied very successfully to the study of the Baum--Connes assembly map. It is based on the construction of a proper $\Gamma$-$C^*$-algebra $B$ together with $KK$-elements 
	$\mathfrak{a}\in KK_*^\Gamma(B,\mathbb{C})$ 
	and $\mathfrak{b}\in KK_*^\Gamma(\mathbb{C}, B)$ such that $\mathfrak{b} \otimes \mathfrak{a}$ 
	is equal to the identity element in $KK^\Gamma(\mathbb{C},\mathbb{C})$. 
	When this is possible, Theorem \ref{thm:proper-GHT} allows us to conclude that the Baum--Connes assembly map for $\Gamma$ is an isomorphism and the rational strong Novikov conjecture for $\Gamma$ follows. 
	Although we do not directly apply this method to prove Theorem \ref{main-result1}, our strategy still calls for a proper $\Gamma$-$C^*$-algebra $B$ and a $KK$-element 
	$\mathfrak{b} \in KK_*^\Gamma(\mathbb{C}, B)$. 
	The following construction is quite useful for applications. 
	
	\begin{constr}	\label{constr:KK-facts-asymptotic} 	
		Let $B$ be a $\Gamma$-$C^*$-algebra and let $\varphi_t: C_0(\mathbb{R}) \to B$ be a family of $*$-homomorphisms indexed by $t\in [1, \infty)$, 
		such that 
		\begin{enumerate}
			\item 
			\emph{pointwise continuous}, i.e., $t \mapsto \varphi_t(f)$ is continuous for any 
			$f\in C_0(\mathbb{R})$; 
			and 
			\item 
			\emph{asymptotically $\Gamma$-invariant}, 
			i.e., $\lim\limits_{t\to\infty} 
			\big\| 
			\gamma \cdot \left(\varphi_t(f)\right) - \varphi_t(f) 
			\big\| = 0$ 
			for any $f \in C_0(\mathbb{R})$ and any $\gamma\in\Gamma$. 
		\end{enumerate}
		Obviously, the family of $*$-homomorphisms forms a $\Gamma$-equivariant asymptotic morphism in \cite{HK}.
		By \cite[Definition~7.4]{HK}, 
		there is an element 
		\[
		\left[ (\varphi_t)_{t\in[1,\infty)} \right] \in KK^\Gamma_0 
		\big(C_0(\mathbb{R}), B \big) 
		\cong  
		KK^\Gamma_1(\mathbb{C}, B)
		\]
		whose image under the forgetful map 
		\[
		KK^\Gamma_0 
		\big(C_0(\mathbb{R}), B \big) \longrightarrow   KK(C_0(\mathbb{R}), B)
		\]
		is equal to the element $[\varphi_{t^\prime}]$ induced by the $*$-homomorphism 
		$\varphi_{t^\prime}$  for  any
		$t^\prime \in [1, \infty)$.
		
		When 
		we need to specify the action of $\Gamma$ on the $C^*$-algebra $B$ (denoted by $\Gamma\curvearrowright_{\alpha}B$), 
		we write 
		\[
		\left[ (\varphi_t)_{t\in[1,\infty)} \right] \in KK^{\Gamma,\alpha}_0 
		\big(C_0(\mathbb{R}), B \big).
		\] 
	\end{constr}

	At the end of this section, 
	we recall the construction of equivariant $KK$-theory with real coefficients introduced by Antonini, Azzali and Skandalis. 
	
	\begin{constr} [\cite{AAS2016}] \label{AASdakljdksjf}
		The \emph{equivariant $KK$-theory with real coefficients}  is a bivariant theory that associates, to each pair $(A,B)$ of $\Gamma$-$C^*$-algebras, the group  
		\[
		KK^\Gamma_{\mathbb{R}} (A, B) = \varinjlim_{\mathcal{N}} KK^\Gamma (A, B \otimes \mathcal{N}),
		\]
		where $\otimes$ stands for the minimal tensor product and the inductive limit is taken over all II$_1$-factors $\mathcal{N}$ with unital 
		$*$-homomorphisms as connecting maps. This theory is contravariant in the first variable and covariant in the second, and there is a natural map from 
		$KK^\Gamma(A, B) \otimes_{\mathbb{Z}} \mathbb{R}$ 
		to $KK^\Gamma_{\mathbb{R}} (A, B)$ since $K_0(\mathcal{N}) \cong \mathbb{R}$ 
		for any II$_1$-factor $\mathcal{N}$. 
		Moreover the Kasparov product extends to this theory. 
		
		Given a countable discrete group $\Gamma$, a Hausdorff space $X$ with a $\Gamma$-action, and a $\Gamma$-$C^*$-algebra $B$, 
		we define $KK^\Gamma_{\mathbb{R},*}(X, B)$ 
		in the same way as in Definition \ref{defn:KK-Gam-compact}. 
		Then the universal coefficient theorem allows us to identify $KK_{\mathbb{R}, *} (X, \mathbb{C})$ with $K_*(X) \otimes_{\mathbb{Z}} \mathbb{R}$ in a natural way. 
	\end{constr}

	The key reason we consider $KK$-theory with real coefficients is the following convenient fact. 
	
	\begin{lem} [{\cite{AAS2020,GWY}}] \label{jdskljfaljsdkfjsqqq}
		For any countable discrete group $\Gamma$ 
		and $\Gamma$-$C^*$-algebra $A$, the homomorphism 
		\[
		\pi_*: 
		KK^\Gamma_{\mathbb{R},*} (E\Gamma, A) \longrightarrow 
		KK^\Gamma_{\mathbb{R},*} (\underline{E}\Gamma, A),
		\]
		which is induced by the natural $\Gamma$-equivariant continuous map $\pi: E\Gamma \to \underline{E}\Gamma$, 
		is injective. 
	\end{lem}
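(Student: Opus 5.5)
This lemma is taken from the literature (\cite{AAS2020,GWY}). I would follow the standard argument, which constructs a left inverse to $\pi_*$ after passing to real coefficients.

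The first step is to reduce to $\Gamma$-compact pieces. By Definition \ref{defn:KK-Gam-compact}, both sides are inductive limits over $\Gamma$-compact subsets. Given a $\Gamma$-compact $Y\subseteq E\Gamma$, the image $\pi(Y)$ lies in a $\Gamma$-compact subset of $\underline{E}\Gamma$. So it suffices to show that any class $x\in KK^\Gamma_{\mathbb{R}}(C_0(Y),C_0(\mathbb{R}^n, A))$ with $\pi_*x=0$ vanishes after enlarging $Y$.

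The second step is to build a "transfer" map
\[
\tau: KK^\Gamma_{\mathbb{R},*}(\underline{E}\Gamma, A)\longrightarrow KK^\Gamma_{\mathbb{R},*}(E\Gamma\times\underline{E}\Gamma, A)\cong KK^\Gamma_{\mathbb{R},*}(E\Gamma, A)
\]
with $\tau\circ\pi_*=\mathrm{id}$. The space $E\Gamma\times \underline{E}\Gamma$ is a model of $E\Gamma$, since the action on it is free and proper and it is universal. On $\underline{E}\Gamma$, each point has finite stabilizer $F$. Locally, $\underline{E}\Gamma$ looks like $\Gamma\times_F U$, while $E\Gamma\times \underline{E}\Gamma$ looks like $\Gamma\times_F(EF\times U)$.

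The key input is that for a finite group $F$, the projection $EF\to \mathrm{pt}$ induces an isomorphism in $F$-equivariant $K$-homology with real coefficients. This is where the $II_1$-factor coefficients matter: the trace-type element $\frac{1}{|F|}\sum_{g\in F} g$, or equivalently the real-valued regular-representation class, is available in $KK_{\mathbb{R}}$ but not integrally. In effect, one defines $\tau$ locally as the product with a "$\frac{1}{|F|}$-averaged" element in $KK^F_{\mathbb{R}}(\mathbb{C}, C_0(EF))$ compatible with restriction. One then patches these local definitions together via Mayer--Vietoris over a $\Gamma$-CW structure of $\underline{E}\Gamma$ (cells of the form $\Gamma/F\times D^k$), using the five lemma together with the exactness and inductive-limit compatibility of $KK^\Gamma_{\mathbb{R}}$.

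Equivalently, and more concretely, I would prove by induction over the skeleta that
\[
p_*:KK^\Gamma_{\mathbb{R},*}(E\Gamma\times\underline{E}\Gamma,A)\to KK^\Gamma_{\mathbb{R},*}(\underline{E}\Gamma,A)
\]
is injective, where $p$ is the second projection. Then $\pi=p\circ(\mathrm{id},\pi)$, and $(\mathrm{id},\pi):E\Gamma\to E\Gamma\times\underline{E}\Gamma$ is a $\Gamma$-homotopy equivalence between models of $E\Gamma$, which gives injectivity of $\pi_*$.

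The main obstacle is the cell case. Here one must show that
\[
KK^\Gamma_{\mathbb{R}}(C_0(\Gamma\times_F EF), A)\cong KK^F_{\mathbb{R}}(C_0(EF),A)\to KK^F_{\mathbb{R}}(\mathbb{C},A)
\]
is split injective. I would use the fact that $K^F_{\mathbb{R}}$-homology of $EF$ is rationally the $F$-invariant part of the homology, and that averaging over $F$ with weight $1/|F|$ gives a splitting in real coefficients, realized by a projection in $\mathcal{N}\rtimes F$ of trace $1/|F|$. Controlling the naturality of this splitting under the Mayer--Vietoris boundary maps is the technical heart of the argument, and there I would defer to the construction in \cite{AAS2020}.
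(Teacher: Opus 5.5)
The paper does not prove this lemma; it quotes it from \cite{AAS2020,GWY}. Deferring the hard step to \cite{AAS2020} is therefore what the paper does too. The outline you build around that citation, however, is not sound as written, and it has a genuine gap.

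\textbf{The stated ``key input'' is false.} For a finite group $F$, $K^F_{\mathbb R,*}(EF)\cong K_*(BF)\otimes\mathbb R\cong\mathbb R$ (in even degree), whereas $K^F_{\mathbb R,*}(\mathrm{pt})\cong R(F)\otimes\mathbb R$, and a free orbit maps to $[\lambda_F]$. So $EF\to\mathrm{pt}$ is injective but not onto when $F\neq 1$. For finite $\Gamma$ and $A=\mathbb C$ this map is the lemma itself. Were it an isomorphism, your Mayer--Vietoris argument would make $\pi_*$ an isomorphism for every $\Gamma$.

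There are two smaller slips. A wrong-way map $KK^F_{\mathbb R}(\mathbb C,A)\to KK^F_{\mathbb R}(C_0(EF),A)$ comes from a class in $KK^F_{\mathbb R}(C_0(EF),\mathbb C)$ (e.g.\ $\frac1{|F|}$ times a free orbit), not from one in $KK^F_{\mathbb R}(\mathbb C,C_0(EF))$. Also, $\frac1{|F|}\sum_{g\in F}g$ is an integral projection (the trivial representation). It is not equivalent to $\frac1{|F|}[\lambda_F]$, which is the class that genuinely needs real coefficients.

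\textbf{Injectivity does not propagate through Mayer--Vietoris sequences.} Compare $H_*(-;\mathbb Z)\to H_*(-;\mathbb Q)$, which is injective on a point but not on $\mathbb{RP}^2$. The five lemma needs isomorphisms, or a left inverse that is a single natural transformation commuting with the boundary maps. Cellwise splittings cannot be patched. So ``induction over skeleta'' proves nothing until the naturality you defer is supplied, and that naturality is the whole content.

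\textbf{The missing idea is to globalize via one Kasparov product.}
\begin{enumerate}
\item The regular representation $C^*_{\max}\Gamma\to L\Gamma\subseteq\mathcal N$ ($\mathcal N$ a II$_1$-factor) defines $[\tau]\in KK^\Gamma_{\mathbb R}(\mathbb C,\mathbb C)$.
\item The map $x\mapsto x\otimes[\tau]$ is an endomorphism of the homology theory $X\mapsto KK^\Gamma_{\mathbb R,*}(X,A)$, so it is automatically compatible with all boundary maps. On $\Gamma/F$ it becomes exterior product with the idempotent $\frac1{|F|}[\lambda_F]\in KK^F_{\mathbb R}(\mathbb C,\mathbb C)$.
\item Let $h_\tau(X)$ be the colimit of $KK^\Gamma_{\mathbb R,*}(X,A)$ under iterated $\otimes[\tau]$. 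This is again a homology theory.
\item For $p\colon X\times E\Gamma\to X$, the composite $KK^\Gamma_{\mathbb R,*}(X\times E\Gamma,A)\xrightarrow{p_*}KK^\Gamma_{\mathbb R,*}(X,A)\to h_\tau(X)$ is a natural transformation of homology theories.
\item On an orbit $\Gamma/F$ this composite is an isomorphism, by the corrected finite-group input: $KK^F_{\mathbb R,*}(EF,A)\to KK^F_{\mathbb R,*}(\mathrm{pt},A)$ is injective with image the range of $\otimes\frac1{|F|}[\lambda_F]$.
\item The five lemma now applies to isomorphisms. It propagates over the skeleta of $\underline{E}\Gamma$ and through direct limits, so $p_*$ is injective, and hence so is $\pi_*$.
\end{enumerate}
This mechanism, localization at the unit element, is what underlies \cite{AAS2020}.
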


\section{Non-sofic groups}	
	
In this section,  we  show that the non-sofic groups constructed by
Kun and Thom (\cite{KunThom})  have Property A and hence admit arbitrarily
small complexity rates. 
These groups provide the first examples of
non-sofic groups with Property A,  thereby resolving Pestov's question
(\cite[Open Question 9.6]{VladimirGPestov2008})  in the negative.
We then construct a new family of groups, called non-sofic monster groups,
all of which satisfy the Novikov conjecture.

In \cite{KunThom},  the authors constructed a family of non-sofic groups in terms of generalized
wreath product and amalgamated free product.   
Let $\Gamma$ be a subgroup of $G$. 
$\Gamma$ is called  an  \emph{infranormal subgroup}  of $G$  if the set  
$\big\{ g\in G ~\big|~ g\Gamma g^{-1}\leq\Gamma \big\}$
generates $G$ as a group. 
They showed that if  $\Gamma$  is an infranormal but not normal subgroup of $G$, and both $\Gamma$ and $G$  have  Kazhdan's Property (T), then the generalized
wreath product  
$(\mathbb{Z}/2\mathbb{Z}) \wr_{G/\Gamma} G$  and the amalgamated free product $G*_{\Gamma}G$  are non-sofic.
In particular,  they considered the following explicit examples.

Let $q$ be a prime power and let  $r,d \geq 3$.
Set
\[
R_{+}=\mathbb{F}_{q}[x_{1},\ldots,x_{d}]
\quad\text{and}\quad
R=\mathbb{F}_{q}[x_{1}^{\pm1},\ldots,x_{d}^{\pm1}],
\]
and let $\operatorname{SL}_{d}(\mathbb{Z})$ act on $R$ by monomial substitutions.
The groups
\begin{equation}\label{eq:nonsoficgroupsbykunthom}
	\Gamma:=\operatorname{EL}_{r}(R_{+})
	\quad\text{and}\quad
	G:=\operatorname{EL}_{r}(R)\rtimes\operatorname{SL}_{d}(\mathbb{Z})
\end{equation}
are residually finite and have Kazhdan's Property (T). 
Moreover,
$\Gamma$ is an  infranormal but not normal subgroup  of $G$.
Thus, the groups 
$$ 
(\mathbb{Z}/2\mathbb{Z}) \wr_{G/\Gamma} G \quad \text{and} \quad 
G*_{\Gamma}G 
$$  
are non-sofic.
Note that both $\Gamma$ and $G$ are finitely generated groups.

The following result shows that these non-sofic groups have Property A and hence have finite complexity rates.

\begin{prop}
	Let $\Gamma$ and $G$ be the finitely generated groups defined in \eqref{eq:nonsoficgroupsbykunthom}.  Then  $(\mathbb{Z}/2\mathbb{Z}) \wr_{G/\Gamma} G$  and $G*_{\Gamma}G$  have Property A.
\end{prop}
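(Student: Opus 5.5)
The plan is to reduce everything to the exactness (equivalently, Property A) of $G$ itself, and then to invoke standard permanence properties of Property A.

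\textbf{Step 1: $G$ has Property A.} $\operatorname{EL}_r(R)$ is a linear group over the commutative ring $R$. Since $R=\mathbb{F}_q[x_1^{\pm1},\ldots,x_d^{\pm1}]$ embeds in the field of fractions $\mathbb{F}_q(x_1,\ldots,x_d)$, the group $\operatorname{EL}_r(R)$ is a finitely generated subgroup of $\operatorname{GL}_r$ over a field. By Guentner--Higson--Weinberger, countable linear groups over any field are exact, so $\operatorname{EL}_r(R)$ has Property A. The group $\operatorname{SL}_d(\mathbb{Z})$ is linear, hence also has Property A. Property A (exactness) is preserved under extensions, so the semidirect product $G=\operatorname{EL}_r(R)\rtimes\operatorname{SL}_d(\mathbb{Z})$ has Property A. Alternatively, one can check that $G$ is itself linear: the monomial action of $\operatorname{SL}_d(\mathbb{Z})$ on $R$ permutes the monomial basis, which realizes $G$ inside $\operatorname{GL}$ of an infinite-rank module. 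This route is less clean, so I will use the extension argument. The subgroup $\Gamma=\operatorname{EL}_r(R_+)$ has Property A because it is a subgroup of $G$.

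\textbf{Step 2: the amalgamated product.} I will apply the theorem of Dadarlat--Guentner (see also Tu): an amalgamated free product $G*_{\Gamma}G$ of exact groups is exact. Its proof goes through the action on the Bass--Serre tree. Trees have Property A, and the stabilizers are conjugates of $G$, which are exact. The permanence result for groups acting on spaces with Property A and exact stabilizers then yields Property A for the fundamental group.

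\textbf{Step 3: the generalized wreath product.} The group $W=(\mathbb{Z}/2\mathbb{Z})\wr_{G/\Gamma}G$ is the extension
\[
1\to \bigoplus_{G/\Gamma}\mathbb{Z}/2\mathbb{Z}\to W\to G\to 1 .
\]
The kernel is a countable abelian (locally finite) group and is therefore amenable, hence exact. Since $G$ is exact and exactness passes to extensions (Kirchberg--Wassermann), $W$ is exact. Because $W$ is finitely generated and hence of bounded geometry, exactness is equivalent to Property A (Ozawa).

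\textbf{Main obstacle.} The step needing the most care is Step 1. One must make sure that the monomial substitution action of $\operatorname{SL}_d(\mathbb{Z})$ does not spoil exactness, and that the extension result is applied to the correct normal subgroup. Exactness of discrete groups is closed under extensions in full generality, so this should be fine. Everything else is a citation of well-known permanence results.
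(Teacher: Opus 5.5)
Your proposal is correct. Steps 1 and 3 coincide with the paper's argument:
\begin{itemize}
\item $\operatorname{EL}_r(R)\le\operatorname{GL}_r(\mathbb{F}_q(x_1,\ldots,x_d))$ and $\operatorname{SL}_d(\mathbb{Z})$ are exact by Guentner--Higson--Weinberger;
\item $G$ is exact by the Kirchberg--Wassermann extension theorem;
\item the generalized wreath product is exact as an extension of $G$ by the amenable group $\bigoplus_{G/\Gamma}\mathbb{Z}/2\mathbb{Z}$;
\item Ozawa's theorem turns exactness into Property A.
\end{itemize}

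The genuine difference is the amalgamated free product. The paper invokes Dykema's theorem that reduced amalgamated free products of exact $C^*$-algebras are exact, applied to $C^*_{\rm r}(G*_{\Gamma}G)\cong C^*_{\rm r}(G)*_{C^*_{\rm r}(\Gamma)}C^*_{\rm r}(G)$. You instead argue coarse-geometrically through the Bass--Serre tree, whose vertex stabilizers are conjugates of $G$. The paper's route is a one-line reduction once $C^*_{\rm r}(G)$ is known to be exact, but it rests on substantial operator-algebraic machinery. Yours stays within Property A, shows that nothing about the edge group $\Gamma$ is needed, and applies verbatim to any graph of groups with exact vertex groups.

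Two points to tidy if you write it up:
\begin{itemize}
\item The index $[G:\Gamma]$ is infinite, since $\operatorname{SL}_d(\mathbb{Z})$ is infinite and meets $\operatorname{EL}_r(R)\supseteq\Gamma$ trivially. So the Bass--Serre tree is not locally finite, and ``trees have Property A'' must be read in the non-bounded-geometry sense (asymptotic dimension one), which is the form the relevant permanence theorems use.
\item The references that state exactly what you need are Tu's theorem that Property A is preserved under amalgamated free products, and Bell's permanence result for groups acting on metric spaces with Property A, which covers graphs of groups.
\end{itemize}
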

\begin{proof}
	We first note that exactness is equivalent to Property A for every countable discrete group \cite{Ozawa2000}.
	
Recall that
	\[
	(\mathbb Z/2\mathbb Z)\wr_{G/\Gamma}G
	=\left(\bigoplus_{G/\Gamma}\mathbb Z/2\mathbb Z\right)\rtimes G, 
	\]
	where $G$ acts by permuting the coordinates of the direct sum.
	Thus, we have a short exact sequence of countable discrete  groups
	\[
	1\longrightarrow
	\bigoplus_{G/\Gamma}\mathbb Z/2\mathbb Z
	\longrightarrow
	(\mathbb Z/2\mathbb Z)\wr_{G/\Gamma}G
	\longrightarrow
	G
	\longrightarrow 1.
	\] 
	Since  $\bigoplus_{G/\Gamma}\mathbb Z/2\mathbb Z$ is amenable, 
	by Theorem 5.1 in \cite{KirchbergWassermann}, 
	it suffices to show that  $G$  is exact. 
	
	Since
	$R=\mathbb F_q[x_1^{\pm1},\ldots,x_d^{\pm1}]$
	is an integral domain, it embeds naturally into its field of fractions
	\[
	K=\mathbb F_q(x_1,\ldots,x_d).
	\]
	Hence,
	\[
	\mathrm{EL}_r(R)
	\leq \mathrm{GL}_r(R)
	\leq \mathrm{GL}_r(K).
	\]
By the main theorem in \cite{GuentnerHigsonWeinberger},
	the subgroup $\mathrm{EL}_r(R)$  of $\mathrm{GL}_r(K)$  is exact.
	The same theorem also implies that $\mathrm{SL}_d(\mathbb Z)$ is exact.	
Now the group
$G=\mathrm{EL}_r(R)\rtimes \mathrm{SL}_d(\mathbb Z)$	
fits into the short exact sequence 
\[
	1\longrightarrow
	\mathrm{EL}_r(R)
	\longrightarrow
	G
	\longrightarrow
	\mathrm{SL}_d(\mathbb Z)
	\longrightarrow 1.
	\]
Another application of Theorem 5.1 in \cite{KirchbergWassermann}
shows that
$G$	is exact.
  		
This completes the proof for the generalized wreath product $(\mathbb{Z}/2\mathbb{Z}) \wr_{G/\Gamma} G$.

For the amalgamated free product  $G*_{\Gamma}G$,   	
the conclusion follows directly from Dykema's  theorem (\cite{Dykema2004}).
\end{proof}

As an application of the proposition,
we can construct a new family of groups, called non-sofic monster groups.
These groups lie in the class $\mathcal{C}$ of Corollary \ref{theclassofcwodimayaijksdjlfjalksdjf}.
Let
\[
\Gamma_1\xrightarrow{\pi_1}\Gamma_2
\xrightarrow{\pi_2}\Gamma_3\rightarrow\cdots
\]
be a sequence of homomorphisms  between finitely generated groups with
Property A such that
\[
\Gamma:=\varinjlim_n\Gamma_n
\]
is a Gromov monster group,
and let $G$ be a non-sofic group with Property A.
Then 
$G*\Gamma$ 
is a non-sofic group that has the monster property, i.e., it does not admit a
coarse embedding into  $\ell^p(\mathbb{N})$  for any
$p\in[1,\infty)$.
Moreover, it belongs to the class $\mathcal{C}$.

	\bibliographystyle{amsplain}

\end{document}